\documentclass[12pt,reqno]{amsart}
\usepackage{amsthm}
\usepackage{amssymb}
\usepackage{amsmath}
\usepackage{mathrsfs}
\usepackage{amsfonts}
\usepackage{amssymb,amsmath}
 \usepackage[colorlinks, linkcolor=black, citecolor=blue,   hypertexnames=false]{hyperref}
\usepackage[a4paper,margin=1.05in]{geometry}
\usepackage{amsmath,amssymb,amsthm,mathtools}
\usepackage{mathrsfs}
\usepackage{enumitem}
\usepackage{xcolor}
\usepackage{microtype}
\usepackage{bm}
\usepackage{bm}
\usepackage{enumitem}
\usepackage{cite}

\hypersetup{
    colorlinks=true,
    linkcolor=blue,
    citecolor=blue,
    urlcolor=blue
}

\numberwithin{equation}{section}
\newtheorem{theorem}{Theorem}[section]
\newtheorem{proposition}[theorem]{Proposition}
\newtheorem{lemma}[theorem]{Lemma}
\newtheorem{corollary}[theorem]{Corollary}
\theoremstyle{definition}
\newtheorem{definition}[theorem]{Definition}
\theoremstyle{remark}
\newtheorem{remark}[theorem]{Remark}

\hypersetup{
    colorlinks=true,
    linkcolor=blue!60!black,
    citecolor=blue!60!black,
    urlcolor=blue!60!black
}

\theoremstyle{definition}
\theoremstyle{plain}
\numberwithin{equation}{section}

\newcommand{\R}{\mathbb R}
\newcommand{\Z}{\mathbb Z}
\newcommand{\dd}{\,d}

\newcommand{\supp}{\operatorname{supp}}
\newcommand{\Tr}{\operatorname{Tr}}

\newcommand{\calH}{\mathcal H}

\newcommand{\T}{\mathbb T}
\newcommand{\C}{\mathbb C}
\newcommand{\ii}{\mathrm i}
\newcommand{\ee}{\mathrm e}
\newcommand{\Lp}{L_p}
\newcommand{\Ran}{\operatorname{Ran}}
\newcommand{\spn}{\operatorname{span}}
\newcommand{\Dom}{\mathcal D}
\newcommand{\1}{\mathbf 1}
\newcommand{\eps}{\varepsilon}
\newcommand{\dist}{\operatorname{dist}}

\newcommand{\OO}{\mathcal O}
\newcommand{\YY}{\mathcal Y}

\newcommand{\N}{\mathbb N}
\newcommand{\Ker}{\operatorname{Ker}}

\newcommand{\Id}{I}

\title[Hausdorff type Time-Trace Observability for  Airy type Equations ]{Hausdorff type Time-Trace Observability for Airy Equations on the Line and Point Observability on the Torus}

\author[  Z. Li ]
{Ze Li}

\address{Ze Li\newline\indent School of Mathematics and Statistics, Ningbo University \newline\indent Ningbo, 315211, Zhejiang, P.R. China}
\email{rikudosennin@163.com}

\begin{document}

\maketitle

\begin{abstract}
The main results of this paper are threefold. First, we prove an observability inequality for the Airy equation on the real line from Hausdorff-thick sets in a time-trace sense  for every observation time $T>0$.  The observation functional is a block supremum of $L^2(0,T)$ time traces over the Hausdorff-thick set. Second, we prove observability inequalities for the Airy equation on the real line with observations on some periodic sets, which in particular yields observability on a class of spatial point sequences.   Third, we give a necessary and sufficient condition for finite point observability for the Airy equation on the torus  with a bounded real-valued potential. Indeed, for a finite observation set $F$,  a  Kalman rank condition on a finite-dimensional invariant subspace is found.  As a corollary, we obtain  sharp point observability results for the Airy and linear KdV equations on the torus. Moreover, for potentials with finite order regularity assumptions we prove that every observation set with an accumulation point, in particular any set of positive Hausdorff dimension, gives an observability inequality for each  observation time $T>0$. 

 Since the Airy equation has neither high frequency exponential decay nor pointwise smoothing effects, which are essential in recent works on Hausdorff type observation results on heat equations, we introduce several new ideas adapted to the Airy case.
\end{abstract}

\tableofcontents

\section{Introduction}
In this work, we mainly consider the Airy equation
\begin{align}\label{pb}
\begin{cases}
\partial_t u(t,x)+\partial_x^3 u(t,x)=0,
& (t,x)\in (0,T)\times \Omega,\\[2mm]
u|_{t=0}=u_0\in L^2,
& x\in \Omega,
\end{cases}
\end{align}
where $\Omega$ denotes the real line $\mathbb R$ or the torus $\mathbb T=\mathbb R/2\pi \mathbb Z$. 
We focus on the following two problems. 

{\it Problem 1.} Let $\Omega=\mathbb R$. Let $E\subset\R$ of zero Lebesgue measure satisfy some thick condition. 
We ask which observability inequalities for (\ref{pb}) hold with the observation set $E$, under the requirement that the corresponding observation operator is $L^2$-admissible.

{\it Problem 2.} Let $\Omega=\mathbb T$. Let $F\subset\T$ be a finite set of points. Which observability inequalities for (\ref{pb}) hold  with the observation set $F$, under the requirement that the associated observation operator is $L^2$-admissible.

Before giving full answers to {\it Problem 1} and {\it Problem 2}, we recall some closely related works, which motivate our final answers. 
In general, the observability theory for observations on open sets can be proved by the global Carleman inequalities method of Fursikov-Imanuvilov \cite{FI1996} and the spectral inequality method of Lebeau-Robbiano \cite{LR1995} and the frequency function method of Phung-Wang \cite{PW2013}. 
For Lipschitz and locally star-shaped  bounded domains in $\mathbb R^d$, observability from open subsets was first extended to
measurable observation sets with $d$-dimensional positive Lebesgue measures by Apraiz-Escauriaza \cite{AE2013}  and
Apraiz-Escauriaza-Wang-Zhang \cite{AEWZ2014}  
in the heat equation setting.

The situation on the whole space is different.  On $\mathbb R^d$, a set of positive $d$-dimensional Lebesgue measure  is not sufficient for heat observability. In fact, Egidi-Veseli\'c \cite{EV2018} and Wang-Wang-Zhang-Zhang \cite{WWZZ2019} proved that thickness conditions on the observation set are necessary for  observability. More precisely,  let $u\in C([0,\infty); L^2(\mathbb R^d))$ be the mild solution to 
\begin{align}\label{u89il}
\begin{cases}
u_t-\Delta u=0,\qquad (t,x)\in (0,\infty)\times \mathbb R^d\\
u(0,x)=u_0(x),\qquad x\in\mathbb R^d.
\end{cases}
\end{align}
A set $\omega\subset \mathbb R^d$ is called an observable set of (\ref{u89il}) if for any $T>0$ there exists $C_{T,\omega}>0$ such that 
$$
\|u(T,\cdot)\|^2_{L^2(\mathbb R^d)}\le C_{T,\omega} \int^T_0\int_{\omega}|u(t,x)|^2 dxdt. 
$$
A measurable subset $\omega\subset \mathbb R^d$ is called a thick set if there exist  $\mu>0,L>0$  such that 
\begin{align}\label{hu89r}
|\omega \cap B(x,L)|\ge \mu |B(x,L)|, \qquad \forall x\in \mathbb R^d.    
\end{align}
\noindent\cite{EV2018,WWZZ2019} together proved that given a set $\omega\subset \mathbb R^d$ with positive $d$-dimensional Lebesgue measure, $\omega$
is observable if and only if $\omega$ is thick in the sense of (\ref{hu89r}). The above result was recently extended to Schr\"odinger equations on $\mathbb R^d$ with power potentials by Huang-Wang-Wang  \cite{HWW2022,HWW2025} and to  the Airy equation by Li-Wang \cite{LW2025}. 

More recently, using the new progress on the propagation of smallness in Logunov-Malinnikova \cite{LM2017},  
Burq-Moyano \cite{BM2023} established a new type observability for zero measure sets. 
In fact, given a $d$-dimensional compact manifold $(M,g)$ with a sharp Lipschitz assumption 
and  a set $\omega\subset M$ with zero $d$-dimensional Lebesgue measure, 
\cite{BM2023} proved that for any mild solution to
\begin{align*}
\begin{cases}
\partial_t u-\Delta_{g}u=0,\qquad &(t,x)\in(0,\infty)\times M\\
u(0,x)=u_0(x),\qquad &\forall x\in M,
\end{cases}
\end{align*}
and any $T>0$, there exists $C_{T,\omega}>0$ such that 
$$
\|u(T,\cdot)\|_{L^2(M)}\le C_{T,\omega} \int^T_0\sup_{\omega}|u(t,x)| dt, 
$$
provided that  $\omega$ has a  positive $(d-\delta)$-Hausdorff content for  a $\delta\in (0,1)$ that is a priori close to 0. This result was then generalized to all $\delta\in (0,1)$ when $d\ge 1$  by Green-Le Balc'h-Martin-Orsoni \cite{GBMO2025}  and heat equations driven by a rough divergence elliptic operator when $d=1$  by Zhu \cite{Z2024}, respectively. 

To cover the $\mathbb R^d$ and heat equations with potentials,  as the Hausdorff analogy of thick sets, Le Balc'h-Martin \cite{LM2025} introduced the notion of $(m,d')$-uniformly distributed set. In fact, letting $R>0$, $m>0$ and $0<d'\le d$,  a measurable set $\omega\subset \mathbb R^d$ is said $(m,d')$-uniformly distributed  at scale $R$ if 
\begin{align}\label{4u89}
{\mathcal{H}}^{d'}_{\infty}(\omega\cap B(x,R))\ge m, \mbox{ }\forall  x\in \mathbb R^d.  
\end{align}
Consider the second order parabolic  equation with bounded potentials on $\mathbb R^d$, i.e. 
\begin{align}\label{Poi8}
\begin{cases}
u_t-\frac{1}{\kappa(x)}{\rm div}(g^{-1}(x)\kappa(x)\nabla u) +V(x)u=0,&\qquad (t,x)\in (0,\infty)\times \mathbb R^d\\
u(0,x)=u_0(x),&\qquad x\in\mathbb R^d,
\end{cases}
\end{align}
where $V\in L^{\infty}(\mathbb R^d; \mathbb R)$,  $\kappa(x)$ is a positive bounded Lipschitz function, and $g(x)$ is a symmetric uniformly elliptic matrix with Lipschitz entries. 

Let $R>0$ satisfy $\mathbb R^d=\cup_{k\in\mathbb Z^d} B(k,R)$. Theorem 2.13 of  \cite{LM2025} proved that there exists a $\delta_d\in (0,1)$ such that for any  $(m,d-\delta)$-uniformly distributed set $\omega\subset \mathbb R^d$ with $\delta\in (0,\delta_d)$ at scale $R$, any mild solution to (\ref{Poi8}) and any time $T>0$, there holds 
\begin{align}\label{i9omm}
\|u(T,\cdot)\|^2_{L^2(\mathbb R^d)} \le Ce^{\frac{C}{T}}\sum_{k\in \mathbb Z^d}\int^T_0\sup_{x\in\omega \cap B(k,R)}|u|^2(t,x)dt. 
\end{align}

A further development was obtained by Huang-Wang-Wang \cite{HWW2026}, who introduced log-type Hausdorff
contents adapted to the heat kernel and proved observability inequalities
for heat equations both on bounded domains and on 
$\mathbb R^d$ from sets measured by the log-type Hausdorff contents.  Their main result for $\mathbb R^d$ is as follows: For a class of gauge functions $f$ and sets $\omega$ satisfying 
\begin{align*}
\inf_{x\in \mathbb R^d} c_{f}(\omega \cap Q_{K}(x))\ge \mu |Q_{K}(0)|,
\end{align*}
for some $K>0$, $\mu>0$, where $c_{f}$ denotes the Hausdorff content defined by $f$ and $Q_{K}(x)$ denotes the closed cube in $\mathbb R^d$ centered at $x$ with side length $K$, and for any $T>0$, any mild  solution $u$ to the heat equation on $\mathbb R^d$ with initial data $u_0\in L^2$, there exists $C=C_{K,\mu,\omega,f,T}>0 $ such that 
\begin{align}\label{11omm}
\|u(T,\cdot)\|_{L^2(\mathbb R^d)} \le C \int^T_0\sum_{k\in \mathbb Z^d} \left(\sup_{x\in \omega\cap Q(k)}|u|^2(t,x)\right)^{\frac{1}{2}}dt,
\end{align}
where $\{Q(k)\}$ is a sequence of closed unit cubes in $\mathbb R^d$ centered at $k\in \mathbb Z^d$ such that $\mathbb{R}^d=\cup_{k\in\mathbb Z^d} Q(k)$.

{\it Problem 1} aims to  establish an observability inequality for (\ref{pb}) for the set $E\subset \mathbb R$ of a zero Lebesgue measure. Unlike the above settings of thick sets, we introduce the notion of a Hausdorff-thick set. 

\begin{definition}\label{ht}
Let $E\subset \mathbb R$. We call $E$ Hausdorff-thick, if for some $0<s\le1$, $R>0$ and $m>0$,
\begin{equation}\label{thicko}
    \mathcal H^s_\infty\bigl(E\cap[3jR,3jR+R]\bigr)\ge m,
    \qquad \forall  j\in\Z .
\end{equation}
\end{definition}
 
It is easy to see that sets $\omega\subset \mathbb R$  satisfying (\ref{4u89}) also fulfill  (\ref{thicko}).  

We will establish observability inequalities for (\ref{pb}) with an observation set $E$ satisfying 
(\ref{thicko}). Note that neither (\ref{i9omm}) nor (\ref{11omm}) for the heat equation  makes sense for the Airy equation (\ref{pb}). The somewhat superficial reason is that the right hand sides of (\ref{i9omm}) and (\ref{11omm}) generally are infinite for $u$ solving the Airy equation with $L^2$ data. 

\subsection{Main results on Airy on the real line}
We begin with the answers of {\it Problem 1}.

Let $S(t)$
be the Airy group on $L^2(\R)$, i.e. $u(t)=S(t)u_0$ solves
\begin{align}\label{2pb}
\begin{cases}
\partial_t u(t,x)+\partial_x^3 u(t,x)=0,
& (t,x)\in (0,\infty)\times \mathbb R,\\[2mm]
u(0,x)=u_0(x),
& x\in \mathbb R. 
\end{cases}
\end{align}
For general $u_0\in L^2(\R)$, the value $u(t,x)$ at a fixed time is not a classical point value.  Nevertheless, the Airy equation has a local smoothing: for each fixed $x$, the map
\begin{align*}
u_0\mapsto \Tr_x S(\cdot)u_0
\end{align*}
extends continuously from Schwartz data to a bounded map from $L^2(\R)$ to $L^2_{\rm loc}(\R_t)$, see  Lemma \ref{sing}.  It is therefore natural to ask whether a thick spatial observation set can determine the full $L^2$ norm of the initial data through time traces.

Let $E$ satisfy (\ref{thicko}). 
Set
\begin{align}\label{Ej}
    E_j=E\cap[3Rj,3Rj+R].
\end{align}
We define the observation functional by 
\begin{equation}\label{OE1}
    \OO_{E,T}(u_0)=
    \sum_{j\in\Z}\sup_{x\in E_j}
       \bigl\|\Tr_x S(\cdot)u_0\bigr\|_{L^2(0,T)}^2.
\end{equation}
The main result for {\it Problem 1}  is as follows.

\begin{theorem}\label{th1}
Let $E\subset \mathbb R$ satisfy (\ref{thicko}) for some $R,m>0$, and define $E_j$ by (\ref{Ej}).  For every $T>0$, define $\OO_{E,T}(u_0)$ by (\ref{OE1}). Then there exist $C:=C_{T,E,R,m}>0$ and  $C':=C'_{T,E,R,m}>0$ such that 
\begin{equation}\label{main}
    \|u_0\|_{L^2(\R)}^2\le C\OO_{E,T}(u_0),
    \qquad \forall \mbox{ }u_0\in L^2(\R).
\end{equation}
and 
\begin{equation}\label{admiss}
    \OO_{E,T}(u_0)\le C' \|u_0\|_{L^2(\R)}^2, \qquad  \forall \mbox{ }u_0\in L^2(\mathbb R).
\end{equation}
For smooth data, \eqref{main} says
\begin{align*}
    \|u_0\|_2^2\le C 
    \sum_{j\in\Z}\sup_{x\in E_j}
    \int_0^T |S(t)u_0(x)|^2\dd t .
\end{align*}
\end{theorem}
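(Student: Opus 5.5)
The plan splits the proof into the admissibility bound \eqref{admiss}, which is a local-smoothing statement, and the observability bound \eqref{main}. For \eqref{main} I would use a frequency splitting $u_0=P_{\le\Lambda}u_0+P_{>\Lambda}u_0$ with $\Lambda$ large, chosen in terms of $T,R$. High frequencies are handled by a wave-packet counting argument, low frequencies by a Remez-type inequality on sets of positive Hausdorff content, and the two are glued with \eqref{admiss} and an absorption argument.

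\emph{Admissibility.} By Lemma~\ref{sing} the trace $\Tr_x S(\cdot)u_0$ makes sense. The task is to show that the sum over blocks of the suprema is still bounded by $\|u_0\|_2^2$. Since $E_j\subset I_j:=[3Rj,3Rj+R]$, it suffices to bound $\sum_j\sup_{x\in I_j}\|\Tr_xS(\cdot)u_0\|_{L^2(0,T)}^2$.
\begin{itemize}
\item \emph{Low frequencies.} The function $x\mapsto P_{\le 1}S(t)u_0(x)$ is band limited. A Plancherel–Pólya (Bernstein) inequality then bounds $\sum_j\sup_{I_j}|\cdot|^2$ by $C\|S(t)u_0\|_2^2=C\|u_0\|_2^2$ uniformly in $t$. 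Integrating over $(0,T)$ gives a constant $CT$.
\item \emph{High frequencies.} Decompose dyadically, $|\xi|\sim N\ge1$. I would apply the one-dimensional Sobolev embedding on $I_j$ to the $L^2(0,T)$-valued function $x\mapsto S(\cdot)P_Nu_0(x)$. This gives
\begin{equation*}
\sup_{x\in I_j}\|\Tr_x S(\cdot)P_Nu_0\|_{L^2(0,T)}^2\lesssim \int_{\tilde I_j}\int_0^T\bigl(|P_N u|^2+N^{-2}|\partial_xP_Nu|^2\bigr)\,dt\,dx
\end{equation*}
after rescaling Sobolev to windows of length $N^{-1}$ inside $I_j$. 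Summing in $j$ (the enlarged intervals $\tilde I_j$ have bounded overlap) gives at most $C T\|P_Nu_0\|_2^2$.
\item \emph{Summing in $N$.} The $N$-sum is handled by the Kato identity $\int_{\R}|\partial_x S(t)u_0(x)|^2dt=c\|u_0\|_2^2$. Combined with the almost-orthogonality of the pieces $P_N$ in $L^2_t$ (their time frequencies satisfy $\tau\sim N^3$), this gives the bound for the full sum.
\end{itemize}

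\emph{High-frequency observability.} The guiding heuristic comes from the Kato identity $\int_{\R}|\Tr_xS(t)u_0|^2dt=c\int|\hat u_0(\xi)|^2\xi^{-2}d\xi$. A wave packet at frequency $\xi$ deposits energy $\sim|\hat u_0|^2\xi^{-2}$ at each point it passes. It moves with speed $3\xi^2$, so during $(0,T)$ it crosses about $T\xi^2/(3R)$ blocks. The total observed energy is therefore $\gtrsim (T/R)|\hat u_0|^2$, uniformly in $\xi$, once $\xi^2\gg R/T$.

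To make this rigorous I would avoid genuine wave packets. Instead I would pick $x_j\in E_j$ and lower-bound $\sum_j\|\Tr_{x_j}S(\cdot)v\|_{L^2(0,T)}^2$ for $v=P_{>\Lambda}u_0$ via the time-Fourier representation. Using the change of variables $\tau=\xi^3$, each trace equals $\int e^{i(x_j\xi+t\xi^3)}\hat v(\xi)\,d\xi$. Multiply by a smooth cutoff $\chi(t/T)$ and expand $\sum_j|\cdot|^2$. The diagonal term in $\xi$ gives $\approx \sum_j\int|\hat v|^2\xi^{-2}\cdot(\text{fraction of }T)$. The off-diagonal terms are controlled by integrating by parts in $t$, using that $x_{j+1}-x_j\in[2R,4R]$ so the points are uniformly separated. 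This step, uniform over arbitrary choices $x_j\in E_j$ (no structure on $E$ used), is where I expect the main technical obstacle. The off-diagonal error must be $o(1)\|v\|_2^2$ as $\Lambda\to\infty$, and the fact that the points are only separated, not periodic, has to be handled by a Schur-test bound on the kernel $\sum_j e^{ix_j(\xi-\eta)}$ smoothed at scale $\Lambda^{-2}$.

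\emph{Low frequencies and gluing.} For $w=P_{\le\Lambda}u_0$, the function $x\mapsto S(t)w(x)$ extends to an entire function of exponential type $\Lambda$. The same holds for its $L^2(0,T)$-valued version, via $\int_0^T|\cdot|^2dt$, which is real-analytic in $x$ with a complex extension of exponential type. I would then use a Remez/propagation-of-smallness inequality for such functions on sets of positive $\mathcal H^s_\infty$ content, in the spirit of \cite{LM2025,GBMO2025}. It gives $\int_{I_j}\|\Tr_xS(\cdot)w\|^2_{L^2(0,T)}dx\le C_{\Lambda,m,s,R}\sup_{E_j}\|\Tr_xS(\cdot)w\|^2_{L^2(0,T)}$ plus a tail controlled by neighbouring blocks. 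Summing in $j$, this is compared via Lemma~\ref{sing} and local smoothing with $\int_0^T\|S(t)w\|_{L^2(\cup I_j)}^2dt$. That quantity is in turn $\ge c\|w\|_2^2$ by the open-set (thick-set) observability of \cite{LW2025}.

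Finally, write $\Tr_xS u_0=\Tr_xSw+\Tr_xSv$. The cross terms are bounded by \eqref{admiss} applied to $v$ and $w$, and $\Lambda$ is chosen large enough to absorb them. The quantitative dependence of the Remez constant on $\Lambda$ means the absorption must be organized so that $\Lambda$ depends only on $T,R$ and is fixed before the low-frequency constant is computed.
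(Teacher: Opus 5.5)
\textbf{Main gap: the gluing step cannot close.} Your ingredients are
$\|v\|_2^2\le C_H\|S(\cdot)v\|_{\YY_E(0,T)}^2$, $\|w\|_2^2\le C_L(\Lambda)\|S(\cdot)w\|_{\YY_E(0,T)}^2$ and $\|S(\cdot)g\|_{\YY_E(0,T)}^2\le C'\|g\|_2^2$. Since $\|\cdot\|_{\YY_E(0,T)}$ is a seminorm, splitting $u_0=v+w$ only yields
$\|v\|_2\le C_H^{1/2}\|S(\cdot)u_0\|_{\YY_E(0,T)}+(C_HC')^{1/2}\|w\|_2$ and
$\|w\|_2\le C_L^{1/2}\|S(\cdot)u_0\|_{\YY_E(0,T)}+(C_LC')^{1/2}\|v\|_2$.
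Closing this loop requires $(C_HC')(C_LC')<1$. But testing your own two inequalities on $v$ and on $w$ themselves gives $C_HC'\ge 1$ and $C_LC'\ge 1$. So no choice of $\Lambda$ can work, and $C_L\sim e^{C\Lambda}$ only makes things worse. This is exactly where the parabolic Lebeau--Robbiano scheme uses dissipation, which the unitary Airy group lacks.

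The missing idea is to bound the observation of each frequency piece by the observation of $u_0$ itself, up to an error much smaller than the inverse Remez cost. The paper does this with time filters. Since the space-time Fourier transform of $S(t)f$ lives on $\tau=\xi^3$, one has $\chi(D_t/N^3)\Tr_xS(\cdot)f=\Tr_xS(\cdot)\chi(D^3/N^3)f$ (Lemma~\ref{lefc}). The filter is applied on the interior interval $I_T=[T/3,2T/3]$, with $\chi$ of Gevrey order $\sigma\in(1,3)$. Then the unknown contribution of times outside $(0,T)$ is $O(e^{-cN^{3/\sigma}}\|f\|_2)$ (Lemma~\ref{lem:tail}), and $N^{3/\sigma}\gg N$ beats the cost $e^{C(1+N)}$ of Lemma~\ref{lemHH}. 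This gives Lemmas~\ref{lehfs} and~\ref{lesepl}, after which absorption is immediate. Your sharp cutoff $P_{\le\Lambda}$ would not allow this, because the corresponding time kernel decays only like $|t|^{-1}$.

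\textbf{Further steps that fail as written.}

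\emph{High-frequency lower bound.} The diagonal $\xi=\eta$ of your $TT^*$ kernel carries $\sum_j 1=\infty$. The main term really comes from the distribution $\sum_j e^{ix_j\zeta}$ near $\zeta=0$. For a non-periodic selection $x_j\in E_j$ this distribution has no Dirac-comb structure. So the leading part is a position-dependent operator (the local density of the $x_j$ averaged along the trajectory), not a multiplier plus a Schur-small remainder. Your computation does work for periodic points; that is the Ingham argument behind Proposition~\ref{ziran}. For arbitrary selections the paper instead proves the Hautus estimate of Lemma~\ref{lem:HF-Hautus}. Samples on a regular sequence control functions with Fourier support in a band of width $\sim 1/L$ (Lemma~\ref{nasa}), while $(D^3-r^3)^{-1}$ gains $N^{-1}$ in $H^1$ off that band. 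Miller's criterion (Corollary~\ref{coho}) then gives observability on $I_T$, uniformly over all $(2R,4R)$-regular selections.

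\emph{Admissibility.} Sobolev on windows of length $N^{-1}$ gives $\sup_W\|F\|^2\lesssim N\int_{\tilde W}(\|F\|^2+N^{-2}\|\partial_xF\|^2)$, so you have dropped a factor $N$. Summing in $j$ then yields $NT\|P_Nu_0\|_2^2$, which the pointwise Kato bound does not repair. The paper obtains \eqref{admiss} from Miller's admissibility criterion for regular sequences (Proposition~\ref{psa}), followed by a supremum over selections (Lemma~\ref{balg}).

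\emph{Low frequencies.} A block-by-block Remez inequality needs a good/bad block selection as in Lemma~\ref{scalH}, since the local growth ratios are not uniform in $j$. Also, the identity $\int_{\R}\|\Tr_xS(\cdot)w\|_{L^2(J)}^2\,dx=|J|\,\|w\|_2^2$ makes the appeal to thick-set observability unnecessary.
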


Theorem \ref{th1} applies to every Hausdorff-thick  set in the sense of Definition \ref{ht}. The following result shows that for sets with special structures, the thick condition is not necessary.  
In fact, one can also obtain observability inequalities for any periodic Borel measure, see Proposition \ref{p1} below.

Let $h>0$, let ${\mathcal C}\subset[0,h)$ be a Borel set, and let $\nu$ be a finite
positive Borel measure supported on ${\mathcal C}$, with
\begin{align*}
    0<\nu(\mathcal C)<\infty.
\end{align*}
Set
\begin{align}\label{gy99v}
    E={\mathcal C}+h\mathbb Z,
    \qquad
    \mu_E=\sum_{n\in\mathbb Z}(\tau_{nh})_\#\nu,
\end{align}
where $\tau_{nh}(y)=y+nh$. 
\begin{proposition} \label{p1}
 For every $T>0$, there exist constants
$0<c_{T,h,\nu}\le C_{T,h,\nu}<\infty$ such that
\begin{align}\label{Er8}
    c_{T,h,\nu}\|u_0\|_{L^2(\mathbb R)}^2
    \le
    \int_0^T\int_E |S(t)u_0(x)|^2\,d\mu_E(x)\,dt
    \le
    C_{T,h,\nu}\|u_0\|_{L^2(\mathbb R)}^2.
\end{align}
For general $u_0\in L^2(\mathbb R)$, the observation functional in the middle of (\ref{Er8}) is
understood as the $L^2$-extension from Schwartz data.
\end{proposition}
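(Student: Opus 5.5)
Throughout, $\hat u_0(\xi)$ denotes the Fourier transform, so that $S(t)u_0(x)=\frac1{2\pi}\int e^{ix\xi+it\xi^3}\hat u_0(\xi)\,d\xi$. The idea is to compute the time--space integral exactly by Plancherel in both variables, exploiting the periodic structure of $\mu_E$.

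\textbf{Step 1: reduce to one period.} By periodicity,
\[
\int_E |S(t)u_0|^2\,d\mu_E=\int_{\mathcal C}\sum_{n\in\Z}|S(t)u_0(y+nh)|^2\,d\nu(y),
\]
so it suffices to control, uniformly in $y\in[0,h)$,
\[
Q_y(u_0)=\int_0^T\sum_n|S(t)u_0(y+nh)|^2\,dt,
\]
and then integrate against $\nu$. Both constants in (\ref{Er8}) will be $\nu(\mathcal C)$ times uniform bounds for $Q_y$.

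\textbf{Step 2: Poisson summation in $x$.} Fold the frequency line modulo $2\pi/h$. Write $\xi=\eta+2\pi k/h$ with $\eta\in[0,2\pi/h)$, and set
\[
a_k(\eta,t)=e^{i(\eta+2\pi k/h)y+it(\eta+2\pi k/h)^3}\hat u_0(\eta+2\pi k/h).
\]
Then $S(t)u_0(y+nh)=\frac1{2\pi}\int_0^{2\pi/h}e^{in h\eta}\sum_k a_k(\eta,t)\,d\eta$. Parseval on the circle gives
\[
\sum_n|S(t)u_0(y+nh)|^2=\frac{1}{2\pi h}\int_0^{2\pi/h}\Bigl|\sum_k a_k(\eta,t)\Bigr|^2 d\eta .
\]

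\textbf{Step 3: Ingham in time.} For fixed $\eta$, $t\mapsto\sum_k c_k e^{it\lambda_k}$ is a nonharmonic Fourier series with frequencies $\lambda_k=(\eta+2\pi k/h)^3$. The gaps satisfy
\[
\lambda_{k+1}-\lambda_k\ge \gamma_0>0 \quad\text{uniformly in }\eta\in[0,2\pi/h],
\]
and in fact $\lambda_{k+1}-\lambda_k\to\infty$ as $|k|\to\infty$. A cubic $p(z)=(\eta+z)^3$ with $z$ spaced by $d=2\pi/h$ has difference $3w^2d+3wd^2+d^3$, with $w=\eta+2\pi k/h$. Minimized over $w$ this is at least $d^3/4$. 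Hence the direct Ingham inequality gives, for any $T$,
\[
\int_0^T\Bigl|\sum c_k e^{it\lambda_k}\Bigr|^2dt\le C_T\sum|c_k|^2 .
\]
For the inverse inequality at arbitrary small $T>0$, use the Beurling/Haraux version of Ingham. Since the gaps tend to infinity, for every $T>0$ one has $c_T\sum|c_k|^2\le\int_0^T|\cdot|^2$, possibly after discarding finitely many $k$, which are handled by the finite-dimensional independence argument of Haraux. The constants are uniform in $\eta$ because the gap bounds are uniform: $\lambda_{k+1}-\lambda_k\ge \frac{3}{4}d|k|^2d^2$ for large $|k|$, uniformly in $\eta$. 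The distinct exponentials also depend continuously on $\eta$ over a compact parameter set.

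\textbf{Step 4: conclude.} Integrating in $\eta$ and summing $|c_k|^2=|\hat u_0(\eta+2\pi k/h)|^2$ reconstructs $\|\hat u_0\|_2^2$. This gives $c_T\|u_0\|^2\le Q_y(u_0)\le C_T\|u_0\|^2$, uniformly in $y$. Integrating over $\nu$ yields (\ref{Er8}) for Schwartz data, and density extends it to $L^2$.

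\textbf{Main obstacle.} The hardest point is uniformity in $\eta$ of the inverse Ingham constant for small $T$. The Haraux step for the finitely many low modes needs a compactness argument in $\eta$. It must also handle the limiting case $\eta\to 2\pi/h$, where the frequency set shifts by one index; this is harmless because the family is then just relabeled.
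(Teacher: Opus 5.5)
Your proposal is correct and follows essentially the same route as the paper: fold the frequency line modulo $2\pi/h$, apply Parseval in the lattice index $n$, use a Haraux--Ingham inequality in time for the cubic frequencies $(\theta+2\pi k/h)^3$ with the uniform gap bound $(2\pi/h)^3/4$, then integrate in $\theta$ and against $\nu$ and conclude by density. The uniformity in $\eta$ that you identify as the main obstacle is handled in the paper (Lemma \ref{leuing}) by noting that the Haraux--Ingham constants depend only on $\gamma,\rho,N,|J|$, and these are uniform in $\eta$, so no separate compactness argument is needed.
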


\begin{remark}
Some remarks related to Theorem \ref{th1} and Proposition \ref{p1} are as follows.  
\begin{enumerate}[label=\textup{(\roman*)}]
 \item 
In particular, taking $E=b+h\mathbb Z$, $\mathcal{C}=\{b\}$, ${\nu}=\delta_{b}$ in (\ref{gy99v}), we find that a point sequence observation, which is not thick in any aforementioned sense, is available for the observability inequality.  

\item Note that Li-Wang \cite{LW2025} has shown that  for classical type observability, i.e.,
$$\|u_0\|^2_{L^2}\le C_T\int^T_{0}\int_{E}|u(t,x)|^2dxdt,\qquad \forall \mbox{ }u_0\in L^2,$$
the thick condition, i.e., there exists $l>0$ such that 
$$
\inf_{x\in\mathbb R} |E\cap [x,x+l]|>0,
$$
is a necessary condition. 

\item From the  two facts (i) and (ii), one sees that  there is an essential difference between the classical $L^{2}_{t,x}([0,T]\times E)$  observation and the $\sup_xL^2_t$ type observation in our setting.

\item By tracking all the constants in the proof of Theorem \ref{th1}, one can explicitly write down the dependence of constant $C$ on $T$ in (\ref{main}). 
For example, $C$ in (\ref{main}) can be chosen as 
\begin{align}\label{RotKL}
B\exp\left(BT^{-1}\right),
\end{align}
when $0<T\ll 1$, where $B>0$ depends only on $E,m,R$.  In fact,  taking $\sigma \in (1,2)$, $N=\frac{C}{T}$  in the proof of Proposition \ref{proline}, the high frequency  estimates in Corollary \ref {coho} and the separation estimates  in Lemma \ref{lesepl} and Lemma \ref{lehfs} hold, provided that $T>0$ is very close to 0. Then the low frequency part gives the final observation cost constant $\frac{1}{T}{\exp} (C(1+N){})$, which becomes (\ref{RotKL}) by enlarging $B$.

\item  Wang and Wang
\cite{WW2024} proved that the
Airy  equation can be observed from two segments in spacetime with restrictions on the two slopes.

\item More recently, Li and Wang \cite{LW2022,LW2025} proved
observability inequalities at two time points from half-lines for linear KdV
equations.

\end{enumerate}
\end{remark}

\begin{remark}
Hausdorff measure entered the analysis of partial differential equations first through the study
of nodal and critical sets of elliptic and parabolic equations, see  Hardt-Simon \cite{HS1989}, Lin
\cite{Lin1991}  and recent works  \cite{Ma2009,Logunov20181,Logunov20182,LMNN2021}. 
Recently, Logunov-Malinnikova
\cite{LM2017} proved the quantitative propagation of smallness.  In fact, they proved Remez-type inequalities for
solutions of elliptic equations with Lipschitz coefficients, see also \cite{Z2024,GBMO2025,BM2025,HWW2026}. 
These works motivate the control problems on Hausdorff observation sets. 
\end{remark}

\begin{remark}
The spectral inequality in the setting of Hausdorff measures plays an important role in recent pioneering works \cite{Z2024,GBMO2025,BM2025,HWW2026,MZ2025} on heat equations with observation sets of positive Hausdorff measures. This type of  spectral inequality may be seen as the fractional analogy of the Logvinenko-Sereda uncertainty principle.  Note also that there is another closely  related uncertainty principle, known as the fractional uncertainty principle (FUP), which states that no function can be localized in both spatial and frequency space near a fractional set. 
The FUP has been intensively studied and applied to various control problems, see e.g. \cite{D2019,DJ2018,C2025,DJN2022}. 
\end{remark}


It will be interesting to compare the results on the Airy equation with the linear KdV equation. 
In fact, we have the following results. 
\begin{proposition}\label{p2}
Consider the linear KdV equation
\begin{align}\label{kdv}
    w_t+w_x+w_{xxx}=0, \qquad w(0,x)=w_0(x), \mbox{ }x\in \mathbb R.
\end{align}
\begin{enumerate}
    \item Let $E\subset \mathbb R$ satisfy (\ref{thicko}) for some $R,m>0$, and let $E_j$ be defined in  (\ref{Ej}). 
Then there exist constants $C:=C_{T,E,m,R}$ and $C':=C'_{T,E,m,R}$ such that 
\begin{equation}\label{RUK}
   C \|w_0\|_2^2\le \sum_j\sup_{x\in E_j}
       \|\Tr_{x}w(t,t+x)\|_{L^2(0,T)}^2\le C' \|w_0\|_2^2
\end{equation}
 \item 
Given $T>0$, there is no $C_T>0$ such that
\begin{align*}
    \|w_0\|_2^2\le C_T\int_0^T\sum_{k\in\Z}|w(t,2\pi k)|^2\dd t
\end{align*}
for all smooth compactly supported $w_0$.
\end{enumerate}
\end{proposition}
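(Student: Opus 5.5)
The plan is to reduce part (1) to Theorem \ref{th1} by a Galilean-type change of variables, and to prove part (2) with an explicit Fourier-side counterexample built from the zeros of the KdV dispersion relation.

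\emph{Part (1).} Let $u(t)=S(t)w_0$ be the Airy solution with $u_0=w_0$, and set $w(t,x)=u(t,x-t)$. Then
\begin{align*}
w_t=u_t-u_x,\qquad w_x=u_x,\qquad w_{xxx}=u_{xxx},
\end{align*}
so $w_t+w_x+w_{xxx}=u_t+u_{xxx}=0$, with $w(0)=w_0$. Hence $w(t,t+x)=u(t,x)$. First verify this identity for Schwartz data. Then pass to traces: the map $w_0\mapsto \Tr_x w(\cdot,\cdot+x)$ coincides with $w_0\mapsto \Tr_x S(\cdot)w_0$ on a dense class, and both are continuous into $L^2(0,T)$ by Lemma \ref{sing}. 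Therefore the middle quantity in \eqref{RUK} equals $\OO_{E,T}(w_0)$, and both inequalities in \eqref{RUK} follow directly from \eqref{main} and \eqref{admiss}.

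\emph{Part (2).} Write $w(t,x)=\frac1{2\pi}\int e^{i(x\xi+t\varphi(\xi))}\widehat{w_0}(\xi)\dd\xi$ with $\varphi(\xi)=\xi^3-\xi$. Sampling at $x=2\pi k$ and applying Poisson summation (Parseval for Fourier series on $[0,1)$) gives
\begin{align*}
\sum_{k}|w(t,2\pi k)|^2=c\int_0^1\Bigl|\sum_{n\in\Z}e^{it\varphi(\xi+n)}\widehat{w_0}(\xi+n)\Bigr|^2\dd\xi .
\end{align*}
The key observation is that $\varphi(0)=\varphi(1)=0$. So if $\widehat{w_0}(\xi)=g(\xi)-g(\xi-1)$ with $g$ concentrated in $|\xi|\lesssim\eps$, the $n=0$ and $n=1$ terms nearly cancel. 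Indeed,
\begin{align*}
|e^{it\varphi(\xi)}-e^{it\varphi(\xi+1)}|\le t\,|\varphi(\xi)-\varphi(\xi+1)|\lesssim t|\xi|(1+|\xi|^2).
\end{align*}
Taking $g(\xi)=\eps^{-1/2}\psi(\xi/\eps)$ with $\psi$ Schwartz, the observation term is then $O(T^3\eps^2)$, while $\|w_0\|_2^2\simeq 2\|\psi\|_2^2$. Letting $\eps\to0$ contradicts any bound $\|w_0\|_2^2\le C_T\int_0^T\sum_k|w(t,2\pi k)|^2\dd t$.

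The main technical obstacle is making this rigorous with smooth compactly supported $w_0$. Such data do not have compactly supported Fourier transform, so the Schwartz tails of $\psi$ produce contributions from other $n$ and from cross terms. I would handle this in one of two ways. The first is to control the tails directly: bound $\int_0^1|\sum_n a_n(\xi)|^2\dd\xi$ by a weighted estimate such as $\sum_n(1+n^2)\int|a_n|^2$ together with the rapid decay of $\psi$, so that they contribute $O(\eps^{N})$. The cleaner route, which I would try first, is density. The observation functional is admissible: it is bounded by $C\|w_0\|_2^2$ via the local-smoothing estimate of Lemma \ref{sing} summed over unit cells, as in the admissibility half of Proposition \ref{p1}, transported by the change of variables. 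Hence an inequality valid for $C_c^\infty$ data extends to all Schwartz data, including the $\widehat{w_0}$ above. In that form the computation reduces to the displayed estimate, with the tail error absorbed by the decay of $\psi$.
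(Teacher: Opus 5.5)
Part (1) is the paper's argument: the substitution $u(t,x)=w(t,x+t)$ turns the moving traces into Airy traces, and \eqref{RUK} is then Theorem \ref{th1}.

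For part (2) your route is genuinely different, although your test data are in fact the paper's. Indeed, $\widehat{w_0}=g-g(\cdot-1)$ means $w_0=\check g\,(1-e^{ix})=\eps^{1/2}\check\psi(\eps x)(1-e^{ix})$, which is the paper's $f_N$ with $N=\eps^{-1}$ and $\phi=\check\psi$. The paper stays in physical space and needs no Fourier bookkeeping. Its $f_N$ vanishes exactly on $2\pi\Z$, and the identity $(\partial_x^3+\partial_x)(1-e^{ix})=0$ gives $\|e^{-t(\partial_x^3+\partial_x)}f_N-f_N\|_{H^1}\le C_TN^{-1}$. The discrete Sobolev bound of Lemma \ref{samu} then finishes.

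Your Poisson-summation computation instead isolates the mechanism. The positive result of Proposition \ref{p1} rests on the uniform gap in Lemma \ref{leuing}. For $\varphi(\xi)=\xi^3-\xi$ the gap $\varphi(\xi+n+1)-\varphi(\xi+n)=3(\xi+n)(\xi+n+1)$ collapses at $\xi+n\in\{-1,0\}$, and your cancellation exploits exactly this. It also gives the explicit size $O(T^3\eps^2)$ of the observation.

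The obstacle you worry about is not real. Choose $\check\psi\in C_c^\infty$, so that $w_0$ is smooth and compactly supported from the start. Then run your first option on the cell $[-\tfrac12,\tfrac12)$. There the periodized sum equals $\sum_n g(\xi+n)\bigl(e^{it\varphi(\xi+n)}-e^{it\varphi(\xi+n+1)}\bigr)$. The $n=0$ term is at most $Ct|\xi||g(\xi)|$. The $n\neq0$ terms are $O(\eps^M)$ for every $M$, uniformly in $t$ and $\xi$, by the Schwartz decay of $\psi$.

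So the density route is unnecessary, which is just as well, because your justification of $L^2$ admissibility there is off. Proposition \ref{p1} treats fixed periodic sets for Airy, whereas the change of variables turns the fixed KdV lattice into the moving Airy lattice $2\pi k-t$. Also, summing the uniform one-point bound of Lemma \ref{sing} over $k$ does not produce an $\ell^2$ bound. If you did want density, $H^1$-continuity of the functional would suffice; this follows from Lemma \ref{samu} and unitarity of the KdV flow on $H^1$.
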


\begin{remark}
The positive result (\ref{RUK}) follows simply by the transformation: $u(t,x):=w(t,x+t)$
solves the Airy equation if $w$ solves (\ref{kdv}). 
Moreover, Proposition \ref{p1} can also be extended to (\ref{kdv}) in the same way. However, the negative result in Proposition \ref{p2} implies that, generally,  temporally fixed trace observation fails for the linear KdV due to the transport term $w_x$. 
\end{remark}

\subsection{Main results on the torus }

For {\it Problem 2}, let us consider the equation 
\begin{align}\label{pb3}
\begin{cases}
\partial_t v(t,x)+\partial_x^3 v(t,x)+p(x)v(t,x)=0,
& (t,x)\in (0,T)\times \mathbb T,\\[2mm]
v(0,x)=v_0(x),
& x\in \mathbb T,
\end{cases}
\end{align}
where $v_0\in L^2$ and $p\in L^{\infty}(\mathbb T; \mathbb R)$. 

Let
\begin{align*}
  \Lp=\partial_x^3+p(x),\qquad \Dom(\Lp)=H^3(\T).
\end{align*}
The operator $-\Lp$ generates a $C_0$-group on $L^2(\T)$, denoted
by
\begin{align*}
  U_p(t)=\ee^{-t\Lp},\qquad t\in\R.
\end{align*}

\begin{definition}
Let $F\subset\T$ be finite and non-empty.  We say that $F$ is observable for
$\Lp$ in time $T>0$ if there exists $C_{T,F,p}>0$ such that
\begin{equation}\label{2obs-def}
  \|v_0\|_{L^2(\T)}^2
  \le C_{T,F,p}\int_0^T\sum_{a\in F}|\Tr_a U_p(t)v_0|^2\dd t
\end{equation}
for all $v_0\in L^2(\T)$.  
\end{definition}
The trace $\Tr_a U_p(\cdot)v_0$ is defined in
Definition \ref{def:trace} below by spectral expansion.  For $v_0\in H^3(\T)$ it is the
classical point value $(U_p(t)v_0)(a)$.
 
\begin{theorem}\label{th3}
Let $F\subset\T$ be finite and non-empty. Given $p\in L^{\infty}(\mathbb T; \mathbb R)$, there exist $d\in \mathbb N$ and a  $d$-dimensional space $X_0$ which is invariant under $\Lp$ such that for every $T>0$ and
\begin{align*}
  \mathcal{A}:=\Lp|_{X_0},
  \qquad C_F y:=(y(a))_{a\in F},
\end{align*}
the following statements are
equivalent:
\begin{enumerate}[label=\textup{(\roman*)}]
  \item $F$ is observable in the sense of \eqref{2obs-def}.
  \item If $y\in X_0$ and
  \begin{align*}
    C_F\mathcal{A}^q y=0,
    \qquad q=0,1,\dots,d-1,
  \end{align*}
  then $y=0$.
  \item The matrix
  \begin{align*}
    \begin{pmatrix}
    C_F\\ C_F\mathcal{A}\\ \vdots\\ C_F\mathcal{A}^{d-1}
    \end{pmatrix}:X_0\to(\C^F)^d
  \end{align*}
  has rank $d$.
\end{enumerate}
 \end{theorem}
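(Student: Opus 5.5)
The plan is to combine the spectral structure of $\Lp$ with an Ingham-type inequality for the high modes and a Kalman rank argument for the low modes.

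\emph{Spectral structure.} Since $p$ is real and bounded, $i\Lp = i\partial_x^3 + ip$ is a bounded perturbation of the self-adjoint operator $i\partial_x^3$, which has eigenvalues $k^3$ with $e^{ikx}$. I will not assume the perturbation is self-adjoint. Let $\Pi_N$ be the Riesz projection onto the spectrum inside a large disc $|\lambda|\le N^3$. Standard perturbation theory (Kato, or a Schur complement argument) gives the following for $N$ large. Outside the disc, the spectrum consists of simple eigenvalues $\lambda_k = -i k^3 + \langle p\rangle + O(1/|k|)$ for $|k|>N$. The spectral gaps $|\lambda_k-\lambda_{k'}|\gtrsim k^2+k'^2$ grow. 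The eigenfunctions satisfy $\phi_k = e^{ikx}+O(1/|k|)$ in $L^\infty$, and together with a basis of $X_0:=\Ran\Pi_N$ they form a Riesz basis of $L^2(\T)$. I take $d=\dim X_0$. This subspace is invariant and finite-dimensional, and it can be chosen independently of $F$ and $T$. The trace of Definition \ref{def:trace} is then $\Tr_a U_p(t)v_0=\sum_k c_k e^{-\lambda_k t}\phi_k(a)+(\text{finite part})$.

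\emph{(ii)$\Leftrightarrow$(iii) and (i)$\Rightarrow$(ii).} The equivalence of (ii) and (iii) is linear algebra. For (i)$\Rightarrow$(ii), I argue by contraposition. Suppose (ii) fails, and take $0\ne y\in X_0$ with $C_F\mathcal A^q y=0$ for all $q<d$. By Cayley–Hamilton, $C_F\mathcal A^q y=0$ for all $q$. Hence $C_F e^{-t\mathcal A}y=0$ for all $t$, so $y$ violates \eqref{2obs-def}.

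\emph{(ii)$\Rightarrow$(i).} This is the main step. I split $v_0=y+w$ with $y=\Pi_N v_0$. For the high part, the gap growth permits an Ingham/Haraux inequality on any interval $(0,T)$ (Beurling density is zero), after enlarging $N$ depending on $T$. Applying it at a single point $a\in F$ gives
\begin{align*}
\int_0^T|\Tr_a U_p(t)w|^2dt\simeq\sum_{|k|>N}|c_k|^2|\phi_k(a)|^2 .
\end{align*}
Since $|\phi_k(a)|\ge 1-O(1/N)\ge 1/2$, this controls $\|w\|^2$. Condition (ii) ensures that the finite-dimensional observation $y\mapsto C_Fe^{-t\mathcal A}y$ on $(0,T)$ is injective, hence bounded below by compactness.

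To combine the two parts, I would use the Haraux-type gluing argument. Alternatively, I would use the uniqueness–compactness argument: suppose $\|v_n\|=1$ and the observation tends to $0$. The high-frequency inequality applied to the dominant components forces the high part to converge strongly. The limit $v$ then has zero observation. The set of such $v$ is finite-dimensional and invariant under $U_p$ (by time-shift), so it is a sum of generalized eigenspaces. Any such space built from high modes has $\phi_k(a)\ne0$ and is therefore excluded. The remaining part lies in $X_0$ and is killed by (ii).

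\emph{Main obstacle.} I expect the hardest part to be the precise high-frequency spectral analysis with merely $L^\infty$ potential. This means proving the Riesz basis property, the simple eigenvalues with growing gaps, and the uniform lower bound on $|\phi_k(a)|$. It also means ensuring that the threshold $N$, and hence $d$, can be chosen independently of $T$. For that I would rely on Haraux's extension of Ingham's inequality, which removes finitely many frequencies for arbitrary $T$.
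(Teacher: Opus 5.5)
Your proposal is correct. Its spectral input is the same as the paper's: Proposition \ref{prospec} proves exactly the facts you call standard. These are the rank-one Riesz projections near $-\ii n^3$ with $\|P_n-P_n^0\|\le Cn^{-2}$, a Bari--Markus argument giving $L^2(\T)=X_0\dotplus\overline{\spn}\{\phi_n\}$, and the bound $|\phi_n(a)|\ge\frac12(2\pi)^{-1/2}$.

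The genuine difference is how you glue the finite block to the tail in (ii)$\Rightarrow$(i).

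\emph{The paper's route.} The tail Riesz bounds come from a Gram-matrix/Schur-test argument (Proposition \ref{cuin}). This handles the complex exponents $e^{-\lambda_n t}$ and the vector coefficients $C_F\phi_n$ without any complex Ingham or Haraux theorem. For the given $T$, the paper then enlarges $X_0$ to $X_2=X_0\oplus\spn\{\phi_n:N_0\le|n|\le N_2\}$. It proves injectivity of the observation on $X_2$ from linear independence of exponential polynomials (Lemma \ref{leexpin}) together with $\phi_n(a)\ne0$. Finally, Proposition \ref{profiniext} shows that a finite space of exponential polynomials meets the closed span of the tail only in $0$: difference quotients on an interior interval give $\sum|\lambda_n|^{2r}|c_n|^2<\infty$, and the annihilating operator $Q(d/dt)$ then forces $c_n=0$.

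\emph{Your route.} Compactness--uniqueness replaces that step with the abstract fact that the unobservable space is finite-dimensional and $\Lp$-invariant, so it contains an eigenvector if it is non-zero. This is shorter and gives the criterion directly in Hautus form: no eigenvector of $\mathcal A$ lies in $\ker C_F$. The paper's route works directly on the coefficients and needs no time-shift invariance.

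\emph{A step you should state correctly.} The set
$\mathcal N_T=\{v:\Tr_aU_p(\cdot)v=0\text{ on }(0,T),\ a\in F\}$
is not itself invariant under $U_p(s)$; you only have $U_p(s)\mathcal N_T\subset\mathcal N_{T-s}$. The standard fix goes as follows.
\begin{enumerate}
\item The weak observability estimate holds with one common threshold for all $T'\in[T/2,T]$.
\item Hence $T'\mapsto\dim\mathcal N_{T'}$ is a bounded, nonincreasing, integer-valued function, so it is constant on some interval $[T_1,T_2]$.
\item Since the spaces $\mathcal N_{T'}$ are nested, they coincide on $[T_1,T_2]$, so $\mathcal N_{T_2}$ is invariant under $U_p(s)$ for small $s>0$. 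Being finite-dimensional, it is then $\Lp$-invariant.
\item The eigenvector argument gives $\mathcal N_{T_2}=\{0\}$, and therefore $\mathcal N_T\subset\mathcal N_{T_2}=\{0\}$.
\end{enumerate}
With this in place, $X_0$ and $d$ are independent of $T$ in your argument, just as in the paper. The $T$-dependent Ingham threshold enters only the weak observability estimate, and the intermediate modes are excluded by $\phi_k(a)\ne0$. This resolves the "main obstacle" you flagged without needing Haraux's theorem for complex frequencies.
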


Note that these conditions do not depend on the particular $T>0$.   

In the following two particular cases, we have a more explicit  observability result.   

\begin{proposition}   \label{c1}
\begin{enumerate} [label=\textup{(\roman*)}]
\item Consider the periodic Airy equation
\begin{align*}
    u_t+u_{xxx}=0, \mbox{ }(t,x)\in\mathbb R\times \mathbb T,
    \qquad
    u(0,x)=u_0(x),\mbox{ }x\in\mathbb T.
\end{align*}
For every $T>0$ and every $x_0\in\T$, there exist  $C_T>0$ and $C'_T>0$ such that
\begin{align*}
 C_T    \|u_0\|_{L^2(\T)}^2
    \le \|\Tr_{x_0}u\|_{L^2(0,T)}^2 \le C'_{T}\|u_0\|^2_{L^2(\mathbb T)}.
\end{align*}
\item
Consider the periodic linear KdV equation,
\begin{align*}
    u_t+u_x+u_{xxx}=0, 
    \mbox{ } (t,x)\in \mathbb R\times \mathbb T,
    \qquad
     u(0,x)=u_0(x),\mbox{ }x\in\mathbb T.
\end{align*}
Let $F\subset\T$ contain at least three distinct points.  Then for every $T>0$, there exist $C_{T,F},C'_{T,F}>0$ so that 
\begin{align}
  C_{T,F}  \|u_0\|_{L^2(\T)}^2
    &\le \sup_{x\in F}\|\Tr_xu\|_{L^2(0,T)}^2 \label{OIujL}\\
    \sup_{x\in F}\|\Tr_xu\|_{L^2(0,T)}^2 & \le C'_{T,F}\|u_0\|^2_{L^2(\mathbb T)}.\nonumber
\end{align}
 \item 
Consider the periodic linear KdV equation, if $F\subset \mathbb T$ contains 
 two or fewer points, then for any $T>0$ and any constant $C_{T,F}>0$, there exists $u_0$
so that (\ref{OIujL}) fails. 
\end{enumerate}
\end{proposition}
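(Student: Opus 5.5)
The plan is to expand solutions in the Fourier basis $e^{ikx}$, which diagonalizes both operators. This turns each trace into a nonharmonic Fourier series in $t$, and Ingham-type inequalities then give both directions of the estimates. For the periodic Airy equation, $u_0=\sum_k c_k e^{ikx}$ gives
\begin{align*}
\Tr_{x_0}u(t)=\sum_{k\in\Z} c_k e^{ikx_0}e^{\ii k^3 t}.
\end{align*}
The exponents $\lambda_k=k^3$ are pairwise distinct. Their consecutive gaps $\lambda_{k+1}-\lambda_k=3k^2+3k+1$ are at least $1$ and tend to $\infty$ as $|k|\to\infty$. By Ingham's inequality in the Haraux/Kahane form (uniform gap $\ge 1$, asymptotic gap $=\infty$), for every $T>0$ we get
\begin{align*}
\int_0^T\Bigl|\sum_k a_k e^{\ii\lambda_k t}\Bigr|^2dt\asymp_T \sum_k|a_k|^2 .
\end{align*}
Apply this with $a_k=c_ke^{ikx_0}$, so that $|a_k|=|c_k|$ and both inequalities of (i) follow by Parseval. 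The upper bound only needs the direct Ingham estimate, which holds for any $T$ under a uniform gap. The lower bound for arbitrarily small $T$ is the delicate point. I would argue it as follows: apply Ingham on the tail $|k|\ge K$, where the gap exceeds $2\pi/T$. Then add back the finitely many low modes by Haraux's compactness/induction argument, which removes one exponential at a time. Alternatively, one could invoke Theorem \ref{th3} with $p=0$: there $X_0$ is spanned by the low modes, the Kalman condition is trivial since each eigenvalue is simple and $|e^{ikx_0}|=1\neq0$, and Theorem \ref{th3} holds for every $T$.

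For the linear KdV equation in (ii), the eigenvalues become $\lambda_k=k^3-k=(k-1)k(k+1)$. If $\lambda_k=\lambda_j$ with $k\neq j$, then $k^2+kj+j^2=1$. Its only integer solutions with $k\ne j$ come from $\{0,\pm1\}$. Hence the only multiple eigenvalue is $\lambda=0$, with eigenspace $\spn\{1,e^{ix},e^{-ix}\}$. All other eigenvalues are simple, and the gaps still tend to infinity. The trace at $a\in F$ is
\begin{align*}
(c_0+c_1e^{\ii a}+c_{-1}e^{-\ii a})+\sum_{|k|\ge2}c_ke^{\ii ka}e^{\ii\lambda_kt}.
\end{align*}
Applying the same Ingham–Haraux argument to the family $\{0\}\cup\{\lambda_k\}_{|k|\ge2}$ at a single point $a$ controls $|c_0+c_1e^{\ii a}+c_{-1}e^{-\ii a}|^2+\sum_{|k|\ge2}|c_k|^2$ from above and below. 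Since $F$ is finite, $\sup_{a\in F}$ and $\sum_{a\in F}$ are comparable up to the factor $|F|$. It therefore remains to show that $(c_0,c_1,c_{-1})\mapsto(c_0+c_1e^{\ii a}+c_{-1}e^{-\ii a})_{a\in F}$ is injective when $F$ has three distinct points. Multiplying by $e^{\ii a}$ turns the condition into the vanishing of the quadratic polynomial $c_{-1}+c_0z+c_1z^2$ at three distinct points $z=e^{\ii a}$ on the unit circle. This forces $c=0$, and the injectivity is exactly the $3\times3$ Vandermonde condition. This yields \eqref{OIujL} and the admissibility bound.

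For (iii), let $F$ have at most two points. A nonzero triple $(c_0,c_1,c_{-1})$ solving at most two homogeneous linear equations always exists. If real data are required, instead take $\alpha+\beta\cos x+\gamma\sin x$ vanishing on $F$: this is three unknowns and at most two equations. The resulting $u_0$ lies in the kernel of $\partial_x+\partial_x^3$, so $u(t)=u_0$ is stationary. Its trace vanishes on $F$ for all $t$, while $\|u_0\|_{L^2}>0$, so \eqref{OIujL} fails for every constant.

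I expect the main obstacle to be the lower Ingham bound for arbitrarily small $T$. For this I would rely on the asymptotic-gap version of Ingham's theorem combined with Haraux's argument for adding finitely many frequencies. Everything else reduces to the eigenvalue multiplicity computation and a Vandermonde determinant.
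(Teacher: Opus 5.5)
Your proposal is correct. Parts (i) and (iii) match the paper: Lemma \ref{thtouai} applies the Haraux--Ingham theorem (Proposition \ref{yugg}) directly to the frequencies $k^3$, and Lemma \ref{prkdvfai} uses the explicit stationary datum $e^{-\ii x}(e^{\ii x}-z_1)(e^{\ii x}-z_2)$.

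For (ii) you take a genuinely different and more elementary route.

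\textbf{The paper's route.} Lemma \ref{lemkdvth} splits each point trace into two pieces. The high tail is controlled by the Gram-matrix/Schur-test Riesz estimate of Proposition \ref{cuin}. The finite low block $g_j$ is glued back using the abstract direct-sum result, Proposition \ref{profiniext}. The paper then uses linear independence of exponential polynomials (Lemma \ref{leexpin}) together with the Vandermonde matrix to show that $(c_n)_{|n|<\tilde N}\mapsto(g_1,g_2,g_3)$ is injective, and concludes by finite-dimensionality.

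\textbf{Your route.} At a fixed point $a$, the three degenerate modes collapse into a single coefficient $b_0(a)=c_0+c_1e^{\ii a}+c_{-1}e^{-\ii a}$ of the zero frequency. The trace is then a nonharmonic series over the \emph{distinct} frequencies $\{0\}\cup\{k^3-k\}_{|k|\ge2}$. These are separated by at least $6$, and their gaps tend to infinity. Proposition \ref{yugg} therefore gives, for every $T>0$ and with constants independent of $a$,
\[
\|\Tr_a u\|_{L^2(0,T)}^2\asymp_T |b_0(a)|^2+\sum_{|k|\ge2}|c_k|^2 .
\]
The Vandermonde step then recovers $(c_{-1},c_0,c_1)$ from three points. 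This bypasses Propositions \ref{cuin}, \ref{profiniext} and Lemma \ref{leexpin} entirely.

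\textbf{What each buys.} Your argument is shorter and fully elementary for this constant-coefficient case. The paper's heavier machinery is the template for the potential case $\partial_x^3+p$. There the eigenvalues are complex, eigenfunctions form only a Riesz basis, and Jordan blocks may occur, so the real-frequency Ingham theorem cannot be applied directly.

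\textbf{One small correction.} The statement does not assume $F$ finite, so your comparison of $\sup_{a\in F}$ with $\sum_{a\in F}$ up to a factor $|F|$ is not available in general. It is also unnecessary. For the lower bound, use $\sup_{a\in F}\|\Tr_a u\|_{L^2(0,T)}^2\ge\frac13\sum_{j=1}^3\|\Tr_{a_j} u\|_{L^2(0,T)}^2$ for three chosen points $a_1,a_2,a_3\in F$. The upper bound follows from the $a$-independence of your single-point constant.
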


\begin{remark} 
Some remarks on Theorem \ref{th3} and Proposition \ref{c1} are listed as follows. 
\begin{enumerate}[label=\textup{(\roman*)}]
\item  \cite{CKP2025} proved pointwise controllability  of the linearized Gear-Grimshaw system under some assumptions on the coefficients in the system. This result is  close to (i) of Proposition   \ref{c1}. 
\item One point observation type results also hold for the free heat equation. 
For the free heat equation on $x\in (0,1)$ with Dirichlet boundary condition,  Dolecki \cite{D1973} and Samb \cite{S2015} proved that, given any $x_0\in (0,1)$ being an algebraic number of order strictly larger than 1 and any $T>0$ and any  weak solution $u(t,x)$ with initial data $u_0\in L^2(0,1)$, there exists some $C>0$ such that 
\begin{align*}
\|u(T,\cdot)\|^2_{L^2(0,1)}\le Ce^{\frac{C}{T}}\int^T_0 |u(t,x_0)|^2 dt.
\end{align*} 
 
 \item 
Note also that [Prop. 3.2, \cite{GBMO2025}] proved that in the spatial domain $\Omega=(0,1)^{n}$ with $n\ge 2$, for any $T>0$ and any finite subset $\omega$ of $\Omega$, the heat equation on $\Omega$ is not observable from $\omega$ at time $T$. 

\item In the Lebesgue measure setting, Niu-Wang-Xiang \cite{NWX2025}  recently established the local exact controllability of the KdV equation on the torus  around equilibrium states, where both the spatial control region and the temporal control region are sets of positive measure.

 \item One may also compare the Airy case with the Schr\"odinger equation. 
On compact manifolds and tori, observability from open sets is
closely connected with semiclassical measures and propagation of mass, see Jaffard \cite{Je} and 
Anantharaman--Maci\`a \cite{AM2014}. The Schr\"odinger equation on tori with rough potentials was treated in
dimension two by Bourgain-Burq-Zworski \cite{BBZ2013}. More recently, Haak-Jaming-Wang-Wang \cite{HJWW2026} proved that solutions of the toroidal Schr\"odinger equation can be observed from suitably curved space-time trajectories, thus of zero Lebesgue measure.
\end{enumerate}
\end{remark} 

\begin{remark}\label{Rty0}
The results of Proposition \ref{c1} can be heuristically explained  from the frequency modes of the Airy equations and the linear KdV equations. In fact, for the free periodic Airy
equation, since the frequencies $n^3$ are monotone in $n$ and have gaps tending to infinity,  the Haraux--Ingham
inequality implies that one spatial point observes all Fourier modes in any positive
time.  For the periodic linear KdV equation,  the frequencies $n^3-n$ have a
three-dimensional stationary eigenspace corresponding to $n=-1,0,1$.  This finite
stationary obstruction explains why one or two points are not enough and why three
distinct points suffice.
\end{remark}

If the observation set $F$ has an accumulation point, we have the following result. 
\begin{corollary}\label{c2}
Let $p\in L^{\infty}(\mathbb T;\mathbb R)$ and  $E\subset\T$ have an accumulation point. There exists $k_0\in \mathbb N$ depending only on $\|p\|_{L^{\infty}}$ such that if 
 $p\in C^{k_0}(\T;\R)$, then for every $T>0$ there exists $C_{T,E,p}>0$ such that
\begin{align}\label{hyjj}
  \|v_0\|_{L^2(\T)}^2
  \le
  C_{T,E,p}\sup_{a\in E}\|\Tr_aU_p(\cdot)v_0\|_{L^2(0,T)}^2
\end{align}
for all $v_0\in L^2(\T)$.  Moreover, there exists $C'_{T,E,p}>0$ such that 
\begin{align*}
  \sup_{a\in E}\|\Tr_aU_p(\cdot)v_0\|_{L^2(0,T)}
  \le C'_{T,E,p}\|v_0\|_{L^2(\T)} 
\end{align*}
\end{corollary}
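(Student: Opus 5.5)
Since $E$ has an accumulation point, pick a finite subset $F\subset E$ with $N$ points inside a small neighbourhood of the accumulation point $a_*$; the number $N$ will be fixed later. Then
\[
\sup_{a\in E}\|\Tr_a U_p(\cdot)v_0\|_{L^2(0,T)}^2\ge \frac1N\sum_{a\in F}\|\Tr_aU_p(\cdot)v_0\|_{L^2(0,T)}^2 .
\]
So \eqref{hyjj} follows from Theorem \ref{th3} once the Kalman rank condition (iii) holds for $F$ and the invariant subspace $X_0$ attached to $F$ and $p$. The upper bound needs no choice of $F$. It follows from the uniform-in-$a$ admissibility $\|\Tr_aU_p(\cdot)v_0\|_{L^2(0,T)}\le C\|v_0\|_2$, which is the bound used to define the trace in Definition \ref{def:trace}: the spectral expansion, Ingham/Haraux on the large-gap part of the spectrum, and a finite-dimensional remainder. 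All these constants are independent of $a$.

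\textbf{The rank condition.} I expect $X_0$ to be the span of the (generalized) eigenfunctions of $\Lp$ belonging to the finitely many eigenvalues that are not separated well enough for Ingham. Eigenvalues of $\Lp$ are asymptotic to $-\ii n^3$ plus a bounded shift, so only finitely many clusters fail the gap condition. The dimension $d$ of this space should be bounded by a quantity depending only on $\|p\|_{L^\infty}$; this is the step that fixes $k_0$. For condition (ii) it suffices to show that if $y\in X_0$ and $C_F\mathcal A^qy=0$ for all $q$, then $y=0$. Decompose $y$ along the generalized eigenspaces of $\mathcal A$. The vanishing of $C_F\mathcal A^q y$ for all $q$ means that every component $z$ of $y$ (and, through the Jordan chains, the eigenvector parts) vanishes at each point of $F$. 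The eigenfunctions solve the third-order ODE $z'''+pz=\lambda z$. If $p\in C^{k_0}$, then $z\in C^{k_0+3}$.

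\textbf{Main obstacle.} The difficulty is to turn ``$z$ vanishes at $N$ points clustered near $a_*$'' into $z\equiv0$ with a \emph{finite} $N$, since $E$ may only supply a sequence converging to $a_*$. I would use the entire sequence $a_k\to a_*$ from $E$ instead of a fixed $F$, together with a Rolle-type argument. A $C^{m}$ function vanishing on a set accumulating at $a_*$ satisfies $z(a_*)=z'(a_*)=\dots=z^{(m)}(a_*)=0$, because iterated Rolle gives zeros of $z^{(j)}$ accumulating at $a_*$. Taking $m\ge2$, the ODE's uniqueness for the Cauchy problem at $a_*$ forces $z\equiv0$. This needs only $z\in C^2$, which holds for $p\in L^\infty$. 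The role of $k_0$ then enters through the Jordan structure: one must also handle generalized eigenvectors and linear combinations within a cluster. On a finite-dimensional space, taking $N$ large enough, depending on $d$, makes the finitely many linear conditions already independent, because the map $y\mapsto(y(a_k))_{k\le N}$ on the finite-dimensional $X_0$ is injective for large $N$ by a compactness/descending-kernel argument.

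\textbf{Conclusion.} With this $F$, Theorem \ref{th3} gives observability, and the reduction at the start gives \eqref{hyjj}. I would state the dependence of $k_0$ on $\|p\|_\infty$ through the bound on $d$, as needed for the regularity used in Theorem \ref{th3}'s construction of $X_0$.
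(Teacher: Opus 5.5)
\textbf{Gap in the final step.} Several parts of your proposal are sound:
\begin{itemize}
\item the reduction to Theorem \ref{th3}, since the supremum over $E$ dominates the average over a finite $F\subset E$;
\item the upper bound from the uniform trace estimate \eqref{KDS};
\item the observation that $C_F\mathcal A^qy=0$ for all $q$ forces the top eigenvector of each Jordan chain of $y$ to vanish on $F$;
\item the Rolle/Taylor plus Cauchy-uniqueness argument showing that a solution of $z'''+pz=\mu z$ cannot vanish on a set accumulating at $a_*$.
\end{itemize}

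The gap is your claim that $y\mapsto (y(a_k))_{k\le N}$ is injective on all of $X_0$ for large $N$. A descending-kernel argument for this map needs the hypothesis that no nonzero $y\in X_0$ vanishes at every $a_k$. You have verified this only for eigenfunctions. A general element of $X_0$ is a sum of generalized eigenvectors for several eigenvalues and satisfies no single third-order equation. So $y(a_*)=y'(a_*)=y''(a_*)=0$ yields nothing. You note that ``generalized eigenvectors and linear combinations'' must be handled, but you never handle them, and the regularity $z\in C^{k_0+3}$ you introduce is never used.

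This is exactly what the paper's Lemma \ref{lem:zeros} supplies. Let $y$ lie in an invariant space whose minimal polynomial has degree $m$. Then the vector $(L_p^jy,(L_p^jy)',(L_p^jy)'')_{j<m}$ solves a $3m$-dimensional first-order system. The assumption $p\in C^{3m-4}$ gives $y\in C^{3m-1}$. Accumulating zeros then force $y^{(\ell)}(a_*)=0$ for $\ell\le 3m-1$, so the whole vector vanishes at $a_*$ and $y\equiv0$. Lemma \ref{lem:finite-sampling} then gives a finite $F_0\subset E$ on which evaluation is injective on $X_0$, and visibility follows by letting $t\to 0$. The integer $k_0$ comes from bounding $m\le\dim X_0$ in terms of $\|p\|_{L^\infty}$.

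\textbf{A repair along your own line.} You can close the gap without Lemma \ref{lem:zeros} by applying the descending-kernel argument to the Kalman map instead of plain evaluation. For $G\subset E$ let $K_G=\{y\in X_0: C_G\mathcal A^qy=0,\ 0\le q\le d-1\}$.
\begin{itemize}
\item By Cayley--Hamilton, $K_E$ is $\mathcal A$-invariant. So if $K_E\ne\{0\}$, it contains an eigenvector of $\mathcal A$, which vanishes on $E$ and is excluded by your eigenfunction argument. Hence $K_E=\{0\}$.
\item Choose a finite $F\subset E$ with $\dim K_F$ minimal. Then $K_F=K_{F\cup\{a\}}\subset K_{\{a\}}$ for every $a\in E$, so $K_F\subset K_E=\{0\}$. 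This is condition (ii) of Theorem \ref{th3}.
\end{itemize}

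This repair uses only $z\in H^3(\T)\subset C^2$ and uniqueness for an ODE with $L^\infty$ coefficients. So it appears to need no assumption $p\in C^{k_0}$ at all. Visibility only requires that the whole orbit $e^{-t\mathcal A}y$ be seen on $E$. The paper instead proves the stronger statement that no single element of $X_0$ vanishes on $E$, and that is what costs regularity.
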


\begin{remark}
Since every set of positive Hausdorff dimension has an accumulation point, Corollary \ref{c2} holds for  $E\subset\T$ of positive Hausdorff dimension.  
\end{remark}

\begin{remark}
The integer $k_0$ in Corollary \ref{c2} can be explicitly determined, see Proposition \ref{usdorff}. 
\end{remark}

\subsection{Main ideas} 

In this subsection, we explain the main ideas in the proof of Theorem \ref{th1}
and Theorem \ref{th3}. 

Let us first recall the mechanism behind the known results on parabolic equations. The Hausdorff-type observation results
for heat equations were obtained by combining quantitative propagation of smallness
of Logunov--Malinnikova type with Lebeau--Robbiano iteration.  In particular, the
works on Hausdorff or logarithmic Hausdorff contents reduce the problem to a
low frequency spectral inequality. Then the high frequency part is eliminated by the
parabolic decay factor $e^{-t|\xi|^2}$ or its analogue.

This strategy is not available for the Airy equation.  There are two essential
obstructions.  First, since the Airy flow is unitary on $L^2$,  the high frequency part does
not decay. Hence, any type of Hausdorff spectral inequality for low frequencies alone cannot imply
observability.  Second, for general $L^2$ initial data the value $u(t,x)$ at a fixed time is
not an intrinsic spatially pointwise quantity.  Thus the pointwise supremum  expressions used in
parabolic Hausdorff observation, such as
\begin{align*}
        \int_0^T \sup_{x\in E} |u(t,x)|^2\,dt,
\end{align*}
do not have a direct meaning for Airy equations with $L^2$ data.  The correct
replacement is a time-trace quantity:
\begin{align*}
        f \longmapsto \Tr_x S(\cdot)f \in L^2_{\mathrm{loc}}(\mathbb R_t),
\end{align*}
which is well-defined by Airy local smoothing, see Lemma  \ref{sing}. 

The first new point of this paper is the formulation of an admissible
time-trace observation norm.  Let  $E$ be Hausdorff-thick  in the sense of 
Definition \ref{ht} and   $E_j$
be defined by (\ref{Ej}). 
We observe the solution through the block supremum trace norm
\begin{align*}
        \|S(\cdot)f\|_{\YY_E(0,T)}^2
        :=
        \sum_{j\in\mathbb Z}
        \sup_{x\in E_j}
        \|\Tr_x S(\cdot)f\|_{L^2(0,T)}^2 .
\end{align*}
This norm is neither a simple variant of the usual $L^2_{t,x}$ observation nor the heat Hausdorff observation (see (\ref{i9omm}) and (\ref{11omm})).    It is intrinsic for $L^2$ initial data and exploits the dispersive smoothing effects.  

The second point is the low-frequency Hausdorff propagation estimate in a
Hilbert-valued form. In fact, we prove
\begin{align*}
        \int_{\mathbb R}
        \|F(x)\|_H^2\,dx
        \leq
        C\exp\{C(1+N){}\}
        \sum_{j\in\mathbb Z}
        \sup_{x\in E_j}\|F(x)\|_H^2 ,
\end{align*}
where $H$ is an arbitrary Hilbert space, and the Fourier transform of $F$ in the $x$ variable  is supported in $[-N,N]$.  This is
derived from 1D Remez-type estimates for holomorphic functions, see Lemma \ref{hkey}, whose key part is the $e^{C_*B}$ cost in (\ref{RtkkL}). The Hilbert form enables us to treat the block supremum
trace norm $\YY_{E}(0,T)$. 

The third new point is the treatment of high frequencies without parabolic decay.
Instead of canceling the high frequency part by heat decay, we use the regular sampling sequence
observation and deal with high frequencies directly.  For a
regular sampling sequence $\Lambda=\{x_j\}_{j\in\mathbb Z}$, see Definition \ref{regular} below, the sampling operator
\begin{align*}
        C_\Lambda f=(f(x_j))_{j\in\mathbb Z}
\end{align*}
satisfies the resolvent estimates on the high frequency
region,   
\begin{align*}
        \|g\|_{L^2}^2
        \leq
        C\|C_\Lambda g\|_{\ell^2}^2
        +
        \frac{C}{N^2}\|(i\partial^3_x+\lambda)g\|_{L^2}^2,
        \qquad \lambda\in\mathbb R, \mbox{ } \supp \widehat{g}\subset \{\xi: |\xi|\ge N\}.
\end{align*}
Miller's resolvent criterion for unitary groups then gives exact observability of the
high frequency part on every prescribed time interval of length $T>0$, once $N$ is chosen
sufficiently large compared with $T^{-1}$.  This is the dispersive substitute for the
high frequency damping in the Lebeau--Robbiano method.

The fourth new idea is the use of Gevrey time-frequency filters to
separate low and high spatial frequencies from the observed full trace.  For the solution to free
Airy equations, its spacetime Fourier transform is supported in the curve $\tau=\xi^3$.  Hence
spatial cutoffs in $\xi$ can be realized as time multipliers on the traces.  The difficulty
is that the observation is only known on $(0,T)$, whereas a Fourier multiplier in time is
nonlocal.  We resolve this by applying compactly supported Gevrey cutoffs on an
interior interval $I_T\Subset(0,T)$.  Their kernels satisfy subexponential decay
\begin{align*}
        |K(t)|\leq C e^{-c|t|^{1/\sigma}},
        \qquad 1<\sigma<3 .
\end{align*}
Consequently the unknown exterior time tails contribute an error of size
$e^{-cN^{3/\sigma}}\|f\|_{L^2}$, which can be absorbed because
\begin{align*}
       N^{3/\sigma} \gg N  
        \qquad (1<\sigma<3),
\end{align*}
where $e^{C N}$ is the cost of the low frequency Hausdorff propagation estimate.
This comparison is the point at which the cubic Airy dispersion and the choice of
Gevrey order enter in an essential way. Assembling the low and high frequency parts together yields Theorem \ref{th1}.

For a bounded potential on the torus, the operator
\begin{align*}
        L_p=\partial_x^3+p(x), \qquad D(L_p)=H^3(\mathbb T),
\end{align*}
is generally non-normal, and one cannot assume a global orthonormal or simple
eigenfunction expansion.  The main idea is to push the non-normal difficulties into a
finite-dimensional subspace.  The cubic gaps explained in Remark \ref{Rty0} dominate the bounded potential at high
frequency.  As a result, the high frequency Riesz spectral projections of $L_p$ are
one dimensional, close to the Fourier projections in the square summation sense, and thus provide  a Riesz
decomposition of high frequency spaces.  All possible spectral troubles, such as multiple eigenvalues,
Jordan chains and low frequency interactions, are confined to a finite-dimensional 
invariant space $X_0$ under $L_p$.

Point observability for the potential Airy equation on the torus is therefore reduced to
a finite-dimensional visibility problem. In fact, we can prove that
the finite point set $F\subset \mathbb T$ is observable if and only if the finite-dimensional system
\begin{align*}
        \dot y + \mathcal{A} y=0,\qquad C_Fy=(y(a))_{a\in F},
        \qquad \mathcal{A}=L_p|_{X_0},
\end{align*}
satisfies the Kalman criterion 
\begin{align*}
        C_F \mathcal{A}^q y=0,\quad q=0,\ldots,\dim X_0-1
        \quad\Longrightarrow\quad y=0 .
\end{align*}
This gives a necessary and sufficient finite rank condition for finite point observability for each
fixed potential, i.e. Theorem \ref{th3}. 

Moreover, observation sets with an accumulation point automatically
separate the finite-dimensional root spaces under the stated finite regularity
assumption. Hence the  time-trace observability in Corollary \ref{c2} holds.  The result is sharp in the sense
that no finite set can work uniformly for all smooth real potentials. In fact, for any prescribed
finite set one can construct a smooth stationary solution vanishing on it, see Appendix A.

\subsection{Proof strategy}

We now describe the proof strategy of  Theorem \ref{th1} and  Theorem \ref{th3}. 

\subsubsection{The free Airy equation on the line}

Let $S(t)=e^{-t\partial_x^3}$ be the Airy group on $L^2(\mathbb R)$, and let  $D=-i\partial_x$. Define  $\widetilde P_N$ and $\widetilde Q_N$ to be the low frequency cutoff and the high frequency cutoff respectively by  
\begin{align*}
        \widetilde P_N=\chi(D^3/N^3),
        \qquad
        \widetilde Q_N=I-\widetilde P_N ,
\end{align*}
where $\chi\in C^{\infty}_c(\mathbb R)$ is  a compactly supported  cutoff function in the Gevrey class, see Appendix B. 

\paragraph{Step 1: Trace admissibility and the block observation space.}
For each fixed $x\in\mathbb R$, the map
\begin{align*}
        f\mapsto \Tr_x S(\cdot)f
\end{align*}
extends continuously from Schwartz data to a bounded map
\begin{align*}
        L^2(\mathbb R)\longrightarrow L^2_{\mathrm{loc}}(\mathbb R_t).
\end{align*}
The proof is based on a $TT^*$ argument, see Lemma \ref{sing}. 

\noindent For a regular sequence $\Lambda=\{x_j\}_{j\in\mathbb Z}$ in the sense of Definition \ref{regular}, the trace map is upgraded to
\begin{align*}
        f\mapsto \{\Tr_{x_j}S(\cdot)f\}_{j\in\mathbb Z}
        \in L^2_{\mathrm{loc}}(\mathbb R_t;\ell^2).
\end{align*}
For a temporal interval $J\subset\R$ define the block norm by 
\begin{align*}
   \|g\|_{{\mathcal Y}_E(J)}^2
   & :=\sum_{j\in\Z}\sup_{x\in E_j}
       \bigl\|g\|_{L^2(J)}^2,
    \end{align*} 
where $E_j$ is defined by (\ref{Ej}). 
Viewing $S(\cdot)f$ as $\Tr_x S(\cdot)f$, we define  
\begin{align}   
\|S(\cdot)f\|_{\YY_E(J)}^2:= \sum_{j\in\Z}\sup_{x\in E_j}
       \bigl\|\Tr_xS(\cdot)f\bigr\|_{L^2(J)}^2.\label{ye}  
\end{align}
Since every selection $x_j\in E_j$ is a regular sequence with constants depending only
on $R$, taking the supremum over all such selections gives the admissibility of the block trace norm:
\begin{align*}
        \|S(\cdot)f\|_{{\mathcal Y}_E(J)}^2
        \leq C_R(1+|J|)\|f\|_{L^2}^2
\end{align*}
for every bounded interval $J$, see Lemma \ref{balg}.

\paragraph{Step 2: High frequency observability by a unitary Hautus estimate.}
For regular sampling sequences, we have a frequency-localized resolvent estimate for
$D^3$:
\begin{align*}
        \|g\|_{L^2}^2
        \leq
        C\|C_\Lambda g\|_{\ell^2}^2
        +
        \frac{C}{N^2}
        \|(D^3-\lambda)g\|_{L^2}^2,
        \qquad
        \supp\widehat g\subset\{|\xi|\geq N\}.
\end{align*}
The key ingredient is a narrow frequency band sampling inequality: on a short frequency interval,
regular point samples determine the $L^2$ norm, while away from that interval the
resolvent $(D^3-\lambda)^{-1}$ gains enough powers of frequency. Then by Miller's criterion, for every $T>0$, there exists a threshold $N_1(T)$ such that for $N\geq N_1(T)$,
\begin{align}\label{Huoo7}
        \|\widetilde Q_N f\|_{L^2}^2
        \leq
        C_T\|S(\cdot)\widetilde Q_N f\|_{{\mathcal Y}_E(I_T)}^2 ,
        \qquad I_T=[T/3,2T/3],
\end{align}
see Lemma \ref{lem:HF-Hautus} and Corollary \ref{coho}.

\paragraph{Step 3: Extracting the high frequency observation from the full observation.}
The preceding estimate (\ref{Huoo7}) observes $S(t)\widetilde Q_N f$, but  Theorem \ref{th1} only assumes the
full trace $S(t)f$ on $(0,T)$.  Since the free Airy phase satisfies $\tau=\xi^3$, we have
\begin{align}\label{p2ws}
        \widetilde {\mathcal Q}_N\big(\Tr_x S(\cdot)f\big)
        =
        \Tr_x S(\cdot)\widetilde Q_N f,
\end{align}
where $\widetilde{\mathcal{Q}}_{N}=\chi(\frac{D_t}{N})$  denotes the time frequency cutoff, see Lemma \ref{lefc}. 
By (\ref{p2ws}) and the definition (\ref{ye}) of $\mathcal{Y}_{E}$ norm, Lemma \ref{lem:tail} and Lemma  \ref{lehfs}  give
\begin{align}\label{y7vv}
\| S(t)\widetilde{Q}_{N} f\|_{\mathcal{Y}_{E}(I_{T})}\le\| S(t)f\|_{\mathcal{Y}_{E}(0,T)} +
        C_T e^{-c_TN^{3/\sigma}}\|f\|_{L^2},
\end{align}
and thus 
\begin{align}\label{y9vv}
\| S(t)\widetilde{P}_{N} f\|_{\mathcal{Y}_{E}(I_{T})}\le 2\| S(t)f\|_{\mathcal{Y}_{E}(0,T)} +
        C_T e^{-c_TN^{3/\sigma}}\|f\|_{L^2},
\end{align}
provided that $N$ is sufficiently large.  
Hence after increasing $N$, one obtains from (\ref{Huoo7}) and (\ref{y7vv}) that 
\begin{align}\label{Hu855}
        \|\widetilde Q_N f\|_{L^2}^2
        \leq
        C_T\|S(\cdot)f\|_{\YY_E(0,T)}^2
        +
        C_{T} e^{-c_TN^{3/\sigma}}\|\widetilde P_N f\|_{L^2}^2 .
\end{align}

\paragraph{Step 4: Low frequency Hausdorff propagation.}
Let $g=\widetilde P_N f$ and set
\begin{align*}
        F_g(x)=\Tr_x S(\cdot)g\big|_{I_T}\in L^2(I_T).
\end{align*}
The function $F_g$ is $L^2(I_T)$-valued and compactly supported in the spatial frequency  space.  The Hilbert-valued
Hausdorff--Remez estimate (see Lemma \ref{lemHH})  yields
\begin{align*}
        \int_{\mathbb R}\|F_g(x)\|_{L^2(I_T)}^2\,dx
        \leq
        C e^{C(1+N){}}
        \sum_{j\in\mathbb Z}
        \sup_{x\in E_j}\|F_g(x)\|_{L^2(I_T)}^2 .
\end{align*}
By unitarity,
\begin{align*}
        \int_{\mathbb R}\|F_g(x)\|_{L^2(I_T)}^2\,dx
        =
        |I_T|\|g\|_{L^2}^2 .
\end{align*}
Thus (\ref{y9vv}) yields 
\begin{align}
        \|\widetilde P_N f\|_{L^2}^2
       & \leq 
       CT^{-1}  e^{C(1+N){}}\|S(\cdot)\widetilde{P}_{N}f\|_{\YY_E(I_{T})}^2\nonumber \\
      &\leq C_{T}  e^{C(1+N){}}\|S(\cdot) f\|_{\YY_E(0,T)}^2+ C_T e^{C(1+N){}}e^{-c_TN^{\frac{3}{\sigma}}}\|f\|_{L^2}^2\nonumber \\
       &\leq   C_{N,T}\|S(\cdot)  f\|_{\YY_E(0,T)}^2+ C_T\delta_{N}\|f\|_{L^2}^2,\label{Hu444}
\end{align}
where $\lim_{N\to \infty}\delta_{N}=0$, since $1<\sigma<3$. See Lemma \ref{lesepl} for low frequency separation. 

\paragraph{Step 5: Absorption.}
Fix $T>0$. Letting $N$ be sufficiently large in (\ref{Hu444}) and combining the high and low frequency estimates in (\ref{Hu855}) and (\ref{Hu444}), we obtain    
\begin{align*}
        \|f\|_{L^2}^2
        \leq
        C_{T}\|S(\cdot)f\|_{\YY_E(0,T)}^2
        +
        \delta\|f\|_{L^2}^2
\end{align*}
with $\delta<1$.  Absorbing the last term proves the  time-trace observability
inequality on $\mathbb R$, see Section 7.

 \begin{remark}
 Note that in the above explanations, we omit the dependence of constants $C,c$ on $E,R,m$, which are fixed throughout the paper, and just keep the dependence of $C,c$ on $T>0$. 
 \end{remark}

\subsubsection{Point observability on the torus}

We next consider
\begin{align*}
        u_t+u_{xxx}+p(x)u=0,\qquad x\in\mathbb T,
\end{align*}
with
\begin{align*}
        L_p=\partial_x^3+p(x),\qquad D(L_p)=H^3(\mathbb T).
\end{align*}

\paragraph{Step 1: High frequency spectral decomposition for $L_p$.}
For general bounded real $p$, the operator $L_p$ has compact resolvent but is usually
non-normal.  The proof avoids assuming a global basis of eigenfunctions. Instead, we 
use the Riesz projections.  Let $P_n^0$ be the Fourier projection onto
$\mathbb C e^{inx}$.  For large $|n|$, the cubic gap around $-in^3$ dominates the
perturbation of bounded $p$.  The Riesz projection $P_n$ of $L_p$ near $-in^3$ is then
well-defined, one-dimensional, and satisfies
\begin{align*}
        \sum_{|n|\geq N_0}\|P_n-P_n^0\|^2<\infty .
\end{align*}
By the Bari--Markus theorem for projections, the high spectral subspaces form a Riesz
basis of subspaces.  Therefore
\begin{align*}
        L^2(\mathbb T)=X_0\dotplus H_\infty,
\end{align*}
where $X_0$ is finite-dimensional and invariant under $L_p$, and
\begin{align*}
        H_\infty
        =
        \overline{\operatorname{span}}\{\phi_n:\ |n|\geq N_0\},
\end{align*}
where $\{\phi_n\}_{|n|\ge N_0}$ denote the eigenfunctions of $L_p$ normalized by $\langle \phi_n, e^{inx}\rangle=1$. 
Moreover,
\begin{align*}
        \phi_n=(2\pi)^{-\frac{1}{2}}e^{inx}+r_n,\qquad \|r_n\|_{C^0(\mathbb T)}\to0.
\end{align*}
  Hence every fixed point $a\in\mathbb T$ sees all sufficiently high
modes:
\begin{align}\label{see}
        |\phi_n(a)|\geq c>0,
        \qquad |n|\geq N_0, \qquad \forall \mbox{ }a\in \mathbb T
\end{align}
by increasing $N_0$. 
This step is presented in Proposition \ref{prospec}. 

\paragraph{Step 2: Complex Ingham estimates for the high tail.}
The eigenvalues $\lambda_n$ of $L_p$ satisfy the cubic asymptotics
\begin{align*}
        \lambda_n=-in^3+O(1).
\end{align*}
Thus their imaginary parts have asymptotically increasing gaps, and the complex
Haraux--Ingham estimate can be applied to
\begin{align} \label{gnyu99}
        \sum_{|n|\geq N} c_n e^{-\lambda_n t}\phi_n(a).
\end{align}
Together with the Riesz equivalence of the high spectral decomposition and the lower
bound on $|\phi_n(a)|$,  (\ref{gnyu99}) satisfies  
\begin{align*}
\frac{1}{C} \sum_{|n|\ge N} |c_n|^2\le   \|\sum_{|n|\geq N} c_n e^{-\lambda_n t}\phi_n(a)\|^2_{L^2}\le {C} \sum_{|n|\ge N} |c_n|^2.    
\end{align*}
Therefore, with the lower bound (\ref{see}) all the high modes can be observed from any point 
$a\in \mathbb T$. See Proposition \ref {cuin} for this step. 

\paragraph{Step 3: Reduction to a finite-dimensional visibility problem.}
Let $F\subset\mathbb T$ be finite and non-empty.
All possible failures of point observability are contained in $X_0$.  Set
\begin{align*}
        \mathcal{A}=L_p|_{X_0},
        \qquad
        C_Fy=(y(a))_{a\in F}.
\end{align*}
The finite-dimensional component is observable from $F$ if and only if
\begin{align*}
        C_F e^{-t\mathcal{A}}y=0 \quad\text{for all }t\in(0,T)
        \quad\Longrightarrow\quad y=0.
\end{align*}
Equivalently, by the finite-dimensional Kalman--Hautus criterion,
\begin{align*}
        C_F\mathcal{A}^q y=0,\qquad q=0,1,\ldots,\dim X_0-1,
        \quad\Longrightarrow\quad y=0.
\end{align*}
The high frequency estimate and this finite-dimensional estimate are then combined by
a direct sum argument for exponential families.  This proves the necessary and
sufficient condition for finite point observability on the torus, see Section \ref{finite} for the details.

\subsection{Organization and notation}

The paper is organized as follows. 

In Section  \ref{pre}, we set up the trace observation operator and prove the admissibility.   Section \ref{remez} proves a Remez type inequality in the low frequency. 
Section \ref{gevrey} proves the Gevrey time filters estimates. Section \ref{high-cross} is devoted to the high frequency observability and separation. In Section \ref{separation}, we obtain the  low frequency separation estimate. Section \ref{proof-th1} finishes the proof of Theorem \ref{th1}. 

In Section \ref{related}, we prove consequences and related results for Airy type equations on the line, particularly Proposition \ref{p1} and Proposition \ref{p2}.

Section \ref{point} studies point observability for periodic Airy and linear KdV equations and  particularly proves Proposition \ref{c1}. 
Section \ref{high-spectral} gives the high frequency spectral decomposition for the  Airy operator with potential.  
Section \ref{finite} gives the finite point observability
criterion and proves Theorem \ref{th3}.
Section \ref{hausdorff} treats observation sets with accumulation points and proves  Corollary \ref{c2}.  

Throughout this paper, we make the convention that $C_{\gamma_1,...,\gamma_k}$ represents a constant depending on the parameters $\gamma_1,...,\gamma_k$, and it may vary  from line to line. The notation $A \asymp_{\gamma} B$ means that there exist constants $C_{\gamma}>0 $ and $c_{\gamma}>0$
such that $c_{\gamma}B\le A\le C_{\gamma} B$. We use $\lceil x \rceil$ to represent the smallest integer which  is greater than or equal to $x\in \mathbb R$. 

Recall the definition of  $s$-dimensional Hausdorff content on $\mathbb{R}$. 
Let $s\ge 0$, $E\subset \mathbb R$.
The $s$-dimensional Hausdorff content of $E$ is defined as
\begin{align}
\mathcal H_\infty^s(E)
:=
\inf\left\{
\sum_{k=1}^\infty \bigl|I_k\bigr|^s
\;\bigg|\;
E\subset \bigcup_{k=1}^\infty I_k,\ \text{each }I_k\subset\mathbb R \text{ is an interval}
\right\},
\end{align}
where $|I_k|=\operatorname{diam}(I_k)$ denotes the length of the interval $I_k$,
and the infimum ranges over all countable intervals whose union covers $E$.

The Fourier transform and Fourier inverse transform are defined by 
\begin{align*}
    \widehat f(\xi)=\frac1{\sqrt{2\pi}}\int_\R e^{-ix\xi}f(x)\dd x,
    \qquad
    f(x)=\frac1{\sqrt{2\pi}}\int_\R e^{ix\xi}\widehat f(\xi)\dd\xi .
\end{align*}

\section{Preliminaries}\label{pre}

In this section, we  focus on the  time-trace and admissibility of time-trace observations on regular sampling sequences. 

 .

\subsection{Point time-traces and regular sampling sequence }

The Airy group is
\begin{align*}
    S(t)f(x)=\frac1{\sqrt{2\pi}}\int_\R e^{i(x\xi+t\xi^3)}\widehat f(\xi)\dd\xi.
\end{align*}
For Schwartz $f$, this is a classical function.  For $f\in L^2(\R)$, point time-traces are defined by the admissibility results below.

\begin{lemma}
\label{sing}
For every $T>0$ there is a constant $C_T>0$ such that, for every
$x\in\mathbb R$, the classical map
\begin{align*}
    \mathcal S(\mathbb R)\ni f
    \longmapsto
    \bigl(S_{\mathbb R}(\cdot)f\bigr)(x)
\end{align*}
extends uniquely to a bounded linear map
\begin{align*}
    \Tr_x S(\cdot):
    L^2(\mathbb R)\longrightarrow L^2(0,T),
\end{align*}
and
\begin{align*}
    \|\Tr_xS(\cdot)f\|_{L^2(0,T)}
    \le C_T\|f\|_{L^2(\mathbb R)}.
\end{align*}
For $f\in\mathcal S(\mathbb R)$, this extended trace coincides with the
classical function
\begin{align*}
    t\mapsto (S(t)f)(x).
\end{align*}
\end{lemma}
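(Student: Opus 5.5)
By translation invariance (S(t) commutes with translations, and translation is unitary on $L^2$), it suffices to treat $x=0$, so the constant $C_T$ is automatically uniform in $x$. I would take the $TT^*$ route announced in the introduction, and I also expect a more elementary change of variables to work. For Schwartz $f$, write
\begin{align*}
(S(t)f)(0)=\frac1{\sqrt{2\pi}}\int_\R e^{it\xi^3}\widehat f(\xi)\dd\xi .
\end{align*}
The map $\xi\mapsto\tau=\xi^3$ is a bijection of $\R$ with $d\tau=3\xi^2 d\xi$. After substituting,
\begin{align*}
(S(t)f)(0)=\frac1{\sqrt{2\pi}}\int_\R e^{it\tau}\,\frac{\widehat f(\tau^{1/3})}{3\tau^{2/3}}\dd\tau ,
\end{align*}
which is the inverse Fourier transform in $t$ of $G(\tau)=\widehat f(\tau^{1/3})/(3\tau^{2/3})$. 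Plancherel over all of $\R_t$ would require $\int|G|^2d\tau=\int|\widehat f(\xi)|^2/(3\xi^2)\,d\xi$, which fails near $\xi=0$. So the global estimate is false, and the restriction to $(0,T)$ must be used precisely at low frequencies.

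I would therefore split $f=f_{\rm lo}+f_{\rm hi}$ with $\supp\widehat f_{\rm lo}\subset[-1,1]$ and $\supp\widehat f_{\rm hi}\subset\{|\xi|\ge1\}$. For the high part, Plancherel in $t$ over $\R$ gives
\begin{align*}
\|(S(\cdot)f_{\rm hi})(0)\|_{L^2(\R_t)}^2=\int_{|\xi|\ge1}\frac{|\widehat f(\xi)|^2}{3\xi^2}\dd\xi\le\tfrac13\|f\|_{L^2}^2 ,
\end{align*}
and this bound is independent of $T$. For the low part, Cauchy--Schwarz gives $|(S(t)f_{\rm lo})(0)|\le(2\pi)^{-1/2}\sqrt2\,\|\widehat f\|_{L^2}$ pointwise in $t$, so its $L^2(0,T)$ norm is at most $C\sqrt T\|f\|_{L^2}$. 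Together these yield $\|(S(\cdot)f)(0)\|_{L^2(0,T)}\le C(1+\sqrt T)\|f\|_{L^2}$. This is the $TT^*$ statement in disguise: the kernel of $TT^*$ is $\int e^{i(t-s)\xi^3}d\xi$, which behaves like $|t-s|^{-1/3}$ and is locally integrable. Schur's test on $(0,T)^2$ would give the same conclusion, so I would remark on this equivalence but use the splitting.

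Given the estimate on the dense subspace $\mathcal S(\R)$, the bounded linear extension to $L^2(\R)\to L^2(0,T)$ exists and is unique by density. Consistency with the classical function for Schwartz $f$ holds by construction, since the extension agrees with the original map on $\mathcal S$.

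The only delicate point is the low-frequency singularity $|\xi|^{-2}$ of the Jacobian weight, which is exactly why the statement is local in time. The splitting above handles it cleanly, so I do not expect a serious obstacle. The remaining care is to record that the constants do not depend on $x$, which holds because the symbol $e^{ix\xi}$ has modulus one.
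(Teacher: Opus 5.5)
Your proof is correct, and it takes a different route from the paper.

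\textbf{The paper's route.} The paper argues by $TT^*$. It writes $\|E_x^*g\|_{L^2}^2=\int_0^T\!\int_0^T K(s-t)g(s)\overline{g(t)}\,ds\,dt$ with $K(\tau)=\frac1{2\pi}\int_{\R}e^{-i\tau\xi^3}\,d\xi$. It then obtains $|K(\tau)|\le C|\tau|^{-1/3}$ by the scaling $\eta=|\tau|^{1/3}\xi$, and closes with Young's inequality because $|\tau|^{-1/3}\in L^1(-T,T)$.

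\textbf{Your route.} You split at $|\xi|=1$. On the high part you substitute $\tau=\xi^3$ and apply Plancherel over all of $\R_t$; this is exactly the Kenig--Ponce--Vega smoothing estimate, which the paper only cites as motivation in the remark after the lemma. You bound the low part pointwise in $t$.

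\textbf{What your route buys.} It is more elementary in several ways. It never uses the conditionally convergent integral defining $K$; inserting $K$ into the $TT^*$ identity tacitly requires a damping argument to exchange the $\xi$- and $(s,t)$-integrations. It gives explicit constants. It shows that all growth in $T$ comes from $|\xi|\le 1$. It also gives directly, for every bounded interval $J$, $\|\Tr_xS(\cdot)f\|_{L^2(J)}^2\le\bigl(\frac23+\frac{2|J|}{\pi}\bigr)\|f\|_2^2$, which is precisely the $a|J|+b$ form used later in Lemma~\ref{lefc} via Lemma~\ref{loc}.

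\textbf{What the paper's route buys.} The kernel argument gives the scaling-sharp constant $C_T\lesssim T^{1/3}$, since $\int_{-T}^T|\tau|^{-1/3}\,d\tau\sim T^{2/3}$. Your fixed threshold yields only $3^{-1/2}+(T/\pi)^{1/2}$, which is worse for both small and large $T$. You can recover $T^{1/3}$ by splitting at $|\xi|=a$ with $a=T^{-1/3}$. With threshold $a$, the squared norms of the two pieces are at most $\frac{1}{3a^2}\|f\|_2^2$ and $\frac{aT}{\pi}\|f\|_2^2$.
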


\begin{proof}
Fix $x\in\mathbb R$. For $f\in\mathcal S(\mathbb R)$, define
\begin{align*}
    E_x f(t)
    =
    \frac1{\sqrt{2\pi}}
    \int_{\mathbb R}
    e^{i(x\xi+t\xi^3)}\widehat f(\xi)\,d\xi,
    \qquad 0<t<T.
\end{align*}
It suffices to prove
\begin{align*}
    \|E_x f\|_{L^2(0,T)}
    \le C_T\|f\|_{L^2(\mathbb R)}
\end{align*}
with $C_T$ independent of $x$.

We prove this by the $TT^*$ argument. For $g\in C_c^\infty(0,T)$, the formal
adjoint $E^*_x$ of $E_x$ fulfills
\begin{align*}
   {E_x^*g}(y)
    =
    \frac1{\sqrt{2\pi}}
   \int_{\mathbb R} e^{-ix\xi-iy\xi}
   \left( \int_0^T e^{-is\xi^3}g(s)\,ds\right) d\xi.
\end{align*}
Therefore
\begin{align*}
    \|{E_x^*g}\|^2_{L^2_y}&=\|\mathcal{F}_{y}[E_x^*g]\|_{L^2_\xi}^2
    =
    \frac1{2\pi}
    \int_{\mathbb R}
    \left|
        \int_0^T e^{-is\xi^3}g(s)\,ds
    \right|^2d\xi                                      \\
    &=
    \int_0^T\int_0^T
    K(s-t)g(s)\overline{g(t)}\,ds\,dt,
\end{align*}
where
\begin{align*}
    K(\tau)
    =
    \frac1{2\pi}
    \int_{\mathbb R}e^{-i\tau\xi^3}\,d\xi.
\end{align*}

The  estimate for $K$ follows by the standard stationary phase method. For $\tau\neq0$, the change of variables
\begin{align*}
    \eta=|\tau|^{1/3}\xi
\end{align*}
gives
\begin{align*}
    K(\tau)
    =
    \frac1{2\pi}|\tau|^{-1/3}
    \int_{\mathbb R} e^{-i\,\operatorname{sgn}(\tau)\eta^3}\,d\eta .
\end{align*}
The last integral is bounded  as an oscillatory integral. Hence
\begin{align*}
    |K(\tau)|
    \le C|\tau|^{-1/3},
    \qquad \tau\neq0.
\end{align*}
Since
\begin{align*}
    |\tau|^{-1/3}\in L^1(-T,T),
\end{align*}
by  extending $g$ by zero outside $(0,T)$ and Young's inequality,  we obtain 
\begin{align*}
    \|E_x^*g\|_{L^2(\mathbb R)}^2
    &\le
    \int_0^T\int_0^T
    |K(s-t)|\,|g(s)|\,|g(t)|\,ds\,dt                         \\
    &\le
    C_T\|g\|_{L^2(0,T)}^2.
\end{align*}
Thus
\begin{align*}
    E_x^*:L^2(0,T)\longrightarrow L^2(\mathbb R)
\end{align*}
extends to a bounded operator with norm at most $C_T^{1/2}$, uniformly in
$x$. By duality, $E_x$ extends to a bounded operator
\begin{align*}
    E_x:L^2(\mathbb R)\longrightarrow L^2(0,T)
\end{align*}
with the same bound, i.e., 
\begin{align*}
    \|E_x f\|_{L^2(0,T)}
    \le C_T\|f\|_{L^2(\mathbb R)}.
\end{align*}

Finally, the extension from
$\mathcal S(\mathbb R)$ to $L^2(\mathbb R)$ follows by density, and for
Schwartz data the extended trace coincides with the classical integral formula.
\end{proof}

 \begin{remark}
There are many results on the dispersive smoothing effects on the Airy equation. For example,  Kenig-Ponce-Vega \cite{KPV1993} proved  
$$
\|\partial_x S(t)f\|_{L^{\infty}_x L^2_t}\le C\|f\|_{L^2},
$$
which inspires Lemma \ref{sing}. 
 \end{remark}

Lemma \ref{sing}
is a toy model for the dispersive local smoothing used in 
this work. For further applications, we will take traces along a point sequence rather than a single point. The following is the definition of regular sequence which will be used as the sampling points.   Sequences of this type are standard
in nonuniform sampling theory, see e.g.
\cite[Chapters~6 and~10]{H1996}. 

\begin{definition}\label{regular}
A sequence $\Lambda=\{x_j\}_{j\in\Z}\subset\R$ is called $(\rho,L)$-regular if
\begin{align*}
    0<\rho\le x_{j+1}-x_j\le L<\infty,
    \qquad \forall  j\in\Z.
\end{align*}
For such a sequence and $f\in C(\mathbb R)$, set
\begin{align*}
    C_\Lambda f=(f(x_j))_{j\in\Z}.
\end{align*}
\end{definition}

For regular sequences, we have the sampling upper bound,  see Lemma \ref{samu}. It is a standard discrete Sobolev estimate. If the function is compactly supported in the spatial frequency  space, then one has a  sampling lower bound, see Lemma \ref{nasa}. It is an elementary maximum-gap sampling estimate for
functions supported in a sufficiently narrow frequency band, see e.g. \cite{ H1996}. We include the short proofs of the two Lemmas for completeness.

\begin{lemma}\label{samu}
If $\Lambda$ is $(\rho,L)$-regular, then
\begin{align*}
    \sum_j |g(x_j)|^2\le C_\rho\|g\|_{H^1(\R)}^2,
    \qquad g\in H^1(\R).
\end{align*}
\end{lemma}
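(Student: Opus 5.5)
The plan is to use the one-dimensional Sobolev embedding localized to intervals around each sampling point. Since $\Lambda$ is $(\rho,L)$-regular, consecutive points satisfy $x_{j+1}-x_j\ge\rho$. Hence the intervals $J_j:=[x_j-\rho/2,\,x_j+\rho/2]$ have disjoint interiors; each has length $\rho$ and contains $x_j$. Only the lower separation $\rho$ matters here; the upper gap bound $L$ plays no role, which is consistent with the constant depending only on $\rho$.

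The key local estimate is the following. For $g\in H^1(J)$ on an interval $J$ of length $\rho$, and any $x\in J$, write $g(x)=g(y)+\int_y^x g'(s)\,ds$ for $y\in J$. Average over $y\in J$ and apply Cauchy--Schwarz to get
\begin{align*}
|g(x)|^2\le 2\rho^{-1}\int_J|g|^2\,dy+2\rho\int_J|g'|^2\,ds .
\end{align*}
This holds for the continuous representative of $g$; every $g\in H^1(\R)$ has one, so point values are meaningful. One can alternatively first prove the estimate for Schwartz $g$ and pass to the limit by density.

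Applying the local estimate with $J=J_j$ and $x=x_j$, and summing over $j\in\Z$, the disjointness of the $J_j$ gives
\begin{align*}
\sum_j|g(x_j)|^2\le 2\max(\rho^{-1},\rho)\,\|g\|_{H^1(\R)}^2 ,
\end{align*}
so one may take $C_\rho=2\max(\rho,\rho^{-1})$. There is no real obstacle. The only point needing care is to justify the use of the continuous representative, or the density argument, so that $C_\Lambda g$ is well defined on $H^1$.
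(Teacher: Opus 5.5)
Your proposal is correct and follows essentially the same route as the paper: localized one-dimensional Sobolev embedding on disjoint intervals around each $x_j$, then summation. The paper uses intervals of radius $\rho/3$ and does not compute the constant. Your radius-$\rho/2$ intervals, which meet only at endpoints, together with the averaging argument give the explicit value $C_\rho=2\max(\rho,\rho^{-1})$.
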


\begin{proof}
Let $I_j=[x_j-\rho/3,x_j+\rho/3]$.  Note that these intervals are disjoint.  The  one dimensional Sobolev embedding  gives
\begin{align*}
    |g(x_j)|^2\le C_\rho\int_{I_j}(|g|^2+|g'|^2)\dd x.
\end{align*}
Summing over $j$ proves the claim. Note that the constant $C_{\rho}$ is independent of $x_j$ by applying a shift transform.  
\end{proof}

\begin{lemma}\label{nasa}
Let $\Lambda$ be $(\rho,L)$-regular.  There exists $a=a(L)>0$ such that, for every $r\in\R$ and every $v\in L^2(\R)$ with
\begin{align*}
    \supp\widehat v\subset[r-a,r+a],
\end{align*}
one has
\begin{align*}
    \|v\|_2^2\le 4L\sum_j |v(x_j)|^2.
\end{align*}
\end{lemma}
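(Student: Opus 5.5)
The plan is to reduce to a modulated function with frequency support in $[-a,a]$ and use the mean value theorem together with Bernstein's inequality. Set $w(x)=e^{-irx}v(x)$. Then $\supp\widehat w\subset[-a,a]$, $|w(x_j)|=|v(x_j)|$ and $\|w\|_2=\|v\|_2$, so it suffices to treat $r=0$. Such a $w$ is smooth (indeed entire of exponential type $a$). Bernstein's inequality gives $\|w'\|_2\le a\|w\|_2$, directly from Plancherel since $|\xi|\le a$ on the support of $\widehat w$.

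Next I partition $\R$ into the intervals $J_j=[x_j,x_{j+1})$. Their lengths are at most $L$, and they cover $\R$ because $x_j\to\pm\infty$, as the gaps are at least $\rho>0$. For $x\in J_j$ write $w(x)=w(x_j)+\int_{x_j}^x w'(y)\,dy$. Cauchy--Schwarz then gives
\begin{align*}
|w(x)|^2\le 2|w(x_j)|^2+2L\int_{J_j}|w'(y)|^2\,dy .
\end{align*}
Integrating over $J_j$, whose length is at most $L$, yields
\begin{align*}
\int_{J_j}|w|^2\le 2L|w(x_j)|^2+2L^2\int_{J_j}|w'|^2 .
\end{align*}
Summing over $j$ and applying Bernstein's inequality,
\begin{align*}
\|w\|_2^2\le 2L\sum_j|w(x_j)|^2+2L^2a^2\|w\|_2^2 .
\end{align*}

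Finally, choose $a=a(L)=1/(2L)$. Then $2L^2a^2=1/2$, and absorbing this term gives $\|w\|_2^2\le 4L\sum_j|w(x_j)|^2$, which is exactly the stated constant.

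I expect no serious obstacle. The only delicate point is justifying the pointwise identities for a general $v\in L^2$ with compact Fourier support. This is handled by noting that $v$ coincides with a continuous $H^\infty$ function, via Fourier inversion with $\widehat v\in L^1$, so point values and the fundamental theorem of calculus are legitimate. The sum $\sum_j|v(x_j)|^2$ is finite by Lemma \ref{samu}, though the inequality holds trivially if it is infinite.
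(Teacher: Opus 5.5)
Your proposal is correct and follows the paper's proof essentially step for step: modulate to $w=e^{-irx}v$, apply Bernstein's inequality via Plancherel, use the fundamental theorem of calculus with Cauchy--Schwarz on each gap interval $[x_j,x_{j+1}]$, integrate, sum, and absorb with $2L^2a^2\le 1/2$. Your explicit choice $a=1/(2L)$ and your remarks on the continuous representative and on the intervals covering $\R$ fill in details the paper leaves implicit.
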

\begin{proof}
Choose $a>0$ so that $2L^2a^2\le 1/2$.  Set $w(x)=e^{-irx}v(x)$.  Then $\supp\widehat w\subset[-a,a]$ and Bernstein's inequality gives $\|w'\|_2\le a\|w\|_2$.  On each interval $[x_j,x_{j+1}]$, there holds 
\begin{align*}
    |w(x)|^2\le 2|w(x_j)|^2+2L\int_{x_j}^{x_{j+1}}|w'(y)|^2\dd y .
\end{align*}
Integrating the above formula in $x\in [x_j,x_{j+1}] $ and summing in $j$ yields
\begin{align*}
    \|w\|_2^2\le 2L\sum_j|w(x_j)|^2+2L^2\|w'\|_2^2
       \le 2L\sum_j|w(x_j)|^2+\frac12\|w\|_2^2.
\end{align*}
Absorbing the last term proves the result.
\end{proof}

The following is the Plancherel--Pólya upper sampling inequality for
uniformly separated sequences, stated in the form needed below, see e.g.  \cite[Chapters~6 and~10]{H1996}.
For completeness, we give a short proof based on the discrete Sobolev
estimate of Lemma~\ref{samu} and Bernstein's inequality. 
\begin{lemma}\label{naup}
Let $\Lambda$ be $(\rho,L)$-regular and let $a>0$ be fixed.  If
\begin{align*}
    \supp\widehat v\subset[r-a,r+a]
\end{align*}
for some $r\in\R$, then
\begin{align*}
    \sum_j |v(x_j)|^2\le C_{\rho,a}\|v\|_2^2 .
\end{align*}
\end{lemma}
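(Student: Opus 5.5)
The plan is to deduce Lemma \ref{naup} from the discrete Sobolev estimate of Lemma \ref{samu}, after first removing the modulation so that Bernstein's inequality gives a bound independent of $r$.

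First, $v$ is well defined pointwise. Since $\supp\widehat v\subset[r-a,r+a]$ is compact and $\widehat v\in L^2$, we have $\widehat v\in L^1$. Hence $v$ is continuous (indeed smooth), and the samples $v(x_j)$ make sense.

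Second, we demodulate. Applying Lemma \ref{samu} directly to $v$ would cost $\|v'\|_2\le(|r|+a)\|v\|_2$, which is not uniform in $r$. So set $w(x)=e^{-irx}v(x)$. Then $\widehat w(\xi)=\widehat v(\xi+r)$ is supported in $[-a,a]$, and $|w(x_j)|=|v(x_j)|$ for every $j$. By Plancherel and Bernstein's inequality,
\begin{align*}
\|w'\|_2=\|\xi\widehat w\|_2\le a\|w\|_2=a\|v\|_2 .
\end{align*}
Therefore $w\in H^1(\R)$ with $\|w\|_{H^1}^2\le(1+a^2)\|v\|_2^2$.

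Third, we apply Lemma \ref{samu} to $w$, which gives
\begin{align*}
\sum_j|v(x_j)|^2=\sum_j|w(x_j)|^2\le C_\rho\|w\|_{H^1}^2\le C_\rho(1+a^2)\|v\|_2^2 .
\end{align*}
This is the claim with $C_{\rho,a}=C_\rho(1+a^2)$, and the constant is independent of $r$ and of $L$.

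The only step that needs care is the second one, the modulation trick. It is what makes the constant uniform in the band center $r$, and nothing else in the argument depends on $r$.
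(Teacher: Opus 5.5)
Your proof is correct and is essentially the paper's argument: demodulate to $w=e^{-irx}v$ so that $\supp\widehat w\subset[-a,a]$, use Bernstein to bound $\|w\|_{H^1}$ by a constant depending only on $a$ times $\|v\|_2$ (uniformly in $r$), and then apply the discrete Sobolev estimate of Lemma~\ref{samu}. Your explicit constant $C_\rho(1+a^2)$ and your remark that $v$ is continuous because $\widehat v\in L^1$ go slightly beyond the paper's write-up but do not change the argument.
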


\begin{proof}
Let $w(x)=e^{-irx}v(x)$. Hence $\supp\widehat w\subset[-a,a]$ and $\|w'\|_2\le  C a\|w\|_2$.  Applying Lemma \ref{samu} to $w$ gives 
\begin{align*}
    \sum_j|v(x_j)|^2=\sum_j|w(x_j)|^2\le C_{\rho}\|w\|^2_{H^1}\le  C_{\rho,a}\|w\|_2^2=C_{\rho,a}\|v\|_2^2 .
\end{align*}
\end{proof}

\subsection{A unitary Hautus criterion}

We use the following standard consequence of Miller's resolvent criterion for unitary groups \cite{Miller2012}.  It is stated in the form needed below.

\begin{proposition}[\cite{Miller2012}]\label{prop:Miller}
Let $A$ be self-adjoint on a Hilbert space $X$, and let $C:D(A)\to Y$ be bounded from $D(A)$ with the graph norm to another Hilbert space $Y$.

\smallskip
\noindent\emph{Admissibility criterion.}  If there exist $L,l>0$ such that
\begin{equation}\label{ard}
    \|Cz\|_Y^2\le L\|(A-\lambda)z\|_X^2+l\|z\|_X^2,
    \qquad z\in D(A),\quad \lambda\in\R,
\end{equation}
then $C$ is admissible for the unitary group $e^{itA}$ on every bounded time interval; that is, for every bounded interval $J$ there is $A_{J,L,l}>0$ such that
\begin{align*}
    \int_J\|Ce^{itA}z\|_Y^2\dd t\le A_{J,L,l}\|z\|_X^2,
    \qquad z\in X.
\end{align*}

\smallskip
\noindent\emph{Observability criterion.}  Assume that $C$ is admissible and that there are $M,m>0$ such that
\begin{equation}\label{eq:hautus}
    \|z\|_X^2\le M\|(A-\lambda)z\|_X^2+m\|Cz\|_Y^2,
    \qquad z\in D(A),\quad \lambda\in\R .
\end{equation}
Then, for every $T>\pi\sqrt M$, there is $K_{T,M,m}>0$ such that
\begin{align*}
    \|z\|_X^2\le K_{T,M,m}\int_0^T\|Ce^{itA}z\|_Y^2\dd t,
    \qquad z\in X.
\end{align*}
The same estimate holds on any interval of length $T>0$ with the same constants. Here, $A_{J,L,l}$ and $K_{T,M,m}$ can be chosen as 
\begin{align}\label{opcc}
A_{J,L,l}=l|J|+\sqrt{Ll}, \mbox{ }
\mbox{ }K_{T,M,m}=\frac{2mT}{T^2-\pi^2M}.
\end{align}
\end{proposition}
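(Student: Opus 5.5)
Both parts follow from one device: multiply the orbit by a time cutoff, take the Fourier transform in $t$, and apply the resolvent inequality at each frequency $\tau$, using $\lambda=\tau$. Throughout, fix $z\in D(A)$, set $w(t)=e^{itA}z$, and extend to all $z\in X$ by density at the end.

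For a real Lipschitz cutoff $\chi$ with compact support (or fast decay), put $v(t)=\chi(t)w(t)$. Since $w$ solves $\partial_t w=iAw$, we have
\begin{align*}
\partial_t v=iAv+\chi'(t)w(t).
\end{align*}
Let $\widehat v(\tau)$ denote the Fourier transform in $t$. Then $\widehat v(\tau)\in D(A)$ and
\begin{align*}
(A-\tau)\widehat v(\tau)=-i\,\widehat{\chi' w}(\tau).
\end{align*}
Because $C$ is bounded on $D(A)$ with the graph norm and $v\in L^2(\R;D(A))$, $C$ commutes with the Fourier transform in $t$. By unitarity of $e^{itA}$ and Plancherel,
\begin{align*}
\int\|\widehat v(\tau)\|^2\,d\tau=\|\chi\|_{L^2}^2\|z\|^2,
\qquad
\int\|(A-\tau)\widehat v(\tau)\|^2\,d\tau=\|\chi'\|_{L^2}^2\|z\|^2,
\end{align*}
and $\int\|C\widehat v(\tau)\|^2d\tau=\int\|\chi(t)Cw(t)\|^2dt$.

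\emph{Admissibility.} Apply \eqref{ard} with $\lambda=\tau$ to $\widehat v(\tau)$ and integrate in $\tau$. This gives
\begin{align*}
\int|\chi|^2\|Cw\|^2\,dt\le \bigl(L\|\chi'\|_2^2+l\|\chi\|_2^2\bigr)\|z\|^2.
\end{align*}
Choose $\chi=1$ on $J$ with exponentially decaying tails of width $\varepsilon$ on each side. Then $\|\chi'\|_2^2\sim1/\varepsilon$ and $\|\chi\|_2^2=|J|+O(\varepsilon)$. Optimizing at $\varepsilon\sim\sqrt{L/l}$ gives a bound of the form $l|J|+c\sqrt{Ll}$. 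Matching the constant in \eqref{opcc} exactly is only a matter of choosing the tail profile carefully (a one-sided or sharper optimization), and I would not insist on it. The estimate then extends from $D(A)$ to $X$ by density, which defines $Ce^{itA}z$ for general $z$.

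\emph{Observability.} Take $\chi(t)=\sin(\pi t/T)\1_{[0,T]}(t)$, which is Lipschitz. Then
\begin{align*}
\|\chi\|_2^2=\frac{T}{2},\qquad \|\chi'\|_2^2=\frac{\pi^2}{2T},\qquad |\chi|\le 1.
\end{align*}
Apply \eqref{eq:hautus} with $\lambda=\tau$ to $\widehat v(\tau)$ and integrate in $\tau$:
\begin{align*}
\frac T2\|z\|^2\le M\frac{\pi^2}{2T}\|z\|^2+m\int_0^T\|Cw(t)\|^2\,dt.
\end{align*}
If $T^2>\pi^2M$, rearranging yields
\begin{align*}
\|z\|^2\le \frac{2mT}{T^2-\pi^2M}\int_0^T\|Ce^{itA}z\|^2\,dt,
\end{align*}
which is exactly the constant $K_{T,M,m}$ in \eqref{opcc}. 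Time-translation invariance of the unitary group gives the same estimate on any interval of length $T$. Admissibility, proved in the first part, lets us pass from $z\in D(A)$ to all $z\in X$.

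\emph{Main obstacle.} The delicate step is justifying the frequency-wise application of the resolvent inequalities. One needs that $\widehat v(\tau)\in D(A)$ and that the identity $(A-\tau)\widehat v=-i\widehat{\chi'w}$ holds in $X$ for a.e.\ $\tau$. Both follow from $z\in D(A)$, the commutation of $A$ with $e^{itA}$, and the fact that $\chi$ and $\chi'$ lie in $L^1\cap L^2$. The only further care needed is in the density argument for the observation term.
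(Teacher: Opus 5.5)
The paper gives no proof of this proposition; it only cites Miller. Your argument is the standard proof of these resolvent criteria: Fourier transform in time of $\chi(t)e^{itA}z$, the resolvent inequality at $\lambda=\tau$, Plancherel, and the cutoff $\sin(\pi t/T)\1_{[0,T]}$, which realizes the Wirtinger constant $\pi^2/T^2$. Your observability part is correct and reproduces $K_{T,M,m}=2mT/(T^2-\pi^2M)$ exactly, including the translation and density steps.

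The one thing to change is your remark that the admissibility constant $l|J|+\sqrt{Ll}$ in \eqref{opcc} could be matched by a more careful tail profile. It cannot.

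\textbf{Within your method.} Let $|\chi|\ge1$ on $J$ and let $s$ be the distance from $J$. On each tail,
$L|\chi'|^2+l|\chi|^2\ge 2\sqrt{Ll}\,|\chi|\,|\chi'|\ge-\sqrt{Ll}\,\frac{d}{ds}|\chi|^2$.
Since $\chi\in H^1$ tends to zero at infinity, each tail costs at least $\sqrt{Ll}$. Hence your exponential profile with $\varepsilon=\sqrt{L/l}$ is already optimal, and the best you can get is $l|J|+2\sqrt{Ll}$. A one-sided cutoff with a jump does not help either: the jump puts a Dirac mass into $\partial_t v$, whose Fourier transform is not square integrable in $\tau$.

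\textbf{The stated constant is false.} Take $X=L^2(\R)$, $A=-i\partial_x$ (so $e^{itA}z=z(\cdot+t)$) and $Cz=z(0)$.
\begin{itemize}
\item Write $z=e^{i\lambda x}\tilde z$. Then $|z(0)|=|\tilde z(0)|$ and $\|(A-\lambda)z\|_2=\|\tilde z'\|_2$.
\item From $|\tilde z(0)|^2\le\|\tilde z\|_2\|\tilde z'\|_2\le\frac14\|\tilde z\|_2^2+\|\tilde z'\|_2^2$, hypothesis \eqref{ard} holds with $L=1$ and $l=\frac14$.
\item The claimed constant would then give $\int_0^1|z(t)|^2\,dt\le\frac34\|z\|_2^2$.
\item This fails for every nonzero $z\in C_c^\infty(0,1)$, since then $\int_0^1|z(t)|^2\,dt=\|z\|_2^2$.
\end{itemize}
In the same example, $l|J|+2\sqrt{Ll}$ is asymptotically sharp as $l\to0$ with $Ll=\frac14$ fixed.

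So you should state and prove admissibility with $A_{J,L,l}=l|J|+2\sqrt{Ll}$. This does not affect the rest of the paper, which only uses a bound of the form $C(1+|J|)$ (Proposition \ref{psa}).
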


\subsection{Trace admissibility for regular sequences and blocks}

In this subsection, we extend the single point time-trace introduced in Lemma \ref{sing} to regular sequences.

\begin{proposition}\label{psa}
Let $\Lambda=\{x_j\}_{j\in\mathbb Z}$ be $(\rho,L)$-regular.  For every bounded interval $J\subset\R$ there exists $A_{|J|,\rho,L}>0$ such that the classical map
\begin{align*}
    H^3(\R)\ni f\longmapsto \bigl(S(\cdot)f(x_j)\bigr)_{j\in\mathbb Z}
    \in L^2(J;\ell^2)
\end{align*}
extends uniquely to a bounded map
\begin{align*}
    \Tr_\Lambda S(\cdot):L^2(\R)\longrightarrow L^2(J;\ell^2),
\end{align*}
and
\begin{equation}\label{eq:selected-admiss}
    \int_J\sum_j\bigl|\bigl(\Tr_\Lambda S(t)f\bigr)_j\bigr|^2\dd t
       \le A_{|J|,\rho,L}\|f\|_2^2,
    \qquad f\in L^2(\R),
\end{equation}
where $A_{|J|,\rho,L}$ can be chosen as 
$$A_{|J|,\rho,L}=C_{\rho,L}(1+|J|)$$
for some $C_{\rho,L}>0$ depending only on $(\rho,L)$. 
For $f\in H^3(\R)$, $(\Tr_\Lambda S(t)f)_j=S(t)f(x_j)$ for a.e. $t\in J$.
\end{proposition}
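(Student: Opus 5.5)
The plan is to apply the admissibility half of Proposition \ref{prop:Miller} with $X=L^2(\R)$, $Y=\ell^2(\Z)$, $C=C_\Lambda$, and $A=D^3$ (self-adjoint on $D(A)=H^3(\R)$). Since $S(t)=e^{-t\partial_x^3}=e^{itD^3}$, this is exactly the unitary group $e^{itA}$. Because $C_\Lambda$ is only defined on $H^3(\R)$, I first check it is bounded from $D(A)$ with the graph norm into $\ell^2$. This follows from Lemma \ref{samu}:
\begin{align*}
\|C_\Lambda z\|_{\ell^2}^2\le C_\rho\|z\|_{H^1}^2\le C_\rho\|z\|_{H^3}^2 .
\end{align*}

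The core step is the resolvent-type bound \eqref{ard}. I need $L,l$, depending only on $(\rho,L)$, such that for every $\lambda\in\R$ and $z\in H^3$,
\begin{align*}
\|C_\Lambda z\|_{\ell^2}^2\le L\|(D^3-\lambda)z\|_2^2+l\|z\|_2^2 .
\end{align*}
Lemma \ref{samu} reduces this to controlling $\|z'\|_2^2=\int|\xi|^2|\widehat z|^2\,d\xi$. It therefore suffices to prove the pointwise Fourier-side bound
\begin{align*}
|\xi|^2\le L|\xi^3-\lambda|^2+l,\qquad \xi,\lambda\in\R,
\end{align*}
with $L,l$ independent of $\lambda$, and then integrate it against $|\widehat z(\xi)|^2$.

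To prove this, write $\lambda=\mu^3$ with $\mu\in\R$.
\begin{enumerate}[label=\textup{(\alph*)}]
\item If $|\xi-\mu|\ge 1$, use $\xi^3-\mu^3=(\xi-\mu)(\xi^2+\xi\mu+\mu^2)$ together with $\xi^2+\xi\mu+\mu^2\ge \tfrac34\xi^2$. This gives $|\xi^3-\lambda|\ge\tfrac34\xi^2\ge \tfrac34|\xi|$ whenever $|\xi|\ge1$. When $|\xi|\le 1$, the constant $l$ absorbs the term.
\item If $|\xi-\mu|<1$, the factorization gives no gain, so this is the region that needs care. Here I instead use a Bernstein-type argument: split $z=z_1+z_2$ with $\widehat z_1=\widehat z\,\1_{|\xi-\mu|<1}$. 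For $z_2$, case (a) applies. For $z_1$, which is supported in a band of width $2$, Lemma \ref{naup} with $a=1$ gives
\begin{align*}
\sum_j|z_1(x_j)|^2\le C_\rho\|z_1\|_2^2\le C_\rho\|z\|_2^2 ,
\end{align*}
uniformly in the center $\mu$.
\end{enumerate}
Combining the two pieces with $|a+b|^2\le 2|a|^2+2|b|^2$ yields \eqref{ard} with $L,l$ depending only on $\rho$.

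This splitting is the only step that is not routine. The alternative of trying to bound the band piece through $\|z_1'\|$ fails, because $\|z_1'\|\sim|\mu|\|z_1\|$ is not uniformly bounded in $\mu$. The translation-invariant band estimate of Lemma \ref{naup} is what removes the dependence on $\mu$.

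With \eqref{ard} established, Proposition \ref{prop:Miller} yields, for $f\in H^3$,
\begin{align*}
\int_J\sum_j|S(t)f(x_j)|^2\,dt\le \bigl(l|J|+\sqrt{Ll}\bigr)\|f\|_2^2 .
\end{align*}
This is \eqref{eq:selected-admiss} with $A_{|J|,\rho,L}=C_{\rho,L}(1+|J|)$. Since $H^3$ is dense in $L^2$, the map extends uniquely to a bounded map $\Tr_\Lambda S(\cdot):L^2(\R)\to L^2(J;\ell^2)$. For $f\in H^3$, the map $t\mapsto S(t)f\in H^3$ is continuous, so the extension agrees with the classical point values $S(t)f(x_j)$ for every $t$, and in particular for a.e.\ $t\in J$.

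Finally, I note that each component of $\Tr_\Lambda S(\cdot)f$ coincides with the single-point trace $\Tr_{x_j}S(\cdot)f$ of Lemma \ref{sing}. Both operators are continuous on $L^2$ and agree on Schwartz data, so they agree everywhere.
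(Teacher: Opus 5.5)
Your proposal is correct and follows the paper's proof essentially verbatim. Both verify Miller's admissibility estimate \eqref{ard} for $A=D^3$ by the same split. The fixed-width frequency band around $\mu=\lambda^{1/3}$ is handled by Lemma \ref{naup}, uniformly in the center. Its complement is handled by the factorization $\xi^3-\mu^3=(\xi-\mu)(\xi^2+\xi\mu+\mu^2)$ together with Lemma \ref{samu}. The only differences are constants: the paper uses a band of half-width $2$ and the bound $\xi^2+\xi\mu+\mu^2\ge\frac12(\xi^2+\mu^2)$, where you use half-width $1$ and $\frac34\xi^2$.

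One blemish: your intermediate claim that it suffices to prove the global pointwise bound $|\xi|^2\le L|\xi^3-\lambda|^2+l$ is false. At $\xi=\mu$ with $|\mu|$ large, the right side is $l$ while the left side is $\mu^2$. Drop that sentence; your case (b) band splitting already replaces it, so the argument stands.
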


\begin{proof}
For $f\in H^3(\R)$ define $C_\Lambda f=(f(x_j))_j$.  Lemma \ref{samu} gives $C_\Lambda\in\mathcal L(H^3,\ell^2)$.  We verify the admissibility resolvent estimate \eqref{ard} for $A=D^3$, where $D=-i\partial_x$.

Let $\lambda=r^3$ and define the frequency projections $P_r,Q_r$ by
\begin{align*}
    \widehat{P_rf}(\xi)=\mathbf 1_{\{|\xi-r|\le 2\}}\widehat f(\xi),
    \qquad Q_r=I-P_r.
\end{align*}
By Lemma \ref{naup},
\begin{align}\label{y7gfb}
    \|C_\Lambda P_rf\|_{\ell^2}\le C\|P_rf\|_2\le C\|f\|_2 .
\end{align}
On the support of $Q_r$,
\begin{align*}
    |\xi-r|>2,
    \qquad
    \xi^3-r^3=(\xi-r)(\xi^2+\xi r+r^2),
\end{align*}
and $\xi^2+\xi r+r^2\ge \frac{1}{2}(\xi^2+r^2)$. Hence, by considering two cases $|r|\ge 1$ and $|r|<1$, we find 
\begin{align*}
    \sup_{|\xi-r|>2}\frac{\langle\xi\rangle}{|\xi^3-r^3|}<\infty
\end{align*}
with a constant independent of $r$.  Therefore
\begin{align*}
    \|Q_rf\|_{H^1}\le C\|(D^3-\lambda)f\|_2.
\end{align*}
Lemma \ref{samu} gives 
\begin{align}\label{7gfb}
    \|C_\Lambda Q_rf\|_{\ell^2}\le C\|(D^3-\lambda)f\|_2.
\end{align}
Combining   (\ref{y7gfb}) with (\ref{7gfb})  yields
\begin{align*}
    \|C_\Lambda f\|_{\ell^2}^2
       \le L_0\|(D^3-\lambda)f\|_2^2+l_0\|f\|_2^2,
       \qquad \lambda\in\R.
\end{align*}
The admissibility part of Proposition \ref{prop:Miller} gives the asserted bounded extension and estimate.  Uniqueness follows from the density of $H^3(\R)$ in $L^2(\R)$.
\end{proof}

In Lemma \ref{sing}, we have defined the single point time-trace. We now give another definition of single point time-trace in the setting of Proposition  \ref{psa} and prove that the two definitions indeed coincide. 
\begin{definition}\label{dsst}
Fix $x\in\R$ and a bounded time interval $J$.  Choose any $(\rho,L)$-regular sequence $\Lambda=\{x_j\}_{j\in\mathbb Z}$ containing $x$ as one of its elements, say $x=x_{j_0}$.  For $f\in L^2(\R)$ we define
\begin{align*}
    \Tr_xS(\cdot)f\big|_J:=\bigl(\Tr_\Lambda S(\cdot)f\bigr)_{j_0}
    \in L^2(J).
\end{align*}

\end{definition}

This definition is independent of the regular sequence and of the index chosen for $x$. Indeed, we have the following result. 
\begin{lemma}\label{lemc}
The map in Definition~\ref{dsst} is the unique bounded extension of the classical point-value map
\begin{align*}
    H^3(\R)\ni f\longmapsto S(\cdot)f(x)\in L^2(J).
\end{align*}
If $x=x_j$ is included in any regular sequence $\Lambda$, then $\Tr_xS(\cdot)f$ agrees with the corresponding coordinate of $\Tr_\Lambda S(\cdot)f$.
\end{lemma}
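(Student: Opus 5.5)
The plan is to show that the map $f\mapsto(\Tr_\Lambda S(\cdot)f)_{j_0}$ is a bounded extension of the classical point-value map on $H^3(\R)$. Uniqueness of such an extension then gives independence of $\Lambda$ and $j_0$.

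First, boundedness. For any $(\rho,L)$-regular $\Lambda$ with $x=x_{j_0}$, estimate \eqref{eq:selected-admiss} gives
\begin{align*}
\bigl\|(\Tr_\Lambda S(\cdot)f)_{j_0}\bigr\|_{L^2(J)}^2\le \int_J\sum_j\bigl|(\Tr_\Lambda S(t)f)_j\bigr|^2\dd t\le A_{|J|,\rho,L}\|f\|_2^2 .
\end{align*}
So $f\mapsto (\Tr_\Lambda S(\cdot)f)_{j_0}$ is a bounded linear map $L^2(\R)\to L^2(J)$, because taking the $j_0$-th coordinate is a bounded linear operation $L^2(J;\ell^2)\to L^2(J)$.

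Second, agreement on the dense class. By the last assertion of Proposition \ref{psa}, for $f\in H^3(\R)$ we have $(\Tr_\Lambda S(t)f)_{j_0}=S(t)f(x_{j_0})=S(t)f(x)$ for a.e.\ $t\in J$. This uses that $S(t)f\in H^3\subset C(\R)$ is continuous in $(t,x)$, so the point values make classical sense. Hence the map of Definition \ref{dsst} restricts to the classical map $f\mapsto S(\cdot)f(x)$ on $H^3(\R)$.

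Third, uniqueness and independence. Since $H^3(\R)$ is dense in $L^2(\R)$, any two bounded operators $L^2(\R)\to L^2(J)$ that agree on $H^3$ coincide. So if $\Lambda,\Lambda'$ are two regular sequences with $x=x_{j_0}=x'_{j_0'}$, the two resulting maps agree on $H^3$ and hence on all of $L^2$. This proves well-definedness and the second statement of the lemma. It also shows that this trace coincides with the one in Lemma \ref{sing}, which is bounded and agrees with the classical values on $\mathcal S\subset H^3$; here $\mathcal S$ is dense, so both maps also agree on $H^3$ by continuity. Existence of at least one regular sequence containing $x$ is trivial, e.g.\ $x+\Z$.

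The only delicate point is the a.e.\ identification of the coordinate with the classical trace for $H^3$ data in the second step. That is already contained in Proposition \ref{psa}, so I expect no real obstacle.
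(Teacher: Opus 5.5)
Your proposal is correct and follows essentially the same route as the paper: boundedness and agreement with classical point values on $H^3(\R)$ both come from Proposition~\ref{psa}, and density of $H^3(\R)$ in $L^2(\R)$ gives uniqueness, hence independence of $\Lambda$ and of the index. Your extra remark identifying this trace with the one from Lemma~\ref{sing} is a harmless addition that the paper does not spell out at this point.
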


\begin{proof}
The map $\Tr_{\Lambda}$ in Proposition~\ref{psa} is bounded from $L^2(\R)$ to $L^2(J)$. For $H^3$ data it coincides with the classical point value.  If two regular sequences contain the same point, the two coordinate maps agree on the dense subspace $H^3(\R)$; hence their bounded extensions to $L^2(\R)$ agree.  This proves both independence and uniqueness.
\end{proof}

Let now $E$ satisfy \eqref{thicko}.  Throughout the real line part we set
\begin{align*}
    E_j=E\cap[3Rj,3Rj+R].
\end{align*}
For an interval $J\subset\R$ define the block supremum trace norm
\begin{equation}\label{eq:blocknorm}
    \|S(\cdot)f\|_{\YY_E(J)}^2
    :=\sum_{j\in\Z}\sup_{x\in E_j}
       \bigl\|\Tr_xS(\cdot)f\bigr\|_{L^2(J)}^2 .
\end{equation}
Note that, equivalently, this is the supremum over all selections $x_j\in E_j$ of
\begin{align*}
    \int_J\sum_j |\Tr_{x_j}S(t)f|^2\dd t.
\end{align*}
Indeed, for a non-negative family $a_j(x)$ one has
\begin{align*}
    \sum_j\sup_{x\in E_j}a_j(x)
    =\sup_{\{x_j\in E_j\}}\sum_j a_j(x_j).
\end{align*}

Now, we are ready to prove the observation $\|S(\cdot)f\|_{\YY_E(J)}$ is admissible for all $L^2$ data. 
\begin{lemma} \label{balg}
For every bounded interval $J\subset\R$, there holds 
\begin{align*}
    \|S(\cdot)f\|_{\YY_E(J)}^2\le A_{|J|,R}\|f\|_2^2,
    \qquad f\in L^2(\R),
\end{align*}
where $A_{|J|,R}$ can be chosen as 
$$A_{|J|,R}=C_{R}(1+|J|)$$
for some $C_{R}>0$ depending only on $R$. 
\end{lemma}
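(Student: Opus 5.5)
The plan is to reduce the block supremum to admissibility along a single selected sequence, using the identity recorded just before the statement:
\begin{align*}
\|S(\cdot)f\|_{\YY_E(J)}^2=\sup_{\{x_j\in E_j\}}\int_J\sum_j|\Tr_{x_j}S(t)f|^2\dd t .
\end{align*}
It therefore suffices to bound $\int_J\sum_j|\Tr_{x_j}S(t)f|^2\dd t$ by $C_R(1+|J|)\|f\|_2^2$, uniformly over all selections $x_j\in E_j$.

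First, I check that any selection is a regular sequence with constants depending only on $R$. Since $x_j\in[3Rj,3Rj+R]$ and $x_{j+1}\in[3R(j+1),3R(j+1)+R]$, we get
\begin{align*}
2R\le x_{j+1}-x_j\le 4R .
\end{align*}
So $\Lambda=\{x_j\}$ is $(2R,4R)$-regular in the sense of Definition \ref{regular}. If the thickness condition (\ref{thicko}) is used, every $E_j$ is nonempty, since $\mathcal H^s_\infty(E_j)\ge m>0$. Even without it, we can adopt the convention that a supremum over an empty set is $0$. Blocks with empty $E_j$ simply drop out of the sum, and the remaining points can be padded with auxiliary points $x_j=3Rj$ to form a full regular sequence; adding nonnegative terms only increases the sum.

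Second, I apply Proposition \ref{psa} with $(\rho,L)=(2R,4R)$. This gives
\begin{align*}
\int_J\sum_j|(\Tr_\Lambda S(t)f)_j|^2\dd t\le C_{2R,4R}(1+|J|)\|f\|_2^2 .
\end{align*}
The constant is independent of the particular selection, because Proposition \ref{psa} produces it from $\rho,L$ only. That proposition obtains it through the resolvent estimate with constants from Lemmas \ref{samu} and \ref{naup}, which depend only on $\rho$ (and the fixed band width), together with Miller's formula (\ref{opcc}).

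Third, by Lemma \ref{lemc}, each coordinate $(\Tr_\Lambda S(\cdot)f)_j$ coincides with the single-point trace $\Tr_{x_j}S(\cdot)f|_J$. The left side above is therefore exactly the selection sum. Taking the supremum over selections yields the lemma with $C_R:=C_{2R,4R}$.

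The only delicate point is the uniformity of the constant in the selection, which rests on the fact that the constants in Proposition \ref{psa} depend on $\Lambda$ only through $(\rho,L)$. I expect this to need just a remark rather than new work.
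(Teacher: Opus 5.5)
Your proposal is correct and follows the paper's proof essentially verbatim. Every selection $x_j\in E_j$ is $(2R,4R)$-regular, Proposition~\ref{psa} gives a bound $C_{2R,4R}(1+|J|)\|f\|_2^2$ independent of the selection, and taking the supremum over selections proves the claim. Your extra remarks — padding empty blocks, and using Lemma~\ref{lemc} to identify the coordinates of $\Tr_\Lambda S(\cdot)f$ with the single-point traces — are harmless details the paper leaves implicit.
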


\begin{proof}
For any selection $x_j\in E_j$, one has
\begin{align*}
    2R\le x_{j+1}-x_j\le 4R.
\end{align*}
Hence the selected sequence is $(2R,4R)$-regular.  Proposition \ref{psa} gives a bound independent of the selection.  Taking the supremum over selections proves the claim.
\end{proof}

\section{Hausdorff--Remez propagation of smallness}\label{remez}

This section gives the precise propagation of smallness for the low frequency part.

Lemma \ref{hkey} below  is a convenient fixed domain formulation of the known Hausdorff content propagation of smallness inequality for holomorphic functions. It follows, up to a standard finite chain localization, from  Proposition 2.3 of \cite{Z2024}, See also \cite{GBMO2025} for the corresponding Remez framework. We include a proof to record the precise dependence on $s,m_0,U$ and the $e^{CB
}$ growth.

\begin{lemma}
\label{hkey}
Let $0<s\leq 1$, let $m_{0}>0$, and let
$U\subset\mathbb C$ be a connected open neighborhood of
$$
I=[0,1].
$$
Then there exists a constant
$$
C_*=C(s,m_{0},U)>0
$$
with the following property. If $A\subset I$ satisfies
$$
\mathcal H^{s}_{\infty}(A)\geq m_{0},
$$
and $g$ is holomorphic in $U$ and satisfies
$$
\|g\|_{L^\infty(U)}
   \leq e^{B}\|g\|_{L^\infty(I)}
\qquad\text{for some } B\geq 0,
$$
then
\begin{align}\label{RtkkL}
\|g\|_{L^\infty(I)}
   \leq e^{C_*B}\|g\|_{L^\infty(A)}.
\end{align}
\end{lemma}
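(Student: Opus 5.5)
The plan is to derive \eqref{RtkkL} from the classical Remez inequality for polynomials, combined with a polynomial approximation of $g$ whose degree is proportional to $B$. Normalize $\|g\|_{L^\infty(I)}=1$, so that $\|g\|_{L^\infty(U)}\le e^{B}$.

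\emph{Polynomial approximation.} Since $U$ is a connected open neighborhood of $I=[0,1]$, it contains a Bernstein ellipse $\mathcal E_\varrho$ with foci $0,1$ and some parameter $\varrho=\varrho(U)>1$. Expanding $g$ in Chebyshev polynomials of $I$, the classical estimate gives, for every $n$, a polynomial $P_n$ of degree $\le n$ with
\begin{align*}
\|g-P_n\|_{L^\infty(I)}\le \frac{2\varrho^{-n}}{1-\varrho^{-1}}\,e^{B}.
\end{align*}
Choose $n=\lceil C_1(B+1)\rceil$ with $C_1=C_1(\varrho)$ large enough that this error is at most $\tfrac14 e^{-K n}$. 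Here $K$ is the constant fixed in the next step, and it must be fixed before $C_1$. This is possible because the ratio $\varrho^{-n}e^{B}e^{Kn}$ is small once $n\log\varrho>B+Kn+\text{const}$. That requires $K<\log\varrho$, which is the point I expect to need care.

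\emph{Hausdorff-content Remez inequality.} For a polynomial $P$ of degree $n$ and $A\subset I$ with $\mathcal H^s_\infty(A)\ge m_0$, I would prove
\begin{align*}
\|P\|_{L^\infty(I)}\le \Bigl(\frac{C_2}{m_0}\Bigr)^{n/s}\|P\|_{L^\infty(A)}.
\end{align*}
The argument factors $P=c\prod(z-z_k)$. The set where $|P|<\eps\|P\|_{L^\infty(I)}$ is covered by Cartan-type discs: by Cartan's lemma, outside $n$ discs with radii summing to $\le 4e\,h$ one has $\prod|z-z_k|\ge h^n$. This gives $n$ intervals $I_k$ with $\sum|I_k|\le Ch$, so $\sum|I_k|^s\le n^{1-s}(Ch)^s$. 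If $n^{1-s}(Ch)^s<m_0$, the set $A$ is not covered, and at a point $x\in A$ we get $|P(x)|\ge |c|h^n$. Comparing with $\|P\|_{L^\infty(I)}\le|c|2^n$ then yields the bound with $h\sim (m_0/n^{1-s})^{1/s}$.

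The factor $n^{(1-s)n/s}$ is not of pure exponential type $e^{Kn}$. This is the main obstacle, and I would address it in one of two ways. The first option is to replace covering by $n$ discs with covering by $\sim\log$-many dyadic groups; alternatively, one can use the Pólya/Remez form for capacity, namely $\|P\|_I\le (C/\mathrm{cap}(A))^n\|P\|_A$, together with the comparison $\mathrm{cap}(A)\ge c(s)\,\mathcal H^s_\infty(A)^{1/s}$ for the logarithmic capacity (Frostman measure on $A$ with $\mu(J)\le|J|^s$ has finite log-energy controlled by $m_0$). I would use the capacity route, which gives $K=K(s,m_0)$ cleanly. The constraint $K<\log\varrho$ may fail. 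If so, I would first use a finite chain of discs (the ``standard finite chain localization'') and apply Hadamard three-circle propagation to shrink to a subinterval of $I$ where the relevant ellipse is relatively larger. Alternatively, I would rescale so that the ellipse parameter exceeds $e^{K}$, paying only constant factors in the exponent multiplied by $B$.

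\emph{Conclusion.} With $n\sim C_1(B+1)$ as above, I would combine the two steps:
\begin{align*}
1=\|g\|_{I}\le \|P_n\|_I+\tfrac14 e^{-Kn}\le e^{Kn}\|P_n\|_A+\tfrac14\le e^{Kn}\|g\|_A+\tfrac12.
\end{align*}
This gives $\|g\|_A\ge \tfrac12 e^{-Kn}\ge e^{-C_*B}$ for $B\ge1$. For $B<1$, the hypothesis with $B$ replaced by $1$ still holds, so the case reduces to $B=1$, and the constant can be absorbed since $e^{C_*}$ is allowed. (Strictly, \eqref{RtkkL} at $B=0$ forces $g$ to be constant-modulus-like, and then the maximum principle on $U$ gives that $g$ is constant, so the inequality is trivial.)
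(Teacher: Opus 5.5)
Your proof has a genuine gap at exactly the point you flag, and neither remedy you offer closes it. Your polynomial ingredients are fine: the Chebyshev approximation on $\mathcal E_\varrho$, and Bernstein--Walsh combined with $\operatorname{cap}(A)\ge c(s)m_0^{1/s}$ via Frostman, after replacing $A$ by $\overline A$.

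\textbf{The compatibility condition can fail.} The combination step needs $n(\log\varrho-K)>B+O(1)$, that is, $\log\varrho>K$. The lemma must hold for \emph{every} connected neighbourhood $U$. For a thin $U$, $\log\varrho$ is close to $0$. On the other side, your $K=\log(1/\operatorname{cap}A)$ is at least $\log 4$. No polynomial Remez exponent can be below $\sup_I g_A$, and for $A=[0,m_0^{1/s}]$ this is about $\log 4+\frac1s\log\frac1{m_0}$. No choice of $n$ repairs this.

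\textbf{Rescaling alone does nothing.} The Bernstein parameter of the pair $(I,U)$ and the ratio $\operatorname{cap}(A)/|I|$ are both invariant under affine maps.

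\textbf{Localization does not obviously help either.} Restricting to a subinterval $J$ of length $\ell$ raises the parameter to $\log\varrho'\approx\log(d/\ell)$, where $d=\operatorname{dist}(I,\partial U)$. But the set deteriorates too. Pigeonholing the content, which is the finite chain localization, only gives $\mathcal H^s_\infty(A\cap J)\gtrsim m_0\ell$. After rescaling this is relative content $\gtrsim m_0\ell^{1-s}$. Through $\operatorname{cap}\ge c(s)(\mathcal H^s_\infty)^{1/s}$ you then only get $K'\lesssim\frac1s\log\frac{C}{m_0}+\frac{1-s}{s}\log\frac1\ell$. For $s\le\frac12$ the coefficient $\frac{1-s}{s}$ is at least $1$, so with these tools $K'<\log\varrho'$ cannot be verified at any scale. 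If you instead pick a scale adapted to $A$ rather than to $(s,m_0,U)$, you lose uniformity: the three-circle propagation from $J$ back to $I$ has an exponent that degenerates as $\ell\to0$. As proposed, the argument does not prove the lemma.

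\textbf{The missing idea.} You need to avoid an approximation error that has to beat the Remez loss. The paper proves the interpolation inequality \eqref{glointer} directly:
$\|g\|_{L^\infty(I)}\le\|g\|_{L^\infty(A)}^{\alpha}\|g\|_{L^\infty(U)}^{1-\alpha}$ with $\alpha=\alpha(s,m_0,U)$.
It does this in two steps:
\begin{enumerate}
\item Apply Zhu's local estimate \eqref{eq:Zhu-uniform-form} on a disc $B(x_*,\rho)$ in which $A$ has content at least $m_0/(2^sN)$.
\item Run a Hadamard three-circle chain of bounded length $\lceil 2/\rho\rceil$ to reach all of $I$.
\end{enumerate}
The hypothesis $\|g\|_U\le e^B\|g\|_I$ then gives $C_*=(1-\alpha)/\alpha$. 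Local estimates of this type do not use approximation. They factor off the zeros of $g$ in a disc, whose number Jensen's formula controls by the doubling exponent. A Cartan/Remez bound is applied to the polynomial factor (your capacity bound would serve), and Harnack to the zero-free factor. No condition linking the size of $U$ to $K$ ever appears. Alternatively, you could feed your capacity lower bound into the two-constants theorem via a lower bound on the harmonic measure of $A$ at points of $I$.

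\textbf{A smaller issue with small $B$.} The statement has no prefactor: it requires $e^{C_*B}$ for every $B\ge0$. The interpolation form gives exactly that. Your route gives $2e^{K\lceil C_1(B+1)\rceil}$, which is fine for $B\ge1$. For $0<B<1$, reducing to $B=1$ only yields $e^{C_*}$, not $e^{C_*B}$, so the constant cannot be absorbed as you claim. This is harmless for Lemma~\ref{scalH}, where $B_N\ge\log 2$, but it is a gap for the statement as written.
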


\begin{proof}
We first recall the precise form of the holomorphic propagation
estimate that will be used. For $E\subset\mathbb C$, denote by
$$
\mathscr H^{s}_{\mathbb C}(E)
 :=
 \inf\left\{
       \sum_{j=1}^{\infty}r_{j}^{s}
       :
       E\subset\bigcup_{j=1}^{\infty}B(z_{j},r_{j})
     \right\}
$$
the Hausdorff content used in \cite[Proposition~2.3]{Z2024}.
That proposition, together with its proof, implies the following
uniform form: for every $s>0$ and $h>0$, there exists
$$
\alpha_{\mathrm Z}=\alpha_{\mathrm Z}(s,h)\in(0,1)
$$
such that, whenever $0<R<1/4$,
$$
G\subset B(z_{0},R),
\qquad
\mathscr H^{s}_{\mathbb C}(G)\geq h,
$$
and $F$ is holomorphic in $B(z_{0},5R)$, one has
\begin{equation}
\label{eq:Zhu-uniform-form}
\|F\|_{L^\infty(B(z_{0},R))}
 \leq
 \|F\|_{L^\infty(G)}^{\alpha_{\mathrm Z}}
 \|F\|_{L^\infty(B(z_{0},4R))}^{1-\alpha_{\mathrm Z}}.
\end{equation}
Indeed, \cite{Z2024}'s stated exponent is $1/(1+C_{\mathrm Z})$.
Moreover, the proof depends on the observation set through the
factor
$$
\left(\frac{20}{\mathscr H^{s}_{\mathbb C}(G)}\right)^{N/s}.
$$
Hence, $C_{\mathrm Z}$, and thus
$\alpha_{\mathrm Z}$, may be chosen uniformly under the lower
bound
$\mathscr H^{s}_{\mathbb C}(G)\geq h$.

We divide the proof into three steps.

\medskip
\noindent
\textbf{Step 1: Localization of the Hausdorff content.}

Since $I\Subset U$, we may choose
$$
0<\rho<\frac14
$$
so small that
\begin{equation}
\label{eq:rho-choice}
\overline{B(x,5\rho)}\subset U
\qquad\text{for every }x\in I.
\end{equation}
Choose $x_{1},\dots,x_{N}\in I$, where $N=N(\rho)$, such that
\begin{equation}
\label{eq:finite-cover-I}
I\subset\bigcup_{\nu=1}^{N}B(x_{\nu},\rho).
\end{equation}
Set
$$
A_{\nu}:=A\cap B(x_{\nu},\rho).
$$
By the countable subadditivity of Hausdorff content,
$$
m_{0}
 \leq \mathcal H^{s}_{\infty}(A)
 \leq \sum_{\nu=1}^{N}
       \mathcal H^{s}_{\infty}(A_{\nu}).
$$
Therefore, there exists an index $\nu_{0}\in\{1,\dots,N\}$
such that
\begin{equation}
\label{lorec}
\mathcal H^{s}_{\infty}(A_{\nu_{0}})
 \geq \frac{m_{0}}{N}.
\end{equation}

We next compare the one-dimensional content with \cite{Z2024}'s planar
content. If $E\subset\mathbb R$, then
\begin{equation}
\label{eq:content-comparison}
\mathcal H^{s}_{\infty}(E)
 \leq 2^{s}\mathscr H^{s}_{\mathbb C}(E).
\end{equation}
Indeed, if
$$
E\subset\bigcup_{j}B(z_{j},r_{j}),
$$
then each intersection
$B(z_{j},r_{j})\cap\mathbb R$ is contained in an interval of
length at most $2r_{j}$. Hence
$$
\mathcal H^{s}_{\infty}(E)
 \leq \sum_{j}(2r_{j})^{s}.
$$
Taking the infimum over all disk coverings proves
\eqref{eq:content-comparison}.

Combining \eqref{lorec} and
\eqref{eq:content-comparison}, we obtain
\begin{equation}
\label{eq:localized-complex-content}
\mathscr H^{s}_{\mathbb C}(A_{\nu_{0}})
 \geq h_{0},
\qquad
h_{0}:=\frac{m_{0}}{2^{s}N}>0.
\end{equation}

\medskip
\noindent
\textbf{Step 2: Local propagation from $A$ to a disk.}

For brevity, denote
$$
x_{*}:=x_{\nu_{0}},
\qquad
G:=A_{\nu_{0}}.
$$
Then
$$
G\subset B(x_{*},\rho),
\qquad
\mathscr H^{s}_{\mathbb C}(G)\geq h_{0}.
$$
In view of \eqref{eq:rho-choice}, the function $g$ is
holomorphic in $B(x_{*},5\rho)$. Thus
\eqref{eq:Zhu-uniform-form}, applied with
$$
R=\rho,
\qquad
\alpha_{0}:=\alpha_{\mathrm Z}(s,h_{0}),
$$
gives
\begin{equation}
\label{eq:local-Zhu}
\|g\|_{L^\infty(B(x_{*},\rho))}
 \leq
 \|g\|_{L^\infty(G)}^{\alpha_{0}}
 \|g\|_{L^\infty(B(x_{*},4\rho))}^{1-\alpha_{0}}.
\end{equation}
Since $G\subset A$ and $B(x_{*},4\rho)\subset U$, it follows
that
\begin{equation}
\label{eq:local-bound}
q
 :=
 \|g\|_{L^\infty(B(x_{*},\rho))}
 \leq
 m^{\alpha_{0}}M^{1-\alpha_{0}},
\end{equation}
where
$$
m:=\|g\|_{L^\infty(A)},
\qquad
M:=\|g\|_{L^\infty(U)}.
$$

\medskip
\noindent
\textbf{Step 3: Propagation from the local disk to the entire
interval.}

We claim that there exists
$$
\beta=\beta(U)>0
$$
such that
\begin{equation}
\label{eq:disk-to-I}
\|g\|_{L^\infty(I)}
 \leq q^{\beta}M^{1-\beta}.
\end{equation}

Fix $x\in I$. Choose points
$$
c_{0},c_{1},\dots,c_{\ell}\in I
$$
such that
$$
c_{0}=x_{*},
\qquad
c_{\ell}=x,
\qquad
|c_{j}-c_{j-1}|\leq\frac{\rho}{2},
$$
and
\begin{equation}
\label{eq:chain-length}
\ell\leq L,
\qquad
L:=\left\lceil\frac{2}{\rho}\right\rceil.
\end{equation}
Such a chain is obtained, for example, by dividing the segment
joining $x_{*}$ to $x$ into sufficiently many equal parts.

For $j=0,\dots,\ell$, set
$$
M_{j}:=\|g\|_{L^\infty(B(c_{j},\rho))}.
$$
Since
$$
B(c_{j},\rho/2)\subset B(c_{j-1},\rho),
$$
Applying Hadamard's three-circle theorem to the center $c_j$
with radii
$
\frac{\rho}{2},\rho,2\rho,
$
yields
\begin{align}
M_{j}
&\leq
 \|g\|_{L^\infty(B(c_{j},\rho/2))}^{1/2}
 \|g\|_{L^\infty(B(c_{j},2\rho))}^{1/2}
\nonumber\\
&\leq M_{j-1}^{1/2}M^{1/2}.
\label{eq:one-chain-step}
\end{align}
Here $B(c_{j},2\rho)\subset U$ follows from
\eqref{eq:rho-choice}. Iterating \eqref{eq:one-chain-step}, and
using $M_{0}=q$,  one obtains 
\begin{align*}
M_{j}
 \leq
 q^{2^{-j}}M^{1-2^{-j}},
\qquad 0\leq j\leq\ell.
\end{align*}
Since $q\leq M$, $\ell\leq L$, and set
$$
\beta:=2^{-L},
$$
we obtain
$$
|g(x)|
 \leq M_{\ell}
 \leq q^{2^{-\ell}}M^{1-2^{-\ell}}
 \leq q^{\beta}M^{1-\beta}.
$$
Taking the supremum over $x\in I$ proves
\eqref{eq:disk-to-I}.

Combining \eqref{eq:disk-to-I} with
\eqref{eq:local-bound}, we find
\begin{align}
\|g\|_{L^\infty(I)}
&\leq
 \left(m^{\alpha_{0}}M^{1-\alpha_{0}}\right)^{\beta}
 M^{1-\beta}
\nonumber\\
&=
 m^{\alpha}M^{1-\alpha},
\label{glointer}
\end{align}
where
$$
\alpha:=\alpha_{0}\beta\in(0,1).
$$
The number $\alpha$ depends only on $s$, $m_{0}$, and $U$.

Finally, let
$$
K:=\|g\|_{L^\infty(I)}.
$$
If $K=0$, there is nothing to prove. Assume $K>0$. From the
hypothesis
$$
M\leq e^{B}K
$$
and \eqref{glointer}, we obtain
$$
K
 \leq m^{\alpha}(e^{B}K)^{1-\alpha}.
$$
Thus
$$
K^{\alpha}
 \leq e^{(1-\alpha)B}m^{\alpha},
$$
and consequently
$$
K
 \leq
 \exp\left(\frac{1-\alpha}{\alpha}B\right)m.
$$
The conclusion follows by setting
$$
C_*:=\frac{1-\alpha}{\alpha}.
$$
Since $\alpha$ depends only on $s$, $m_{0}$, and $U$, so
does $C_*$.
\end{proof}

\begin{remark}
The preceding proof shows that the fixed domain estimate is a
consequence of Zhu's local disk estimate and Hadamard's
three-circle theorem.
\end{remark}

\subsection{Hilbert-valued version of propagation of  smallness}

We first prove  a scalar Hausdorff propagation  of  smallness result.

\begin{lemma} 
\label{scalH}
Let $0<s\leq 1$, $R>0$, and $m>0$. Suppose that
\begin{align}
E_j\subset [3Rj,3Rj+R],
\qquad
\mathcal H^s_\infty(E_j)\geq m,
\qquad j\in\mathbb Z.
\end{align}
Then there exist constants $C_0,C_1>0$, depending only on
$R$, $m$, and $s$, such that, for every $N\geq 1$ and every
nonnegative function $U\in L^1(\mathbb R)$ satisfying
\begin{align}
\operatorname{supp}\widehat U\subset[-N,N],
\end{align}
one has
\begin{equation}
\int_{\mathbb R}U(x)\,dx
\leq
C_0 e^{C_1N}
\sum_{j\in\mathbb Z}\sup_{x\in E_j}U(x).
\label{eq:scalar-hausdorff-remez}
\end{equation}
Here $U$ is identified with its continuous representative.
\end{lemma}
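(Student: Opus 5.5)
The plan is to combine a Bernstein-type entire function argument with Lemma \ref{hkey}, working block by block on intervals of length comparable to $R$. Since $U\in L^1$, $U\ge 0$ and $\supp\widehat U\subset[-N,N]$, the Paley--Wiener theorem shows that $U$ extends to an entire function of exponential type $N$, namely
\begin{align*}
U(x+iy)=\frac1{\sqrt{2\pi}}\int_{-N}^{N}e^{i(x+iy)\xi}\widehat U(\xi)\,d\xi .
\end{align*}
Because $\|\widehat U\|_\infty\le (2\pi)^{-1/2}\|U\|_{L^1}$, this gives $|U(x+iy)|\le C N e^{N|y|}\|U\|_{L^1}$.

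For each $j$, set $I_j=[3Rj-R,3Rj+2R]$, which has length $3R$ and contains $[3Rj,3Rj+R]$ together with a margin $R$ on each side. The intervals $I_j$ are pairwise disjoint and $\bigcup_j I_j=\R$, up to endpoints, so
\begin{align*}
\int_\R U=\sum_j\int_{I_j}U\le 3R\sum_j\|U\|_{L^\infty(I_j)}.
\end{align*}
After rescaling $I_j$ affinely onto $[0,1]$, the set $E_j$ corresponds to a subset $A_j$ with $\mathcal H^s_\infty(A_j)\ge m(3R)^{-s}=:m_0$. Fix $U_0=\{z:\dist(z,[0,1])<1\}$; in the original coordinates this is the $3R$-neighbourhood $\widetilde U_j$ of $I_j$. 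Lemma \ref{hkey} then gives $\|U\|_{L^\infty(I_j)}\le e^{C_*B_j}\sup_{E_j}U$, provided that $\|U\|_{L^\infty(\widetilde U_j)}\le e^{B_j}\|U\|_{L^\infty(I_j)}$.

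The main obstacle is that the doubling exponent $B_j$ need not be uniformly $O(N)$ on every block: a function of exponential type can be much smaller on some block than on the nearby complex neighbourhood. I would handle this with the standard good/bad block dichotomy, as in Kovrijkine's proof of Logvinenko--Sereda. Call $I_j$ \emph{good} if $\int_{\widetilde I_j}U\le K\int_{I_j}U$, where $\widetilde I_j$ is the union of $I_j$ with its neighbouring blocks within distance $\sim 6R$, and $K$ is a large constant depending only on the overlap count. Summing over all blocks shows that bad blocks carry at most half of $\int_\R U$, so $\int_\R U\le 2\sum_{\text{good}}\int_{I_j}U$.

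On a good block, the doubling bound $B_j\le C(1+N)$ should follow from two facts. First, the complex sup over $\widetilde U_j$ is controlled by $e^{CNR}$ times the $L^1$ norm of $U$ on a real neighbourhood, for instance $\widetilde I_j$. This is a local Bernstein/Plancherel--P\'olya estimate: apply the subharmonicity of $|U|$ on disks of radius comparable to $R$, using the bound $|U(x+iy)|\lesssim e^{N|y|}$ times a local mean along the real line. The localization is the technical point, and I would carry it out via a Poisson/Phragm\'en--Lindel\"of estimate in a strip of width $3R$. Second, since $I_j$ is good, that local $L^1$ norm is at most $K\int_{I_j}U\le 3RK\|U\|_{L^\infty(I_j)}$. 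If this localization turns out to be delicate, a fallback is to take $\widetilde I_j$ as the whole dyadic enlargement and to count overlaps, which changes only constants.

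Combining these pieces gives
\begin{align*}
\int_\R U\le 6R\sum_{\text{good}}e^{C_*C(1+N)}\sup_{E_j}U ,
\end{align*}
which is the claimed estimate with $C_1=C_*C$, depending only on $s$, $m$ and $R$.
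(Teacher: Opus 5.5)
There is a genuine gap. Your block decomposition, the rescaling, and the bookkeeping showing that bad blocks carry at most half of $\int_\R U$ are all fine. The central step fails, though. A block can be good in your sense ($\int_{\widetilde I_j}U\le K\int_{I_j}U$) and still have a doubling exponent $B_j$ that is not $O(1+N)$. The local estimate you want to derive this from, $\|U\|_{L^\infty(\widetilde U_j)}\le e^{CNR}\int_{\widetilde I_j}U$, is false for every fixed bounded real window $\widetilde I_j$.

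Here is a counterexample. Fix $N$ and $R$. Let $T_n$ be the Chebyshev polynomial, $T_n(\cos\theta)=\cos n\theta$. Take $a$ with $\widetilde I_0\subset[-a/2,a/2]$ (for your window $[-7R,8R]$, $a=20R$ works), set $\delta_n=N/(2n+2)$, and define
\begin{align*}
U_n(x)=T_n(x/a)^2\Bigl(\frac{\sin(\delta_n x)}{\delta_n x}\Bigr)^{2n+2}.
\end{align*}
This example has the following properties.
\begin{enumerate}[label=\textup{(\roman*)}]
\item $U_n=f_n^2\ge0$ with $f_n\in L^2$ of exponential type $N/2$. Hence $U_n\in L^1$ and $\supp\widehat{U_n}\subset[-N,N]$.
\item Since $n\delta_n^2\to0$, the sinc factor tends to $1$ uniformly on compact subsets of $\mathbb C$.
\item $\cos^2(n\arccos(x/a))$ equidistributes to density $1/2$ on $[-a/2,a/2]$. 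So $\int_{\widetilde I_0}U_n/\int_{I_0}U_n\to|\widetilde I_0|/|I_0|$, which is the overlap count. Thus $I_0$ is good for your $K$.
\item $\sup_{I_0}U_n\le1$ and $\int_{\widetilde I_0}U_n\le|\widetilde I_0|$.
\item For $a=20R$, the point $2Ri$ lies in the $3R$-neighbourhood of $I_0$. Since $\sinh u/u\ge1$, we get $|U_n(2Ri)|\ge|T_n(i/10)|^2\ge\frac14(\rho^n-\rho^{-n})^2$ with $\rho=(1+\sqrt{101})/10>1$.
\end{enumerate}
So $B_0\to\infty$ while $N$ and $R$ stay fixed. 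The underlying reason is that exponential type is a global constraint. Phragm\'en--Lindel\"of in a strip only gives $|U(x+iy)|\le e^{N|y|}\sup_\R U$. The half-plane majorant $|U(x+iy)|\le e^{N|y|}(P_{|y|}*U)(x)$ has $|x-t|^{-2}$ tails. Neither lets real data on a bounded window control the complex growth.

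The paper closes this gap with Kovrijkine's \emph{derivative-based} notion of goodness. On $I_j=[3Rj,3R(j+1)]$, with $b_j=\int_{I_j}U$, the block is good if $\int_{I_j}|U^{(\ell)}|\le(AC_BN)^\ell b_j$ for all $\ell\ge1$. The argument then runs as follows.
\begin{enumerate}[label=\textup{(\roman*)}]
\item The $L^1$ Bernstein inequality $\|U^{(\ell)}\|_{L^1}\le(C_BN)^\ell\|U\|_{L^1}$ gives $\sum_{\rm bad}b_j\le\frac12\|U\|_{L^1}$ when $A=3$.
\item On a good block, averaging $\sum_\ell|U^{(\ell)}|/(2AC_BN)^\ell$ over $I_j$ produces a point $x_j$ with $|U^{(\ell)}(x_j)|\le\frac{2b_j}{3R}(2AC_BN)^\ell$ for every $\ell$.
\item The Taylor series of the entire function $U$ at $x_j$ then gives $|U(z)|\le\frac{2b_j}{3R}e^{2AC_BN|z-x_j|}$ on all of $\mathbb C$.
\item Hence $B_j\le\log2+12AC_BRN$ uniformly over good blocks, and Lemma \ref{hkey} applies exactly as you intend.
\end{enumerate}
Control of all Taylor coefficients at a single point is exactly the information your mass-comparison criterion lacks.

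If you want to keep a Poisson-based route, goodness has to be non-local. For example, require both of the following:
\begin{enumerate}[label=\textup{(\roman*)}]
\item the supremum of $U$ over the neighbouring blocks is at most $K\sup_{I_j}U$;
\item $R^{-1}\sum_k(1+|k-j|)^{-2}\int_{I_k}U\le K\sup_{I_j}U$.
\end{enumerate}
Choose $K\sim1+NR$, via Plancherel--P\'olya and Bernstein, so that bad blocks carry at most half the mass. You would also have to actually prove the half-plane majorant.
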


\begin{proof}
The case $U=0$ is trivial. Suppose that $U$ is non-zero. 

Since  $\operatorname{supp}\widehat U\subset[-N,N]$,
\begin{equation}
U(z)
=
\frac{1}{\sqrt{2\pi}}
\int_{-N}^{N}e^{iz\xi}\widehat U(\xi)\,d\xi,
\qquad z\in\mathbb C,
\label{entir}
\end{equation}
defines an entire function. Its
restriction to $\mathbb R$ is the continuous representative of the
original $L^1$-function.

 Bernstein estimate shows that 
there exists an absolute constant $C_{\mathrm B}\geq1$ such that
\begin{equation}
\bigl\|U^{(\ell)}\bigr\|_{L^1(\mathbb R)}
\leq
(C_{\mathrm B}N)^\ell
\|U\|_{L^1(\mathbb R)},
\qquad \ell=0,1,2,\ldots.
\label{BernL}
\end{equation}

For $j\in\mathbb Z$, set
\begin{equation*}
I_j:=[3Rj,3R(j+1)]
\quad\text{and}\quad
b_j:=\int_{I_j}U(x)\,dx.
\end{equation*}
The intervals $I_j$ cover $\mathbb R$ and overlap only at their
endpoints. Hence
\begin{equation}
\sum_{j\in\mathbb Z}b_j
=
\int_{\mathbb R}U(x)\,dx.
\label{eq:sum-bj}
\end{equation}
In addition,
\begin{equation*}
b_j>0
\qquad\text{for every }j\in\mathbb Z.
\end{equation*}
Indeed, if $b_j=0$, then continuity and nonnegativity imply that
$U$ vanishes identically on $I_j$. The identity theorem applied
to the entire extension \eqref{entir} would then give
$U\equiv0$, which is contrary to our assumption.

Now, we can use the bad-good interval decomposition argument of \cite{K2001} and \cite{HWW2026} in the Lebesgue and respectively log-Hausdorff settings. 
Fix
$
A:=3.
$
We call $I_j$ \emph{good} if, for every integer $\ell\geq1$,
\begin{equation}
\int_{I_j}|U^{(\ell)}(x)|\,dx
\leq
(A C_{\mathrm B}N)^\ell b_j.
\label{goodin}
\end{equation}
Otherwise $I_j$ is called \emph{bad}. For each bad interval, choose
an integer $\ell(j)\geq1$ for which
\eqref{goodin} fails. Then using
\eqref{BernL}, we obtain
\begin{align}
\sum_{j:\,I_j\ {\rm bad}}b_j
&\leq
\sum_{\ell=1}^{\infty}
(A C_{\mathrm B}N)^{-\ell}
\sum_{\substack{j:\,I_j\ {\rm bad}\\ \ell(j)=\ell}}
\int_{I_j}|U^{(\ell)}(x)|\,dx
\nonumber\\
&\leq
\sum_{\ell=1}^{\infty}
(A C_{\mathrm B}N)^{-\ell}
\bigl\|U^{(\ell)}\bigr\|_{L^1(\mathbb R)}
\nonumber\\
&\leq
\sum_{\ell=1}^{\infty}A^{-\ell}
\|U\|_{L^1(\mathbb R)}
\nonumber\\
&=
\frac{1}{A-1}\|U\|_{L^1(\mathbb R)}
=
\frac12\|U\|_{L^1(\mathbb R)}.
\label{bdgood}
\end{align}
It follows from \eqref{eq:sum-bj} and
\eqref{bdgood} that
\begin{equation}
\int_{\mathbb R}U(x)\,dx
\leq
2\sum_{j:\,I_j\ {\rm good}}b_j.
\label{goointer}
\end{equation}

We now derive a uniform holomorphic growth estimate on every good
interval. Fix a good interval $I_j$, and define
\begin{align*}
G_j(x)
:=
\sum_{\ell=0}^{\infty}
\frac{|U^{(\ell)}(x)|}
     {(2A C_{\mathrm B}N)^\ell},
\qquad x\in I_j,
\end{align*}
where the term corresponding to $\ell=0$ is $U(x)$. By monotone
convergence and \eqref{goodin},
\begin{align*}
\int_{I_j}G_j(x)\,dx
&=
b_j+
\sum_{\ell=1}^{\infty}
(2A C_{\mathrm B}N)^{-\ell}
\int_{I_j}|U^{(\ell)}(x)|\,dx
\nonumber\\
&\leq
b_j+
\sum_{\ell=1}^{\infty}2^{-\ell}b_j
=
2b_j.
\end{align*}
Since $|I_j|=3R$, there exists a point $x_j\in I_j$ such that
\begin{equation*}
G_j(x_j)
\leq
\frac{2b_j}{3R}.
\end{equation*}
Thus  for every integer $\ell\geq0$, one obtains 
\begin{equation}
|U^{(\ell)}(x_j)|
\leq
\frac{2b_j}{3R}
(2A C_{\mathrm B}N)^\ell.
\label{deriboundxi}
\end{equation}

Because $U$ is entire, its Taylor series centered at $x_j$
converges throughout $\mathbb C$. Thus
\eqref{deriboundxi} implies that, for every
$z\in\mathbb C$,
\begin{align}
|U(z)|
&\leq
\sum_{\ell=0}^{\infty}
\frac{|U^{(\ell)}(x_j)|}{\ell!}|z-x_j|^\ell
\nonumber\\
&\leq
\frac{2b_j}{3R}
\sum_{\ell=0}^{\infty}
\frac{(2A C_{\mathrm B}N|z-x_j|)^\ell}{\ell!}
\nonumber\\
&=
\frac{2b_j}{3R}
\exp\!\bigl(2A C_{\mathrm B}N|z-x_j|\bigr).
\label{comgrowth}
\end{align}

Let
\begin{equation*}
\Omega_*:=
\left\{
z\in\mathbb C:
\operatorname{dist}(z,[0,1])<1
\right\}.
\end{equation*}
This is a fixed connected open neighborhood of $[0,1]$. Define
\begin{equation*}
g_j(z):=U(3Rj+3Rz),
\qquad
\widetilde E_j
:=
\frac{E_j-3Rj}{3R}.
\end{equation*}
Then $g_j$ is entire and
\begin{align*}
\widetilde E_j\subset[0,1/3]\subset[0,1].
\end{align*}
By the scaling property of Hausdorff content,
\begin{align*}
\mathcal H^s_\infty(\widetilde E_j)
=
(3R)^{-s}\mathcal H^s_\infty(E_j)
\geq
m_*,
\qquad
m_*:=\frac{m}{(3R)^s}>0.
\end{align*}

Set
\begin{align*}
y_j:=\frac{x_j-3Rj}{3R}\in[0,1].
\end{align*}
If $z\in\Omega_*$, then there exists $t\in[0,1]$ such that
$|z-t|<1$. Therefore
\begin{align*}
|z-y_j|
\leq
|z-t|+|t-y_j|
<2,
\end{align*}
and hence
\begin{align*}
|3Rj+3Rz-x_j|
=
3R|z-y_j|
<6R.
\end{align*}
Using \eqref{comgrowth}, together with
\begin{align*}
b_j
=
\int_{I_j}U(x)\,dx
\leq
3R\sup_{x\in I_j}U(x),
\end{align*}
we obtain
\begin{align*}
\sup_{z\in\Omega_*}|g_j(z)|
&\leq
\frac{2b_j}{3R}
\exp(12A C_{\mathrm B}RN)
\nonumber\\
&\leq
2\exp(12A C_{\mathrm B}RN)
\sup_{x\in I_j}U(x)
\nonumber\\
&=
\exp(B_N)
\sup_{x\in[0,1]}|g_j(x)|,
\end{align*}
where
\begin{equation*}
B_N:=\log2+12A C_{\mathrm B}RN.
\end{equation*}

Let
\begin{align*}
C_*:=C(s,m_*,\Omega_*)
\end{align*}
be the constant in Lemma~\ref{hkey}. Applying Lemma~\ref{hkey} to $g_j$,
with observation set $\widetilde E_j$, complex neighborhood
$\Omega_*$, and growth parameter $B_N$, gives
\begin{align}
\sup_{x\in I_j}U(x)
&=
\sup_{x\in[0,1]}|g_j(x)|
\nonumber\\
&\leq
\exp(C_*B_N)
\sup_{x\in\widetilde E_j}|g_j(x)|
\nonumber\\
&=
2^{C_*}
\exp(12A C_*C_{\mathrm B}RN)
\sup_{x\in E_j}U(x).
\label{localgood}
\end{align}
This estimate is uniform in $j$.

Finally, combining \eqref{goointer} with
\eqref{localgood}, we conclude that
\begin{align*}
\int_{\mathbb R}U(x)\,dx
&\leq
2\sum_{j:\,I_j\ {\rm good}}b_j
\\
&\leq
6R\sum_{j:\,I_j\ {\rm good}}
\sup_{x\in I_j}U(x)
\\
&\leq
6R\,2^{C_*}
\exp(12A C_*C_{\mathrm B}RN)
\sum_{j\in\mathbb Z}\sup_{x\in E_j}U(x).
\end{align*}
Thus \eqref{eq:scalar-hausdorff-remez} holds with
\begin{align*}
C_0:=6R\,2^{C_*},
\qquad
C_1:=12A C_*C_{\mathrm B}R.
\end{align*}
Since
\begin{align*}
m_*=\frac{m}{(3R)^s},
\end{align*}
the constant $C_*$, and therefore $C_0$ and $C_1$, depends only
on $R$, $m$, and $s$. This completes the proof.
\end{proof}

\begin{remark}
Lemma \ref{scalH} can also be proved by other Remez type estimates without using Lemma~\ref{hkey}. Lemma~\ref{hkey} is of independent interest and  may be applied to other problems. 
\end{remark}

Now, we extend Lemma \ref{scalH} to the Hilbert-valued form. 
\begin{lemma} \label{lemHH}
Let $H$ be a separable Hilbert space.  Suppose $F\in L^2(\mathbb R;H)$ and its distributional Fourier transform in the $x$ variable is supported in $[-M,M]$.  Then
\begin{equation}\label{HH}
    \int_\mathbb R \|F(x)\|_H^2\,dx
    \le C_0 \exp\{C_1(1+M){}\}
    \sum_j\sup_{x\in E_j}\|F(x)\|_H^2.
\end{equation}
\end{lemma}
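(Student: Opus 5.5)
The plan is to reduce the Hilbert-valued estimate to the scalar one, Lemma \ref{scalH}, applied to the nonnegative function
\begin{align*}
U(x):=\|F(x)\|_H^2 .
\end{align*}
First, $U\in L^1(\mathbb R)$ with $\int_{\mathbb R}U\,dx=\|F\|_{L^2(\mathbb R;H)}^2$.

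Second, I will check that $\widehat U$ is supported in $[-2M,2M]$. Fix an orthonormal basis $\{e_k\}$ of the separable space $H$ and write $F_k(x)=\langle F(x),e_k\rangle_H$. Each $F_k\in L^2(\mathbb R)$, and pairing the distributional Fourier transform of $F$ with $e_k$ shows $\supp\widehat{F_k}\subset[-M,M]$. Then
\begin{align*}
U=\sum_k|F_k|^2, \qquad \widehat{|F_k|^2}=(2\pi)^{-1/2}\,\widehat{F_k}*\widehat{\overline{F_k}},
\end{align*}
and $\supp\widehat{\overline{F_k}}\subset[-M,M]$, so each $\widehat{|F_k|^2}$ is supported in $[-2M,2M]$. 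The series $\sum_k|F_k|^2$ converges to $U$ in $L^1$, by monotone convergence and $\sum_k\int|F_k|^2=\|F\|^2_{L^2(\mathbb R;H)}$. Hence the Fourier transforms converge uniformly, and $\supp\widehat U\subset[-2M,2M]$.

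Third, I will identify the continuous representative. Each $F_k$ extends to an entire function of exponential type $M$ (Paley--Wiener). The partial sums $\sum_{k\le K}|F_k|^2$ are continuous, and so is $x\mapsto F(x)=\sum_k F_k(x)e_k$. The cleanest route is to note that $F=\mathcal F^{-1}\widehat F$ with $\widehat F\in L^2([-M,M];H)\subset L^1([-M,M];H)$, so $F$ is given pointwise by a Bochner integral, which is continuous, indeed analytic, in $x$. Then $\|F(x)\|_H^2$ is continuous and coincides everywhere with the continuous representative of $U$. This is the point needing most care: $\sup_{x\in E_j}\|F(x)\|_H^2$ refers to pointwise values on a Lebesgue-null set, so they must be the continuous ones. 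In the application, $F(x)=\Tr_xS(\cdot)g|_{I_T}$, and I would remark that this agrees with the Bochner formula on Schwartz data and hence by density.

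Finally, apply Lemma \ref{scalH} to $U$ with $N=\max(2M,1)$. This gives
\begin{align*}
\int_{\mathbb R}\|F(x)\|_H^2\,dx\le C_0e^{C_1\max(2M,1)}\sum_j\sup_{x\in E_j}\|F(x)\|_H^2,
\end{align*}
and since $\max(2M,1)\le 2(1+M)$, renaming $2C_1$ as $C_1$ yields \eqref{HH}.
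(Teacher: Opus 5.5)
Your proof is correct and follows essentially the same route as the paper: you reduce to Lemma \ref{scalH} applied to $\|F(x)\|_H^2=\sum_k|F_k(x)|^2$, whose Fourier transform lies in $[-2M,2M]$. The only difference is the order of limits. The paper applies Lemma \ref{scalH} to the finite-rank truncations $\|\Pi_KF\|_H^2$ and then lets $K\to\infty$, using $\|\Pi_KF(x)\|_H\le\|F(x)\|_H$ on the right and monotone convergence on the left. You instead pass to the limit first to verify $\supp\widehat U\subset[-2M,2M]$, and you also make explicit the choice of continuous representative, which the paper leaves implicit.
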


\begin{proof}
Choose a sequence of finite rank orthogonal projections $\Pi_K$ on $H$ so that $\Pi_{K}$ converges strongly to the identity as $K\to \infty$.  Write $F_K=\Pi_KF$.  Then the Fourier transform of $F_K$ in the $x$ variable is still supported in $[-M,M]$. After choosing an orthonormal basis $\{e_{\ell }\}^{K}_{\ell=1 }$ of $\Pi_K H$, define the scalar-valued function  $U_K(x)$ by 
\begin{align*}
    U_K(x):=\|F_K(x)\|_H^2=\sum_{\ell=1}^K |F_{K,\ell}(x)|^2, \mbox{ }F_{K,\ell }=  \langle \Pi_{K} F,e_{\ell}\rangle.
\end{align*}
Note that it has Fourier support contained in $[-2M,2M]$.  Applying Lemma~\ref{scalH} to $U_K$ yields
\begin{align*}
    \int_\mathbb R\|F_K(x)\|_H^2\,dx
    \le C\exp\{C(1+M){}\}
       \sum_j\sup_{x\in E_j}\|F_K(x)\|_H^2.
\end{align*}
Since $\|F_K(x)\|_H\le\|F(x)\|_H$, the right hand side is bounded by the same expression with $F$.  Letting $K\to\infty$ and using dominate convergence theorem on the left gives \eqref{HH}.
\end{proof}

\section{Gevrey time filters}\label{gevrey}

We need compactly supported cutoff functions whose Fourier transforms have sub-exponential decay.  Fix once and for all a Gevrey order
\begin{align}\label{gevord}
    1<\sigma<3.
\end{align}
Appendix B shows there exists a function   $\chi\in C_c^\infty(\R)$ of Gevrey class $\sigma$, with
\begin{equation}\label{eq:chi-def}
    \chi\ge 0,
    \qquad
    \chi(\tau)=1\quad(|\tau|\le1),
    \qquad
    \chi(\tau)=0\quad(|\tau|\ge2).
\end{equation}
By the ultradifferentiable Paley-Wiener theorem (e.g. \cite{{R1992}}), its inverse Fourier transform $K=\check\chi$  satisfies
\begin{equation}\label{gekernl}
    |K(t)|\le C e^{-c|t|^{1/\sigma}}.
\end{equation}
Recall $D=-i\partial_x$. For $N\ge1$ set
\begin{equation}\label{eq:QNPN}
   \widetilde{P}_N=\chi(D^3/N^3),
    \qquad
    \widetilde{Q}_N=I- \widetilde{P}_N .
\end{equation}
Hence $\widetilde{P}_N$ is a smooth low frequency cutoff, and ${\widetilde{Q}}_N$ is a smooth high frequency cutoff respectively.  Let $D_t=-i\partial_t$. On the time side define 
\begin{align*}
   \widetilde{\mathcal{P}}_{N}=\chi(D_t/N^3),
    \qquad
   \widetilde{\mathcal{Q}}_{N}=I- \widetilde{\mathcal{P}}_{N}.
\end{align*}

\begin{lemma}\label{lefc}
For every $x\in\mathbb R$, every $f\in L^2(\mathbb R)$ and every bounded time interval $J$,
\begin{equation}\label{eqtifilter}
   \widetilde{\mathcal{P}}_{N}\bigl(\Tr_xS(\cdot)f\bigr)=\Tr_xS(\cdot)\widetilde{P}_Nf,
    \qquad
   \widetilde{\mathcal{Q}}_{N}\bigl(\Tr_xS(\cdot)f\bigr)=\Tr_xS(\cdot)\widetilde{Q}_Nf
\end{equation}
in $L^2_{loc}$.
\end{lemma}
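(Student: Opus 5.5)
The plan is to verify \eqref{eqtifilter} first for Schwartz data and a function of time defined on all of $\R$, and then pass to $L^2$ data by density and continuity. The convention must be fixed first: $\Tr_xS(\cdot)f$ is defined on every bounded interval by Lemma~\ref{sing}, hence as an element of $L^2_{\rm loc}(\R_t)$, with bound $C_{|J|}\|f\|_2$ on each $J$. The time multiplier $\widetilde{\mathcal P}_N=\chi(D_t/N^3)$ is convolution in $t$ with the kernel $N^3K(N^3t)$ (up to the $\sqrt{2\pi}$ normalization). This kernel is integrable by \eqref{gekernl}, so the convolution makes sense for functions of at most polynomial growth in $L^2_{\rm loc}$. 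Note that $\|\Tr_xS(\cdot)f\|_{L^2(J)}\le C(1+|J|)^{1/2}\|f\|_2$ for intervals of length $|J|$; this follows from Proposition~\ref{psa} and Lemma~\ref{lemc}, or from Lemma~\ref{sing} and time translation invariance. Hence the trace is uniformly locally $L^2$, and the convolution with a rapidly decaying $L^1$ kernel is well defined and bounded on $L^2_{\rm uloc}$.

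\textbf{Step 1 (Schwartz data).} Take $f\in\mathcal S(\R)$ with $\widehat f\in C_c^\infty(\R\setminus\{0\})$, or more generally $f\in\mathcal S$. Then
\[
u_x(t):=S(t)f(x)=\frac1{\sqrt{2\pi}}\int_\R e^{it\xi^3}e^{ix\xi}\widehat f(\xi)\,d\xi .
\]
Change variables to $\tau=\xi^3$ on $\xi>0$ and $\xi<0$, giving $d\xi=\tfrac13|\tau|^{-2/3}d\tau$. This exhibits $u_x$ as the inverse time Fourier transform of the function
\[
\widehat{u_x}(\tau)=\tfrac13|\tau|^{-2/3}e^{ix\tau^{1/3}}\widehat f(\tau^{1/3}),
\]
which lies in $L^1(\R_\tau)$, since $|\tau|^{-2/3}$ is integrable near $0$ and $\widehat f$ decays rapidly. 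Applying $\chi(D_t/N^3)$ multiplies this by $\chi(\tau/N^3)=\chi(\xi^3/N^3)$. Undoing the change of variables gives exactly $S(t)\widetilde P_Nf(x)$. Rather than invoke tempered-distribution Fourier theory in $t$, it is cleaner to compute directly:
\[
\int K_N(t-s)\,u_x(s)\,ds=\frac1{\sqrt{2\pi}}\int e^{ix\xi}\widehat f(\xi)\Bigl(\int K_N(t-s)e^{is\xi^3}ds\Bigr)d\xi .
\]
The inner integral equals $\chi(\xi^3/N^3)e^{it\xi^3}$ by definition of $K=\check\chi$. Fubini is justified because $K_N\in L^1$ and $\widehat f\in L^1$. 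The identity for $\widetilde{\mathcal Q}_N$ follows by subtraction.

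\textbf{Step 2 (density).} For general $f\in L^2$, take Schwartz $f_k\to f$ in $L^2$. On a bounded $J$, $\Tr_xS(\cdot)\widetilde P_Nf_k\to\Tr_xS(\cdot)\widetilde P_Nf$ in $L^2(J)$ by Lemma~\ref{sing}, since $\widetilde P_N$ is an $L^2$ contraction. On the other side, split $K_N=K_N\mathbf 1_{|t|\le \ell}+K_N\mathbf 1_{|t|>\ell}$. The local piece converges in $L^2(J)$ by Lemma~\ref{sing} on the enlarged interval $J+[-\ell,\ell]$. The tail piece is bounded on $J$ by
\[
\sum_{k'\ge \ell}\sup_{|s|\in[k',k'+1]}|K_N(s)|\cdot C\|f_k-f\|_2 ,
\]
using the uniform local bound on unit intervals. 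This is summable by \eqref{gekernl}. Hence $\widetilde{\mathcal P}_N\Tr_xS(\cdot)f_k\to\widetilde{\mathcal P}_N\Tr_xS(\cdot)f$ in $L^2(J)$, and the identity passes to the limit.

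\textbf{Main obstacle.} The only delicate point is making sense of the time convolution acting on $\Tr_xS(\cdot)f$, which is not in $L^2(\R_t)$ globally. What handles it is the uniform unit-interval bound from Lemma~\ref{sing} combined with translation invariance: $\Tr_xS(\cdot+t_0)f=\Tr_xS(\cdot)S(t_0)f$, and $S(t_0)$ is unitary. Together with the integrability and decay of $K$, this is all that is needed.
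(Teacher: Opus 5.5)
Your proof is correct and follows essentially the same route as the paper. You first verify the identity for Schwartz data via the dispersion relation $\tau=\xi^3$; your explicit Fubini computation of $\int K_N(t-s)e^{is\xi^3}\,ds$ is a cleaner version of the paper's remark that the space-time Fourier transform lives on that curve. You then pass to $L^2$ data by density, using the uniform unit-interval trace bound from Lemma~\ref{sing} (with time translation) and the decay of $K_N$, which is exactly the content of the paper's Lemma~\ref{loc}.
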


\begin{proof}
By Lemma \ref{sing}, there exist  $a,b>0$ such that 
\begin{align*}
  \|\Tr_{x}S(\cdot)f\|^2_{L^2(J)}\le (a|J|+b)\|f\|^2_{2}, \qquad \forall x\in \mathbb R. 
\end{align*}
Then Lemma \ref{loc} below shows 
\begin{align} 
&\|\widetilde{\mathcal{P}}_{N}\Tr_x S(\cdot)f\|^2_{L^2(J)}+\|\widetilde{\mathcal{Q}}_{N}\Tr_x S(\cdot)f\|^2_{L^2(J)}\nonumber\\
&\le C\left[(a|J|+b)+C(a+b)|J|\right]\|f\|^2_2, \qquad \forall x\in \mathbb R,\label{6bn}
\end{align}
for any bounded interval $J\subset \mathbb R$. 

For Schwartz $f$, the trace $\Tr_x S(\cdot )f$  is the classical point value. Since the spacetime Fourier transform of  $u:=S(t)f$ is supported in the curve $\{(\tau,\xi)\in \mathbb R^2: \tau=\xi^3\}$, applying the time multiplier $\chi(D_t/N^3)$ is the same as applying the spatial multiplier $\chi(D^3/N^3)$. Hence (\ref{eqtifilter}) holds in $L^2_{loc}$ by (\ref{6bn}) for every $x\in \mathbb  R$. 

For general $L^2$ initial data, we use the density argument. Let $f_n\to f$ in $L^2$, then $\widetilde{Q}_Nf_n\to {\widetilde Q}_Nf$ and ${\widetilde P}_Nf_n\to {\widetilde P}_Nf$ in $L^2$, and 
\begin{equation*}
   \widetilde{\mathcal{P}}_{N}\bigl(\Tr_xS(\cdot)f_n\bigr)=\Tr_xS(\cdot)\widetilde{P}_Nf_n,
    \qquad
   \widetilde{\mathcal{Q}}_{N}\bigl(\Tr_xS(\cdot)f_n\bigr)=\Tr_xS(\cdot)\widetilde{Q}_Nf_n, \qquad \forall n\in \mathbb N. 
\end{equation*}
By  (\ref{6bn}), letting $n\to \infty$, we obtain  that (\ref{eqtifilter}) holds in $L^2_{loc}$ for general $f\in L^2$.

\end{proof}


Let
\begin{align*}
    I_T=[T/3,2T/3].
\end{align*}
The following tail estimate is used for  the low and high frequency separation.

\begin{lemma}\label{lem:tail}
Let $\mathcal M_N$ be either $\widetilde{\mathcal Q}_N$ or $\widetilde{\mathcal P}_N$.  For every $T>0$ there are constants $C,c>0$, depending on $T,R$ and the cutoff, such that
\begin{equation}\label{eq:tail-est}
\begin{split}
 &\left\|\mathcal M_N\bigl(\1_{\R\setminus(0,T)}\Tr_xS(\cdot)f\bigr)
     \right\|_{\YY_E(I_T)}  
 \le C e^{-c N^{3/\sigma}}\|f\|_2,
    \qquad\forall \mbox{ } N\ge1 ,
\end{split}
\end{equation}
where  the norm $\YY_{E}(I_{T})$ is defined in \eqref{eq:blocknorm}.
\end{lemma}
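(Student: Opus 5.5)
The plan is to write $\mathcal M_N$ as a time convolution and use that, for $t\in I_T$ and $s\notin(0,T)$, one has $|t-s|\ge T/3$. Put $g_x(s):=\Tr_xS(s)f$ and $K_N(t)=N^3K(N^3t)$, so that $\widetilde{\mathcal P}_N h=K_N*h$. For $\mathcal M_N=\widetilde{\mathcal Q}_N=I-\widetilde{\mathcal P}_N$, the identity part applied to $\1_{\R\setminus(0,T)}g_x$ vanishes on $I_T\subset(0,T)$. So on $I_T$ both cases reduce, up to sign, to
\begin{align*}
\int_{\R\setminus(0,T)}K_N(t-s)\,g_x(s)\,ds,\qquad t\in I_T .
\end{align*}
By \eqref{gekernl}, on this region
\begin{align*}
|K_N(t-s)|\le CN^3e^{-c(N^3|t-s|)^{1/\sigma}},\qquad |t-s|\ge T/3 .
\end{align*}
This decay also shows that the convolution is absolutely convergent for locally $L^2$ functions of at most linear growth in the $L^2(J)$ sense. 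Lemma \ref{sing} and Lemma \ref{balg} provide exactly this growth.

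To handle the block norm $\YY_E(I_T)$, I would work with selections rather than single points. By the identity after \eqref{eq:blocknorm}, it suffices to bound $\int_{I_T}\sum_j|\mathcal M_N(\1_{\R\setminus(0,T)}g_{x_j})(t)|^2dt$ uniformly over all selections $x_j\in E_j$. For a fixed selection, $\Lambda=\{x_j\}$ is $(2R,4R)$-regular, so Proposition \ref{psa} gives the $\ell^2$-valued trace $G(s)=(g_{x_j}(s))_j$ with
\begin{align*}
\int_J\|G(s)\|_{\ell^2}^2ds\le C_R(1+|J|)\|f\|_2^2 .
\end{align*}
The coordinates agree with the single-point traces by Lemma \ref{lemc}. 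Since $\mathcal M_N$ acts componentwise, Minkowski's inequality in $\ell^2$ gives, for $t\in I_T$,
\begin{align*}
\bigl\|\mathcal M_N(\1_{\R\setminus(0,T)}G)(t)\bigr\|_{\ell^2}\le\int_{\R\setminus(0,T)}|K_N(t-s)|\,\|G(s)\|_{\ell^2}\,ds .
\end{align*}

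Next I would split $\R\setminus(0,T)$ into unit intervals $J_k=[k,k+1]$, with $k\ge \lfloor T\rfloor$ or $k\le -1$. On each $J_k$, Cauchy--Schwarz together with the admissibility bound with $|J_k|=1$ gives a contribution at most
\begin{align*}
CN^3\sup_{s\in J_k}e^{-c(N^3|t-s|)^{1/\sigma}}\,(2C_R)^{1/2}\|f\|_2 .
\end{align*}
Since $|t-s|\ge T/3$ and $|t-s|\gtrsim |k|$ for large $|k|$, the sum over $k$ is bounded by
\begin{align*}
C_TN^3e^{-c'(N^3T)^{1/\sigma}}\|f\|_2 .
\end{align*}
For instance, use $e^{-c(N^3d)^{1/\sigma}}\le e^{-\frac c2(N^3T/3)^{1/\sigma}}e^{-\frac c2 d^{1/\sigma}}$ when $d\ge T/3$ and $N\ge1$; the second factor is summable in $k$. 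Squaring, integrating over $I_T$ (a factor $T/3$), and absorbing $N^3$ into the exponential by shrinking $c$ yields a bound $C_T e^{-cN^{3/\sigma}}\|f\|_2$ that is uniform in the selection. Taking the supremum over selections then gives \eqref{eq:tail-est}.

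The main obstacle I expect is rigor in the vector-valued convolution step. One must justify that $\mathcal M_N(\1_{\R\setminus(0,T)}g_x)$ is the same object coordinatewise as the $\ell^2$-valued convolution of $G$, since $G$ is only locally $L^2$ with growth in $J$. I would settle this by the same density argument as in Lemma \ref{lefc}: first take Schwartz $f$, where everything is classical, and then pass to the limit using the uniform bound just proved.
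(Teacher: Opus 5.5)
Your proof is correct and follows the paper's argument: use the kernel bound $|K_N(t-s)|\le CN^3e^{-c(N^3|t-s|)^{1/\sigma}}$ for $t\in I_T$ and $s\notin(0,T)$ (the identity part of $\widetilde{\mathcal Q}_N$ drops out on $I_T$), split $\R\setminus(0,T)$ into unit intervals, apply Cauchy--Schwarz with unit-interval admissibility on each piece, and sum the sub-exponential factors. The only cosmetic difference is that you fix a selection $x_j\in E_j$, apply Minkowski in $\ell^2$, and only then take the supremum over selections, whereas the paper bounds each block supremum directly and invokes Lemma \ref{balg}, which is exactly that supremum over selections of Proposition \ref{psa}.
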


\begin{proof}
For $\widetilde{\mathcal P}_N$, the convolution kernel is $K_N(t)=N^3K(N^3t)$, where $K$ satisfies \eqref{gekernl}.  For $\widetilde{\mathcal Q}_N=I-\widetilde{\mathcal P}_N$, the identity part gives no contribution on $I_T$, since ${\bf 1}_{\mathbb R\setminus (0,T)}$ vanishes in $I_{T}$. Hence, the same estimate with kernel $-K_N$ applies to $\widetilde{\mathcal{Q}}_N$. 

We prove the estimate for the kernel $K_{N}$.  Decompose $\R\setminus(0,T)$ into intervals of length one,
\begin{align*}
    J_m^+=[T+m,T+m+1],\qquad J_m^-=[-m-1,-m],\qquad m=0,1,2,\dots .
\end{align*}
For $t\in I_T$ and $s\in J_m^\pm$ one has $|t-s|\ge c_T(1+m)$  with $c_T>0$.  Therefore, we see 
\begin{align*}
    \sup_{t\in I_T,\,s\in J_m^\pm}|K_N(t-s)|
    \le C_T N^3\exp\{-c_T N^{3/\sigma}(1+m)^{1/\sigma}\}
    =: \alpha_{N,m}.
\end{align*}
Let $y_x(t)=\Tr_xS(t)f$.  For each block $E_j$, using Cauchy  inequality in $s$ and the fact that $|J_m^\pm|=1$  one obtains 
\begin{align*}
\begin{split}
&\sup_{x\in E_j}
\left\|\int_{J_m^\pm}K_N(\cdot-s)y_x(s)\dd s\right\|_{L^2(I_T)} \\
&\qquad\le |I_T|^{1/2}\alpha_{N,m}
   \sup_{x\in E_j}\|y_x\|_{L^2(J_m^\pm)} .
\end{split}
\end{align*}
Taking the $\ell^2$ norm in $j$ yields
\begin{align*}
\begin{split}
&\left(\sum_j\sup_{x\in E_j}
\left\|\int_{J_m^\pm}K_N(\cdot-s)y_x(s)\dd s\right\|_{L^2(I_T)}^2\right)^{1/2} \\
&\qquad\le C_T\alpha_{N,m}
    \|S(\cdot)f\|_{\YY_E(J_m^\pm)} .
\end{split}
\end{align*}
Minkowski  inequality then gives
\begin{align*}
\begin{split}
&\left\|\mathcal M_N(\1_{\R\setminus(0,T)}\Tr_xS(\cdot)f)\right\|_{\YY_E(I_T)} \\
&\qquad\le C_T\sum_{m\ge0}\alpha_{N,m}
   \bigl(\|S(\cdot)f\|_{\YY_E(J_m^+)}+
         \|S(\cdot)f\|_{\YY_E(J_m^-)}\bigr).
\end{split}
\end{align*}
By Lemma~\ref{balg}, each block norm on the intervals $J_m^\pm$ is bounded by $A_{1,R}^{1/2}\|f\|_2$.  Hence
\begin{align*}
    \left\|\mathcal M_N(\1_{\R\setminus(0,T)}\Tr_xS(\cdot)f)\right\|_{\YY_E(I_T)}
    \le C_{T,R}\left(\sum_{m\ge0}N^3e^{-c_{T}N^{3/\sigma}(1+m)^{1/\sigma}}\right)\|f\|_2.
\end{align*}
The sum is bounded by $C_{T,R}e^{-c_T'N^{3/\sigma}}$ after changing constants, because the exponential term dominates the polynomial factor $N^3$.  This proves \eqref{eq:tail-est}.
\end{proof}

The following lemma serves as a supplement to the proof of Lemma \ref{lefc}. 
\begin{lemma}
\label{loc} 
Let $\chi\in C_c^\infty(\mathbb R)$, and define
\begin{align*}
   \widetilde{\mathcal{P}}_{N}=\chi(D_t/N^3),\qquad N\ge1.
\end{align*}
Suppose that $f\in L^2_{\mathrm{loc}}(\mathbb R)$ satisfies
\begin{align}\label{HmL}
    \int_J |f(t)|^2\,dt\le a|J|+b
\end{align}
for every bounded interval $J\subset\mathbb R$, where $a,b\ge0$. Then
there exists a constant $C_N>0$, depending only on $N$ and
$\chi$, such that
\begin{align}\label{R544}
    \|{\widetilde {\mathcal P}}_N f\|_{L^\infty(\mathbb R)}
    \le C_N(a+b)^{1/2}.
\end{align}
And for any bounded interval $J\subset \mathbb R$ there exists $C_{N}>0$ such that 
\begin{align}
    \|{\widetilde{\mathcal Q}}_N f\|_{L^2(J)}
    \le C_{N}|J|^{\frac{1}{2}}(a+b)^{1/2}+(a|J|+b)^{\frac{1}{2}}.\label{R59966}
\end{align}
\end{lemma}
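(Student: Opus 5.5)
The plan is to write $\widetilde{\mathcal P}_N$ as convolution with the rescaled kernel $K_N(t)=N^3K(N^3t)$, where $K=\check\chi$. Since $\chi\in C_c^\infty$, the kernel $K$ is Schwartz, so we only need the polynomial bound $|K(t)|\le C(1+|t|)^{-2}$; the Gevrey bound \eqref{gekernl} is not required here.

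For \eqref{R544}, fix $t\in\mathbb R$ and decompose $\mathbb R$ into the unit intervals $J_k=[t+k,t+k+1]$, $k\in\mathbb Z$. On $J_k$ we have
\begin{align*}
\sup_{s\in J_k}|K_N(t-s)|\le C N^3\bigl(1+N^3\max(|k|-1,0)\bigr)^{-2}\le C_N(1+|k|)^{-2}.
\end{align*}
By Cauchy--Schwarz on each $J_k$ together with \eqref{HmL} for $|J_k|=1$,
\begin{align*}
\Bigl|\int_{J_k}K_N(t-s)f(s)\,ds\Bigr|\le C_N(1+|k|)^{-2}(a+b)^{1/2}.
\end{align*}
Summing over $k$ converges and gives $|\widetilde{\mathcal P}_Nf(t)|\le C_N(a+b)^{1/2}$ uniformly in $t$. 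The same computation shows that the convolution integral converges absolutely. This makes $\widetilde{\mathcal P}_N f$ well defined for $f$ that is merely $L^2_{\rm loc}$ with uniformly locally bounded $L^2$ mass. (If one wants to be careful, one can first interpret $\chi(D_t/N^3)$ on tempered distributions and note that $f$ is tempered by \eqref{HmL}; the kernel formula then agrees with the Fourier definition.)

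For \eqref{R59966}, define $\widetilde{\mathcal Q}_Nf=f-\widetilde{\mathcal P}_Nf$ and apply the triangle inequality on $J$:
\begin{align*}
\|\widetilde{\mathcal Q}_Nf\|_{L^2(J)}\le\|f\|_{L^2(J)}+|J|^{1/2}\|\widetilde{\mathcal P}_Nf\|_{L^\infty}.
\end{align*}
The first term is at most $(a|J|+b)^{1/2}$ by \eqref{HmL}, and the second is controlled by \eqref{R544}. This gives exactly \eqref{R59966}.

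The only mildly delicate point is justifying that the multiplier $\chi(D_t/N^3)$ acting on a non-$L^2$ function coincides with the kernel convolution. I would handle it by the tempered-distribution remark above, or by truncating $f$ to $\mathbf 1_{[-n,n]}f$ and passing to the limit via the uniform tail bound. Everything else is routine.
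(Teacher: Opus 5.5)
Your proposal is correct and is essentially the paper's proof: you write $\widetilde{\mathcal P}_N f=K_N*f$, split the line into unit intervals, apply Cauchy--Schwarz together with the hypothesis on each interval, and sum using the rapid decay of $K_N$. You then get the second estimate from $\widetilde{\mathcal Q}_N=I-\widetilde{\mathcal P}_N$ and the triangle inequality. The only differences are cosmetic: you center the unit intervals at $t$ and bound the kernel pointwise rather than by its $L^2$ norm on each fixed interval $[m,m+1]$, and you also justify that the kernel formula agrees with the multiplier on tempered $f$, which the paper takes for granted.
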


\begin{proof}
Let
\begin{align*}
    K=\check\chi,
    \qquad
    K_N(t)=N^3K(N^3t).
\end{align*}
Then $K_N\in\mathcal S(\mathbb R)$ and
\begin{align*}
   \widetilde{\mathcal{P}}_{N} f=K_N*f.
\end{align*}
For $m\in\mathbb Z$, set
\begin{align*}
    I_m=[m,m+1].
\end{align*}
By the assumption (\ref{HmL}) applied to $I_m$, we have
\begin{align*}
    \|f\|_{L^2(I_m)}\le (a+b)^{1/2}.
\end{align*}
Therefore, for every $t\in\mathbb R$,
\begin{align*}
\begin{aligned}
    |\widetilde{\mathcal P}_N f(t)|
    &=
    \left|\int_{\mathbb R}K_N(t-s)f(s)\,ds\right|  \\
    &\le
    \sum_{m\in\mathbb Z}\int_{I_m}|K_N(t-s)||f(s)|\,ds  \\
    &\le
    \sum_{m\in\mathbb Z}
    \|K_N(t-\cdot)\|_{L^2(I_m)}
    \|f\|_{L^2(I_m)}  \\
    &\le
    (a+b)^{1/2}
    \sum_{m\in\mathbb Z}
    \|K_N(t-\cdot)\|_{L^2(I_m)}.
\end{aligned}
\end{align*}
Since $K_N\in\mathcal S(\mathbb R)$, the quantity
\begin{align*}
    C_N:=
    \sup_{t\in\mathbb R}
    \sum_{m\in\mathbb Z}
    \|K_N(t-\cdot)\|_{L^2(I_m)}
\end{align*}
is finite. Hence
\begin{align*}
    |\widetilde{\mathcal P}_N f(t)|
    \le
    C_N(a+b)^{1/2}.
\end{align*}
Taking the supremum over $t\in\mathbb R$, we obtain
\begin{align*}
    \|\widetilde{\mathcal P}_N f\|_{L^\infty(\mathbb R)}
    \le
    C_N(a+b)^{1/2}.
\end{align*}
Therefore, (\ref{R544}) follows and $\widetilde{\mathcal P}_N f\in L^2_{\mathrm{loc}}(\mathbb R)$. (\ref{R59966}) follows by $\widetilde{\mathcal{Q}}_N=I-\widetilde{\mathcal{P}}_{N}$ and (\ref{HmL}). 
\end{proof}

\section{High-frequency crossing and separation}
\label{high-cross}

Recall the definitions of $\widetilde{P}_N$ and  $\widetilde{Q}_N$ in (\ref{eq:QNPN}). 

\begin{remark}
In all the proofs  of Section 5 to Section 7, we omit the dependence of various constants $C,c$ on $E,R,m$, which are fixed from the beginning, and just keep the dependence of $c,c$ on $T>0$. 
 \end{remark}

\subsection{The high-frequency Hautus estimate}

Recall that $D=-i\partial_x $. 
\begin{lemma}\label{lem:HF-Hautus}
Let $\Lambda$ be $(\rho,L)$-regular in Definition \ref{regular}.  There are constants $C>0$ and $N_0\ge1$ depending only on $\rho,L$ such that, for all $N\ge N_0$, all $f\in H^3(\R)$ with 
$$ \supp\widehat{f}\subset \{\xi\in \mathbb R: |\xi|\ge N\},$$
and all $\lambda\in\R$, there holds 
\begin{equation}\label{eq:HF-Hautus}
    \|f\|_2^2\le C\|C_\Lambda f\|_{\ell^2}^2
       +\frac{C}{N^2}\|(D^3-\lambda)f\|_2^2.
\end{equation}
\end{lemma}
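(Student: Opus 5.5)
Fix $\lambda\in\R$ and write $\lambda=r^3$ with $r\in\R$ real. The plan is to split $f$ into a narrow frequency band around $r$ and its complement, following the pattern of the proof of Proposition~\ref{psa}. Let $a=a(L)>0$ be the width given by Lemma~\ref{nasa}, and define
\begin{align*}
\widehat{P f}=\1_{\{|\xi-r|\le a\}}\widehat f,\qquad Q=I-P.
\end{align*}
On the band, Lemma~\ref{nasa} gives $\|Pf\|_2^2\le 4L\|C_\Lambda Pf\|_{\ell^2}^2$. Since $\|C_\Lambda Pf\|_{\ell^2}\le \|C_\Lambda f\|_{\ell^2}+\|C_\Lambda Qf\|_{\ell^2}$, it remains to control two quantities by $N^{-1}\|(D^3-\lambda)f\|_2$: the norm $\|Qf\|_2$ and the sampled norm $\|C_\Lambda Qf\|_{\ell^2}$.

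The main step is a symbol estimate on $\supp\widehat{Qf}\subset\{|\xi|\ge N,\ |\xi-r|>a\}$. We have
\begin{align*}
|\xi^3-r^3|=|\xi-r|\,(\xi^2+\xi r+r^2)\ge \tfrac12|\xi-r|(\xi^2+r^2)\ge \tfrac a2\,\xi^2.
\end{align*}
Hence $\langle\xi\rangle/|\xi^3-r^3|\le C_a\langle\xi\rangle/\xi^2\le C_a/N$ whenever $|\xi|\ge N\ge1$. By Plancherel this yields
\begin{align*}
\|Qf\|_{H^1}\le \frac{C_a}{N}\|(D^3-\lambda)Qf\|_2\le\frac{C_a}{N}\|(D^3-\lambda)f\|_2,
\end{align*}
where the last step uses that $Q$ commutes with $D^3-\lambda$ and is a contraction. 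The support of $Pf$ lies inside $\{|\xi|\ge N\}$ as well, but we do not need this. Lemma~\ref{samu} then gives $\|C_\Lambda Qf\|_{\ell^2}\le C_\rho\|Qf\|_{H^1}\le C_{\rho,a}N^{-1}\|(D^3-\lambda)f\|_2$. Here we use $f\in H^3\subset H^1$, so the point values make sense and $Qf\in H^1$.

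To finish, we combine the pieces:
\begin{align*}
\|f\|_2^2\le 2\|Pf\|_2^2+2\|Qf\|_2^2\le 16L\|C_\Lambda f\|_{\ell^2}^2+16L\|C_\Lambda Qf\|_{\ell^2}^2+2\|Qf\|_2^2.
\end{align*}
The last two terms are $\le C N^{-2}\|(D^3-\lambda)f\|_2^2$, which is \eqref{eq:HF-Hautus} with $C$ depending only on $\rho,L$, and any $N_0\ge1$ works.

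The key point, and the only place the hypothesis $|\xi|\ge N$ enters, is the gain $1/N$ in the symbol estimate. It comes from the quadratic factor $\xi^2+\xi r+r^2\gtrsim\xi^2$, which beats the single derivative needed for the sampling bound. The remaining worry is making sure the band width $a$ from Lemma~\ref{nasa} is independent of $r$. It is, because that lemma is modulation invariant.
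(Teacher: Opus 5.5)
Your proof is correct and follows the paper's argument. It uses the same band projection $\mathbf 1_{\{|\xi-r|\le a\}}$ with $a=a(L)$ from Lemma~\ref{nasa}, the same factorization $\xi^3-r^3=(\xi-r)(\xi^2+\xi r+r^2)$ to get the $O(N^{-1})$ bound on $\|Qf\|_{H^1}$, and Lemma~\ref{samu} for the sampled remainder; the only difference is that the paper first treats $|r|\le N/2$ separately (where $|\xi^3-r^3|\gtrsim N^3$), and your uniform bound $|\xi^3-r^3|\ge \tfrac a2\xi^2$ shows that case split is unnecessary.
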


\begin{proof}
Let $\lambda=r^3$. Define the frequency  cutoff $\Pi_r$   by
\begin{align*}
    \widehat{\Pi_rf}(\xi)=\mathbf 1_{\{|\xi-r|\le a\}}\widehat f(\xi),
\end{align*}
where $a$ is chosen as in Lemma \ref{nasa}. And set $\Phi_r=I-\Pi_r$.  Put $g=(D^3-\lambda)f$.

If $|r|\le N/2$, then $|\xi|\ge N$ on $\supp\widehat f$ implies $|\xi^3-r^3|\gtrsim N^3$, hence $\|f\|_2\lesssim N^{-3}\|g\|_2$, which is stronger than \eqref{eq:HF-Hautus}.

Assume $|r|>N/2$.  On the support of $\Phi_rf$,
\begin{align*}
    |\xi-r|>a,
    \qquad
    \xi^3-r^3=(\xi-r)(\xi^2+\xi r+r^2),
\end{align*}
and
\begin{align*}
    \xi^2+\xi r+r^2\ge \frac{1}{2}(\xi^2+r^2).
\end{align*}
Since $|\xi|\ge N$ on $\supp\widehat f$ and $|r|>N/2$, this gives
\begin{align}\label{Huytgm}
    \|\Phi_rf\|_{H^1}\le \frac{C}{N}\|g\|_2.
\end{align}
By Lemma \ref{samu}, one has 
\begin{align*}
    \|C_\Lambda \Phi_rf\|_{\ell^2}\le \frac{C}{N}\|g\|_2.
\end{align*}
By Lemma \ref{nasa},
\begin{align*}
    \|\Pi_rf\|_2\le C\|C_\Lambda \Pi_rf\|_{\ell^2}
       \le C\|C_\Lambda f\|_{\ell^2}+\frac{C}{N}\|g\|_2,
\end{align*}
which  together with the estimate (\ref{Huytgm}) on $\Phi_rf$  proves \eqref{eq:HF-Hautus}.
\end{proof}

As a corollary of Lemma \ref{lem:HF-Hautus}, we have the high frequency observability on arbitrary short intervals. 
\begin{corollary}\label{coho}
Let $\Lambda$ be $(\rho,L)$-regular in Definition \ref{regular}.  For every $\tau>0$ there exist  $N_\tau\ge1$ and $K_\tau>0$ such that, for every $a\in\R$ and every $g\in L^2(\R)$ with $\supp\widehat g\subset\{|\xi|\ge N_\tau\}$, there holds 
\begin{equation}\label{eq:HF-obs}
    \|g\|_2^2\le K_\tau\int_a^{a+\tau}
       \sum_j |(\Tr_{\Lambda}S(t)g)_j|^2\dd t .
\end{equation}
Here $N_{\tau},K_{\tau}$ can be chosen as 
$$N_{\tau}=\frac{C_1}{\tau}, \qquad K_{\tau}=\frac{C_2}{\tau}$$
for some constants $C_1,C_2>0$ depending only on $(\rho,L)$. 
\end{corollary}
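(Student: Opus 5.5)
We apply the observability part of Proposition~\ref{prop:Miller} to the self-adjoint operator $A=-D^3$ (so that $e^{itA}=S(t)$, since $S(t)$ has symbol $e^{it\xi^3}$) restricted to the closed invariant subspace
\begin{align*}
X_N:=\{g\in L^2(\R):\ \supp\widehat g\subset\{|\xi|\ge N\}\},
\end{align*}
with observation operator $C=C_\Lambda$ defined on $D(A)\cap X_N=H^3\cap X_N$. The subspace $X_N$ reduces $D^3$, so $A|_{X_N}$ is self-adjoint on $X_N$ and the restricted group is still $S(t)$. Admissibility of $C_\Lambda$ on $X_N$ follows from Proposition~\ref{psa}: the resolvent estimate \eqref{ard} proved there holds a fortiori on the subspace.

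For the Hautus estimate \eqref{eq:hautus}, Lemma~\ref{lem:HF-Hautus} gives, for $N\ge N_0$, every $\lambda\in\R$ and every $f\in H^3\cap X_N$,
\begin{align*}
\|f\|_2^2\le \frac{C}{N^2}\|(A-\lambda)f\|_2^2+C\|C_\Lambda f\|_{\ell^2}^2 .
\end{align*}
Replacing $\lambda$ by $-\lambda$ is harmless, because the estimate holds for all real $\lambda$. Thus \eqref{eq:hautus} holds with $M=C/N^2$ and $m=C$, where $C$ depends only on $(\rho,L)$.

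Given $\tau>0$, we choose $N_\tau=\max\{N_0,\sqrt2\,\pi\sqrt C/\tau\}$. Then $\pi^2M\le\tau^2/2$, so $\tau>\pi\sqrt M$. By \eqref{opcc},
\begin{align*}
K_{\tau,M,m}=\frac{2m\tau}{\tau^2-\pi^2M}\le\frac{2C\tau}{\tau^2/2}=\frac{4C}{\tau}.
\end{align*}
Proposition~\ref{prop:Miller} gives the estimate on any interval of length $\tau$, in particular on $[a,a+\tau]$, with this constant uniform in $a$. For small $\tau$ this yields $N_\tau=C_1/\tau$ and $K_\tau=C_2/\tau$. For large $\tau$ the value $N_0$ dominates; one can either restrict the statement to $\tau\le\tau_0$, or observe that enlarging $N$ only shrinks $X_N$, so taking $N_\tau=\max(N_0,C_1/\tau)$ still gives the estimate. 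The form $N_\tau=C_1/\tau$ with a single constant needs $\tau$ bounded, so I would state the result for $0<\tau\le\tau_0$, which is the regime actually used later.

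The main obstacle is the passage from $H^3$ data to general $g\in X_N\cap L^2$ in the trace formulation. Miller's criterion yields the inequality for $z\in X_N$ with the observation interpreted as the admissible extension of $t\mapsto C_\Lambda S(t)z$. By Proposition~\ref{psa} and uniqueness of the bounded extension, this extension coincides with $\Tr_\Lambda S(\cdot)g$. Density of $H^3\cap X_N$ in $X_N$ together with the admissibility bound \eqref{eq:selected-admiss} then passes both sides of the inequality to the limit. This gives \eqref{eq:HF-obs}.
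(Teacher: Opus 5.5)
Your proposal is correct and follows the same route as the paper. You apply Miller's resolvent criterion on the high-frequency subspace, inherit admissibility from Proposition~\ref{psa}, take the Hautus constant $M=CN^{-2}$ from Lemma~\ref{lem:HF-Hautus}, and read off $K_\tau$ from \eqref{opcc}. Your remarks that $N_\tau=C_1/\tau$ only makes sense for bounded $\tau$ (otherwise take $\max\{N_0,C_1/\tau\}$), and your density/trace identification, are sound refinements that the paper leaves implicit.

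One small slip: since $D=-i\partial_x$ has symbol $\xi$, one has $S(t)=e^{itD^3}$, so the generator is $A=D^3$ rather than $-D^3$. This is harmless, because the Hautus estimate holds for all real $\lambda$ and the conclusion holds on arbitrary intervals of length $\tau$.
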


\begin{proof}
Apply Proposition \ref{prop:Miller} to the high frequency subspace.  The admissibility constants are inherited from Proposition~\ref{psa}. The observability Hautus constant in \eqref{eq:hautus} can be chosen as $M =C N^{-2}$.  We fix $N_\tau$ so large that $\pi\sqrt{M}<\tau/2$ for all $N\ge N_\tau$. Then the desired result follows by  Proposition \ref{prop:Miller}. 
\end{proof}

\subsection{High frequency separation from the full observation}

The following lemma  separates the high frequency part  from the full observation. 

\begin{lemma}\label{lehfs}
For every $T>0$ and every $\eps>0$ there exist $N_1=N_1(T,\eps,R)$ and $C=C(T,R)$  such that for all $N\ge N_1$ and all $f\in L^2(\R)$,
\begin{equation}\label{eq:HF-sep}
    \|\widetilde{Q}_Nf\|_2^2
    \le C \|S(\cdot)f\|_{\YY_E(0,T)}^2
       +\eps\|\widetilde{P}_Nf\|_2^2 .
\end{equation}
\end{lemma}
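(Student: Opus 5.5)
Fix $T>0$ and $\eps>0$. We combine the high frequency observability of Corollary \ref{coho} on $I_T$ with the time-filter identity of Lemma \ref{lefc} and the tail estimate of Lemma \ref{lem:tail}.

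\textbf{Step 1: high frequency observability on $I_T$.} Apply Corollary \ref{coho} with $\tau=T/3$ to an arbitrary selection $x_j\in E_j$, which is $(2R,4R)$-regular as in Lemma \ref{balg}. The function $g=\widetilde Q_Nf$ does not have $\supp\widehat g\subset\{|\xi|\ge N\}$ exactly: $1-\chi(\xi^3/N^3)$ vanishes for $|\xi|\le N$, so $\supp \widehat g\subset\{|\xi|\ge N\}$. Hence this holds once $N\ge N_{T/3}=3C_1/T$, and we get
\begin{align*}
\|\widetilde Q_Nf\|_2^2\le K_{T/3}\int_{I_T}\sum_j|\Tr_{x_j}S(t)\widetilde Q_Nf|^2\,dt\le K_{T/3}\|S(\cdot)\widetilde Q_Nf\|_{\YY_E(I_T)}^2 .
\end{align*}
Here $K_{T/3}$ is uniform in the selection, and the last step takes the supremum over selections.

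\textbf{Step 2: pass to the full trace.} By Lemma \ref{lefc}, for each $x$,
\begin{align*}
\Tr_xS(\cdot)\widetilde Q_Nf=\widetilde{\mathcal Q}_N\bigl(\1_{(0,T)}\Tr_xS(\cdot)f\bigr)+\widetilde{\mathcal Q}_N\bigl(\1_{\R\setminus(0,T)}\Tr_xS(\cdot)f\bigr).
\end{align*}
Lemma \ref{lem:tail} bounds the second term in $\YY_E(I_T)$ by $Ce^{-cN^{3/\sigma}}\|f\|_2$.

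For the first term, write $\widetilde{\mathcal Q}_N=I-\widetilde{\mathcal P}_N$. The identity part contributes at most $\|S(\cdot)f\|_{\YY_E(0,T)}$. The convolution part has kernel $K_N$ with $\|K_N\|_{L^1}=\|K\|_{L^1}$ independent of $N$. By Young's inequality applied pointwise in $x$, $\|K_N*(\1_{(0,T)}y_x)\|_{L^2(I_T)}\le\|K\|_{L^1}\|y_x\|_{L^2(0,T)}$. Taking $\sup_{x\in E_j}$ and summing over $j$ gives
\begin{align*}
\|S(\cdot)\widetilde Q_Nf\|_{\YY_E(I_T)}\le(1+\|K\|_{L^1})\|S(\cdot)f\|_{\YY_E(0,T)}+Ce^{-cN^{3/\sigma}}\|f\|_2 .
\end{align*}
This is the main point to check: the $\YY_E$ structure, a sum over $j$ of suprema over $x$, commutes with a fixed time convolution bounded in $L^1$. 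This holds because the bound is pointwise in $x$.

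\textbf{Step 3: absorption.} Squaring and using $\|f\|_2^2\le 2\|\widetilde Q_Nf\|_2^2+2\|\widetilde P_Nf\|_2^2$ gives
\begin{align*}
\|\widetilde Q_Nf\|_2^2\le C_T\|S(\cdot)f\|_{\YY_E(0,T)}^2+C_Te^{-2cN^{3/\sigma}}\bigl(\|\widetilde Q_Nf\|_2^2+\|\widetilde P_Nf\|_2^2\bigr).
\end{align*}
Choose $N_1\ge 3C_1/T$ large enough that $C_Te^{-2cN^{3/\sigma}}\le\min(1/2,\eps/2)$ for all $N\ge N_1$. Absorbing the $\widetilde Q_N$ term on the left then yields \eqref{eq:HF-sep} with $C=2C_T$, which depends only on $T$ and $R$, as claimed.

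The main obstacle is Step 2: making sure the interior convolution piece is controlled by $\|S(\cdot)f\|_{\YY_E(0,T)}$ with a constant independent of $N$. The $L^1$-normalisation of $K_N$ and Young's inequality are what give this.
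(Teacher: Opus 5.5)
Your proof is correct and follows the paper's argument: Corollary \ref{coho} on $I_T$ for a $(2R,4R)$-regular selection, the filter identity of Lemma \ref{lefc}, the split into the $(0,T)$ piece and the exterior piece with Lemma \ref{lem:tail} handling the latter, and absorption via $f=\widetilde P_Nf+\widetilde Q_Nf$. The only difference is cosmetic: you bound the interior piece by Young's inequality with $\|K_N\|_{L^1}=\|K\|_{L^1}$, getting the constant $1+\|K\|_{L^1}$, whereas the paper uses the Plancherel bound $\|\widetilde{\mathcal Q}_N\|_{L^2\to L^2}\le 1$ (valid since $0\le\chi\le 1$); both constants are uniform in $N$. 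Also, the opening sentence of your Step 1 contradicts itself and should simply say that $\widehat{\widetilde Q_Nf}$ vanishes on $[-N,N]$.
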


\begin{proof}
Fix a selection $x_j\in E_j$.  Then $\{x_j\}$ is $(2R,4R)$-regular.  Recall that $I_{T}=[\frac{1}{3}T,\frac{2}{3}T]$. For $N$ large enough, $\widetilde{Q}_Nf$ is supported in high frequencies and Corollary \ref{coho} on $I_T$ gives
\begin{align}\label{Kiu8G}
    \|\widetilde{Q}_Nf\|_2^2
    \le K_T\int_{I_T}\sum_j|\Tr_{x_j}S(t)\widetilde{Q}_Nf|^2\dd t.
\end{align}
By \eqref{eqtifilter},
\begin{align*}
    \Tr_{x_j}S(\cdot)\widetilde{Q}_Nf=\widetilde{\mathcal Q}_N(\Tr_{x_j}S(\cdot)f).
\end{align*}
Split $\Tr_{x_j}S(\cdot)f$ into the restriction to $(0,T)$ part and the exterior part, i.e.,
\begin{align*}
\widetilde{\mathcal Q}_N(\Tr_{x_j}S(\cdot)f)&= \widetilde{\mathcal Q}_N({\bf 1}_{(0,T)}\Tr_{x_j}S(\cdot)f)+ \widetilde{\mathcal Q}_N({\bf 1}_{\mathbb R\setminus (0,T)}\Tr_{x_j}S(\cdot)f)\\
&:={\bf I}_1(x_j)+{\bf I}_2(x_j).
\end{align*}
By the $L^2(\mathbb R)\to L^2(\mathbb R)$ boundedness of $\widetilde{\mathcal Q}_N$ and  $x_j\in E_j$, the inner part ${\bf I}_1$ is dominated by  
\begin{align*}
\sum_{j\in\mathbb Z} \|{\bf I}_1(x_j)\|^2_{L^2(I_{T})}\le  \sum_{j\in \mathbb Z} \|\Tr_{x_j}S(\cdot)f)\|^2_{L^2(0,T)}\le  \|S(\cdot)f\|^2_{\YY_{E}(0,T)}. 
\end{align*}
The exterior part is bounded by Lemma \ref{lem:tail} as 
\begin{align*}
\sum_{j\in\mathbb Z} \|{\bf I}_2(x_j)\|^2_{L^2(I_T)}\le  e^{-c_{T}N^{\frac{3}{\sigma}}}\|f\|^2_{2} 
\end{align*}
for some $c_T>0$. 
Thus (\ref{Kiu8G}) yields
\begin{align*}
    \|\widetilde{Q}_Nf\|^2_2
    \le C_T\|S(\cdot)f\|^2_{\YY_E(0,T)}+
      C_T \delta_N\|f\|^2_2,
\end{align*}
where $\delta_N\to0$ as $N\to\infty$.  Since $f=\widetilde{P}_Nf+\widetilde{Q}_Nf$,
\begin{align*}
    \|{\widetilde{Q}}_Nf\|^2_2
    \le C_T\|S(\cdot)f\|^2_{\YY_E(0,T)}+
      C_{T}\delta_N\|{\widetilde{Q}}_Nf\|^2_2+
      C_{T} \delta_N\|\widetilde{P}_Nf\|^2_2.
\end{align*}
For $N$ sufficiently large, absorbing the term $C_T\delta_N\|\widetilde{Q}_Nf\|_2$, we obtain  \eqref{eq:HF-sep}.
\end{proof}

\section{Low-frequency Hausdorff separation}\label{separation}

The following lemma  separates the low frequency part from the full observation. 
\begin{lemma}\label{lesepl} 
For every $T>0$ there exist $N_2=N_2(T,R,m,s)$ and $C_{T,N}>0$ such that, for all $N\ge N_2$ and all $f\in L^2(\R)$,
\begin{equation}\label{eq:LF-sep}
    \|\widetilde{P}_Nf\|_2^2
    \le C_{T,N}\|S(\cdot)f\|_{\YY_E(0,T)}^2
       +\frac1{16}\|f\|_2^2 .
\end{equation}
\end{lemma}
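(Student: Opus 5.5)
Set $g=\widetilde P_Nf$ and consider the $L^2(I_T)$-valued function $F_g(x)=\Tr_xS(\cdot)g|_{I_T}$, with $I_T=[T/3,2T/3]$. The plan is to apply the Hilbert-valued propagation estimate Lemma \ref{lemHH} to $F_g$, and then use Lemma \ref{lefc} together with the tail estimate Lemma \ref{lem:tail} to compare $\|S(\cdot)g\|_{\YY_E(I_T)}$ with the full observation on $(0,T)$.

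\textbf{Step 1: Fourier support and the Remez estimate.} Since $\chi$ vanishes for $|\tau|\ge2$, we have $\supp\widehat g\subset\{|\xi|\le 2^{1/3}N\}$. For $t\in I_T$, the function $x\mapsto S(t)g(x)$ has the same Fourier support. Hence $F_g\in L^2(\R;L^2(I_T))$ has $x$-Fourier support in $[-2N,2N]$. To justify this, I would pair $F_g$ with an orthonormal basis of $L^2(I_T)$, so that each scalar coefficient is an $x$-band-limited function. By unitarity and Fubini,
\begin{align*}
\int_\R\|F_g(x)\|_{L^2(I_T)}^2\dd x=|I_T|\,\|g\|_2^2 .
\end{align*}
Lemma \ref{lemHH} with $M=2N$ then gives
\begin{align*}
\|g\|_2^2\le \frac{3C_0}{T}e^{C_1(1+2N)}\|S(\cdot)g\|_{\YY_E(I_T)}^2 .
\end{align*}
One technical point is that the pointwise values $F_g(x)$ must agree with the traces $\Tr_xS(\cdot)g$ entering the $\YY_E$-norm. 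This holds because $g$ is band-limited, hence smooth, so the trace is the classical value and $x\mapsto F_g(x)$ is continuous into $L^2(I_T)$.

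\textbf{Step 2: Separation.} By Lemma \ref{lefc}, $\Tr_xS(\cdot)g=\widetilde{\mathcal P}_N(\Tr_xS(\cdot)f)$. Split this as $\widetilde{\mathcal P}_N(\1_{(0,T)}\Tr_xS(\cdot)f)+\widetilde{\mathcal P}_N(\1_{\R\setminus(0,T)}\Tr_xS(\cdot)f)$.

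For the first term, $\widetilde{\mathcal P}_N$ is a Fourier multiplier with $|\chi|\le C$, so it is bounded on $L^2(\R)$ uniformly in $N$. Applied block by block with the supremum over $x\in E_j$, this bounds the first term in $\YY_E(I_T)$ by $C\|S(\cdot)f\|_{\YY_E(0,T)}$.

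Lemma \ref{lem:tail} bounds the second term by $Ce^{-c_TN^{3/\sigma}}\|f\|_2$. Combining the two bounds,
\begin{align*}
\|\widetilde P_Nf\|_2^2\le C_Te^{C(1+N)}\|S(\cdot)f\|_{\YY_E(0,T)}^2+C_Te^{C(1+N)-2c_TN^{3/\sigma}}\|f\|_2^2 .
\end{align*}

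\textbf{Step 3: Choice of $N_2$.} Since $\sigma<3$, we have $3/\sigma>1$, so $C_Te^{C(1+N)-2c_TN^{3/\sigma}}\to0$ as $N\to\infty$. Choose $N_2$ so that this coefficient is at most $1/16$ for all $N\ge N_2$, and set $C_{T,N}=C_Te^{C(1+N)}$. This gives \eqref{eq:LF-sep}.

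The main obstacle is exactly this competition between the $e^{CN}$ cost of the Remez estimate and the $e^{-cN^{3/\sigma}}$ tail decay. The restriction $1<\sigma<3$ on the Gevrey order is what makes the comparison win. A secondary obstacle is the rigorous identification, for general $L^2$ data, of the trace functions with the band-limited vector-valued $F_g$, which I would handle by the density argument of Lemma \ref{lefc}.
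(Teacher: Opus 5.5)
Your proposal is correct and follows essentially the same route as the paper. You apply Lemma~\ref{lemHH} to the band-limited $L^2(I_T)$-valued function $F_g$ to get $\|\widetilde P_Nf\|_2^2\le C_Te^{C(1+N)}\|S(\cdot)\widetilde P_Nf\|_{\YY_E(I_T)}^2$, then use Lemma~\ref{lefc}, the split into $(0,T)$ and exterior parts, the uniform $L^2$-boundedness of $\widetilde{\mathcal P}_N$ and Lemma~\ref{lem:tail}, and absorb via $N^{3/\sigma}\gg N$ for $\sigma<3$; your remark identifying $F_g(x)$ with the classical traces, since $g$ is band-limited, is a detail the paper leaves implicit.
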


\begin{proof}
Set $g={\widetilde P}_Nf$ and  
$$
F_g(x):=\Tr_x S(\cdot)g\big|_{I_T}\in H:=L^2(I_T).
$$ 
Then $F_g$ is $H$-valued and its Fourier transform in variable $x$  is supported  in $[-2N,2N]$.
Then Lemma \ref{lemHH} gives
\begin{align*}
    \int_\R\|F_g(x)\|_{L^2(I_T)}^2\dd x
       \le C \exp\{C(1+N){}\}
       \sum_j\sup_{x\in E_j}\|F_g(x)\|_{L^2(I_T)}^2.
\end{align*}
The left hand side equals
\begin{align*}
    \int_{I_T}\|S(t)g\|_2^2\dd t=|I_T|\|g\|_2^2=\frac{T}{3}\|\widetilde{P}_Nf\|_2^2.
\end{align*}
Therefore one has 
\begin{equation}\label{eqFprF}
    \|\widetilde{P}_Nf\|_2^2
       \le A_N\|S(\cdot)\widetilde{P}_Nf\|_{\YY_E(I_T)}^2,
    \qquad
    A_N\le C_T\exp\{C(1+N){}\}.
\end{equation}
By \eqref{eqtifilter},
\begin{align*}
    \Tr_xS(\cdot)\widetilde{P}_Nf=  \widetilde{\mathcal P}_N(\Tr_xS(\cdot)f).
\end{align*}
Divide  $\widetilde{\mathcal P}_N(\Tr_xS(\cdot)f)$ into 
$$\widetilde{\mathcal P}_N(\Tr_xS(\cdot)f)=\widetilde{\mathcal P}_N({\bf 1}_{(0,T)}\Tr_x S(\cdot)f)
+\widetilde{\mathcal P}_N({\bf 1}_{\mathbb R\setminus (0,T)}\Tr_x S(\cdot)f).
$$
By the same arguments as the proof of Lemma \ref{lehfs},  we obtain  from the $L^2$ boundedness of $\widetilde{\mathcal P}_N$ and Lemma \ref{lem:tail} that 
\begin{align}
    \|S(\cdot){\widetilde{P}}_Nf\|_{\YY_E(I_T)}^2
       \le C_T\|S(\cdot)f\|_{\YY_E(0,T)}^2
          +C_T e^{-c_TN^{3/\sigma}}\|f\|_2^2,\label{Yugvrt}
\end{align}
Combining (\ref{Yugvrt}) with \eqref{eqFprF} gives 
\begin{align*}
    \|\widetilde{P}_Nf\|_2^2
       &\le C_T\exp\{C(1+N){}\}\|S(\cdot)f\|_{\YY_E(0,T)}^2 \\
       &\quad +C_T \exp\{C(1+N){}\}e^{-c_TN^{3/\sigma}}\|f\|_2^2 .
\end{align*}
Since $1<\sigma<3$, we have $N^{3/\sigma}\gg N$.  Taking $N\ge N_2$ to be sufficiently large, we obtain  \eqref{eq:LF-sep}.
\end{proof}

\section{Proof of Theorem \ref{th1}}\label{proof-th1}

In this section, we prove Theorem \ref{th1}. For reader's convenience, we restate Theorem \ref{th1} as the following proposition. 
\begin{proposition}\label{proline}
Let $0<s\le1$, $R>0$, $m>0$, and let $E\subset\R$ satisfy
\begin{align*}
    \mathcal H^s_\infty(E\cap[3Rj,3Rj+R])\ge m,
    \qquad \forall \mbox{ }j\in\Z.
\end{align*}
Then for every $T>0$ there exist constants $C,C'>0$, depending on $T,R,m,s$, such that
\begin{equation}\label{qeline1}
    \|u_0\|_{L^2(\R)}^2
    \le C
    \sum_{j\in\Z}\sup_{x\in E_j}
       \|\Tr_xS(\cdot)u_0\|_{L^2(0,T)}^2,
    \qquad \forall \mbox{ }u_0\in L^2(\R),
\end{equation}
and
\begin{equation}\label{malineup2}
    \sum_{j\in\Z}\sup_{x\in E_j}
       \|\Tr_xS(\cdot)u_0\|_{L^2(0,T)}^2
    \le C'\|u_0\|_{L^2(\R)}^2  \forall \mbox{ }u_0\in L^2(\R).
\end{equation}
For $u_0\in H^3(\R)$ the traces are classical point value, and  particularly 
\begin{align*}
    \|u_0\|_2^2
    \le C \sum_j\sup_{x\in E_j}\int_0^T |S(t)u_0(x)|^2\dd t .
\end{align*}
\end{proposition}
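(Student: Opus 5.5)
The upper bound \eqref{malineup2} follows directly from Lemma \ref{balg} applied with $J=(0,T)$. That lemma gives $C'=C_R(1+T)$, since every selection $x_j\in E_j$ is a $(2R,4R)$-regular sequence and the bound from Proposition \ref{psa} is uniform over selections. Taking the supremum over selections turns the sum of $\sup_{x\in E_j}$ into the block norm, so no further work is needed there. For $u_0\in H^3(\R)$ the identification of the traces with classical point values is Lemma \ref{lemc}, so the last display of the proposition is immediate once \eqref{qeline1} is known.

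For the lower bound \eqref{qeline1}, fix $T>0$ and write $f=u_0=\widetilde P_Nf+\widetilde Q_Nf$. The plan is to combine the two separation lemmas and absorb. Lemma \ref{lesepl} gives, for $N\ge N_2$,
\begin{align*}
\|\widetilde P_Nf\|_2^2\le C_{T,N}\|S(\cdot)f\|_{\YY_E(0,T)}^2+\tfrac1{16}\|f\|_2^2 .
\end{align*}
Lemma \ref{lehfs}, applied with $\eps=1$, gives for $N\ge N_1(T,1,R)$
\begin{align*}
\|\widetilde Q_Nf\|_2^2\le C_T\|S(\cdot)f\|_{\YY_E(0,T)}^2+\|\widetilde P_Nf\|_2^2 .
\end{align*}
Fix $N=\max(N_1,N_2)$. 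Using $\|f\|_2^2\le 2\|\widetilde P_Nf\|_2^2+2\|\widetilde Q_Nf\|_2^2$ and substituting the high-frequency bound, we get $\|f\|_2^2\le 2C_T\|S(\cdot)f\|^2_{\YY_E(0,T)}+4\|\widetilde P_Nf\|_2^2$. Inserting the low-frequency bound then yields
\begin{align*}
\|f\|_2^2\le (2C_T+4C_{T,N})\|S(\cdot)f\|^2_{\YY_E(0,T)}+\tfrac14\|f\|_2^2 ,
\end{align*}
and absorbing the last term, which is finite since $f\in L^2$, proves \eqref{qeline1}. The constant depends on $T,R,m,s$ only through $N_1,N_2,C_T,C_{T,N}$.

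The step needing care is the bookkeeping of the absorption. The factor $\tfrac1{16}$ in Lemma \ref{lesepl} must survive multiplication by the constant in front of $\|\widetilde P_Nf\|_2^2$ produced by the high-frequency estimate, so $\eps$ must be chosen as a fixed absolute constant, and not depending on $N$, before $N$ is fixed. Also, $\widetilde P_N$ and $\widetilde Q_N$ are not orthogonal projections, since $\chi$ is not an indicator, so one cannot use Pythagoras. The crude triangle inequality above is enough, because Lemma \ref{lesepl} allows any small absorbable fraction, and choosing $\eps=1$ keeps the final coefficient $4\cdot\tfrac1{16}=\tfrac14<1$. All the genuinely hard analysis (the Gevrey tails, the Remez estimate, the Hautus estimate) is contained in the two lemmas being invoked.
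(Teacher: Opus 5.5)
Your proposal is correct and follows the paper's proof: the upper bound comes from Lemma \ref{balg}, and the lower bound combines Lemma \ref{lehfs} and Lemma \ref{lesepl} via $\|f\|_2^2\le 2\|\widetilde P_Nf\|_2^2+2\|\widetilde Q_Nf\|_2^2$ and then absorbs. The only difference is that you take $\eps=1$ in Lemma \ref{lehfs}, giving the coefficient $4\cdot\tfrac1{16}=\tfrac14$, whereas the paper takes $\eps=\tfrac1{16}$, giving $\tfrac{17}{8}\cdot\tfrac1{16}=\tfrac{17}{128}$; this difference is immaterial.
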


\begin{proof}
The upper bound \eqref{malineup2} is Lemma \ref{balg}.  We prove the lower bound.  Fix $\eps=1/16$ in Lemma \ref{lehfs}.  Choose $N$ large enough so that both Lemma \ref{lehfs} and Lemma \ref{lesepl} hold.  Write $f=u_0$ and $f=\widetilde {P}_Nf+\widetilde{Q}_Nf$.  Then
\begin{align*}
    \|f\|_2^2\le 2\|\widetilde{P}_Nf\|_2^2+2\|\widetilde{Q}_Nf\|_2^2.
\end{align*}
By the  high frequency separation proved in Lemma \ref{lehfs}, one obtains 
\begin{align*}
    \|{\widetilde Q}_Nf\|_2^2
    \le C_T\|S(\cdot)f\|_{\YY_E(0,T)}^2+\frac1{16}\|{\widetilde P}_Nf\|_2^2.
\end{align*}
By the low frequency separation given by Lemma \ref{lesepl}, we obtain  
\begin{align*}
    \|\widetilde{P}_Nf\|_2^2
    \le C_{T,N}\|S(\cdot)f\|_{\YY_E(0,T)}^2+\frac1{16}\|f\|_2^2.
\end{align*}
Substituting gives
\begin{align*}
    \|f\|_2^2\le C_{T}\|S(\cdot)f\|_{\YY_E(0,T)}^2+\frac14\|f\|_2^2.
\end{align*}
Absorbing the last term proves \eqref{qeline1}.
\end{proof}

\section{Consequences and related results on the line}\label{related}

In this section, we extend the result of Theorem \ref{th1} to observations on periodic Hausdorff sets  and prove Proposition \ref{p1}.

\subsection{Preliminaries on  Haraux--Ingham type theorem}

In the following proposition, we recall Haraux--Ingham theorem, see Theorem 4 in \cite{Haraux1989}  and also \cite{Ingham1936}.
\begin{proposition}[Haraux--Ingham theorem]
\label{yugg}
Let $J\subset\mathbb R$ be a bounded interval. Let
$(\mu_k)_{k\in\mathbb Z}$ be a strictly increasing sequence of real numbers.
Assume that there exist constants $\gamma>0$, $\rho>0$, and an integer
$N\ge0$ such that
\begin{align*}
    \mu_{k+1}-\mu_k\ge \gamma,
    \qquad |k|\ge N,
\end{align*}
\begin{align*}
    |J|>\frac{2\pi}{\gamma},
\end{align*}
and
\begin{align*}
    \mu_{k+1}-\mu_k\ge \rho,
    \qquad \forall  k\in\mathbb Z.
\end{align*}
Then there exist constants $c,C>0$, depending only on
$\gamma,\rho,N$, and $|J|$, such that for every finitely supported sequence
$(b_k)_{k\in\mathbb Z}$,
\begin{align*}
    c\sum_{k\in\mathbb Z}|b_k|^2
    \le
    \int_J
    \left|
        \sum_{k\in\mathbb Z} b_k e^{i\mu_k t}
    \right|^2dt
    \le
    C\sum_{k\in\mathbb Z}|b_k|^2 .
\end{align*}
\end{proposition}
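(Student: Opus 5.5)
The plan is to follow Haraux's original argument: first prove the classical Ingham inequality for the uniformly separated tail, then add the finitely many remaining exponentials one at a time. Write $f(t)=\sum_k b_k e^{i\mu_k t}$. By a translation in $t$, which only multiplies each $b_k$ by a unimodular factor, we may assume $J=[-T_0,T_0]$ with $2T_0=|J|$.

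\emph{Upper bound.} Since every gap is at least $\rho$, cover $J$ by finitely many intervals $J_1,\dots,J_p$, each of length exceeding $2\pi/\rho$, with $p\lesssim 1+|J|\rho$. On each $J_i$, Ingham's classical inequality with gap $\rho$ gives $\int_{J_i}|f|^2\le C_\rho\sum_k|b_k|^2$. Summing over $i$ yields the right-hand inequality, with a constant depending only on $\rho$ and $|J|$.

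\emph{Lower bound, tail part.} Put $\varepsilon:=|J|-2\pi/\gamma>0$. The exponents $\{\mu_k\}_{|k|\ge N}$ split into two monotone families, those with $k\ge N$ and those with $k\le -N$. Consecutive gaps within each family are at least $\gamma$. The single junction gap, between $\mu_{-N}$ and $\mu_N$, is at least $2N\rho$ if $N\ge 1$. So either this junction gap is also at least $\gamma$, or the tail is uniformly $\gamma$-separated apart from one gap of size at least $\rho$. In both cases I would reduce to the statement below. Classical Ingham (via the weight $\cos(\pi t/(2T'))$ on $[-T',T']$) gives, on any interval of length strictly greater than $2\pi/\gamma$,
\begin{align*}
\int_{J'}\Bigl|\sum_{k\in K_0}b_ke^{i\mu_kt}\Bigr|^2dt\ge c\sum_{k\in K_0}|b_k|^2
\end{align*}
whenever the family $\{\mu_k\}_{k\in K_0}$ is $\gamma$-separated. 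To keep the bookkeeping uniform, I would enlarge the finite exceptional set to $\{|k|\le N\}$, so that the remaining family is genuinely $\gamma$-separated; the finitely many extra indices are treated exactly like the others in the next step.

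\emph{Lower bound, adding finitely many frequencies.} This is Haraux's step. Let $\mu_0$ be one exceptional exponent, and let $\delta>0$ be small. For $t$ in the shrunken interval $J_\delta=[-T_0+\delta,T_0-\delta]$ define
\begin{align*}
g(t)=f(t)-\frac1{2\delta}\int_{-\delta}^{\delta}e^{-i\mu_0 s}f(t+s)\,ds
=\sum_{k\ne 0}b_k\Bigl(1-\tfrac{\sin((\mu_k-\mu_0)\delta)}{(\mu_k-\mu_0)\delta}\Bigr)e^{i\mu_k t}.
\end{align*}
The $\mu_0$ term is removed exactly. Because $|\mu_k-\mu_0|\ge\rho$, the multipliers satisfy $1-\operatorname{sinc}\ge c(\rho,\delta)>0$. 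Cauchy--Schwarz gives $\int_{J_\delta}|g|^2\le 4\int_J|f|^2$.

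Applying the induction hypothesis to $g$ on $J_\delta$ controls $\sum_{k\ne0}|b_k|^2$ by $\int_J|f|^2$. Then $b_0$ is recovered from $b_0e^{i\mu_0t}=f-\sum_{k\ne0}b_ke^{i\mu_kt}$ together with the upper bound already proved. Iterating over the at most $2N+1$ exceptional indices, I would choose $\delta=\varepsilon/(4(2N+1))$. This keeps the final interval length above $2\pi/\gamma+\varepsilon/2$, so Ingham's inequality applies at the base of the induction.

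\emph{Main obstacle.} The delicate point is the uniform accounting of constants. The lower bound for $1-\operatorname{sinc}$ must depend only on $\rho$ and $\delta$. The shrinking of the interval must be budgeted so that the slack $\varepsilon$ is never exhausted. The resulting constants then depend only on $\gamma$, $\rho$, $N$ and $|J|$. Finally, for a finitely supported sequence all sums are finite, so no convergence issues arise.
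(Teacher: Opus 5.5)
Your proof is correct, and it is exactly the argument behind the paper's citation. The paper gives no proof of its own and simply invokes Theorem 4 of Haraux (with Ingham), which is proved by the same combination you use: Ingham's weighted inequality for the $\gamma$-separated tail, plus the averaging operator $f\mapsto f-\frac{1}{2\delta}\int_{-\delta}^{\delta}e^{-i\mu_0 s}f(\cdot+s)\,ds$ to reinsert the finitely many exceptional exponents while budgeting the shrinkage of $J$. (Incidentally, for $N\ge 1$ the junction gap $\mu_N-\mu_{-N}$ is automatically at least $\mu_{-N+1}-\mu_{-N}\ge\gamma$, so your case distinction there is unnecessary, though harmless since you enlarge the exceptional set to $\{|k|\le N\}$ anyway.)
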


Proposition \ref{yugg} immediately gives the following 
uniform cubic Ingham estimate. 
\begin{lemma}
\label{leuing}
Let $\alpha>0$ and  $T>0$.  There exist constants
$0<A\le B<\infty$ such that, for every $\theta\in[0,\alpha]$ and every
finitely supported sequence $(a_k)_{k\in\mathbb Z}$,
\begin{align*}
    A\sum_{k\in\mathbb Z}|a_k|^2
    \le
    \int_0^T
    \left|
        \sum_{k\in\mathbb Z}a_k e^{it(\theta+\alpha k)^3}
    \right|^2dt
    \le
    B\sum_{k\in\mathbb Z}|a_k|^2 .
\end{align*}
\end{lemma}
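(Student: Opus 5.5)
We apply Proposition \ref{yugg} with $\mu_k=(\theta+\alpha k)^3$ and $J=(0,T)$, and check that its three hypotheses hold with constants $\gamma,\rho,N$ that do not depend on $\theta\in[0,\alpha]$. The constants $c,C$ in Proposition \ref{yugg} depend only on $\gamma,\rho,N,|J|$, so the uniformity in $\theta$ then follows. Since $t\mapsto t^3$ is strictly increasing, $(\mu_k)$ is strictly increasing.

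For the gaps, put $x_k=\theta+\alpha k$. Then
\begin{align*}
\mu_{k+1}-\mu_k=x_{k+1}^3-x_k^3=\alpha\bigl(x_{k+1}^2+x_{k+1}x_k+x_k^2\bigr)\ge \tfrac{\alpha}{2}\bigl(x_{k+1}^2+x_k^2\bigr).
\end{align*}
The two consecutive points $x_k,x_{k+1}$ differ by $\alpha$, so at least one of them has modulus $\ge\alpha/2$. Hence the gap is at least $\alpha^3/8=:\rho$ for every $k$ and every $\theta$.

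For large $|k|$ we have $|x_k|\ge\alpha(|k|-1)$ when $\theta\in[0,\alpha]$. Therefore
\begin{align*}
\mu_{k+1}-\mu_k\ge \tfrac{\alpha^3}{2}(|k|-1)^2.
\end{align*}
Set $\gamma:=4\pi/T$, so that $|J|=T>2\pi/\gamma$. Choose an integer $N$, depending only on $\alpha$ and $T$, with $\frac{\alpha^3}{2}(N-1)^2\ge\gamma$. Then $\mu_{k+1}-\mu_k\ge\gamma$ for all $|k|\ge N$, uniformly in $\theta$.

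Proposition \ref{yugg} now gives
\begin{align*}
A\sum|a_k|^2\le\int_0^T\Bigl|\sum a_k e^{i\mu_k t}\Bigr|^2dt\le B\sum|a_k|^2
\end{align*}
for finitely supported sequences, with $A,B$ depending only on $(\alpha^3/8,\,4\pi/T,\,N,\,T)$, and hence only on $\alpha$ and $T$. The only point needing care is this uniformity: we must check that the threshold $N$ and the minimal gap $\rho$ do not degenerate as $\theta$ ranges over $[0,\alpha]$. The elementary bounds above handle this, and there is no real obstacle.
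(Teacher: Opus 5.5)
Your proof is correct and follows the paper's argument: you apply Proposition \ref{yugg} to $\mu_k=(\theta+\alpha k)^3$ with $\gamma=4\pi/T$, and you verify the global and asymptotic gap conditions uniformly in $\theta\in[0,\alpha]$. The only difference is cosmetic. The paper computes the exact gap $\alpha^3\bigl(3(k+r)^2+3(k+r)+1\bigr)\ge\alpha^3/4$ with $r=\theta/\alpha$, whereas your factorization gives the slightly weaker but equally sufficient bound $\alpha^3/8$, together with an explicit threshold $N$.
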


\begin{proof}
We apply  Proposition \ref{yugg} to
\begin{align*}
    J=[0,T],
    \qquad
    \mu_k=\omega_k(\theta):=(\theta+\alpha k)^3.
\end{align*}
For each fixed $\theta\in[0,\alpha]$, the sequence
$(\omega_k(\theta))_{k\in\mathbb Z}$ is strictly increasing because
$x\mapsto x^3$ is strictly increasing.

We now verify the gap assumptions uniformly with respect to $\theta$.
Write
\begin{align*}
    r=\frac{\theta}{\alpha}\in[0,1].
\end{align*}
Then
\begin{align}\label{Rtoomm}
    \omega_k(\theta)=\alpha^3(k+r)^3.
\end{align}
Therefore 
\begin{align*}
    \omega_{k+1}(\theta)-\omega_k(\theta)
    &=
    \alpha^3\bigl(3(k+r)^2+3(k+r)+1\bigr)
     =\alpha^3\left(3\left(s+\frac12\right)^2+\frac14\right)
    \ge \frac{\alpha^3}{4}.
\end{align*}
Hence we obtain the uniform global separation
\begin{align*}
    \omega_{k+1}(\theta)-\omega_k(\theta)
    \ge
    \frac{\alpha^3}{4},
    \qquad k\in\mathbb Z,\quad \theta\in[0,\alpha].
\end{align*}
Thus one may take
$\rho=\frac{\alpha^3}{4}.$
Next,  
take $\gamma=\frac{4\pi}{T}$, then 
$\gamma>\frac{2\pi}{T}.$
Because
\begin{align*}
    3(k+r)^2+3(k+r)+1\to+\infty
    \qquad\text{as } |k|\to\infty,
\end{align*}
holds uniformly for $r\in[0,1]$, there exists an integer $N=N(T,\alpha)$ such that
\begin{align*}
    \alpha^3\bigl(3(k+r)^2+3(k+r)+1\bigr)\ge \gamma
\end{align*}
for all $|k|\ge N$ and all $r\in[0,1]$. Equivalently, there holds 
\begin{align*}
    \omega_{k+1}(\theta)-\omega_k(\theta)\ge \gamma,
    \qquad |k|\ge N,\quad \theta\in[0,\alpha].
\end{align*}
Moreover,  
$ |J|=T>\frac{2\pi}{\gamma},$
by the choice of $\gamma$.

Hence, for every $\theta\in[0,\alpha]$, the sequence
$(\omega_k(\theta))_{k\in\mathbb Z}$ satisfies the hypotheses of the
Haraux--Ingham theorem with the same constants $\gamma,\rho,N$, independent of
$\theta$. Therefore there exist constants $A,B>0$, depending only on
$T$ and $\alpha$, such that
\begin{align*}
    A\sum_{k\in\mathbb Z}|a_k|^2
    \le
    \int_0^T
    \left|
        \sum_{k\in\mathbb Z}a_k e^{it\omega_k(\theta)}
    \right|^2dt
    \le
    B\sum_{k\in\mathbb Z}|a_k|^2
\end{align*}
for all $\theta\in[0,\alpha]$ and all finitely supported sequences
$(a_k)$. This is exactly the desired estimate by (\ref{Rtoomm}). 
\end{proof}

\subsection{Periodic Hausdorff sets}

Theorem \ref{th1} applies to every Hausdorff-thick set.  In periodic situations, one can also obtain a Hilbertian Hausdorff measure observation.

Let $h>0$ and let $\mathcal C\subset[0,h)$ be a Borel set. Assume that  $\nu$ is a finite
positive Borel measure supported on $\mathcal C$ with
\begin{align*}
    0<\nu(\mathcal C)<\infty.
\end{align*}
Set
\begin{align*}
    E=\mathcal C+h\mathbb Z,
    \qquad
    \mu_E=\sum_{n\in\mathbb Z}(\tau_{nh})_\#\nu,
\end{align*}
where $\tau_{nh}(y)=y+nh$. 

We restate Proposition \ref{p1} as follows for reader's convenience. 
\begin{proposition} 
\label{ziran}
 For every $T>0$, there exist constants
$0<c_{T,h,\nu}\le C_{T,h,\nu}<\infty$ such that
\begin{align*}
    c_{T,h,\nu}\|u_0\|_{L^2(\mathbb R)}^2
    \le
    \int_0^T\int_E |S(t)u_0(x)|^2\,d\mu_E(x)\,dt
    \le
    C_{T,h,\nu}\|u_0\|_{L^2(\mathbb R)}^2.
\end{align*}
For general $u_0\in L^2(\mathbb R)$, the observation in the middle is
understood as the $L^2$-extension from Schwartz data.
\end{proposition}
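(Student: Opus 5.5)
We would prove Proposition \ref{ziran} by a Bloch--Floquet (Zak transform) decomposition adapted to the period $h$, which reduces the observation over the periodic set to Lemma \ref{leuing}. Work first with Schwartz data $u_0$. Set $\alpha=2\pi/h$ and write each frequency as $\xi=\theta+\alpha k$ with $\theta\in[0,\alpha)$ and $k\in\Z$. For $x=y+nh$ with $y\in\mathcal C$ we have $e^{i(y+nh)(\theta+\alpha k)}=e^{iy(\theta+\alpha k)}e^{inh\theta}$. Hence
\begin{align*}
    S(t)u_0(y+nh)=\frac1{\sqrt{2\pi}}\int_0^\alpha e^{inh\theta}\,G_\theta(t,y)\,d\theta,
    \qquad
    G_\theta(t,y):=\sum_{k\in\Z}e^{iy(\theta+\alpha k)+it(\theta+\alpha k)^3}\widehat{u_0}(\theta+\alpha k).
\end{align*}
For fixed $(t,y)$, the sequence $n\mapsto S(t)u_0(y+nh)$ is therefore a Fourier coefficient sequence in $\theta$ on an interval of length $\alpha$. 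Parseval in $\theta$ then gives
\begin{align*}
    \sum_{n\in\Z}|S(t)u_0(y+nh)|^2=\frac{1}{2\pi}\cdot\frac{2\pi}{h}\int_0^\alpha|G_\theta(t,y)|^2\,d\theta=\frac1h\int_0^\alpha|G_\theta(t,y)|^2\,d\theta .
\end{align*}
The constant only needs to be tracked up to a factor. Integrating against $d\nu(y)$ and over $t\in(0,T)$, and using Fubini/Tonelli, the middle quantity in \eqref{Er8} equals
\begin{align*}
    \frac1h\int_0^\alpha\int_{\mathcal C}\int_0^T\Bigl|\sum_k a_k(\theta,y)e^{it(\theta+\alpha k)^3}\Bigr|^2dt\,d\nu(y)\,d\theta,
    \qquad a_k(\theta,y)=e^{iy(\theta+\alpha k)}\widehat{u_0}(\theta+\alpha k).
\end{align*}

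Next we apply Lemma \ref{leuing} for each fixed $(\theta,y)$. Its constants $A,B$ are uniform in $\theta\in[0,\alpha]$, and $|a_k(\theta,y)|=|\widehat{u_0}(\theta+\alpha k)|$ does not depend on $y$. The inner time integral is therefore comparable to $\sum_k|\widehat{u_0}(\theta+\alpha k)|^2$, with constants $A,B$. Integrating in $y$ produces the factor $\nu(\mathcal C)\in(0,\infty)$. Integrating in $\theta$ reassembles $\|\widehat{u_0}\|_2^2=\|u_0\|_2^2$. This yields \eqref{Er8} for Schwartz data, with
\begin{align*}
    c_{T,h,\nu}=\frac{A\,\nu(\mathcal C)}{h},\qquad C_{T,h,\nu}=\frac{B\,\nu(\mathcal C)}{h}.
\end{align*}

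The main technical obstacle is justifying this for Schwartz data, because Lemma \ref{leuing} is stated only for finitely supported sequences. For Schwartz $u_0$ the sequence $(\widehat{u_0}(\theta+\alpha k))_k$ decays rapidly, uniformly in $\theta$, so the series defining $G_\theta$ converges absolutely and uniformly in $(t,y)$. We would pass to the limit from truncations $|k|\le K$: the Ingham bounds give uniform two-sided control of the truncated sums in $L^2(0,T)$, and the truncations converge in $L^2(0,T)$. This passage also needs $\theta$-measurability and the exchange of sum and integral in the Floquet identity, which Tonelli handles for nonnegative integrands. Finally, the upper bound shows that the map $u_0\mapsto S(\cdot)u_0|_{E}$ into $L^2((0,T)\times E,dt\,d\mu_E)$ is bounded on a dense subspace. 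It therefore extends uniquely to $L^2(\R)$, which is exactly the meaning stipulated in the statement, and both inequalities pass to the limit by density.
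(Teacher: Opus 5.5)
Your proposal is correct and follows the paper's proof essentially step for step. Both use the Floquet splitting $\xi=\theta+\alpha k$ with $\alpha=2\pi/h$, Parseval in $\theta$ giving the factor $1/h$, the uniform cubic Ingham estimate of Lemma \ref{leuing} for each $(\theta,y)$, the same constants $A\nu(\mathcal C)/h$ and $B\nu(\mathcal C)/h$, and a final density argument. The only difference is that the paper restricts to $\widehat{u}_0\in C_c^\infty(\R)$, so every $k$-sum is finite and Lemma \ref{leuing} applies directly; this removes the truncation step you need for Schwartz data. (For Schwartz data, the exchange of sum and integral in the Floquet identity is justified by Fubini via $\widehat{u}_0\in L^1$, not by Tonelli.)
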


\begin{proof}
Set
\begin{align*}
    \alpha=\frac{2\pi}{h}.
\end{align*}
We first prove the result for  
$\widehat u_0\in C_c^\infty(\mathbb R)$. For such data all the sums below are
finite for each fixed $\theta\in[0,\alpha)$, and all applications of Fubini and
Parseval are justified.

For $x=y+nh$  with $y\in \mathcal C$ and $n\in\mathbb Z$, the Airy evolution is
\begin{align*}
    S(t)u_0(y+nh)
    =
    \frac1{\sqrt{2\pi}}
    \int_{\mathbb R}
    e^{i(y+nh)\xi}
    e^{it\xi^3}
    \widehat u_0(\xi)\,d\xi .
\end{align*}
Decompose
\begin{align*}
    \mathbb R
    =
    \bigcup_{k\in\mathbb Z}[0,\alpha)+\alpha k
\end{align*}
and write
\begin{align*}
    \xi=\theta+\alpha k,
    \qquad
    \theta\in[0,\alpha),
    \quad k\in\mathbb Z.
\end{align*}
Then
\begin{align*}
\begin{aligned}
    S(t)u_0(y+nh)
    &=
    \frac1{\sqrt{2\pi}}
    \int_0^\alpha
    \sum_{k\in\mathbb Z}
    e^{i(y+nh)(\theta+\alpha k)}
    e^{it(\theta+\alpha k)^3}
    \widehat u_0(\theta+\alpha k)
    \,d\theta .
\end{aligned}
\end{align*}
Since $h\alpha=2\pi$, we have 
\begin{align*}
\begin{aligned}
    S(t)u_0(y+nh)
    &=
    \frac1{\sqrt{2\pi}}
    \int_0^\alpha
    e^{inh\theta}
    \sum_{k\in\mathbb Z}
    \widehat u_0(\theta+\alpha k)
    e^{iy(\theta+\alpha k)}
    e^{it(\theta+\alpha k)^3}
    \,d\theta .
\end{aligned}
\end{align*}
For fixed $t\in[0,T]$ and $y\in \mathcal C$, define
\begin{align*}
    H_{t,y}(\theta)
    :=
    \sum_{k\in\mathbb Z}
    \widehat u_0(\theta+\alpha k)
    e^{iy(\theta+\alpha k)}
    e^{it(\theta+\alpha k)^3}.
\end{align*}
Then
\begin{align*}
    S(t)u_0(y+nh)
    =
    \frac1{\sqrt{2\pi}}
    \int_0^\alpha e^{inh\theta}H_{t,y}(\theta)\,d\theta .
\end{align*}
By Parseval's identity for the orthogonal system
\begin{align*}
    \{e^{inh\theta}\}_{n\in\mathbb Z}
\end{align*}
on $[0,\alpha]$, we obtain
\begin{align*}
\begin{aligned}
    \sum_{n\in\mathbb Z}|S(t)u_0(y+nh)|^2
    &=
    \frac1{2\pi}
    \sum_{n\in\mathbb Z}
    \left|
    \int_0^\alpha e^{inh\theta}H_{t,y}(\theta)\,d\theta
    \right|^2  \\
    &=
    \frac{\alpha}{2\pi}
    \int_0^\alpha |H_{t,y}(\theta)|^2\,d\theta \\
    &=\frac1h
    \int_0^\alpha |H_{t,y}(\theta)|^2\,d\theta .
\end{aligned}
\end{align*}
Using the definition of $\mu_E$, for non-negative measurable functions $F$, one has 
\begin{align*}
    \int_E F(x)\,d\mu_E(x)
    =
    \sum_{n\in\mathbb Z}\int_{\mathcal C } F(y+nh)\,d\nu(y).
\end{align*}
Hence
\begin{align*}
\begin{aligned}
    \int_0^T\int_E |S(t)u_0(x)|^2\,d\mu_E(x)\,dt
    &=
    \int_0^T
    \int_{\mathcal C} 
    \sum_{n\in\mathbb Z}|S(t)u_0(y+nh)|^2
    \,d\nu(y)\,dt  \\
    &=
    \frac1h
    \int_{\mathcal C} 
    \int_0^T
    \int_0^\alpha
    |H_{t,y}(\theta)|^2
    \,d\theta\,dt\,d\nu(y).
\end{aligned}
\end{align*}
By Fubini's theorem,
\begin{align*}
   & \int_0^T\int_E |S(t)u_0(x)|^2\,d\mu_E(x)\,dt
    \\
    &=\frac1h
    \int_{\mathcal C}\int_0^\alpha \int_0^T
    \left| \sum_{k\in\mathbb Z}
    \widehat u_0(\theta+\alpha k)
    e^{iy(\theta+\alpha k)}
    e^{it(\theta+\alpha k)^3}
    \right|^2\,dt\,d\theta\,d\nu(y).
\end{align*}

We now apply Lemma~\ref{leuing}. For fixed
$\theta\in[0,\alpha]$ and $y\in \mathcal C$, define
\begin{align*}
    a_k(\theta,y)
    :=
    \widehat u_0(\theta+\alpha k)e^{iy(\theta+\alpha k)}.
\end{align*}
Note that 
\begin{align*}
    |a_k(\theta,y)|=|\widehat u_0(\theta+\alpha k)|.
\end{align*}
By Lemma \ref{leuing}, there exist  constants $0<A\le B<\infty$,
depending only on $T$ and $\alpha$, such that
\begin{align}\label{PoLm2}
    A\sum_{k\in\mathbb Z}|\widehat u_0(\theta+\alpha k)|^2
    \le
    \int_0^T
    \left|
    \sum_{k\in\mathbb Z}
    \widehat u_0(\theta+\alpha k)
    e^{iy(\theta+\alpha k)}
    e^{it(\theta+\alpha k)^3}
    \right|^2dt
\end{align}
and
\begin{align}\label{PoLm4}
    \int_0^T
    \left|
    \sum_{k\in\mathbb Z}
    \widehat u_0(\theta+\alpha k)
    e^{iy(\theta+\alpha k)}
    e^{it(\theta+\alpha k)^3}
    \right|^2dt
    \le
    B\sum_{k\in\mathbb Z}|\widehat u_0(\theta+\alpha k)|^2 .
\end{align}
The constants are uniform in both $\theta\in[0,\alpha]$ and $y\in \mathcal C$.

Integrating (\ref{PoLm2})  and (\ref{PoLm4}) in $\theta\in[0,\alpha]$ and
$y\in\mathcal  C$, we obtain 
\begin{align*}
\begin{aligned}
    \frac{A}{h}
    \int_{\mathcal C}\int_0^\alpha
    \sum_{k\in\mathbb Z}
    |\widehat u_0(\theta+\alpha k)|^2
    d\theta\,d\nu(y)
    &\le
    \int_0^T\int_E |S(t)u_0(x)|^2\,d\mu_E(x)\,dt \\
    &\le
    \frac{B}{h}
    \int_{\mathcal C}\int_0^\alpha
    \sum_{k\in\mathbb Z}
    |\widehat u_0(\theta+\alpha k)|^2
    d\theta\,d\nu(y).
\end{aligned}
\end{align*}
Since the integrand is independent of $y$,
\begin{align*}
    \int_{\mathcal C}\int_0^\alpha
    \sum_{k\in\mathbb Z}
    |\widehat u_0(\theta+\alpha k)|^2
    \,d\theta\,d\nu(y)
    =
    \nu(\mathcal C)
    \int_0^\alpha
    \sum_{k\in\mathbb Z}
    |\widehat u_0(\theta+\alpha k)|^2
    d\theta .
\end{align*}
Finally, the change of variables $\xi=\theta+\alpha k$ gives
\begin{align*}
    \int_0^\alpha
    \sum_{k\in\mathbb Z}
    |\widehat u_0(\theta+\alpha k)|^2\,d\theta
    =
    \int_{\mathbb R}|\widehat u_0(\xi)|^2\,d\xi.
\end{align*}
Since the Fourier transform is  unitary in $L^2(\mathbb R)$, we obtain 
\begin{align*}
    \frac{A\nu(\mathcal C)}{h}\|u_0\|_2^2
    \le
    \int_0^T\int_E |S(t)u_0(x)|^2\,d\mu_E(x)\,dt
    \le
    \frac{B\nu(\mathcal C)}{h}\|u_0\|_2^2 .
\end{align*}
This proves the desired estimate for data with compactly supported
Fourier transform. The general case follows by a density argument.

\end{proof}

\subsection{The standard linear KdV equation on the line}

Consider
\begin{align}\label{rthgK}
    w_t+w_x+w_{xxx}=0.
\end{align}
Then $v(t,y)=w(t,y+t)$ solves the Airy equation.  Hence Theorem \ref{th1} gives 
\begin{equation}\label{eq:moving-kdv-line}
    \|w_0\|_2^2\le C_T\sum_j\sup_{x\in E_j}
       \|\Tr_{x}w(\cdot,\cdot+x)\|_{L^2(0,T)}^2.
\end{equation}
Let $h>0$, $\mathcal C\subset[0,h)$ be a Borel set, and let $\nu$ be a finite
positive Borel measure supported on $\mathcal C$, with
\begin{align*}
    0<\nu(\mathcal C)<\infty.
\end{align*}
Then Proposition \ref{ziran} gives the moving periodic set observability for (\ref{rthgK}): 
For every $T>0$, there exist constants
$0<c_{T,h,\nu}\le C_{T,h,\nu}<\infty$ such that
\begin{align*}
    c_{T,h,\nu}\|u_0\|_{L^2(\mathbb R)}^2
    \le
    \int_0^T\int_E |w(t,x+t)|^2\,d\mu_E(x)\,dt
    \le
    C_{T,h,\nu}\|u_0\|_{L^2(\mathbb R)}^2,
\end{align*}

Fixed trace observability or fixed periodic set observability is generally false for (\ref{rthgK}). For example, one has the following counterexample  of  (\ref{rthgK})  in the setting of periodic point sequence observation.  

\begin{proposition}\label{prop:kvd-line-fail}
For the equation $w_t+w_x+w_{xxx}=0$ and the lattice $2\pi\Z$, there is no $C_T$ such that
\begin{align*}
    \|w_0\|_2^2\le C_T\int_0^T\sum_{k\in\Z}|w(t,2\pi k)|^2\dd t
\end{align*}
for all smooth compactly supported $w_0$.
\end{proposition}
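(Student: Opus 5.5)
The plan is to redo the Bloch--Floquet computation from the proof of Proposition \ref{ziran}, now with the KdV phase, and to find a family of data whose observed Floquet fibres fail a uniform Ingham bound. For $w_0$ with $\widehat w_0\in C_c^\infty$ the solution is
\begin{align*}
w(t,x)=\frac1{\sqrt{2\pi}}\int_{\R}e^{ix\xi}e^{it(\xi^3-\xi)}\widehat w_0(\xi)\dd\xi .
\end{align*}
We take $h=2\pi$, so that $\alpha=1$, and write $\xi=\theta+k$ with $\theta\in[0,1)$. Exactly as in that proof, Parseval in $n$ gives
\begin{align*}
\int_0^T\sum_{k\in\Z}|w(t,2\pi k)|^2\dd t=\frac1{2\pi}\int_0^1\int_0^T\Bigl|\sum_{k\in\Z}\widehat w_0(\theta+k)e^{it\omega_k(\theta)}\Bigr|^2\dd t\dd\theta,
\end{align*}
with frequencies $\omega_k(\theta)=(\theta+k)^3-(\theta+k)$. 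The point sample at $x=2\pi n$ carries no extra phase, since $y=0$.

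The obstruction is a collision of frequencies at $\theta=0$. There $\omega_{-1}(0)=\omega_0(0)=\omega_1(0)=0$, because $k^3-k=0$ for $k=-1,0,1$. Consequently, for $\theta$ near $0$ the three exponentials $e^{it\omega_k(\theta)}$ with $k\in\{-1,0,1\}$ are all $1+O(\theta T)$ uniformly on $[0,T]$. We choose the coefficients $(a_{-1},a_0,a_1)=(1,-1,0)$ so that they cancel to leading order.

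Concretely, let $\varphi\in C_c^\infty((0,1))$ be nonnegative with $\|\varphi\|_{L^2}=1$, and set $\varphi_\eps(\theta)=\eps^{-1/2}\varphi(\theta/\eps)$. Define $w_0^\eps$ by
\begin{align*}
\widehat{w_0^\eps}(\xi)=\varphi_\eps(\xi)-\varphi_\eps(\xi+1).
\end{align*}
This is $C_c^\infty$, so $w_0^\eps$ is Schwartz. To get compact support in $x$ as the statement requires, we approximate by cutting off in $x$: multiply by $\psi(x/M)$ with $M\to\infty$. Both sides of the inequality are continuous in $L^2$ by admissibility, which follows from Proposition \ref{ziran} applied to the Airy transform, or directly from the Ingham upper bound. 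So it suffices to falsify the inequality for Schwartz data.

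For this data $\|w_0^\eps\|_2^2=2$. In the fibre picture the only nonzero coefficients are $a_0=\varphi_\eps(\theta)$ and $a_{-1}=-\varphi_\eps(\theta)$, and the observed quantity is
\begin{align*}
\frac1{2\pi}\int_0^1|\varphi_\eps(\theta)|^2\int_0^T\bigl|e^{it\omega_0(\theta)}-e^{it\omega_{-1}(\theta)}\bigr|^2\dd t\dd\theta .
\end{align*}
On $\supp\varphi_\eps\subset(0,\eps)$ we have $|\omega_0(\theta)-\omega_{-1}(\theta)|=|3\theta^2-3\theta+1-1|\cdot$ const. More precisely, $\omega_0-\omega_{-1}=\theta^3-\theta-(\theta-1)^3+(\theta-1)=3\theta^2-3\theta$, which is $O(\eps)$. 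Hence the inner integral is $\le T^3\cdot O(\eps^2)$, and the observation is $O(T^3\eps^2)\to0$ while the norm stays equal to $2$. This contradicts any constant $C_T$.

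The only step needing care is justifying the Fubini/Parseval identity and the density passage to compactly supported data. The identity is routine because the $k$-sums are finite for these data.
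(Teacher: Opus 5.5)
Your argument is correct. Your test function is essentially the paper's, seen on the Fourier side. Since $\mathcal F^{-1}[\varphi_\eps(\cdot+1)]=e^{-ix}\check\varphi_\eps$, you have $w_0^\eps(x)=\check\varphi_\eps(x)(1-e^{-ix})$ with $\check\varphi_\eps(x)=\eps^{1/2}\check\varphi(\eps x)$. This is a slowly varying envelope of width $\eps^{-1}$ times a combination of two stationary plane waves that vanishes on $2\pi\Z$, just like the paper's $f_N=N^{-1/2}\phi(x/N)(1-e^{ix})$ with $N=\eps^{-1}$.

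What differs is how the observation is estimated. The paper stays in physical space: $f_N$ vanishes on the lattice and $(\partial_x^3+\partial_x)f_N=O(N^{-1})$ in $H^1$. Hence $w(t)-f_N=O(TN^{-1})$ in $H^1$, and Lemma \ref{samu} finishes; the data are compactly supported from the start, so no approximation step is needed. You instead compute the observation exactly through the Floquet--Parseval identity of Proposition \ref{ziran}. This makes the mechanism transparent: the branches $\omega_{-1},\omega_0,\omega_1$ collide at $\theta=0$, which is exactly where the Haraux--Ingham gap hypothesis fails. It also gives the explicit bound $\frac{3}{2\pi}T^3\eps^2$. The price is that band-limited data cannot be compactly supported, so you need a density step.

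That density step is sound, but your justification for it should be changed. Proposition \ref{ziran} does not apply here: the change of variables $v(t,y)=w(t,y+t)$ turns the fixed lattice into the moving lattice $2\pi\Z-t$. Proposition \ref{yugg} as stated requires uniform separation, which fails near $\theta=0$ for the very reason your proof works. A uniform upper bound is still available: treat $k\in\{-1,0,1\}$ by Cauchy--Schwarz and note that the remaining $\omega_k(\theta)$ are uniformly $2$-separated.

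The simplest fix is to avoid $L^2$ admissibility altogether. By Lemma \ref{samu} and unitarity of the KdV group on $H^1$, the observation is bounded by $CT\|w_0\|_{H^1}^2$. Since $w_0^\eps$ is Schwartz, $\psi(x/M)w_0^\eps\to w_0^\eps$ in $H^1$, so both sides pass to the limit.
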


\begin{proof}
Let $\phi\in C_c^\infty(\R)$ be non-zero and set
\begin{align*}
    f_N(x)=N^{-1/2}\phi(x/N)(1-e^{ix}).
\end{align*}
Then $f_N(2\pi k)=0$ for all $k$ and $\|f_N\|_2\to c_\phi>0$. Since 
$$(\partial^3_x+\partial_x)(1-e^{ix})=0,
$$
straightforward calculations show 
\begin{align*}
    \|(\partial^3_x+\partial_x)f_N\|_{H^1}=O(N^{-1}).
\end{align*}
Therefore, uniformly for $t\in[0,T]$,
\begin{align*}
    \|e^{-t(\partial_x^3+\partial_x)}f_N-f_N\|_{H^1}\le C_TN^{-1}.
\end{align*}
By  Lemma \ref{samu}, the observation tends to zero, while $\|f_N\|_2$ does not.
\end{proof}

 \section{Point Observability for Periodic Airy Equations  with a Bounded Potential}
 \label{point}

\subsection{Notation and elementary analytic estimates}

We identify
\begin{align*}
  \T=\R/2\pi\Z
\end{align*}
and use the normalized Fourier basis
\begin{align*}
  e_n(x)=(2\pi)^{-1/2}\ee^{\ii n x},\qquad n\in\Z .
\end{align*}
Then
\begin{align*}
  \partial_x^3e_n=-\ii n^3e_n .
\end{align*}
The inner product is defined by 
$$
\langle f,g\rangle :=\int_{\mathbb T} f(x)\overline{g(x)} dx. 
$$
For a real-valued potential
$p\in L^\infty(\T)$, set
\begin{align*}
  \Lp=\partial_x^3+p(x),\qquad \Dom(\Lp)=H^3(\T).
\end{align*}
The operator $-\partial_x^3$ generates the unitary group
$S(t)$ with $S(t)e_n=\ee^{\ii n^3t}e_n$.  And  multiplication by $-p$ is bounded on
$L^2(\T)$. Then one can verify that $L_p$ defines  a strongly continuous group satisfying
\begin{align*}
  U_p(t)=\ee^{-t\Lp},\qquad
  \|U_p(t)\|_{L^2\to L^2}\le \ee^{\|p\|_\infty |t|} .
\end{align*}
 The above upper bound follows by energy and density arguments. 

We shall use the following elementary cubic non-harmonic Fourier estimate.

\begin{lemma} \label{lecubr}
Let $\sigma\in\R\setminus\{0\}$, $R_0\ge0$ and let
$(\omega_n)_{|n|\ge N_1}\subset\R$ satisfy
\begin{align*}
  |\omega_n-\sigma n^3|\le R_0(1+|n|),
  \qquad |n|\ge N_1 .
\end{align*}
Then, after increasing $N_1$ if necessary, one has 
$\omega_n\ne\omega_m$ for
$n\ne m$, and
\begin{align*}
  \rho_N:=
  \sup_{|n|\ge N}
  \sum_{\substack{|m|\ge N\, ,\, m\ne n}}
  \frac1{|\omega_n-\omega_m|}
  \longrightarrow0,
  \qquad N\to\infty .
\end{align*}
\end{lemma}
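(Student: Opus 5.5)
The approach is to compare $\omega_n-\omega_m$ with $\sigma(n^3-m^3)$ and use the factorization $n^3-m^3=(n-m)(n^2+nm+m^2)$. We may assume $\sigma>0$; otherwise replace $\omega_n$ by $-\omega_n$.

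\textbf{Step 1: a lower bound on the gaps.} For $n\neq m$ we have
\begin{align*}
n^2+nm+m^2\ge \tfrac12(n^2+m^2)\ge \tfrac12\max(n,m)^2 .
\end{align*}
Together with $|n-m|\ge1$ this gives
\begin{align*}
|\sigma(n^3-m^3)|\ge \tfrac{\sigma}{2}|n-m|\,(n^2+m^2).
\end{align*}
The perturbation satisfies
\begin{align*}
|\omega_n-\omega_m-\sigma(n^3-m^3)|\le R_0(2+|n|+|m|)\le 4R_0\max(|n|,|m|).
\end{align*}
Hence, once $N_1$ is large enough that $\tfrac{\sigma}{4}\max(|n|,|m|)^2\ge 4R_0\max(|n|,|m|)$ whenever $|n|,|m|\ge N_1$, we obtain
\begin{align*}
|\omega_n-\omega_m|\ge \tfrac{\sigma}{4}|n-m|(n^2+m^2)\qquad (|n|,|m|\ge N_1,\ n\ne m).
\end{align*}
This proves distinctness, and it is the only place where $N_1$ is increased.

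\textbf{Step 2: estimating $\rho_N$.} For fixed $|n|\ge N$ and $N\ge N_1$, Step 1 gives
\begin{align*}
\sum_{|m|\ge N,\,m\ne n}\frac1{|\omega_n-\omega_m|}\le \frac4\sigma\sum_{m\ne n}\frac{1}{|n-m|(n^2+m^2)}\le \frac{4}{\sigma n^2}\sum_{k\ne0}\frac1{|k|}\cdot\frac{n^2}{n^2+(n-k)^2}.
\end{align*}
A cruder route avoids the harmonic sum. Use $|n-m|\ge1$ and $n^2+m^2\ge 2|n||m|$, so that
\begin{align*}
\sum_{|m|\ge N}\frac{1}{n^2+m^2}\le \frac{1}{|n|}\sum_{|m|\ge N}\frac{1}{|m|\cdot |n|/|n|}\,,
\end{align*}
which diverges, so this bound is not usable as stated. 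Instead bound
\begin{align*}
\sum_{m\in\Z}\frac1{n^2+m^2}\le \frac{1}{n^2}+2\int_0^\infty\frac{dx}{n^2+x^2}=\frac1{n^2}+\frac{\pi}{|n|}.
\end{align*}
Therefore the inner sum is at most $\frac{4}{\sigma}\bigl(\frac{1}{n^2}+\frac{\pi}{|n|}\bigr)\le \frac{C}{N}$ for all $|n|\ge N$. Taking the supremum over $|n|\ge N$ yields $\rho_N\le C/N\to0$.

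\textbf{Main obstacle.} The key step is Step 1, namely making the perturbation absorbable uniformly in the pair $(n,m)$, including the case $m$ near $-n$, where $n^3-m^3$ is not small but $n-m$ is large. The factorization handles this case because $n^2+nm+m^2\ge\frac12(n^2+m^2)$ holds uniformly, so no case split on signs is needed.
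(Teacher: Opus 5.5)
Your proof is correct. Step 1 is the paper's argument; Step 2 takes a simpler route than the paper's.

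\textbf{Step 1.} You use the factorization $n^3-m^3=(n-m)(n^2+nm+m^2)$ and the bound $n^2+nm+m^2\ge\frac12(n^2+m^2)$. You then absorb the perturbation $R_0(2+|n|+|m|)$ to get $|\omega_n-\omega_m|\ge\frac{|\sigma|}{4}|n-m|(n^2+m^2)$. This is exactly what the paper does.

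\textbf{Step 2, the paper's route.} The paper keeps the factor $|n-m|$ and splits the sum in $m$:
\begin{itemize}
\item For $|m|\le 2|n|$, it uses $n^2+m^2\ge n^2$ and a harmonic sum in $|m-n|$, giving $C\log(2+|n|)/n^2$.
\item For $|m|>2|n|$, it uses $|m-n|\ge |m|/2$ and $n^2+m^2\ge m^2$, giving a cubic tail of size $C/n^2$.
\end{itemize}
This yields $\rho_N=O(\log N/N^2)$.

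\textbf{Step 2, your route.} You discard $|n-m|$ via $|n-m|\ge1$ and compare $\sum_{m\in\Z}(n^2+m^2)^{-1}$ with $\int_{\R}dx/(n^2+x^2)$. This gives $\rho_N\le\frac{4}{|\sigma|}\bigl(N^{-2}+\pi N^{-1}\bigr)$ with no case distinction.

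\textbf{Comparison.} Your rate is weaker but fully sufficient. The lemma is only used, in Proposition~\ref{cuin}, to make the Schur bound on the off-diagonal part of the Gram matrix small. The paper's split buys a sharper decay that is never needed; yours is shorter and more elementary.

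\textbf{Clean-up for a final write-up.}
\begin{itemize}
\item Delete the abandoned ``cruder route'' display; its right-hand side is infinite, so it says nothing.
\item Delete the unused rewriting in terms of $k=n-m$.
\item Write $\max(|n|,|m|)$ instead of $\max(n,m)$ in Step 1.
\end{itemize}
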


\begin{proof}
For integers $n\ne m$,
\begin{align*}
  |n^3-m^3|=|n-m|\,|n^2+nm+m^2|.
\end{align*}
Since
\begin{align*}
  n^2+nm+m^2=\frac12(n^2+m^2)+\frac12(n+m)^2
  \ge \frac12(n^2+m^2),
\end{align*}
we have
\begin{align*}
  |\sigma|\,|n^3-m^3|
  \ge \frac{|\sigma|}{2}|n-m|(n^2+m^2).
\end{align*}
Take $N_1$  large enough so that
\begin{align*}
  R_0(2+|n|+|m|)\le \frac{|\sigma|}{4}|n-m|(n^2+m^2)
\end{align*}
whenever $|n|,|m|\ge N_1$ and $n\ne m$.  This is possible since
$|n-m|\ge1$ and the right hand side grows at least quadratically in
$\max\{|n|,|m|\}$.  Then
\begin{equation}\label{cuesepa}
  |\omega_n-\omega_m|
  \ge \frac{|\sigma|}{4}|n-m|(n^2+m^2).
\end{equation}
Fix $|n|\ge N$.  Split the sum over $m$ into two parts.  If
$|m|\le 2|n|$, then by $n^2+m^2\ge n^2$ and   
\eqref{cuesepa}, there holds 
\begin{align}\label{Hy71} 
  \sum_{\substack{|m|\ge N,\,m\ne n,\, |m|\le2|n|}}
  \frac1{|\omega_n-\omega_m|}
  \le \frac{C}{n^2}
\sum_{1\le |m-n|\le3|n|}\frac1{|m-n|}
  \le C\frac{\log(2+|n|)}{n^2}.
\end{align}
If $|m|>2|n|$, then $|m-n|\ge |m|/2$ and
$n^2+m^2\ge m^2$. Thus one has 
\begin{align}\label{Hy72} 
  \sum_{|m|>2|n|}\frac1{|\omega_n-\omega_m|}
  \le C\sum_{|m|>2|n|}\frac1{|m|^3}
  \le \frac{C}{n^2}.
\end{align}
The two estimates (\ref{Hy71}) and (\ref{Hy72}) are uniform for $|n|\ge N$, and their right hand sides tend
to zero as $N\to \infty$.
\end{proof}

We recall the classical Schur test lemma below. 
\begin{lemma}\label{lem:schur}
Let $A=(a_{nm})$ be a matrix indexed by a countable set $J$.  Assume
\begin{align*}
  R:=\sup_{n\in J}\sum_{m\in J}|a_{nm}|<\infty,
  \qquad
  C:=\sup_{m\in J}\sum_{n\in J}|a_{nm}|<\infty .
\end{align*}
Then the  operator
$(Ax)_n=\sum_m a_{nm}x_m$ is a bounded operator on
$\ell^2(J)$, and
\begin{align*}
  \|A\|_{\ell^2\to\ell^2}\le (RC)^{1/2}.
\end{align*}
\end{lemma}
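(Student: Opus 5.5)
The plan is the classical Schur test via Cauchy--Schwarz, with the row and column sums used as the two weights.

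Fix $x\in\ell^2(J)$, say finitely supported first, so that all sums are finite. For each $n$, write $|a_{nm}||x_m| = |a_{nm}|^{1/2}\cdot |a_{nm}|^{1/2}|x_m|$ and apply Cauchy--Schwarz in $m$:
\begin{align*}
  |(Ax)_n|^2\le \Bigl(\sum_m |a_{nm}|\Bigr)\Bigl(\sum_m |a_{nm}||x_m|^2\Bigr)\le R\sum_m |a_{nm}||x_m|^2 .
\end{align*}

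Summing over $n$ and exchanging the order of summation (Tonelli, since all terms are nonnegative), we get
\begin{align*}
  \sum_n|(Ax)_n|^2\le R\sum_m |x_m|^2\sum_n|a_{nm}|\le RC\,\|x\|_{\ell^2}^2 .
\end{align*}

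This argument in fact applies to every $x\in\ell^2(J)$. The first display shows that the series $\sum_m a_{nm}x_m$ converges absolutely, because $\sum_m |a_{nm}||x_m|\le R^{1/2}(\sum_m|a_{nm}||x_m|^2)^{1/2}\le (R\,C)^{1/2}\|x\|_{\ell^2}$, using $|a_{nm}|\le C$. So $A$ is well defined on all of $\ell^2(J)$ and satisfies $\|A\|\le (RC)^{1/2}$; linearity is clear.

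No step here is a real obstacle. The only point requiring care is justifying absolute convergence, so that $(Ax)_n$ makes sense before any density argument, and that is handled above.
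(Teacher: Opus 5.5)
Your proof is correct. It is the standard Cauchy--Schwarz proof of the Schur test, and your check that $\sum_m |a_{nm}||x_m|\le (RC)^{1/2}\|x\|_{\ell^2}$ (using $|a_{nm}|\le C$) properly ensures that $(Ax)_n$ is defined for every $x\in\ell^2(J)$. The paper gives no proof of this lemma and simply recalls it as classical, so your argument is the standard justification it relies on.
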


The following proposition gives a Hilbert-valued cubic Ingham estimate, 
which will be useful for high frequency spectral decomposition in Section  \ref{high-spectral}. 
\begin{proposition}\label{cuin}
Let $T>0$, let $\calH$ be a complex Hilbert space, and let
\begin{align*}
  \lambda_n=\alpha_n+\ii\beta_n,\qquad |n|\ge N_1,
\end{align*}
satisfy
\begin{align*}
  \sup_{|n|\ge N_1}|\alpha_n|<\infty,
  \qquad
  \beta_n=\sigma n^3+O(|n|),\quad \sigma\ne0 .
\end{align*}
Let $b_n\in\calH$ satisfy
\begin{align*}
  0<b_-\le \|b_n\|_{\calH}\le b_+<\infty,
  \qquad |n|\ge N_1.
\end{align*}
Then there exists $N\ge N_1$ such that the family
\begin{align*}
  h_n(t)=b_n\ee^{-\lambda_n t},\qquad |n|\ge N,
\end{align*}
is a Riesz sequence in $L^2(0,T;\calH)$.  Equivalently, there are constants
$0<A\le B<\infty$ such that for every scalar sequence  
$(c_n)_{|n|\ge N}$ in $\ell ^2$, 
\begin{equation}\label{gtgting}
  A\sum_{|n|\ge N}|c_n|^2
  \le
  \int_0^T\left\|\sum_{|n|\ge N}c_n b_n\ee^{-\lambda_n t}\right\|_{\calH}^2\dd t
  \le
  B\sum_{|n|\ge N}|c_n|^2 .
\end{equation}
More generally, if finitely many positive-length time intervals are prescribed,
then $N$ can be chosen so that the corresponding Riesz estimates hold on all
of them, with constants depending on the intervals.
\end{proposition}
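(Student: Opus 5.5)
The plan is to prove \eqref{gtgting} by a perturbation argument around the Gram matrix of the family $h_n$ in $L^2(0,T;\calH)$. The key point is that the off-diagonal entries decay like $1/|\beta_n-\beta_m|$, and Lemma \ref{lecubr} makes the sum of these small. First normalize: replace $b_n$ by $b_n/\|b_n\|$ and absorb $\|b_n\|\in[b_-,b_+]$ into $c_n$. So we may assume $\|b_n\|=1$, at the cost of constants depending on $b_\pm$. Let
\begin{align*}
G_{nm}=\int_0^T \langle b_n,b_m\rangle_{\calH}\, \ee^{-\lambda_n t}\overline{\ee^{-\lambda_m t}}\dd t
=\langle b_n,b_m\rangle\int_0^T \ee^{-(\lambda_n+\overline{\lambda_m})t}\dd t .
\end{align*}
For a finitely supported sequence, $\int_0^T\|\sum c_nb_n\ee^{-\lambda_nt}\|^2\dd t=\sum_{n,m}c_n\overline{c_m}G_{nm}$. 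It therefore suffices to show that $G=D+O$, where $D$ is diagonal with entries bounded above and below, and $\|O\|_{\ell^2\to\ell^2}$ is smaller than the lower bound of $D$ once $N$ is large. The extension to general $\ell^2$ sequences then follows by density, since the upper bound makes the series converge in $L^2(0,T;\calH)$.

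The diagonal entries are $G_{nn}=\int_0^T \ee^{-2\alpha_n t}\dd t$. Since $\sup|\alpha_n|=:a<\infty$, these lie in $[T\ee^{-2aT},T\ee^{2aT}]$, which gives uniform two-sided bounds. For $n\neq m$, the exponent is $\lambda_n+\overline{\lambda_m}=(\alpha_n+\alpha_m)+\ii(\beta_n-\beta_m)$, so integrating explicitly gives
\begin{align*}
|G_{nm}|\le \frac{|1-\ee^{-(\lambda_n+\overline{\lambda_m})T}|}{|\lambda_n+\overline{\lambda_m}|}\le \frac{1+\ee^{2aT}}{|\beta_n-\beta_m|},
\end{align*}
using $|\langle b_n,b_m\rangle|\le1$. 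Now apply Lemma \ref{lecubr} with $\omega_n=\beta_n$. The hypothesis $|\beta_n-\sigma n^3|\le R_0(1+|n|)$ is exactly $\beta_n=\sigma n^3+O(|n|)$. After enlarging $N_1$, the $\beta_n$ are distinct and $\rho_N\to0$. By symmetry of the bound, both Schur constants of $O=(G_{nm})_{n\ne m,\,|n|,|m|\ge N}$ are at most $(1+\ee^{2aT})\rho_N$. Lemma \ref{lem:schur} then gives $\|O\|\le(1+\ee^{2aT})\rho_N$. Choose $N$ so that this is at most $\tfrac12 T\ee^{-2aT}$. Then
\begin{align*}
\tfrac12 T\ee^{-2aT}\sum|c_n|^2\le \sum c_n\overline{c_m}G_{nm}\le \bigl(T\ee^{2aT}+\tfrac12T\ee^{-2aT}\bigr)\sum|c_n|^2 ,
\end{align*}
which is \eqref{gtgting} after restoring the factors $b_\pm^2$. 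For finitely many intervals $J_1,\dots,J_k$, repeat the argument on each interval, since translating $t$ only multiplies the coefficients by the bounded factors $\ee^{-\lambda_n t_0}$, whose moduli lie in $[\ee^{-a|t_0|},\ee^{a|t_0|}]$. Then take $N$ to be the maximum of the resulting thresholds.

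The main obstacle is ensuring that the off-diagonal bound really is summable uniformly. The estimate $|\lambda_n+\overline{\lambda_m}|\ge|\beta_n-\beta_m|$ works even when $\beta_n-\beta_m$ is small, but only if the $\beta_n$ are eventually distinct with cubic separation. This is exactly why we pass to $|n|\ge N$ via Lemma \ref{lecubr}, rather than attempting a full Ingham argument with a weight function. No gap condition relative to $T$ is needed, because the decay $\rho_N\to0$ beats the fixed lower bound $T\ee^{-2aT}$ for every $T>0$.
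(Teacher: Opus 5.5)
Your proposal is correct and follows essentially the same route as the paper: the Gram matrix of $(h_n)$, two-sided diagonal bounds from $\sup|\alpha_n|<\infty$, off-diagonal entries bounded by $C_T/|\beta_n-\beta_m|$, and the Schur test (Lemma \ref{lem:schur}) combined with Lemma \ref{lecubr} to make the off-diagonal part at most half the diagonal lower bound once $N$ is large. The only differences are cosmetic: you normalize $\|b_n\|=1$ instead of carrying $b_\pm^2$, and you handle other intervals by translating to $(0,\tau)$ rather than repeating the argument directly on each subinterval.
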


\begin{proof}
Let $A_0=\sup|\alpha_n|$.  For $n\ne m$, integration by parts gives
\begin{align*}
  \left|
  \int_0^T \ee^{-(\lambda_n+\overline{\lambda_m})t}\dd t
  \right|
  &=\left|
  \int_0^T
  \ee^{-(\alpha_n+\alpha_m)t}
  \ee^{-\ii(\beta_n-\beta_m)t}\dd t
  \right|                                    \\
  &\le
  \frac{2\ee^{2A_0T}+2A_0T\ee^{2A_0T}}{|\beta_n-\beta_m|}
  =\frac{C_T}{|\beta_n-\beta_m|}.
\end{align*}
The diagonal terms satisfy
\begin{align*}
  T\ee^{-2A_0T}b_-^2
  \le
  \|b_n\|_{\calH}^2\int_0^T\ee^{-2\alpha_nt}\dd t
  \le
  T\ee^{2A_0T}b_+^2 .
\end{align*}
Let $G=(G_{nm})$ be the Gram matrix of the family $(h_n)_{|n|\ge N}$, i.e.,
$$
G_{nm}=\left(\int^{T}_0 e^{-(\lambda_n +\overline{\lambda_m})t} dt\right) \langle b_n,b_{m}\rangle_{\mathcal{H}}, \qquad |n|,|m|\ge N.  
$$
The diagonal part of $(G_{nm})$ is bounded below by
\begin{align*}
  d_-:=T\ee^{-2A_0T}b_-^2>0.
\end{align*}
The absolute row sums of its off diagonal part are dominated by
\begin{align}\label{KpD9i}
  b_+^2 C_T
  \sup_{|n|\ge N}
  \sum_{\substack{|m|\ge N\, ,\, m\ne n} }
  \frac1{|\beta_n-\beta_m|},
\end{align}
which by Lemma \ref{lecubr}  tends to zero as $N\to\infty$.
Choose $N$ sufficiently large so that (\ref{KpD9i}) is at most $d_-/2$. Then Lemma \ref{lem:schur}   gives that the
off diagonal part of $(G_{nm})$ has operator norm at most $d_-/2$ on $\ell^2$.  Thus the lower
bound in \eqref{gtgting} follows with $A=d_-/2$.  The upper bound
follows from the upper diagonal estimate and the same Schur bound.  The proof
on a subinterval $(a,b)$ is the same.
\end{proof}

The following elementary lemma proves the linear independence of exponential polynomials in vector spaces and is standard. We give a proof for completeness. 
\begin{lemma}\label{leexpin} 
Let $\mu_1,\ldots,\mu_L\in\C$ be distinct, let $d_1,
\ldots,d_L\in\N$, and let $v_{\ell,q}$ belong to a complex vector space.
If
\begin{align*}
  \sum_{\ell=1}^{L}\sum_{q=0}^{d_\ell-1}
  t^q\ee^{-\mu_\ell t}v_{\ell,q}=0
\end{align*}
for all $t$ in a non-empty open interval, then every $v_{\ell,q}$ is zero.
\end{lemma}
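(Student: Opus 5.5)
The plan is to reduce to the scalar case and then argue by induction on $L$, removing one exponential at a time with a differential operator. First, only finitely many vectors $v_{\ell,q}$ occur, so they all lie in a finite-dimensional subspace $W$. Choose a basis $w_1,\dots,w_K$ of $W$ and write $v_{\ell,q}=\sum_k \gamma^{k}_{\ell,q}w_k$. Linear independence of the $w_k$ turns the vector identity into the scalar identities
\begin{align*}
  \sum_{\ell=1}^{L}\sum_{q=0}^{d_\ell-1}\gamma^{k}_{\ell,q}\,t^q\ee^{-\mu_\ell t}=0,\qquad k=1,\dots,K,
\end{align*}
each holding on the given open interval. It therefore suffices to prove the statement for complex scalars $v_{\ell,q}=c_{\ell,q}\in\C$.

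Next I extend the identity off the interval. The function $\Phi(t)=\sum_{\ell,q}c_{\ell,q}t^q\ee^{-\mu_\ell t}$ is entire in $t$ and vanishes on a non-empty open interval, so by the identity theorem it vanishes on all of $\C$, in particular on $\R$. This lets me differentiate freely without worrying about the interval.

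For the scalar case I induct on $L$. When $L=1$, multiplying by $\ee^{\mu_1 t}$ leaves a polynomial vanishing identically, so all its coefficients are zero. For the inductive step, I apply the operator $(\partial_t+\mu_L)^{d_L}$ to $\Phi\equiv0$. This operator annihilates $t^q\ee^{-\mu_L t}$ for $q<d_L$. For $\ell\neq L$ it maps $p_\ell(t)\ee^{-\mu_\ell t}$ to $\tilde p_\ell(t)\ee^{-\mu_\ell t}$, because
\begin{align*}
  (\partial_t+\mu_L)\bigl(p(t)\ee^{-\mu_\ell t}\bigr)=\bigl(p'(t)+(\mu_L-\mu_\ell)p(t)\bigr)\ee^{-\mu_\ell t}.
\end{align*}
Since $\mu_L-\mu_\ell\ne0$, the map $p\mapsto p'+(\mu_L-\mu_\ell)p$ is a bijection on polynomials of degree $<d_\ell$: it is triangular with nonzero diagonal in the monomial basis. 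The resulting identity involves only $L-1$ exponents. By the induction hypothesis every $\tilde p_\ell$ vanishes, and by injectivity every $p_\ell$ with $\ell<L$ vanishes. What remains is $p_L(t)\ee^{-\mu_L t}\equiv0$, which is the $L=1$ case.

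The only point needing care is the injectivity of $(\partial_t+\mu_L)^{d_L}$ on polynomial-times-exponential terms with a different exponent. This follows from the triangular structure above, so I expect no real obstacle. Everything else is routine.
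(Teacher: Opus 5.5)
Your proof is correct and follows essentially the same route as the paper: reduce to scalar coefficients on the finite-dimensional span, then induct on $L$ by applying $(\partial_t+\mu)^{d}$ to one exponent. That operator kills the chosen exponent and acts on the other polynomial coefficients by a triangular map with nonzero diagonal $\mu-\mu_\ell$, hence invertibly. The only cosmetic differences are that you eliminate $\mu_L$ rather than $\mu_1$, and you use a basis rather than linear functionals for the scalar reduction.
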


\begin{proof}
It is enough to prove the scalar case after applying arbitrary linear
functionals to the finite-dimensional span of the vectors $v_{\ell,q}$.  The
scalar functions are real analytic in $t$, so an identity on an interval is an
identity on $\R$.  We argue by induction on $L$.  For $L=1$, multiplying
by $\ee^{\mu_1t}$ gives a polynomial which is identically zero, so all its
coefficients are zero.  Assume the statement known for $L-1$ exponents. 

For fixed $\ell\ge2$, write the
corresponding scalar part as 
\begin{align}\label{Hy700pL}
\sum^{L}_{\ell=1 }\ee^{-\mu_\ell t}P_\ell(t),
\end{align}
where
$\deg P_\ell<d_\ell$. For $\ell \ge 2$, one has 
\begin{align*}
  \left(\frac{d}{dt}+\mu_1\right)^{d_1}
  \bigl(\ee^{-\mu_\ell t}P_\ell(t)\bigr)
  =\ee^{-\mu_\ell t}(D+\mu_1-\mu_\ell)^{d_1}P_\ell(t).
\end{align*}

Thus after applying   
$(\frac{d}{dt}+\mu_1)^{d_1}$ to (\ref{Hy700pL}),  the terms with exponent $\mu_1$ vanish, and
for $\ell\ge2$ the result is again a finite sum of functions
$t^q\ee^{-\mu_\ell t}$.  By the induction hypothesis, the new 
coefficient polynomial belonging to each exponent $\mu_\ell$, $\ell\ge2$, is
zero.  It remains only to verify that this forces the original coefficient
polynomial for that exponent to be zero.  
Since $\mu_1-\mu_\ell\ne0$, the operator
$D+\mu_1-\mu_\ell$ is represented in the monomial basis by an upper triangular
matrix with non-zero diagonal $\mu_1-\mu_\ell$. Hence it is invertible, and so
is its $d_1$-th power.  Therefore $P_\ell=0$ for every $\ell\ge2$.  The
original identity then contains only the exponent $\mu_1$, and the case
$L=1$ finishes the proof.
\end{proof}

We now extend Proposition \ref{cuin}  to include a finite-dimensional block. 
\begin{proposition} \label{profiniext}
Let $T>0$, let $\calH$ be a complex Hilbert space, and let
\begin{align*}
  h_n(t)=b_n\ee^{-\lambda_n t},
  \qquad |n|\ge N.
\end{align*}
Assume that
\begin{align*}
  \sup_{|n|\ge N}|\operatorname{Re}\lambda_n|<\infty,
\end{align*}
and  $\{h_n\}$ satisfies the Riesz bounds (\ref{gtgting}) of Proposition
\ref{cuin} both in $L^2(0,T;\calH)$ and in
$L^2(I;\calH)$ for one open interval
$I=(a,b)\Subset(0,T)$.  Let $G\subset L^2(0,T;\calH)$ be a
finite-dimensional space consisting of $\calH$-valued exponential polynomials,
\begin{align}\label{GtMM}
  g(t)=\sum_{\ell=1}^{L}\sum_{q=0}^{d_\ell-1}
       t^q\ee^{-\mu_\ell t}v_{\ell,q},
  \qquad v_{\ell,q}\in\calH,
\end{align}
where the distinct exponents $\mu_1,\ldots,\mu_L$ are different from all
$\lambda_n$, $|n|\ge N$.  Then
\begin{align*}
  G\cap \overline{\spn}\{h_n:|n|\ge N\}=\{0\}.
\end{align*}
Furthermore, for any norm $\|\cdot\|_G$ on $G$, there are constants
$0<A_G\le B_G<\infty$ such that
\begin{equation}\label{sgudvhkh}
  A_G\left(\|g\|_G^2+\sum_{|n|\ge N}|c_n|^2\right)
  \le
  \left\|g+\sum_{|n|\ge N}c_nh_n\right\|_{L^2(0,T;\calH)}^2
  \le
  B_G\left(\|g\|_G^2+\sum_{|n|\ge N}|c_n|^2\right)
\end{equation}
for all $g\in G$ and all finitely supported $(c_n)$.
\end{proposition}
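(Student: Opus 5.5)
The plan is to prove the two assertions separately. First I show that $G\cap V=\{0\}$, where
$V:=\overline{\spn}\{h_n:|n|\ge N\}\subset L^2(0,T;\calH)$, using translation invariance. Then I obtain \eqref{sgudvhkh} from the general fact that a closed subspace plus a finite-dimensional subspace with trivial intersection is a topological direct sum. Throughout, the Riesz bounds \eqref{gtgting} on $(0,T)$ give a coefficient isomorphism $\ell^2\ni(c_n)\mapsto\sum c_nh_n\in V$, so $V$ is closed. The analogous bounds on $I=(a,b)$ give the same for $V_I:=\overline{\spn}\{h_n|_I\}\subset L^2(I;\calH)$. Restriction $V\to V_I$ maps $\sum c_nh_n$ to $\sum c_nh_n|_I$ with the same coefficients. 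So every element of $V$ is determined on $I$ by its coefficient sequence, and distinct sequences give distinct restrictions.

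\textbf{Trivial intersection.} Set $W:=\{g|_I:\ g\in G\cap V\}$. Restriction is injective on $G$, because exponential polynomials are real analytic (Lemma \ref{leexpin}), so $W$ is finite-dimensional. For $0\le s<a$ define the translation $(\tau_sw)(t)=w(t+s)$, with $w$ taken on $(a-s,b-s)\subset(0,T)$. Concretely, for $g\in G\cap V$ I consider $t\mapsto g(t+s)$ on $I$.

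\emph{Invariance of $W$.} Two facts give $\tau_sW\subset W$, after shrinking $I$ slightly if needed so that shifts stay inside $(0,T)$ and the Riesz property on the shifted interval is inherited.
\begin{itemize}
\item $G$ is invariant under translations, since $(t+s)^qe^{-\mu(t+s)}$ is again of the form \eqref{GtMM}.
\item On the $V$ side, $\tau_s\sum c_nh_n=\sum c_ne^{-\lambda_ns}h_n$. Since $\sup|\operatorname{Re}\lambda_n|<\infty$, the multiplier $(e^{-\lambda_ns})$ is bounded on $\ell^2$, so the shifted element lies in $V$ again.
\end{itemize}
A cleaner way to arrange the domains is to work with $\widetilde W=G\cap V$ itself and the operator $\tau_s$ acting on coefficients. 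The coefficients then evolve by the diagonal group $c_n\mapsto e^{-\lambda_ns}c_n$, while the $G$-part evolves by a finite-dimensional matrix group.

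\emph{Generator and eigenvector.} The finite-dimensional space $W$ is invariant under the continuous semigroup $\{\tau_s\}$. Hence it is invariant under its generator $d/dt$. If $W\neq\{0\}$, $d/dt$ has an eigenvector in $W$. Elements of $G$ are exponential polynomials with exponents among the $\mu_\ell$, so this eigenvector has the form $w=ve^{-\mu_\ell t}$ with $0\ne v\in\calH$. Write $w=\sum c_nh_n$ on $I$. Then $\tau_sw=e^{-\mu_\ell s}w$ in two ways, and uniqueness of Riesz coefficients gives
\[
e^{-\mu_\ell s}c_n=e^{-\lambda_ns}c_n\qquad\text{for all }n\text{ and small }s .
\]
Since $\mu_\ell\ne\lambda_n$, every $c_n=0$, so $w=0$, which is a contradiction.

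\textbf{The two-sided estimate.} The upper bound in \eqref{sgudvhkh} is immediate. On the finite-dimensional space $G$ the norms $\|\cdot\|_G$ and $\|\cdot\|_{L^2(0,T;\calH)}$ are equivalent, and the Riesz upper bound controls $\|\sum c_nh_n\|$; then apply the triangle inequality.

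For the lower bound I argue by contradiction. Suppose there are $g_k\in G$ and $(c^k_n)$ with
\[
\|g_k\|_G^2+\sum_n|c^k_n|^2=1,\qquad \Big\|g_k+\sum_n c^k_nh_n\Big\|\to0 .
\]
\begin{itemize}
\item By compactness of the unit sphere in $G$, pass to a subsequence with $g_k\to g$ in $G$.
\item Then $\sum_nc^k_nh_n\to-g$ in $L^2(0,T;\calH)$. Because $V$ is closed, this gives $-g\in V$.
\item By the first part, $g=0$.
\item Then $\sum_nc^k_nh_n\to0$, and the Riesz lower bound forces $\sum_n|c^k_n|^2\to0$.
\end{itemize}
Together with $g_k\to0$ this contradicts the normalization. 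The result extends from finitely supported to all $\ell^2$ sequences by density.

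\textbf{Main obstacle.} I expect the main difficulty to be the invariance step. One must make precise that translation preserves $G\cap V$ while keeping all functions on intervals where the Riesz bounds hold; this is the reason the hypothesis includes an interior interval $I\Subset(0,T)$. One must also justify passing from semigroup invariance of the finite-dimensional space to invariance under $d/dt$, which holds because finite-dimensional semigroup-invariant subspaces of smooth functions are preserved by differentiation in $s$ at $s=0$. If that becomes delicate, the fallback is a direct argument: apply $\prod_\ell(d/dt+\mu_\ell)^{d_\ell}$ to annihilate $g$, and test the resulting distributional series against the biorthogonal family of $\{h_n|_I\}$ mollified in time.
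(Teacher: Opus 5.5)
\textbf{There is a genuine gap in the trivial-intersection part of your argument.} Your proof of \eqref{sgudvhkh} is fine and matches the paper: the upper bound comes from the triangle inequality and norm equivalence on $G$, and the lower bound from the compactness argument using closedness of $V$ and the lower Riesz bound. The gap is the step you flag yourself: nothing in the proposal shows that translation maps $G\cap V$ (or $W$) into itself, and neither bullet establishes it.

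\emph{First bullet.} A finite-dimensional $G$ of the form \eqref{GtMM} need not be translation invariant, e.g.\ $G=\C\,te^{-\mu t}v$. Your computation only shows that $g(\cdot+s)$ is an exponential polynomial with the same exponents. This is easily repaired: replace $G$ by the finite-dimensional hull $\widetilde G$ spanned by $t^qe^{-\mu_\ell t}w$, with $q<d_\ell$ and $w$ in the span of the coefficient vectors occurring in $G$.

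\emph{Second bullet (the serious problem).} If $g=\sum c_nh_n$ in $L^2(0,T;\calH)$, the identity $g(t+s)=\sum c_ne^{-\lambda_ns}h_n(t)$ holds only for $t\in(0,T)\cap(-s,T-s)$. On the rest of $(0,T)$ nothing relates the exponential polynomial $g(\cdot+s)$ to that series. Boundedness of the multiplier $(e^{-\lambda_ns})$ only shows that the \emph{series} lies in $V$, not that $g(\cdot+s)$ does.

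Restricting to $I$ does not help. To get $\tau_s(g|_I)\in W$ you again need $g(\cdot+s)\in V$. If you instead define $W$ by the relation $g|_I\in\overline{\spn}\{h_n|_I\}$, that relation lives only on $I$, and a shift moves it off $I$. The coefficient-group reformulation has the same defect. So the generator/eigenvector argument never gets started, although its last step (ruling out $ve^{-\mu_\ell t}\in V$ by comparing coefficients on $I$) is correct.

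\textbf{How to close the gap.} Within your own framework, a stabilization argument supplies invariance:
\begin{enumerate}
\item For $I\subseteq J\subseteq(0,T)$, where the Riesz bounds hold (lower bound inherited from $I$), set $Z_J=\{g\in\widetilde G:\ g|_J\in\overline{\spn}\{h_n|_J\}\}$.
\item These subspaces decrease as $J$ grows. For an exhausting sequence $J_k\uparrow(0,T)$ with $J_k\supseteq I$, uniqueness of coefficients on $I$ gives $\bigcap_kZ_{J_k}=\widetilde G\cap V$.
\item By finite dimensionality, $Z_{J_1}=\widetilde G\cap V$ for some $J_1\Subset(0,T)$.
\item For $|s|<\dist(J_1,\{0,T\})$ you then do get $g(\cdot+s)|_{J_1}=\sum c_ne^{-\lambda_ns}h_n|_{J_1}$, i.e.\ $\tau_s(\widetilde G\cap V)\subset Z_{J_1}=\widetilde G\cap V$.
\end{enumerate}

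The paper avoids invariance entirely. It applies $Q(d/dt)=\prod_\ell(d/dt+\mu_\ell)^{d_\ell}$, which annihilates $g$. To justify applying it termwise, it first shows $\sum|\lambda_n|^{2r}|c_n|^2<\infty$, using the difference quotients $s^{-r}\Delta_s^rg=\sum((e^{-\lambda_ns}-1)/s)^rc_nh_n$ on $I$, the lower Riesz bound on $I$, and Fatou. Then $0=\sum Q(-\lambda_n)c_nh_n$ in $L^2(I;\calH)$, which forces $c_n=0$.

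Your fallback would also work, and it even skips the coefficient decay. Take $(k_m)$ biorthogonal to $(h_n|_I)$, extend by zero, and set $\psi_m=\rho_\eps*k_m$, which is supported in $(0,T)$ for small $\eps$. Then $0=\langle Q(d/dt)g,\psi_m\rangle=\langle \sum c_nh_n,Q(d/dt)^{*}\psi_m\rangle=c_mQ(-\lambda_m)\int\rho_\eps(v)e^{-\lambda_mv}\,dv$, and the last integral tends to $1$ as $\eps\to0$. As written, however, this is a one-line pointer, so the first assertion of the proposition is not yet proved.
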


\begin{proof}
Set 
\begin{align*}
  H_N=\overline{\spn}\{h_n:|n|\ge N\}\subset L^2(0,T;\calH).
\end{align*}
Define maps $S_T$ and $S_I$  by 
\begin{align*}
  S_Tc=\sum_{|n|\ge N}c_nh_n\quad\hbox{in }L^2(0,T;\calH),
  \qquad
  S_Ic=\sum_{|n|\ge N}c_nh_n\quad\hbox{in }L^2(I;\calH).
\end{align*}
The Riesz bounds imply that both maps are bounded from $\ell^2$ into the
corresponding $L^2$ space and that there are constants $A_T,A_I>0$ such that
\begin{align}\label{ZSE}
  A_T\|c\|_{\ell^2}^2\le \|S_Tc\|_{L^2(0,T;\calH)}^2,
  \qquad
  A_I\|c\|_{\ell^2}^2\le \|S_Ic\|_{L^2(I;\calH)}^2.
\end{align}
Thus $S_T$ is injective and $H_N:=\Ran S_T$ is closed.

We prove $G\cap H_N=\{0\}$.  Suppose that $g\in G$, $c=(c_n)\in\ell^2$,
and
\begin{equation}\label{tgv78jn}
  g=S_Tc\quad\hbox{in }L^2(0,T;\calH).
\end{equation}
Since $g\in G$, the element $g$  has a canonical
representative given by the finite exponential polynomial (\ref{GtMM}).  This representative is $C^\infty$ as a map $(0,T)\to\calH$, since
for every $r\ge0$
\begin{align*}
  g^{(r)}(t)=\sum_{\ell=1}^{L}\sum_{q=0}^{d_\ell-1}
       \frac{d^r}{dt^r}\bigl(t^q\ee^{-\mu_\ell t}\bigr)v_{\ell,q}
\end{align*}
is again a finite sum of continuous $\calH$-valued functions.

For $s\in\R$ define the forward difference
\begin{align*}
  \Delta_s f(t)=f(t+s)-f(t),
  \qquad
  \Delta_s^r=(\Delta_s)^r .
\end{align*}
Fix $r\ge1$.  Choose $\delta_r>0$ so small that
\begin{align*}
  a-r\delta_r>0,
  \qquad
  b+r\delta_r<T.
\end{align*}
For $0<|s|<\delta_r$, the functions
$\Delta_s^r g(t)$ and $\Delta_s^r S_Tc(t)$ are defined for almost every
$t\in I$.  From \eqref{tgv78jn},  one has
\begin{equation}\label{equdiffeq}
  \frac{\Delta_s^r g}{s^r}
  =
 \sum_{|n|\ge N}\left(\frac{\ee^{-\lambda_n s}-1}{s}\right)^r c_n h_n
  \quad\hbox{in }L^2(I;\calH).
\end{equation}
Indeed,  (\ref{equdiffeq}) follows from
$\Delta_s^r h_n(t)=(\ee^{-\lambda_n s}-1)^r h_n(t)$ and the uniform bound in  $n\ge N$ for fixed
$s\ne0$, i.e., 
\begin{align*}
  \sup_{|n|\ge N}\left|\frac{\ee^{-\lambda_n s}-1}{s}\right|^r
  \le
  \left(\frac{\ee^{A_0|s|}+1}{|s|}\right)^r<\infty,
  \qquad
  A_0=\sup_{|n|\ge N}|\Re\lambda_n|.
\end{align*}

We claim that the difference quotients of $g$ are bounded in $L^2(I;\calH)$ uniformly for
$0<|s|<\delta_r$.  In fact the iterated fundamental theorem of calculus gives
\begin{align*}
  \frac{\Delta_s^r g(t)}{s^r}
  =
  \int_{[0,1]^r}
  g^{(r)}\bigl(t+(\theta_1+\cdots+\theta_r)s\bigr)
  \,d\theta_1\cdots d\theta_r .
\end{align*}
By Jensen's inequality,
\begin{align}
  \left\|\frac{\Delta_s^r g}{s^r}\right\|_{L^2(I;\calH)}^2
  &\le
  \int_{[0,1]^r}\int_I
  \left\|g^{(r)}\bigl(t+(\theta_1+\cdots+\theta_r)s\bigr)\right\|_{\calH}^2
  \,dt\,d\theta          \nonumber  \\
  &\le
  \|g^{(r)}\|_{L^2(a-r\delta_r,b+r\delta_r;\calH)}^2
  =:C_r<\infty .\label{u7IIL}
\end{align}
Combining (\ref{u7IIL}) with \eqref{equdiffeq} and the lower Riesz
bound (\ref{ZSE}) for $S_I$ gives
\begin{align*}
  A_I\sum_{|n|\ge N}
  \left|\frac{\ee^{-\lambda_n s}-1}{s}\right|^{2r}|c_n|^2
  \le C_r,
  \qquad 0<|s|<\delta_r .
\end{align*}
Letting $s\to0$ and using Fatou's lemma yields
\begin{equation}\label{eqradce}
  \sum_{|n|\ge N}|\lambda_n|^{2r}|c_n|^2<\infty,
  \qquad r=1,2,\ldots .
\end{equation}
Together with $c\in\ell^2$, this also covers $r=0$.

Let
\begin{align*}
  Q(\tilde{D})=\prod_{\ell=1}^{L}(\tilde{D}+\mu_\ell)^{d_\ell},
  \qquad \tilde{D}=\frac{d}{dt},
\end{align*}
and let $m=\deg Q=\sum_\ell d_\ell$.  By construction of $G$,
$Q(\tilde{D})g=0$ pointwise on $(0,T)$, see the proof of Lemma \ref{leexpin} .  On the other hand,
\eqref{eqradce} implies that
\begin{align*}
  \bigl(Q(-\lambda_n)c_n\bigr)_{|n|\ge N}\in\ell^2,
  \qquad
  Q(-\lambda_n)=\prod_{\ell=1}^{L}(\mu_\ell-\lambda_n)^{d_\ell}.
\end{align*}
 For each $0\le q\le m$,
\eqref{eqradce} and the boundedness of $S_I$ show that
\begin{align*}
  \sum_{|n|\ge N}(-\lambda_n)^q c_nh_n
\end{align*}
converges in $L^2(I;\calH)$. Thus one has 
\begin{align*}
  \tilde{D}^qS_Ic=
  \sum_{|n|\ge N}(-\lambda_n)^q c_nh_n,
  \qquad 0\le q\le m.
\end{align*}
Applying this identity to the polynomial differential operator $Q(\tilde{D})$ and
using \eqref{tgv78jn} restricted to $I$, we obtain
\begin{align*}
  0=Q(\tilde{D})g
  =
  \sum_{|n|\ge N}Q(-\lambda_n)c_nh_n
  \quad\hbox{in }L^2(I;\calH).
\end{align*}
The lower Riesz bound (\ref{ZSE}) for $S_I$ gives
\begin{align*}
  Q(-\lambda_n)c_n=0,
  \qquad |n|\ge N.
\end{align*}
Since $\lambda_n\ne\mu_\ell$ for every $n$ and every $\ell$, one has
$Q(-\lambda_n)\ne0$.  Hence $c_n=0$ for all $|n|\ge N$, and then
$g=0$.  Thus $G\cap H_N=\{0\}$.

It remains to prove the two sided estimate (\ref{sgudvhkh}).  Since $G$ is finite-dimensional ,
$H_N$ is closed, and $G\cap H_N=\{0\}$, the sum $G\oplus H_N$ is a
topological direct sum.  Precisely, there is $\gamma>0$ such that
\begin{equation}\label{eq:topological-direct-lower}
  \|g+h\|_{L^2(0,T;\calH)}^2
  \ge \gamma\bigl(\|g\|_{L^2(0,T;\calH)}^2+
                   \|h\|_{L^2(0,T;\calH)}^2\bigr)
\end{equation}
for all $g\in G$ and $h\in H_N$.  If no such $\gamma$ existed, then there
would be $g_j\in G$ and $h_j\in H_N$ with
$\|g_j\|_{L^2}^2+\|h_j\|_{L^2}^2=1$ and $g_j+h_j\to0$ in $L^2$.  Since $G$ is finite-dimensional, passing
to a subsequence, we obtain  $g_j\to g\in G$.  Then $h_j\to -g$.  Since $H_N$ is
closed, $-g\in H_N$, so $g\in G\cap H_N=\{0\}$. Thus 
$g_j\to0$ and $h_j\to0$, which contradicts the normalization.

Now take $h=S_Tc$.  The lower Riesz bound (\ref{ZSE}) for $S_T$,
\eqref{eq:topological-direct-lower}, and the equivalence of all norms on the
finite-dimensional space $G$ give the lower bound in
\eqref{sgudvhkh}.  The upper bound follows from
\begin{align*}
  \|g+S_Tc\|_{L^2}^2
  \le 2\|g\|_{L^2}^2+2\|S_Tc\|_{L^2}^2,
\end{align*}
and the upper Riesz bound for $S_T$ and the norm equivalence on
$G$.

\end{proof}

\begin{remark}
Note that the constant $N$ in Proposition \ref{cuin}  and thus in Proposition \ref{profiniext} may depend on $T>0$.  
\end{remark}

\subsection{Periodic Airy and periodic linear KdV}

We now prove Proposition \ref{c1}. 

The one point observability for Airy on $\T$ is as follows. 

\begin{lemma}\label{thtouai}
For every $T>0$ and every $x_0\in\T$, there exist
$0<a_T\le b_T<\infty$ such that
\begin{align*}
  a_T\sum_{n\in\Z}|c_n|^2
  \le
  \int_0^T\left|
  \sum_{n\in\Z}c_n\ee^{\ii n x_0}\ee^{\ii n^3t}
  \right|^2\dd t
  \le
  b_T\sum_{n\in\Z}|c_n|^2.
\end{align*}
\end{lemma}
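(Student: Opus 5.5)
The plan is to reduce the estimate to the Haraux--Ingham theorem, Proposition \ref{yugg}, applied to the frequencies $\mu_n=n^3$, $n\in\Z$. The factor $e^{\ii n x_0}$ has modulus one, so we set $b_n:=c_n e^{\ii n x_0}$. Then $|b_n|=|c_n|$, and the middle quantity becomes
\begin{align*}
\int_0^T\Bigl|\sum_{n\in\Z} b_n e^{\ii n^3 t}\Bigr|^2\dd t .
\end{align*}
It therefore suffices to prove two-sided Ingham bounds for the exponential family $\{e^{\ii n^3 t}\}_{n\in\Z}$ on $(0,T)$, with constants independent of $x_0$.

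The frequencies are already indexed in increasing order: $\mu_n=n^3$ is strictly increasing in $n\in\Z$. The gaps are
\begin{align*}
\mu_{n+1}-\mu_n=3n^2+3n+1\ge 1 ,
\end{align*}
so we may take the uniform gap $\rho=1$. Next choose $\gamma=4\pi/T$. Since $3n^2+3n+1\to\infty$ as $|n|\to\infty$, there is an integer $N=N(T)$ with $\mu_{n+1}-\mu_n\ge\gamma$ for all $|n|\ge N$, and the interval satisfies $|J|=T>2\pi/\gamma$. Proposition \ref{yugg} then gives constants $a_T,b_T>0$, depending only on $T$, such that the two-sided inequality holds for all finitely supported sequences $(b_n)$. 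Equivalently, this is Lemma \ref{leuing} with $\alpha=1$ and $\theta=0$, which could simply be cited.

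Finally we pass from finitely supported sequences to general $(c_n)\in\ell^2$ by density. The upper bound shows that the map $(c_n)\mapsto\sum_n c_n e^{\ii n x_0}e^{\ii n^3 t}$ extends to a bounded operator $\ell^2\to L^2(0,T)$. Both inequalities then pass to the limit along truncations, since the series converges in $L^2(0,T)$.

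There is no genuine obstacle here. The only point needing care is to check that the hypotheses of Proposition \ref{yugg} (strict monotonicity, a uniform gap, and an asymptotic gap exceeding $2\pi/T$) hold for every $T>0$. This works because the cubic gaps tend to infinity, so the threshold $N(T)$ can always be found.
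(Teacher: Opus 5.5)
Your proposal is correct and matches the paper's proof, which simply cites the Haraux--Ingham theorem (Proposition \ref{yugg}). You add the routine details the paper leaves implicit: the hypotheses hold for $\mu_n=n^3$ with uniform gap $1$ and asymptotic gap $\gamma=4\pi/T$, the unimodular factors $e^{\ii n x_0}$ do not affect the bounds, and a density argument passes from finitely supported sequences to all of $\ell^2$ (equivalently, this is Lemma \ref{leuing} with $\alpha=1$, $\theta=0$).
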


\begin{proof}
This follows directly by  
Haraux-Ingham theorem, see Proposition \ref{yugg}.  
\end{proof}

Consider  the periodic linear KdV equation
\begin{align}\label{254}
  u_t+u_x+u_{xxx}=0, \qquad (t,x)\in \mathbb R\times \mathbb T. 
\end{align}
For $u_0\in L^2$, the solution is of the form 
\begin{align*}
 u(t,x)=\sum_{n\in\Z}c_n\ee^{\ii n x}\ee^{\ii(n^3-n)t}.
 \end{align*}
Note that the frequency $n^3-n$ is zero for $n=-1,0,1$.

The following lemma shows that  one or two points observability fails for the linear KdV  on $\T$. 
\begin{lemma}\label{prkdvfai}
For any two points $x_1,x_2\in\T$, there is a non-zero
stationary solution  to (\ref{254}) which vanishes at both $x_1$ and $x_2$.  Hence one or two points cannot observe
the periodic linear KdV equation.
\end{lemma}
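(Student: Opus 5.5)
We look for the stationary solution inside the three-dimensional kernel of $\partial_x^3+\partial_x$ on $\T$, namely
\begin{align*}
  w(x)=\alpha+\beta\cos x+\gamma\sin x,\qquad (\alpha,\beta,\gamma)\in\C^3 .
\end{align*}
Each such $w$ satisfies $w_{xxx}+w_x=0$, so $u(t,x)=w(x)$ solves (\ref{254}) and is time independent. This matches the Fourier description: it is the sum over $n=-1,0,1$, where $n^3-n=0$.

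Imposing $w(x_1)=w(x_2)=0$ gives two homogeneous linear equations in the three unknowns $(\alpha,\beta,\gamma)$. Hence there is a nonzero solution triple. Since $1,\cos x,\sin x$ are linearly independent, the resulting $w$ is a nonzero element of $L^2(\T)$. If $x_1=x_2$, or if only one point is prescribed, there is just one equation, and the conclusion is even easier.

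We could also write $w$ explicitly. Take $w(x)=\cos\bigl(x-\tfrac{x_1+x_2}{2}\bigr)-\cos\tfrac{x_2-x_1}{2}$. This vanishes at $x=x_1$ and $x=x_2$, because $\cos$ is even. It is nonzero, since its $e^{\pm ix}$ coefficients are nonzero.

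Finally, we conclude the failure of observability. Take $u_0=w$. Then $u(t,\cdot)=w$ for all $t$, so $\Tr_{x_k}u(t)=w(x_k)=0$ for every $t$. (The trace is classical here, since $w$ is smooth.) Consequently, for $F\subset\{x_1,x_2\}$ we have $\sup_{x\in F}\|\Tr_x u\|_{L^2(0,T)}=0$, while $\|u_0\|_{L^2}>0$. Thus (\ref{OIujL}) fails for every $T$ and every constant, which gives the claim for one or two points and also part (iii) of Proposition \ref{c1}.

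No step is genuinely hard. The only point needing care is that a nontrivial kernel vector gives a nonzero function, and this follows from the linear independence of $1,\cos x,\sin x$.
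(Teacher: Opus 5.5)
Your proposal is correct and follows essentially the same route as the paper. The paper also takes a stationary solution in the span of $e^{-ix},1,e^{ix}$ (the modes with $n^3-n=0$), namely the explicit polynomial $e^{-ix}(e^{ix}-e^{ix_1})(e^{ix}-e^{ix_2})$, whereas you get it by a dimension count or by your cosine formula, which is equally valid and has the small bonus of being real-valued.
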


\begin{proof}
Set $z_j=\ee^{\ii x_j}$ and take
\begin{align*}
  u_0(x)=\ee^{-\ii x}(\ee^{\ii x}-z_1)(\ee^{\ii x}-z_2)
       =\ee^{\ii x}-(z_1+z_2)+z_1z_2\ee^{-\ii x}.
\end{align*}
This $u_0$ is stationary, and
vanishes at $x_1,x_2$.
\end{proof}

The following lemma implies that three points  suffice for the observation of periodic linear KdV. 
\begin{lemma}\label{lemkdvth} 
Let $E\subset\T$ contain three distinct points.  Then for every $T>0$ and any mild solution to (\ref{254}) with $u_0\in L^2$, there exist $C_{T,E},C'_{T,E}>0$ so that
\begin{align}
  \|u_0\|_{L^2(\T)}^2
  &\le
  C_{T,E}\sup_{x\in E}\|\Tr_xu\|_{L^2(0,T)}^2 \label{Fh78K}\\
 \sup_{x\in E}\|\Tr_xu\|_{L^2(0,T)}^2  &\le  C'_{T,E}\|u_0\|_{L^2(\T)}^2.\label{Sh78K}
\end{align}
If $\mu$ is a finite positive Borel measure whose support contains at least
three points, then
\begin{align}\label{7uIU}
  \|u_0\|_{L^2(\T)}^2
  \asymp_{T,\mu}
  \int_\T\|\Tr_xu\|_{L^2(0,T)}^2\dd\mu(x).
\end{align}
\end{lemma}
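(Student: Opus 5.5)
The plan is to reduce the three-point statement to the Haraux--Ingham theorem (Proposition \ref{yugg}) applied to the nonzero frequencies, plus a finite-dimensional argument for the stationary block $n\in\{-1,0,1\}$. Write $u(t,x)=\sum_n c_n e^{\ii nx}e^{\ii(n^3-n)t}$ and split $u=u_s+u_d$, where $u_s(x)=c_{-1}e^{-\ii x}+c_0+c_1e^{\ii x}$ is time independent and $u_d$ contains the modes $|n|\ge2$. The frequencies $\omega_n=n^3-n$, $|n|\ge2$, are distinct: they are strictly increasing in $n$ for $n\ge2$, odd in $n$, and satisfy $|\omega_n|\ge6$. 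Their gaps tend to infinity, and all of them differ from $0$ by at least $6$.

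\textbf{Upper bound.} Fix a point $x$ and apply the upper Ingham bound to the family $\{0\}\cup\{\omega_n\}_{|n|\ge2}$, taking the coefficient $u_s(x)$ on the frequency $0$. This gives
\begin{align*}
\|\Tr_x u\|_{L^2(0,T)}^2\le C\Bigl(|u_s(x)|^2+\sum_{|n|\ge2}|c_n|^2\Bigr)\le C'\|u_0\|_2^2,
\end{align*}
uniformly in $x$. This proves \eqref{Sh78K}, and integrating in $\mu$ gives the upper half of \eqref{7uIU}. Traces are defined first for trigonometric polynomials and then extended by density.

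\textbf{Lower bound.} The full family $\{0\}\cup\{\omega_n:|n|\ge2\}$, reordered increasingly, has uniform separation $\rho>0$ and gaps tending to infinity. Hence Proposition \ref{yugg} applies for every $T>0$: choose $\gamma=4\pi/T$ and $N$ large. For each $x$ this yields
\begin{align*}
\|\Tr_xu\|_{L^2(0,T)}^2\ge c_T\Bigl(|u_s(x)|^2+\sum_{|n|\ge2}|c_n|^2\Bigr).
\end{align*}
Summing over the three points $x_1,x_2,x_3\in E$ and bounding the sum by $3\sup_{x\in E}$, it remains to check that $\sum_{k=1}^3|u_s(x_k)|^2\ge c\,(|c_{-1}|^2+|c_0|^2+|c_1|^2)$. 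This is the statement that the $3\times3$ matrix $(e^{\ii n x_k})_{k,n}$ is invertible. After multiplying row $k$ by $e^{\ii x_k}$, it becomes the Vandermonde matrix in $z_k=e^{\ii x_k}$, which are distinct, so it is nonsingular. Combining the two estimates gives \eqref{Fh78K}.

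\textbf{Measure version.} For \eqref{7uIU}, integrate the pointwise lower bound against $\mu$. This leaves us to show that $\int|u_s|^2d\mu\ge c\,|(c_{-1},c_0,c_1)|^2$. The quadratic form is continuous on the unit sphere of $\C^3$, so by compactness it suffices to show it is positive there. It vanishes only if $e^{\ii x}u_s$, a polynomial of degree at most $2$ in $e^{\ii x}$, vanishes on $\supp\mu$. Since the support contains three points, that polynomial has three roots and is therefore zero.

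I expect the main obstacle to be the careful application of Ingham with the extra zero frequency and its uniformity in $x$. The uniformity holds because the coefficients $c_ne^{\ii nx}$ have moduli independent of $x$. The only coefficient that depends on $x$ is $u_s(x)$, and it enters solely through its modulus.
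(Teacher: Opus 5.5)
Your proof is correct, but its analytic core differs from the paper's. The paper splits off a high tail $|n|\ge\tilde N$ and shows it is a Riesz sequence by the Gram-matrix/Schur-test argument of Proposition \ref{cuin}. It then adjoins the finite block (the modes $2\le|n|<\tilde N$ together with the stationary part) via Proposition \ref{profiniext}. The low coefficients are recovered through Lemma \ref{leexpin}, the Vandermonde argument and a finite-dimensional norm-equivalence step. The measure version repeats this with $\calH=L^2(\T,\mu)$, and \eqref{Sh78K} is referred to Lemma \ref{trace-tori}.

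You instead use that the KdV spectrum is real and explicit, so the triple degeneracy at frequency $0$ collapses into the single coefficient $u_s(x)$. One application of Proposition \ref{yugg} to $\{0\}\cup\{n^3-n\}_{|n|\ge 2}$ then gives the uniform two-sided pointwise estimate
\[
\|\Tr_x u\|_{L^2(0,T)}^2\asymp_T |u_s(x)|^2+\sum_{|n|\ge 2}|c_n|^2 .
\]
Summing over three points or integrating against $\mu$ reduces everything to the same Vandermonde and positive-definiteness facts the paper uses.

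Your route is shorter and needs no high/low split and no argument over the intermediate modes $2\le|n|<\tilde N$. It also proves \eqref{Sh78K} directly, which is cleaner, since Lemma \ref{trace-tori} is formulated for $\partial_x^3+p$ rather than $\partial_x^3+\partial_x$. The paper's route buys robustness: it needs neither real frequencies nor pure exponentials in the finite block. That is exactly what is lost for $L_p$ with a potential, where eigenvalues are complex with bounded real parts and Jordan chains may produce $t^q e^{-\mu t}$ terms. The lemma is deliberately written as a rehearsal of the scheme used in Section \ref{finite}.

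One small addition for the $\mu$-version: by your uniform upper bound and density, $x\mapsto\Tr_x u\in L^2(0,T)$ is a uniform limit of continuous maps, hence continuous for every $u_0\in L^2(\T)$. So the $\mu$-integral in \eqref{7uIU} is well defined.
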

\begin{proof}
(\ref{Sh78K}) is a special case of Lemma \ref{trace-tori} below.

{\bf Step 1.} Choose three distinct points $x_1,x_2,x_3\in E$.  For the stationary modes, set
\begin{align*}
  P_x=c_{-1}\ee^{-\ii x}+c_0+c_1\ee^{\ii x}.
\end{align*}
The matrix
$$
\begin{pmatrix}
e^{-\ii x_1} & 1  & e^{\ii x_1} \\
e^{-\ii x_2}& 1 & e^{\ii x_2} \\
e^{-\ii x_3} & 1 & e^{\ii x_3}
\end{pmatrix}
$$
is a Vandermonde matrix in $\ee^{\ii x_r}$ multiplied by non-zero factors, 
thus being invertible. Hence the three values $P_{x_r}$ determine and control
$(c_{-1},c_0,c_1)$.

{\bf Step 2.} Let $e_{n}=(2\pi)^{-\frac{1}{2}}e^{\ii nx}$, $n\in \mathbb Z$.  We have  
$$
(\partial^3_x+\partial_x)e_{n}= (\ii n-\ii n^3)e_{n},
$$
and for distinct $n,m\in \mathbb Z$, 
\begin{align}\label{Pokm}
(\ii n-\ii n^3)=(\ii m-\ii m^3), \qquad {\rm iff}\qquad n, m\in \{-1,0,1\}.
\end{align}
For $u_0=\sum_{n\in \mathbb Z}c_n e_n(x)$ in $L^2$, one has 
\begin{align*}
u(t,x)=\sum_{n\neq 0,\pm 1} c_n e^{\ii(n^3-n) t}e_{n}(x)+(2\pi)^{-\frac{1}{2}}(c_0+c_1e^{\ii x}+c_{-1}e^{-\ii x}).
\end{align*}
For $n\notin\{-1,0,1\}$, set $\lambda_n=-\ii(n^3-n)$ and $
  b^{(j)}_n= \ee^{\ii n x_j}.
$
Then $|b^{(j)}_n|=1$, for $j=1,2,3$, and
$\operatorname{Im}\lambda_n=-(n^3-n)=-n^3+O(n)$. Hence by  Lemma
\ref{lecubr} and Proposition \ref{cuin}, for $I=(0,T)$, there exists $\tilde{N}\ge 2$
such that 
\begin{align*}
c_{I}\sum_{|n|\ge \tilde{N}}|c_n|^2
\le \|\sum_{|n|\ge \tilde{N}} c_n e^{\ii(n^3-n) t}e_{n}(x)\|^2_{L^2(I)}
\le C_{I}\sum_{|n|\ge \tilde{N}}|c_n|^2, \mbox{ } \forall x\in \mathbb T
\end{align*}
for some $c_{I},C_{I}>0$. 
Then applying Proposition  \ref{profiniext} to the finite-dimensional space 
$$
G_j:= \left\{\sum_{2\le |n|<\tilde{N}} l_n e^{\ii(n^3-n) t}e_{n}(x_j)+ l_0+l_1e^{\ii x_j}+l_{-1}e^{-\ii x_j}: 
\{l_m\}^{\tilde{N}-1}_{m=-1}\subset \mathbb  C\right\} 
$$
gives that for some $c_I,C_I>0$  
\begin{align}
c_{I}(\sum_{|n|\ge \tilde{N}} |c_n|^2+\|g_j\|_{L^2(I)})
\le \|u(t,x_j)\|^2_{L^2(I)} 
\le C_{I}(\sum_{|n|\ge \tilde{N}} |c_n|^2+\|g_j\|_{L^2(I)}),\label{yOLK}
\end{align}
where 
$$
g_j=\sum_{2\le |n|< \tilde{N}} c_n e^{\ii(n^3-n) t}e_{n}(x_j)+(2\pi)^{-\frac{1}{2}}(c_0+c_1e^{\ii x_j}+c_{-1}e^{-\ii x_j}).
$$ 

We now prove 
\begin{align}\label{PPI9}
g_j=0, \mbox{ } \forall j=1,2,3 
\Longrightarrow  
\mbox{ }c_n=0, \mbox{ } \forall\mbox{ } 0\le |n|<\tilde{N}. 
\end{align}
In fact, (\ref{Pokm}) and Lemma \ref{leexpin} imply that 
\begin{align*}
g_j=0, \mbox{ } \forall j=1,2,3
\Longrightarrow &c_n=0, \mbox{ }\forall \mbox{ }2\le |n|< \tilde{N}\\
& {\rm and} \mbox{ }c_0+c_1e^{ix_j}+c_{-1}e^{-ix_j}=0,\mbox{ } j=1,2,3. 
\end{align*}
Then Step 1 shows $c_0=c_1=c_{-1}=0$. 
Therefore (\ref{PPI9}) holds, and thus there exists $C_{I}>0$ such that 
\begin{align*}
\sum_{0\le |n|< \tilde{N}}|c_n|^2\le C_{I} \sup_{j}\|g_j\|^2_{L^2(I)}, 
\end{align*}
which combined with (\ref{yOLK}) gives the stated observability inequality (\ref{Fh78K}).

{\bf Step 3.} We prove (\ref{7uIU}). We claim  the quadratic form
\begin{align*}
  (c_{-1},c_0,c_1)\mapsto
  \int_\T|c_{-1}\ee^{-\ii x}+c_0+c_1\ee^{\ii x}|^2\dd\mu(x)
\end{align*}
is positive definite. In fact, if it vanished, the trigonometric polynomial
$c_{-1}\ee^{-\ii x}+c_0+c_1\ee^{\ii x}$ would vanish on $\supp\mu$. After
multiplication by $\ee^{\ii x}$, one gets  a polynomial of degree at most two but having
at least three distinct zeros.  Hence it must be identically zero. 

{\bf Step 4.}
For the version (\ref{7uIU}), take $\calH=L^2(\T,\mu)$ and
$b_n(x)=\ee^{\ii n x}$.  Then
$\|b_n\|_{\calH}=\mu(\T)^{1/2}$ for every $n$, provided
$\mu(\T)>0$.  Again by Proposition \ref{cuin}, deleting
finitely many low frequencies gives the estimates 
of  high modes, say $\{e_{n}\}^{\infty}_{|n|\ge \bar{N}}$.  
It remains to deal with the remaining finite modes.
The new part is to prove 
\begin{align}\label{KKI9}
\int_{\mathbb T}\|g_x\|^2_{L^2(I)}\, d \mu(x) =0, \mbox{ } 
\Longrightarrow  
\mbox{ }c_n=0, \mbox{ } \forall\mbox{ } 0\le |n|<\bar{N},
\end{align}
where 
$$
g_x=\sum_{2\le |n|< \bar{N}} c_n e^{\ii(n^3-n) t}e_{n}(x)+(2\pi)^{-\frac{1}{2}}(c_0+c_1e^{\ii x}+c_{-1}e^{-\ii x}).
$$
In fact, Lemma \ref{leexpin} shows  $c_n=0$ for all $2\le |n|<\bar{N}$, and 
Step 3 yields $c_n=0$ with $n=0,\pm 1$.
Thus Proposition \ref{profiniext} with $\calH=L^2(\T,\mu)$ gives  (\ref{7uIU}).

\end{proof}

Until now, we have proved Proposition \ref{c1} by Lemma \ref{prkdvfai} and  Lemma \ref{lemkdvth}.  
The proof of \ref{prkdvfai} gives us the basic route mapping to the general case, see Section \ref{high-spectral} and Section \ref{finite} below. 

\section{High-frequency spectral decomposition for the Airy operator with potential}\label{high-spectral}

In this section, we construct the high-frequency spectral decomposition of
$L_p$.

Throughout this section, assume  $p\in L^\infty(\T;\R)$. Let
\begin{align}\label{br}
  L_0=\partial_x^3,
  \qquad
  \nu_n=-\ii n^3, \qquad
  e_n(x)=(2\pi)^{-1/2}\ee^{\ii n x},\qquad n\in\Z .
\end{align}
For $n\in\Z$, let $P_n^0$ be the orthogonal projection onto $\C e_n$.
The operator $\Lp$ has compact resolvent.  Indeed, choose
$z\in\rho(L_0)$ such that $\|p(z-L_0)^{-1}\|<1$.  Then
\begin{align*}
  (z-\Lp)^{-1}=(z-L_0)^{-1}\bigl(I-p(z-L_0)^{-1}\bigr)^{-1},
\end{align*}
and $(z-L_0)^{-1}:L^2(\T)\to H^3(\T)\hookrightarrow L^2(\T)$ is compact.
Thus the spectrum of $\Lp$ consists only of isolated eigenvalues of finite
algebraic multiplicity, with no finite accumulation point.

We will use Kato's perturbation method (e.g. \cite{Kato}) and Bari-Markus theorem on projection operators (see \cite{B1951,M1962}) to prove 
 the following spectral decomposition proposition of $L_p$. 

\begin{proposition}\label{prospec}
Let $p\in L^\infty(\T;\R)$.  There exist sufficiently large $N_0\ge1$, a finite-dimensional 
$\Lp$-invariant subspace $X_0$, simple eigenvalues $\lambda_n$, and
eigenfunctions $\phi_n$, indexed by $|n|\ge N_0$, such that 
\begin{enumerate}[label=\textup{(S\arabic*)}]
  \item $\Lp\phi_n=\lambda_n\phi_n$, and the algebraic spectral subspace
  associated with $\lambda_n$ is one-dimensional.
  \item There is a Riesz direct decomposition
  \begin{equation}\label{eqried}
    L^2(\T)=X_0\dotplus \overline{\spn}\{\phi_n:|n|\ge N_0\}.
  \end{equation}
  Thus every $f\in L^2(\T)$ has a unique expansion
  \begin{align*}
    f=f_0+\sum_{|n|\ge N_0}c_n\phi_n,
    \qquad f_0\in X_0,
  \end{align*}
  and there are constants $0<A\le B<\infty$ such that
  \begin{equation}\label{rieseq}
    A\left(\|f_0\|_2^2+\sum_{|n|\ge N_0}|c_n|^2\right)
    \le \|f\|_2^2
    \le
    B\left(\|f_0\|_2^2+\sum_{|n|\ge N_0}|c_n|^2\right).
  \end{equation}
  \item The eigenvalues satisfy
  \begin{equation}\label{eq:lambda-asymp}
    \lambda_n=-\ii n^3+\bar p+O(n^{-2}),
    \qquad |n|\to\infty,
  \end{equation}
  where
  \begin{align*}
    \bar p=(2\pi)^{-1}\int_\T p(x)\dd x .
  \end{align*}
  In particular, $|\operatorname{Re}\lambda_n|$ is bounded and
  $\operatorname{Im}\lambda_n=-n^3+O(1)$.
  \item The eigenfunctions $\phi_n$ can be normalized by
  $\langle\phi_n,e_n\rangle=1$, and then 
  \begin{equation}\label{eq:phi-asymp}
    \|\phi_n-e_n\|_{C^0(\T)}\longrightarrow 0,
    \qquad |n|\to\infty.
  \end{equation}
And  there holds 
  \begin{equation}\label{qyunbou}
   \frac12(2\pi)^{-1/2}\le |\phi_n(a)|\le \frac32(2\pi)^{-1/2},
    \qquad \forall a\in\T,
    \quad |n|\ge N_0 .
  \end{equation}
\end{enumerate}
\end{proposition}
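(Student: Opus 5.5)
The plan is to treat $p$ as a bounded perturbation of the skew-adjoint operator $L_0$ and work with Riesz projections on small circles around the unperturbed eigenvalues $\nu_n=-\ii n^3$. Since $|\nu_n-\nu_m|\ge \frac12|n-m|(n^2+m^2)$, the gap around $\nu_n$ is of order $n^2$. For $|n|\ge N_0$ we take the circle $\Gamma_n=\{z:|z-\nu_n|=r\}$ with a fixed radius $r>2\|p\|_\infty$ (for instance $r=2\|p\|_\infty+1$), which is smaller than half the gap once $|n|$ is large.

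\textbf{Step 1: resolvent bounds and one-dimensional projections.} On $\Gamma_n$ we have $\|p(z-L_0)^{-1}\|\le \|p\|_\infty/r<1/2$, so the Neumann series gives $z\in\rho(\Lp)$. Then
\begin{align*}
P_n=\frac{1}{2\pi\ii}\oint_{\Gamma_n}(z-\Lp)^{-1}\dd z
\end{align*}
is well defined. The homotopy $L_0+\theta p$, $\theta\in[0,1]$, keeps $\Gamma_n$ in the resolvent set, so $\operatorname{rank}P_n=\operatorname{rank}P_n^0=1$. This gives (S1).

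To handle the remaining low part, we choose a large contour $\Gamma_0$ enclosing all $\nu_m$ with $|m|<N_0$ and separated from the $\Gamma_n$. The strip $\{|\operatorname{Re} z|\le\|p\|_\infty\}$ contains the whole spectrum, and points outside all disks satisfy $\|(z-L_0)^{-1}\|<1/r$. Hence every eigenvalue of $\Lp$ lies inside some $\Gamma_n$ or inside $\Gamma_0$. We set $X_0=\Ran P_{\Gamma_0}$, which is finite dimensional by the same homotopy argument and $\Lp$-invariant.

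\textbf{Step 2: the Bari--Markus condition, the main obstacle.} We must show $\sum_{|n|\ge N_0}\|P_n-P_n^0\|^2<\infty$. Expand
\begin{align*}
P_n-P_n^0=\frac1{2\pi\ii}\oint_{\Gamma_n}(z-L_0)^{-1}p(z-L_0)^{-1}\bigl(I-p(z-L_0)^{-1}\bigr)^{-1}\dd z.
\end{align*}
The crude norm bound is only $O(1)$ per $n$, which is not square summable, so we do not use it. Instead we write $(z-L_0)^{-1}=(z-\nu_n)^{-1}P_n^0+R_n(z)$, where $\|R_n(z)\|\lesssim n^{-2}$ on $\Gamma_n$ by the cubic gap. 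The first-order term $\oint (z-L_0)^{-1}p(z-L_0)^{-1}\dd z$ then equals $P_n^0pR_n(\nu_n)+R_n(\nu_n)pP_n^0$. Its Hilbert--Schmidt norm squared is of size
\begin{align*}
\sum_{m\ne n}\frac{|\hat p(n-m)|^2}{|\nu_n-\nu_m|^2}\lesssim n^{-4}\|p\|_2^2,
\end{align*}
which is summable. The higher-order terms contain at least two factors of $R_n$, or one $R_n$ together with the off-diagonal part, and are bounded similarly by $O(n^{-2})$ in norm. This is the step where the cubic gap is essential, and the bookkeeping in the Neumann expansion is where care is needed.

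Once square summability holds, the Bari--Markus theorem (applied with $X_0$ together with the complementary finite-rank Fourier projection onto the modes $|m|<N_0$, whose total rank must match the rank of $P_{\Gamma_0}$, again by homotopy) gives the Riesz decomposition (S2) and the equivalence (\ref{rieseq}).

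\textbf{Step 3: asymptotics.} Define $\phi_n=P_ne_n/\langle P_ne_n,e_n\rangle$. For (S3), write $\lambda_n=\nu_n+\langle p\phi_n,e_n\rangle$. The leading term is $\langle pe_n,e_n\rangle=\bar p$. The correction is $\langle p(\phi_n-e_n),e_n\rangle$, and $\phi_n-e_n=R_n(\lambda_n)p\phi_n$ to first order, so this correction is a sum $\sum_{m\ne n}|\hat p(n-m)|^2/(\nu_n-\nu_m)$ plus higher-order terms. Its size is $O(n^{-2})$ up to logarithmic factors; if a logarithmic loss appears, we accept the weaker statement $\operatorname{Im}\lambda_n=-n^3+O(1)$, which is all that Proposition \ref{cuin} uses.

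For (S4), use $\phi_n-e_n=\sum_{m\ne n}a_me_m$ with $|a_m|\lesssim |\hat p(n-m)|/|\nu_n-\nu_m|$ plus higher-order terms. Cauchy--Schwarz then gives
\begin{align*}
\|\phi_n-e_n\|_{C^0}\le\sum_{m\ne n}|a_m|\lesssim \|p\|_2\Bigl(\sum_{m\ne n}|\nu_n-\nu_m|^{-2}\Bigr)^{1/2}\to0.
\end{align*}
Enlarging $N_0$ so that this is at most $\frac12(2\pi)^{-1/2}$ yields (\ref{qyunbou}).
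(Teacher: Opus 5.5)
Your strategy works and yields a proof. As written, though, it does not establish (S3), and it reaches (S2) by a different route from the paper.

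\textbf{The gap is in (S3).} You allow for a logarithmic loss and offer to settle for $\operatorname{Im}\lambda_n=-n^3+O(1)$. That is weaker than the stated $\lambda_n=-\ii n^3+\bar p+O(n^{-2})$, and what Proposition~\ref{cuin} needs is beside the point. No loss actually occurs, because the identity $\phi_n-e_n=R_n(\lambda_n)p\phi_n$ is exact, not first order. Pairing $(\lambda_n-L_0)\phi_n=p\phi_n$ with $e_m$ for $m\ne n$ gives $\langle\phi_n,e_m\rangle=\langle p\phi_n,e_m\rangle/(\lambda_n-\nu_m)$, and $\langle\phi_n,e_n\rangle=1$ by your normalization. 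Since $\lambda_n$ lies within $r$ of $\nu_n$, one has $|\lambda_n-\nu_m|\ge|\nu_n-\nu_m|-r\gtrsim n^2$. With $\|\phi_n\|_2$ bounded (by your Step 2), this gives $\|\phi_n-e_n\|_2=O(n^{-2})$. Hence $|\lambda_n-\nu_n-\bar p|=|\langle p(\phi_n-e_n),e_n\rangle|\le\|p\|_\infty\|\phi_n-e_n\|_2=O(n^{-2})$, which is exactly how the paper proves (S3). A logarithm would only appear if you summed $|\nu_n-\nu_m|^{-1}$ termwise, as in Lemma~\ref{lecubr}, and that is not needed here. The same exact formula removes the ``plus higher-order terms'' in (S4): Cauchy--Schwarz gives $\sum_{m\ne n}|\langle\phi_n,e_m\rangle|\le\|p\phi_n\|_2\bigl(\sum_{m\ne n}|\lambda_n-\nu_m|^{-2}\bigr)^{1/2}=O(n^{-2})$.

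\textbf{Comparison with the paper's route.} Apart from this, your proof differs from the paper's in two places.
\begin{itemize}
\item \emph{The bound $\|P_n-P_n^0\|\le Cn^{-2}$.} The paper gets it from eigenvectors. It shows $\langle\phi_n,e_n\rangle\ne0$, bounds $\phi_n-e_n$ and the adjoint eigenvector $\psi_n-e_n$ by $O(n^{-2})$ via the identity above, and writes $P_nf=\langle f,\psi_n\rangle\phi_n/\langle\phi_n,\psi_n\rangle$. Your contour splitting into the pole at $\nu_n$ plus $R_n(z)$ reaches the same bound. At order $k$, the surviving terms are those containing at least one pole factor and at least one $R_n$ factor. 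There are at most $2^{k+1}$ of them, each $O(n^{-2}(\|p\|_\infty/r)^k)$, so $r>2\|p\|_\infty$ is exactly what makes the series sum. The Hilbert--Schmidt computation is therefore unnecessary. Your argument costs more bookkeeping but does not use the rank-one structure.
\item \emph{Assembling the decomposition.} Since $\sum\|P_n-P_n^0\|$ converges, the paper avoids the Markus theorem in substance. It sets $P_\infty=\sum_{|n|\ge N_0}P_n$ (strong limit) and $X_0:=\Ker P_\infty$, and gets $\dim X_0<\infty$ from $\|P_\infty-P_\infty^0\|<1/4$. The Riesz bounds on $\Ran P_\infty$ come from the Hilbert--Schmidt perturbation $e_n\mapsto\phi_n$. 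The direct sum $f=P_\infty f+(I-P_\infty)f$ is then automatic, and completeness never has to be checked. Your $X_0=\Ran P_{\Gamma_0}$ is the natural spectral subspace; it in fact coincides with $\Ker P_\infty$, since it lies in every $\Ker P_n$ and both have dimension $2N_0-1$. However, it forces you to invoke the Markus theorem with the rank count to obtain completeness. With your $O(n^{-2})$ bound you could equally use the paper's elementary assembly.
\end{itemize}
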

\begin{proof}
Recall that $\nu_n=-\ii n^3$. Let $K=\|p\|_\infty$, and choose a number $R>K+1$.  For $|n|$ large, the
closed discs
\begin{align*}
  D_n=\{z\in\C: |z-\nu_n|\le R\}
\end{align*}
are pairwise disjoint.  If $z\in\partial D_n$, then
$\dist(z,\{\nu_k:k\in\Z\})=R$ for all sufficiently large $|n|$, and therefore
\begin{align*}
  \|p(z-L_0)^{-1}\|\le K/R<1.
\end{align*}
Thus $z\in\rho(L_0+sp)$ for every $s\in[0,1]$.  The Riesz projection
\begin{align*}
  P_n(s)=\frac1{2\pi\ii}\int_{\partial D_n}(z-L_0-sp)^{-1}\dd z
\end{align*}
is consequently well-defined and depends continuously on $s$ in operator
norm. The rank of $P_n(s)$ is constant in $s$, by the well-known fact that  two projections $P$ and $Q$ of finite rank satisfying $\|P-Q\|<1$ must have the same rank.   At $s=0$ the rank is one, since
$D_n$ contains only the eigenvalue $\nu_n$ of $L_0$, which is simple.  Hence
\begin{align*}
  P_n:=P_n(1)
\end{align*}
has rank one.

Let $\lambda_n$ be the unique eigenvalue of $\Lp$ in $D_n$, counted with
algebraic multiplicity.  Since $\operatorname{rank}P_n=1$, this eigenvalue is
algebraically simple.  Let $\phi_n\in\Ran P_n\setminus\{0\}$.  We first prove
that $n$-th Fourier coefficient of $\phi_n$ is non-zero for $|n|$ sufficiently large. We prove this by contradiction.  If
$\widehat\phi_n(n)=0$, then for every $k\ne n$,
\begin{align*}
  (\nu_k-\lambda_n)\widehat\phi_n(k)=-\widehat{p\phi_n}(k).
\end{align*}
For $z\in D_n$ and $k\ne n$, the distance $|\nu_k-z|$ is bounded below by
$c n^2$ for all sufficiently large $|n|$.  Therefore
\begin{align*}
  \|\phi_n\|_2
  \le Cn^{-2}\|p\phi_n\|_2
  \le CKn^{-2}\|\phi_n\|_2,
\end{align*}
which is impossible for large $|n|$. 

Since $\widehat{\phi}_n(n)\neq 0$ for $|n|$ large, we can normalize $\phi_n$ by
$\widehat\phi_n(n)=\langle\phi_n,e_n\rangle=1$.  Writing
$\phi_n=e_n+r_n$ gives
\begin{align*}
  \widehat r_n(k)=-\frac{\widehat{p\phi_n}(k)}{\nu_k-\lambda_n},
  \qquad k\ne n; \qquad  \widehat r_n(n)=0.
\end{align*}
Thus
\begin{equation}\label{eq:r-L2}
  \|r_n\|_2
  \le Cn^{-2}\|p\phi_n\|_2
  \le CKn^{-2}(1+\|r_n\|_2),
\end{equation}
and hence
\begin{equation}\label{eq:phi-L2-close}
  \|\phi_n-e_n\|_2\le Cn^{-2}.
\end{equation}

Apply the same argument to the adjoint operator
$\Lp^*=-\partial_x^3+p$.  The adjoint spectral projection $P_n^*$ has rank
one and is associated with $\overline{\lambda_n}$.  Let $\psi_n\in\Ran P_n^*$
be normalized by $\langle\psi_n,e_n\rangle=1$.  Then
\begin{equation}\label{oosiclo}
  \|\psi_n-e_n\|_2\le Cn^{-2}.
\end{equation}
And we can explicitly write $P_n$ as 
\begin{align*}
  P_nf=\frac{\langle f,\psi_n\rangle}{\langle\phi_n,\psi_n\rangle}\phi_n .
\end{align*}
Indeed, letting $P_nf=k\phi_n$ and taking inner product with $\psi_n$ on both sides, using
$P_n^*\psi_n=\psi_n$, we determine the scalar coefficient  $k$.  From
\eqref{eq:phi-L2-close} and \eqref{oosiclo}, one has
\begin{align}\label{newp}
\langle\phi_n,\psi_n\rangle=1+O(n^{-2}).
\end{align}
For $\|f\|_2=1$, write $r_n=\phi_n-e_n$ and $s_n=\psi_n-e_n$.  Then we have 
\begin{align}\label{LAO}
  (P_n-P_n^0)f
  &=\left(\frac{\langle f,\psi_n\rangle}{\langle\phi_n,\psi_n\rangle}
        -\langle f,e_n\rangle\right)e_n
    +\frac{\langle f,\psi_n\rangle}{\langle\phi_n,\psi_n\rangle}r_n .
\end{align}
By (\ref{oosiclo}) and (\ref{newp}), the scalar coefficient before $e_n$ in the right side of (\ref{LAO}) is bounded by  
\begin{align*}
  C\bigl(|\langle f,s_n\rangle|+|\langle\phi_n,\psi_n\rangle-1|\bigr)
  \le Cn^{-2}.
\end{align*}
And by (\ref{eq:r-L2}) and (\ref{newp}), the $r_n$ involved term in (\ref{LAO})  is dominated by 
$$C\|r_n\|_2\le Cn^{-2}.$$  Hence 
\begin{equation}\label{eqprcstr}
  \|P_n-P_n^0\|\le Cn^{-2}.
\end{equation}

Choose $N_0$ so large that all previous estimates hold and
\begin{align}\label{FRkk}
  \sum_{|n|\ge N_0}\|P_n-P_n^0\|<\frac14,
  \qquad
  \sum_{|n|\ge N_0}\|\phi_n-e_n\|_2^2<\frac14.
\end{align}
Now we can apply Bari-Markus theorem to obtain the spectral decomposition theorem of $L_p$. In fact, let 
$$P_\infty^0:=\sum_{|n|\ge N_0}P^0_{n},  
$$
which converges strongly in $\mathcal{L}(L^2; L^2)$ by the Parseval identity. 
Then there exists a projection operator $P_{\infty}$ such that 
\begin{align}\label{IuHJ}
\lim_{J\to \infty}  \sum_{N_0\le |n|\le J}P_{n}=P_{\infty}, \mbox{ }   {\rm strongly}\mbox{ } {\rm in} \mbox{ }\mathcal{L}(L^2; L^2). 
\end{align}
In fact, one has 
\begin{align}\label{YWRREE}
\sum_{N_0\le |n|\le J}P_{n} =\sum_{N_0\le |n|\le J}P^0_n+\sum_{N_0\le |n|\le J}(P_n-P_n^0),
\end{align}
Notice that  the first series on the right hand side of (\ref{YWRREE}) converges strongly  in $\mathcal{L}(L^2; L^2)$  as $J\to \infty$, and the second series converges in the operator norm $\mathcal{L}(L^2; L^2)$  by (\ref{FRkk}). 
Thus (\ref{IuHJ}) holds. 

Furthermore, one  obtains from  (\ref{FRkk})-(\ref{YWRREE}) that 
\begin{equation}\label{eq:Pinfty-close}
  \|P_\infty-P_\infty^0\|<\frac14.
\end{equation}
Set
\begin{align*}
  X_0=\Ker P_\infty,
  \qquad
  H_\infty=\Ran P_\infty.
\end{align*}
Then we obtain  
$$L^2(\T)=X_0\dotplus H_\infty$$
by writing 
\begin{align}\label{RtGVV}
f=P_{\infty} f+(I-P_{\infty})f.
\end{align}
 The space $X_0$ is finite-dimensional. In fact, if $x\in X_0$, then by \eqref{eq:Pinfty-close}
\begin{align*}
  \|P_\infty^0x\|_2\le\frac14\|x\|_2,
\end{align*}
so the projection $x\mapsto(\Id-P_\infty^0)x$ is injective from $X_0$ into
the finite-dimensional space
$\spn\{e_n:|n|<N_0\}$. Moreover, $X_0$ and $H_\infty$ are invariant under the group $U_p(t)$, since each Riesz projection $P_n$ commutes with the group $U_p(t)$, and so does the strong limit $P_{\infty}$.    

Define a map
\begin{align*}
  S:\overline{\spn}\{e_n:|n|\ge N_0\}\to L^2(\T),
  \qquad
  S\left(\sum c_ne_n\right)=\sum c_n\phi_n .
\end{align*}
Since $\sum\|\phi_n-e_n\|_2^2<1/4$, the perturbation
$S-\Id$ is Hilbert--Schmidt with norm at most $1/2$.  Thus $S$ is an
isomorphism from the high Fourier subspace $\overline{\spn}\{e_n:|n|\ge N_0\}$ onto
$\overline{\spn}\{\phi_n:|n|\ge N_0\}$.  Note that $\overline{\spn}\{\phi_n:|n|\ge N_0\}$ is exactly
$H_\infty$. Hence by the inverse operator theorem,  there are constants
$a,b>0$ such that, for $h=\sum c_n\phi_n\in H_\infty$,
\begin{align}\label{Ys7FFD}
  a\sum |c_n|^2\le \|h\|_2^2\le b\sum |c_n|^2.
\end{align}
For $f\in L^2$, let $h=P_\infty f$ and
$f_0=(I-P_\infty)f$. Then 
\begin{align*}
  \|h\|_2+\|f_0\|_2& \le (\|P_\infty\|+\|I-P_\infty\|)\|f\|_2\\
  \|f\|_2&\le\|f_0\|_2+\|h\|_2
\end{align*}
These two estimates, together with 
(\ref{Ys7FFD}),  give \eqref{eqried} and \eqref{rieseq}.

It remains to prove the asymptotics.  Taking inner product with $e_n$ on both sides of $\Lp\phi_n=\lambda_n\phi_n$ and using $\langle\phi_n,e_n\rangle=1$, we obtain 
\begin{align*}
  \lambda_n=\nu_n+\langle p\phi_n,e_n\rangle .
\end{align*}
Since $\phi_n=e_n+O(n^{-2})$ in $L^2$,
\begin{align*}
  \langle p\phi_n,e_n\rangle
  =\langle pe_n,e_n\rangle+O(n^{-2})
  =\bar p+O(n^{-2}),
\end{align*}
which proves \eqref{eq:lambda-asymp}.

Recall that $\phi_n=e_n+r_n$. Then $\widehat{r}_n(n)=0$ and for $k\ne n$,
\begin{align*}
  \widehat r_n(k)=-\frac{\widehat{p\phi_n}(k)}{-\ii k^3-\lambda_n}.
\end{align*}
Using \eqref{eq:lambda-asymp}, for all large $|n|$, there holds 
\begin{align*}
  \sup_{k\ne n}\frac{\langle k\rangle}{|-\ii k^3-\lambda_n|}
  \le \frac{C}{|n|}.
\end{align*}
Therefore, we see 
\begin{align*}
  \|r_n\|_{H^1}
  \le \frac{C}{|n|}\|p\phi_n\|_2
  \le \frac{C}{|n|}.
\end{align*}
The Sobolev embedding $H^1(\T)\hookrightarrow C^0(\T)$ proves
\eqref{eq:phi-asymp}.  Since $|e_n(a)|=(2\pi)^{-1/2}$ for every $a$, choose
$N_1\ge N_0$ so that
$\|\phi_n-e_n\|_{C^0}\le \frac12(2\pi)^{-1/2}$ for all $|n|\ge N_1$.  If
$N_1>N_0$, replace the low space by
\begin{align*}
  X_0\oplus\spn\{\phi_n:N_0\le |n|<N_1\}
\end{align*}
and replace the high tail by $|n|\ge N_1$.  This is only a finite-dimensional
modification: invariance, the Riesz direct decomposition, and the coefficient
norm equivalence are preserved.  Renaming $N_1$ as $N_0$, we obtain
\eqref{qyunbou}.

\end{proof}

\begin{remark}
The construction of $P_\infty$ also shows that all spectral subspaces not
represented by the projections $P_n$, $|n|\ge N_0$, lie in $X_0$.  Indeed,
if $Y$ is a generalized eigenspace associated with a spectral value different
from all high eigenvalues $\lambda_n$, then the Riesz projection $P_n$
annihilates $Y$ for every $|n|\ge N_0$.  Hence $P_\infty Y=0$, and
$Y\subset X_0$.
Thus  possible Jordan  chains occur only in the finite-dimensional space  $X_0$.
\end{remark}

As in the real line case, we also need the definition of  point trace in the periodic case.  

\begin{definition}\label{def:trace}
Let $p\in L^\infty(\T;\R)$ and assume the decomposition of Proposition
\ref{prospec}.  If
\begin{align*}
  f=f_0+\sum_{|n|\ge N_0}c_n\phi_n,
  \qquad f_0\in X_0,
\end{align*}
then for $a\in\T$ we define
\begin{align}\label{kiop}
  \Tr_aU_p(t)f
  =(U_p(t)f_0)(a)+
  \sum_{|n|\ge N_0}c_n\ee^{-\lambda_nt}\phi_n(a)
\end{align}
as an element of $L^2_{\mathrm{loc}}(\R_t)$. 
\end{definition}

\begin{remark}
Note that the high-frequency series in (\ref{kiop}) is
well-defined by Proposition \ref{cuin} with
$\calH=\C$ and $b_n=\phi_n(a)$.  The constants in the upper estimate are
uniform in $a$, because \eqref{qyunbou} gives uniform upper and
lower bounds for $|\phi_n(a)|$ on the high tail.  
\end{remark}

The point trace in Definition \ref{def:trace} is $L^2$ admissible and 
agrees with classical point values. 
\begin{lemma}\label{trace-tori}
If $f\in H^3(\T)$, then for every $a\in\T$ and every $t\in\R$,
\begin{align*}
  \Tr_aU_p(t)f=(U_p(t)f)(a).
\end{align*}
Moreover, for each $T>0$ there exists $C_T>0$ such that 
\begin{align}\label{KDS}
\sup_{a\in \mathbb T} \| \Tr_aU_p(t)f\|_{L^2(0,T)}\le C_T\|f\|_{L^2}.
\end{align}
\end{lemma}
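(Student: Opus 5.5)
Both claims come from the Riesz decomposition of Proposition \ref{prospec}: the finite-dimensional block $X_0$ and the high-frequency tail are handled separately, then recombined with \eqref{rieseq}.

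\textbf{Agreement with classical point values.} Let $f\in H^3(\T)$ and write $f=f_0+\sum_{|n|\ge N_0}c_n\phi_n$.

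First, $X_0$ is invariant under $\Lp$ and under $U_p(t)$. Each $\phi_n$ lies in $\Dom(\Lp)=H^3$, and $X_0$ is finite-dimensional and $\Lp$-invariant, so $X_0\subset H^3$. Hence $(U_p(t)f_0)(a)=(\ee^{-t\mathcal A}f_0)(a)$ is a classical point value.

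Next, $h:=f-f_0=P_\infty f$ lies in $H^3$, because $P_\infty$ commutes with the resolvent and therefore preserves $\Dom(\Lp)$. It remains to show that $\sum c_n\ee^{-\lambda_n t}\phi_n$ converges to $U_p(t)h$ in $C^0(\T)$ for each fixed $t$. Since $\Lp h=\sum \lambda_n c_n\phi_n$ lies in $H_\infty$, the Riesz bounds \eqref{Ys7FFD} give $\sum|\lambda_n|^2|c_n|^2<\infty$. By \eqref{eq:lambda-asymp}, this means $\sum n^6|c_n|^2<\infty$. Together with $\sup_n\|\phi_n\|_{C^0}<\infty$ from \eqref{qyunbou} and $|\ee^{-\lambda_n t}|\le \ee^{C|t|}$, Cauchy--Schwarz gives absolute uniform convergence:
\[
\sum|c_n|\le \Big(\sum n^6|c_n|^2\Big)^{1/2}\Big(\sum n^{-6}\Big)^{1/2}.
\]
The series also converges in $L^2$ to $U_p(t)h$: $U_p(t)$ is bounded and $U_p(t)\phi_n=\ee^{-\lambda_nt}\phi_n$. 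So the two limits coincide as continuous functions. Evaluating at $a$ shows that \eqref{kiop} equals $(U_p(t)f)(a)$ for every $t$.

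\textbf{Uniform admissibility \eqref{KDS}.} Split $\Tr_aU_p(\cdot)f$ into the $X_0$-part and the tail.

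For the finite-dimensional part, take a basis of $X_0\subset H^3\subset C^0$. Then
\[
\sup_{a}|(U_p(t)f_0)(a)|\le \|U_p(t)f_0\|_{C^0}\le C\|U_p(t)f_0\|_2\le C\ee^{\|p\|_\infty T}\|f_0\|_2
\]
for $t\in[0,T]$, using equivalence of norms on $X_0$ and its invariance under $U_p(t)$. Hence its $L^2(0,T)$ norm is at most $C_T\|f_0\|_2$.

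For the tail, apply the upper bound in Proposition \ref{cuin} with $\calH=\C$ and $b_n=\phi_n(a)$. By \eqref{qyunbou}, the constants $b_\pm$ are independent of $a$. The exponents $\lambda_n$ satisfy the hypotheses by \eqref{eq:lambda-asymp}.

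The main point to check is that the upper constant $B$, and the index $N$ beyond which the Riesz bound holds, are uniform in $a$. In the proof of Proposition \ref{cuin}, $N$ and $B$ depend only on $b_\pm$, $\sup|\alpha_n|$ and the gap sums of Lemma \ref{lecubr}, none of which involve $a$. If $N>N_0$, the finitely many modes $N_0\le|n|<N$ are bounded trivially: each is $\le \ee^{CT}|c_n|\,\|\phi_n\|_{C^0}$, with constants uniform in $a$. This gives
\[
\Big\|\sum_{|n|\ge N_0}c_n\ee^{-\lambda_n t}\phi_n(a)\Big\|_{L^2(0,T)}^2\le C_T\sum_{|n|\ge N_0}|c_n|^2 .
\]

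Finally, combining the two bounds with the lower inequality in \eqref{rieseq}, namely $\|f_0\|_2^2+\sum|c_n|^2\le A^{-1}\|f\|_2^2$, yields \eqref{KDS} with a constant independent of $a$.
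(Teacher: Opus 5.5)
Your proof is correct and takes essentially the same route as the paper. You use the Riesz decomposition of Proposition \ref{prospec}, norm equivalence on the finite-dimensional block $X_0$, and the $a$-uniform upper bound of Proposition \ref{cuin} for the tail (with the finitely many intermediate modes bounded directly), then recombine via \eqref{rieseq}. The only cosmetic difference is in the classical-point-value identity: you show absolute uniform convergence of the tail series from $\sum n^6|c_n|^2<\infty$ and Cauchy--Schwarz, whereas the paper invokes graph-norm convergence together with $H^3(\T)\hookrightarrow C^2(\T)$.
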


\begin{proof}
Let
$$
f=f_0+\sum_{|n|\geq N_0}c_n\phi_n
$$
be the Riesz decomposition given by  Proposition
\ref{prospec}. The finite dimensional component satisfies
\begin{align}
\sup_{a\in\mathbb T}
\|\bigl(U_p(t)f_0\bigr)(a)\|_{L^2(0,T)}
\leq C_T\|f_0\|_{L^2}.
\label{10.21}
\end{align}
For the high frequency part define
$$
b_n(a)=\phi_n(a).
$$
By the uniform estimate of  Proposition
\ref{prospec},
$$
0<c_0\leq |b_n(a)|\leq C_0 ,
\qquad
|n|\geq N_0 ,
$$
uniformly in $a$.

The vector-valued Riesz estimate of Proposition \ref{cuin} gives
a number $N_1\geq N_0$ such that
\begin{align}
\int_0^T
\left|
\sum_{|n|\geq N_1}
c_ne^{-\lambda_nt}b_n(a)
\right|^2dt
\leq
C_T
\sum_{|n|\geq N_1}|c_n|^2
\label{10.22}
\end{align}
uniformly for $a\in\mathbb T$.

It remains to treat the finite block
$$
Y=
X_0\oplus
\operatorname{span}
\{\phi_n:N_0\leq |n|<N_1\}.
$$
Since $Y$ is finite dimensional and contained in $H^3(\mathbb T)$,
point evaluation is uniformly bounded by 
\begin{align}
\sup_{a\in\mathbb T}
\|(U_p(t)y)(a)\|_{L^2(0,T)}
\leq C_T\|y\|_{L^2},
\qquad y\in Y .
\label{10.23}
\end{align}
Combining the finite dimensional estimate with (\ref{10.22}), and using the
Riesz equivalence
$$
\|f\|_{L^2}^2
\asymp
\|f_0\|_{L^2}^2+
\sum_{|n|\geq N_0}|c_n|^2 ,
$$
we obtain
$$
\sup_{a\in\mathbb T}
\|\operatorname{Tr}_aU_p(\cdot)f\|_{L^2(0,T)}
\leq
C_T\|f\|_{L^2}.
$$
Finally, for $f\in H^3(\mathbb T)$,
the spectral expansion converges in the graph norm of $L_p$ because
$$
\sum_{|n|\geq N_0}|\lambda_nc_n|^2<\infty .
$$
Since
$$
D(L_p)=H^3(\mathbb T)
\hookrightarrow C^2(\mathbb T),
$$
the pointwise evaluation is justified and the abstract trace coincides with
the classical point value.
 
\end{proof}

\section{Finite-dimensional visibility and finite point observability}\label{finite}

In this section, we prove Theorem \ref{th3}. 

Let $F\subset\T$ be finite and non-empty.  By \eqref{eq:phi-asymp}, there exists sufficiently large $N_0$ depending only on $p$ such that
\begin{equation}\label{eq:visible-high}
  |\phi_n(a)|\ge\frac{1}{2}(2\pi)^{-\frac{1}{2}},
  \qquad \forall\mbox{ } |n|\ge N_0, \forall \mbox{ }a\in \mathbb T.
\end{equation}
Recall the spectral decomposition in Proposition \ref{prospec}, especially 
the  finite-dimensional space $X_0$ is $\Lp$-invariant and is contained in
$H^3(\T)$.  Set
\begin{align*}
  \mathcal{A}=\Lp|_{X_0},
  \qquad
  C_Fy=(y(a))_{a\in F}\in\C^F .
\end{align*}

\begin{definition}[Visibility on $X_0$]
The set $F$ is visible on $X_0$ if
\begin{align*}
  C_F\ee^{-t{\mathcal A}}y=0\quad\hbox{for all }t\in(0,T)
  \quad\Longrightarrow\quad y=0.
\end{align*}
\end{definition}

Note that this condition is independent of the particular positive value of $T$. In fact,  it is well-known that 
in the finite-dimensional space $X_0$, visibility is equivalent to the Kalman  rank condition, see Lemma \ref{lem:kalman} below.

\begin{lemma}\label{lem:kalman}
Let $d=\dim X_0$.  The following statements are equivalent:
\begin{enumerate}[label=\textup{(\roman*)}]
  \item $F$ is visible on $X_0$.
  \item If $y\in X_0$ and
  \begin{align*}
    C_F{\mathcal A}^q y=0,
    \qquad q=0,1,\ldots,d-1,
  \end{align*}
  then $y=0$.
  \item The linear map
  \begin{align*}
    \begin{pmatrix}
    C_F\\ C_F{\mathcal A}\\ \vdots\\ C_F{\mathcal A}^{d-1}
    \end{pmatrix}:X_0\to(\C^F)^d
  \end{align*}
  has rank $d$.
\end{enumerate}
\end{lemma}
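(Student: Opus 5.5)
The plan is to prove the standard finite-dimensional Kalman equivalence, working entirely inside the $d$-dimensional space $X_0$. Since $X_0$ is $\Lp$-invariant and finite-dimensional, $\mathcal A=\Lp|_{X_0}$ is a linear operator on $X_0$, and $e^{-t\mathcal A}=\sum_{k\ge0}(-t)^k\mathcal A^k/k!$ is a norm-convergent power series on $X_0$. Moreover, $U_p(t)y=e^{-t\mathcal A}y$ for $y\in X_0$. Because $X_0\subset H^3(\T)\hookrightarrow C^0(\T)$, the map $C_F:X_0\to\C^F$ is a linear map between finite-dimensional spaces, so all compositions are continuous. Hence $t\mapsto C_Fe^{-t\mathcal A}y$ is a real-analytic (indeed entire) $\C^F$-valued function.

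\textbf{(ii)$\Leftrightarrow$(iii).} This is linear algebra. The stacked map $\mathcal K:=(C_F,C_F\mathcal A,\dots,C_F\mathcal A^{d-1})^{\top}:X_0\to(\C^F)^d$ has kernel exactly the set of $y$ appearing in (ii). Statement (ii) says $\Ker\mathcal K=\{0\}$, which by rank–nullity is equivalent to $\operatorname{rank}\mathcal K=\dim X_0=d$.

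\textbf{(ii)$\Rightarrow$(i).} Suppose $C_Fe^{-t\mathcal A}y=0$ for all $t\in(0,T)$. By analyticity this identity holds for all $t\in\R$. Differentiating $q$ times and evaluating at $t=0$ gives $(-1)^qC_F\mathcal A^qy=0$ for every $q\ge0$, in particular for $q\le d-1$. Hence $y=0$ by (ii).

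\textbf{(i)$\Rightarrow$(ii).} Suppose $C_F\mathcal A^qy=0$ for $q=0,\dots,d-1$. By the Cayley--Hamilton theorem, every $\mathcal A^k$ with $k\ge d$ is a linear combination of $I,\mathcal A,\dots,\mathcal A^{d-1}$, so $C_F\mathcal A^ky=0$ for all $k\ge0$. Summing the exponential series then gives $C_Fe^{-t\mathcal A}y=0$ for all $t$, and (i) forces $y=0$.

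This argument also shows that visibility does not depend on $T>0$, as remarked before the lemma. The only point requiring care is the justification that $C_F$ commutes with the series. This holds because $C_F$ is a bounded linear map on the finite-dimensional space $X_0$, so interchanging it with the norm-convergent sum is legitimate. There is no substantive obstacle.
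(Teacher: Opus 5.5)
Your proof is correct and follows the same route as the paper. Analyticity of $t\mapsto C_F e^{-t\mathcal A}y$ turns vanishing on $(0,T)$ into $C_F\mathcal A^q y=0$ for all $q\ge 0$, Cayley--Hamilton truncates this to $q\le d-1$, and (ii)$\Leftrightarrow$(iii) is rank--nullity; splitting (i)$\Leftrightarrow$(ii) into two directions just makes the paper's single ``if and only if'' step explicit.
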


\begin{proof}
This is a standard result, we give a short proof for completeness. For fixed $y\in X_0$, the map $t\mapsto C_F\ee^{-t\mathcal{A}}y$ is an entire
$\C^F$-valued function of $t$.  It vanishes on a non-empty interval if and
only if all derivatives at $0$ vanish, that is,
\begin{align*}
  C_F\mathcal{A}^q y=0\qquad(q\ge0).
\end{align*}
By the Cayley--Hamilton identity for the $d\times d$ matrix $\mathcal{A}$, the
conditions for $q\ge d$ follow from those for $0\le q\le d-1$.  This proves
the equivalence of (i) and (ii), and (ii) is exactly the rank statement (iii).
\end{proof}



We define the finite point observability
as follows. 
\begin{definition}
The finite set $F$ is observable for $\Lp$ in time $T>0$ if there is a
constant $C_{T,F,p}>0$ such that
\begin{equation}\label{eq:finite-obs-def}
  \|v_0\|_{L^2(\T)}^2
  \le
  C_{T,F,p}
  \int_0^T\sum_{a\in F}|\Tr_aU_p(t)v_0|^2\dd t
\end{equation}
for all $v_0\in L^2(\T)$.
\end{definition}

We now prove the 
finite point observability criterion
in Theorem \ref{th3}. 

\begin{proposition}
\label{propfincri}
Let $p\in L^\infty(\T;\R)$ and let $F\subset\T$ be finite and non-empty.
Let $X_0$ be the finite-dimensional space defined  in Proposition \ref{prospec}. The following statements are equivalent for
every $T>0$:
\begin{enumerate}[label=\textup{(\roman*)}]
  \item $F$ is observable in the sense of \eqref{eq:finite-obs-def}.
  \item $F$ is visible on $X_0$.
\end{enumerate}
\end{proposition}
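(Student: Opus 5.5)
The plan is to decompose $v_0=f_0+\sum_{|n|\ge N_0}c_n\phi_n$ as in Proposition \ref{prospec}. By Definition \ref{def:trace}, for each $a\in F$ the trace is
\begin{align*}
  \Tr_aU_p(t)v_0=(e^{-t\mathcal A}f_0)(a)+\sum_{|n|\ge N_0}c_n\ee^{-\lambda_nt}\phi_n(a).
\end{align*}
We work in the Hilbert space $\calH=\C^F$ and set $b_n=(\phi_n(a))_{a\in F}$. By \eqref{qyunbou}, $\|b_n\|_{\calH}$ is bounded above and below uniformly in $n$. By \eqref{eq:lambda-asymp}, the $\lambda_n$ have bounded real parts and $\operatorname{Im}\lambda_n=-n^3+O(1)$. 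So Proposition \ref{cuin} applies, after enlarging $N_0$ to some $N$ so that the Riesz bounds hold on $(0,T)$ and on a fixed $I\Subset(0,T)$. The modes $N_0\le|n|<N$ are absorbed into an enlarged finite-dimensional invariant space $Y=X_0\oplus\spn\{\phi_n:N_0\le|n|<N\}$, exactly as at the end of the proof of Proposition \ref{prospec}.

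For (ii)$\Rightarrow$(i), let $G=\{t\mapsto C_Fe^{-tL_p}y: y\in Y\}\subset L^2(0,T;\calH)$. Since $L_p|_Y$ is a finite matrix, its elements are exponential polynomials of the form \eqref{GtMM}, with exponents given by the eigenvalues of $L_p|_Y$. These exponents differ from every $\lambda_n$, $|n|\ge N$, because the $\lambda_n$ are algebraically simple and their spectral subspaces are not contained in $Y$. Proposition \ref{profiniext} then gives the lower bound
\begin{align*}
  \|\Tr_FU_p(\cdot)v_0\|^2_{L^2(0,T;\calH)}\ge A_G\Bigl(\|g\|_G^2+\sum_{|n|\ge N}|c_n|^2\Bigr),
\end{align*}
where $g=C_Fe^{-tL_p}y$. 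We choose the norm $\|g\|_G:=\|y\|_2$, after checking that $y\mapsto g$ is injective. This injectivity is the key point. If $C_Fe^{-tL_p}y\equiv0$, write $y=y_0+\sum_{N_0\le|n|<N}d_n\phi_n$ with $y_0\in X_0$. The terms $e^{-\lambda_nt}d_n\phi_n(a)$ have exponents that are simple and distinct from the spectrum of $\mathcal A$, since $X_0=\Ker P_\infty$ contains no $\phi_n$ with $|n|\ge N_0$. So Lemma \ref{leexpin} forces $d_n\phi_n(a)=0$, hence $d_n=0$ by \eqref{qyunbou}, together with $C_Fe^{-t\mathcal A}y_0\equiv0$. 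Visibility then gives $y_0=0$. Combining this with the Riesz equivalence \eqref{rieseq} yields \eqref{eq:finite-obs-def}.

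For (i)$\Rightarrow$(ii), argue by contraposition. If $F$ is not visible on $X_0$, pick $0\ne y\in X_0$ with $C_Fe^{-t\mathcal A}y=0$ on $(0,T)$. Then $v_0=y$ has vanishing traces on $F$ by Definition \ref{def:trace}, because $X_0\subset H^3$ and the trace there is the classical point value. So \eqref{eq:finite-obs-def} fails. Since visibility is $T$-independent (Lemma \ref{lem:kalman}), the equivalence holds for every $T$.

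The main obstacle I expect is the careful verification of the hypotheses of Proposition \ref{profiniext}. First, the exponents of $G$ must be disjoint from $\{\lambda_n\}_{|n|\ge N}$. Second, $G$ must be identified isomorphically with $Y$, handling possible Jordan blocks in $\mathcal A$ through Lemma \ref{leexpin}. If the spectra overlap via coincidences, I would first enlarge $N$ so that the high discs $D_n$ contain no eigenvalue of $\mathcal A$.
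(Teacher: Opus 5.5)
Your proposal is correct and follows essentially the same route as the paper's proof. Both arguments enlarge $X_0$ by finitely many modes $\phi_n$ to a finite invariant block, prove injectivity of $y\mapsto C_Fe^{-tL_p}y$ on that block via Lemma \ref{leexpin} and visibility, combine this with the high-frequency Riesz tail through Propositions \ref{cuin} and \ref{profiniext} and the Riesz equivalence \eqref{rieseq}, and obtain the converse by taking $v_0=y\in X_0$. Your explicit checks fill in points the paper leaves implicit: the exponents are disjoint because of algebraic simplicity of $\lambda_n$ and $X_0\cap H_\infty=\{0\}$, and the Riesz bounds must also hold on an interior interval $I\Subset(0,T)$.
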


\begin{proof}
First assume visibility on $X_0$. Fix $T>0$.  For $|n|\ge N_0$, set
\begin{align*}
  h_n(t)=\ee^{-\lambda_nt}C_F\phi_n\in L^2(0,T;\C^F).
\end{align*}
By \eqref{eq:visible-high}, one has 
\begin{align}\label{67yhP}
\|C_F\phi_n\|_{\C^F}\ge |\phi_n(a)|\ge \frac{1}{2}(2\pi)^{-\frac{1}{2}}
  \qquad \forall \mbox{ }|n|\ge N_0.
\end{align}
Moreover, \eqref{eq:phi-asymp} gives a uniform upper bound for $\|C_F\phi_n\|$.
 By Proposition
\ref{cuin}, there exists $N_2\ge N_0$ sufficiently large  such that the tail $(h_n)_{|n|\ge N_2}$ satisfies Riesz
bounds in $L^2(0,T;\C^F)$. 

Let 
$$
X_2:=X_0\oplus {\rm span}\{\phi_n: N_0\le |n|\le N_2\}.$$
Then $X_2$ is invariant under $L_p$. 
We claim that 
the map
\begin{align}\label{DRUJ}
  X_2\ni y\longmapsto C_F\ee^{-t\Lp|_{X_2}}y\in L^2(0,T;\C^F)
\end{align}
is injective. 
In fact, suppose that $C_{F}e^{-tL_p|_{X_2}}y=0$ with $y\in X_2$. Then $y\in X_2$
can be decomposed as $$y=y_0+\sum^{N_2}_{|n|\ge N_0}l_n \phi_n, \qquad y_0\in X_0, \mbox{ }l_n \in \mathbb C.$$
Thus 
$$C_{F}e^{-tL_p |_{X_2}} y= C_{F}e^{-t L_p|_{X_0}}y_0+\sum_{N_0\le |n|\le N_2} l_n e^{-\lambda_n t} C_{F}\phi_n=0. $$
From Lemma \ref {leexpin} and (\ref{67yhP}), we conclude  that 
\begin{align*}
 C_{F}e^{-t L_p|_{X_0}}y_0=0;\qquad
  l_n =0, \mbox { }N_0\le |n|\le N_2. 
\end{align*} 
Then, by assumption (ii), $y_0=0$.  
 Therefore, the map (\ref{DRUJ}) is injective.
Its range, denoted by $G_2$, is finite-dimensional and consists
of $\C^F$-valued exponential polynomials. By the injectivity  of (\ref{DRUJ}),  there exists $c>0$ such that 
\begin{align}\label{RhT}
\|C_{F}e^{-tL_p|X_2}y\|_{{L^2(0,T; \mathbb C}^F)}\ge c \|y\|_{L^2}, \qquad \forall \mbox{ }y\in X_2.
\end{align}

Applying Proposition \ref{profiniext} and Proposition
\ref{cuin} to $G_2$ and the tail
$(h_n)_{|n|> N_2}$, by (\ref{RhT}), there  exist constants $a,b>0$ such that
\begin{align}\label{Gnmbou}
a\left(\|y\|_2^2+\sum_{|n|>N_2}|c_n|^2\right)
  &\le
  \int_0^T
  \left\|C_F\ee^{-t\Lp|_{X_2}}y+
  \sum_{|n|> N_2}c_n\ee^{-\lambda_nt}C_F\phi_n\right\|_{\C^F}^2\dd t \\
  &\le
  b\left(\|y\|_2^2+\sum_{|n|> N_2}|c_n|^2\right).
\end{align}
Every $v_0\in L^2(\T)$ has the unique decomposition
\begin{align*}
  v_0=y+
  \sum_{|n|>N_2}c_n\phi_n,
  \qquad y\in X_2.
\end{align*}
Using \eqref{Gnmbou} and then the Riesz equivalence
\eqref{rieseq}, we obtain 
\begin{align*}
  \int_0^T\sum_{a\in F}|\Tr_aU_p(t)v_0|^2\dd t
  \ge
  c\left(\|y\|_2^2+
  \sum_{|n|> N_2}|c_n|^2\right)
  \ge c'\|v_0\|_2^2.
\end{align*}
This is \eqref{eq:finite-obs-def}. 

Conversely, if visibility on $X_0$ fails, there exists $0\ne y\in X_0$ such
that $C_F\ee^{-t\mathcal{A}}y=0$ for $0<t<T$.  Taking $v_0=y$ gives a non-zero
initial datum with zero observation.  Hence \eqref{eq:finite-obs-def} cannot
hold.
\end{proof}

{\bf Proof of  Theorem \ref{th3}}

Therefore, Theorem \ref{th3} follows by Lemma \ref{lem:kalman} and Proposition \ref{propfincri}.

\section{Observation sets with an accumulation point}\label{hausdorff}

In this section, we prove Corollary \ref{c2}. 

The preceding finite point criterion assumes only $p\in L^\infty(\mathbb T; \mathbb R)$.  To show that sets with accumulation points automatically remove the finite-dimensional
obstruction, we need a finite regularity assumption on $p$.

We first introduce a preliminary result on zero sets. 
\begin{lemma}\label{lem:zeros}
Let $Y\subset\Dom(\Lp)$ be a non-zero finite-dimensional $\Lp$-invariant
subspace.  Let $m=m(Y)$ be the degree of the minimal polynomial of
$\Lp|_Y$.  Assume
\begin{align*}
  p\in C^{3m-4}(\T;\R),
\end{align*}
with the convention that $C^{-1}(\T)=L^\infty(\T)$ when $m=1$.  If
$0\ne y\in Y$, then the zero set of $y$ has no accumulation point in $\T$.
\end{lemma}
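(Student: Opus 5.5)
Let $\mu$ be the minimal polynomial of $\Lp|_Y$, of degree $m$, and fix $0\ne y\in Y$. Then $\mu(\Lp)y=0$. The plan is to turn this into a linear ODE in $x$ with $C^0$ leading structure, which forces $y$ to vanish only at isolated points. Factor $\mu(z)=\prod_{k=1}^m(z-\mu_k)$. Set $y_0=y$ and $y_j=(\Lp-\mu_j)y_{j-1}$, so that $y_m=0$. Let $k$ be the largest index with $y_k\neq 0$. Then $(\Lp-\mu_{k+1})y_k=0$, i.e. $y_k$ is a genuine eigenfunction.

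\textbf{Step 1: regularity.} Each $y_j$ lies in $Y\subset H^3(\T)$, since $Y$ is invariant. By bootstrapping $\partial_x^3 y_j=y_{j+1}+(\mu_{j+1}-p)y_j$ backwards from $y_m=0$, one gets $y_j\in C^{3(m-j)-1}$ under $p\in C^{3m-4}$. The case $j=m-1$ gives $\partial_x^3y_{m-1}=(\mu_m-p)y_{m-1}\in C^0$, hence $y_{m-1}\in C^{3}$, and each further step gains three derivatives, limited by the smoothness of $p$. This is why the hypothesis $p\in C^{3m-4}$ appears; I expect the exact index count to be routine. In particular $y=y_0\in C^{3m-1}$, or at least $C^{3m}$ in a weak sense, and the composite relation $\mu(\partial_x^3+p)y=0$ holds classically.

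\textbf{Step 2: the ODE.} Expanding $\mu(\partial_x^3+p)$ gives a linear differential operator of order $3m$ with leading term $\partial_x^{3m}$. Its lower coefficients are built from derivatives of $p$ up to order $3m-3$, and these are continuous by assumption. Hence $y$ solves
\[
y^{(3m)}=\sum_{r<3m}a_r(x)\,y^{(r)},\qquad a_r\in C^0(\T).
\]

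\textbf{Step 3: the accumulation argument.} Suppose the zeros of $y$ accumulate at some $x_*\in\T$. By Rolle's theorem applied repeatedly, each derivative $y^{(r)}$ with $r\le 3m-1$ also has zeros accumulating at $x_*$. By continuity, $y^{(r)}(x_*)=0$ for $0\le r\le 3m-1$. Uniqueness for linear ODEs with continuous coefficients (Picard–Lindelöf or Grönwall applied to the first-order system) then gives $y\equiv0$ near $x_*$, and by connectedness on all of $\T$. This contradicts $y\neq0$.

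\textbf{Main obstacle.} The delicate step is Step 1–2: justifying that $\mu(\Lp)y$ can be expanded as a classical differential operator with continuous coefficients under only $C^{3m-4}$ regularity of $p$. Products such as $p\,\partial_x^3(p\,\partial_x^3 y)$ must be distributed via Leibniz using exactly the available derivatives. If this fails near the top order, I would instead first apply the uniqueness argument to the eigenfunction $y_k$, which satisfies a third-order ODE with $L^\infty$ coefficient; Carathéodory uniqueness suffices there. Then I would induct downward. On an interval where $y_{j}$ vanishes identically, $y_{j-1}$ solves a third-order ODE, and its zeros accumulating force its first two derivatives to vanish at $x_*$. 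Hence $y_{j-1}$ vanishes on a neighbourhood, and the identity theorem for ODE solutions propagates this to all of $\T$. This chain argument may in fact need less regularity.
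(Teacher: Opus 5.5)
There is a genuine gap, and it sits at the step you flagged. Expanding $\mu(\partial_x^3+p)$ into a scalar operator of order $3m$ produces coefficients containing $p^{(3m-3)}$. Already for $m=2$ one has $L_p^2y=y^{(6)}+2py'''+3p'y''+3p''y'+(p'''+p^2)y$, while the hypothesis is only $p\in C^{2}$. In general $p\in C^{3m-4}$ does not give $p^{(3m-3)}$ even as a function, and $y$ is only $C^{3m-1}$. So neither the claim that the $a_r$ are continuous nor the claim that $\mu(\partial_x^3+p)y=0$ holds classically is available. Steps 2--3 therefore prove the lemma only with one extra derivative on $p$.

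The fallback does not repair this. To apply uniqueness to the eigenfunction $y_k$ you need $y_k(x_*)=y_k'(x_*)=y_k''(x_*)=0$. Accumulation of zeros is known only for $y_0=y$, and nothing you wrote gives vanishing Cauchy data for $y_k$. The same objection applies to $y_{j-1}$ with $j\ge 2$ in your downward induction: its zeros are not known to accumulate.

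The missing idea, which is the heart of the paper's proof, is to transfer the high-order vanishing of $y$ to the Cauchy data of lower-order objects without ever differentiating the equation.

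\begin{itemize}
\item From $y\in C^{3m-1}$ and Rolle you get $y^{(r)}(x_*)=0$ for $r\le 3m-1$.
\item Classically, $y_k=\prod_{i=1}^{k}(L_p-\mu_i)y$ is a combination of $y,\dots,y^{(3k)}$ whose coefficients involve $p^{(i)}$ with $i\le 3k-3$.
\item Hence $y_k,y_k',y_k''$ are combinations of $y^{(r)}$ with $r\le 3k+2\le 3m-1$, with coefficients involving $p^{(i)}$ with $i\le 3k-1\le 3m-4$. This uses $k\le m-1$, and it is exactly where $p\in C^{3m-4}$ is needed.
\item So $y_k$ has vanishing Cauchy data at $x_*$. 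Gr\"onwall uniqueness for $y_k'''+(p-\mu_{k+1})y_k=0$, whose coefficient is merely $L^\infty$, gives $y_k\equiv 0$, contradicting the choice of $k$.
\end{itemize}

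The paper does the same with the Krylov vectors $z_j=L_p^jy$, $0\le j\le m-1$, packed into $W=(z_j,z_j',z_j'')_j$. This $W$ solves a first-order system $W'=A_pW$ whose entries involve only $p$ itself and the coefficients of the minimal polynomial, and the paper checks $W(x_*)=0$ by the same derivative count. Once repaired, your factorised chain is slightly leaner, since only the three-dimensional system for the single eigenfunction $y_k$ is needed.
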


\begin{proof}
Let
\begin{align*}
  q(z)=z^m+a_{m-1}z^{m-1}+\cdots+a_0
\end{align*}
be the minimal polynomial of $\Lp|_Y$.  For $0\le j\le m-1$, set 
\begin{align}\label{8iEP}
  z_j=\Lp^j y.
\end{align}
Since $Y$ is $\Lp$-invariant, every $z_j\in Y\subset H^3(\T)$.  Define
\begin{align*}
  W=(z_0,z_0',z_0'',z_1,z_1',z_1'',\ldots,z_{m-1},z_{m-1}',z_{m-1}'').
\end{align*}
For $0\le j\le m-2$, (\ref{8iEP}) yields 
\begin{align*}
  z_j'''+pz_j=z_{j+1},
\end{align*}
and the identity $q(\Lp|_Y)=0$ gives
\begin{align*}
  z_{m-1}'''+pz_{m-1}
  =-(a_{m-1}z_{m-1}+\cdots+a_0z_0).
\end{align*}
Thus $W$ satisfies a first order linear system
\begin{equation}\label{eq:root-system}
  \frac{d}{dx}W(x)=A_p(x)W(x),
\end{equation}
where the entries of $A_{p}$ consist only of $p(x)$, the constants $a_j$, and the constants
$0,1,-1$.  If $p\in L^\infty$, then $A_p\in L^\infty$, and the uniqueness of solutions to 
\eqref{eq:root-system} follows directly by applying Gronwall inequality to
the integral equation
\begin{align*}
  W(x)=W(x_0)+\int_{x_0}^xA_p(s)W(s)\dd s.
\end{align*}

When $m=1$, the vector $W=(y,y',y'')$.  Since $y\in H^3(\T)\subset
C^2(\T)$, an accumulation of zeros at $x_0$ implies
$y(x_0)=y'(x_0)=y''(x_0)=0$.  Hence $W(x_0)=0$, and the uniqueness gives
$y\equiv0$, which is a contradiction.

Assume $m\ge2$.  Since $p\in C^{3m-4}$, the coefficients of
\eqref{eq:root-system} are $C^{3m-4}$. 

Note that we have an elementary bootstrap:
if $A\in C^k$ and a continuous function $W$ satisfies
$W(x)=W(x_0)+\int_{x_0}^xA(s)W(s)\,ds$, then $W\in C^{k+1}$.   Hence
$W\in C^{3m-3}$.  In particular,
$z_0''=y''\in C^{3m-3}$, so $y\in C^{3m-1}$.  If the zeros of $y$ had an
accumulation point $x_0$, Taylor's expansion  gives
\begin{align}\label{6YKL}
  y^{(\ell)}(x_0)=0,
  \qquad 0\le \ell\le 3m-1.
\end{align}
We now verify that (\ref{6YKL}) forces $W(x_0)=0$.  By induction on $j$,
$z_j=\Lp^j y$ is a linear combination of
$y,y',\ldots,y^{(3j)}$  with coefficients involving derivatives of $p$ of
order at most $3j-3$ when $j\ge1$. Hence $z_j'$ and $z_j''$ are
linear combinations of $y,y',\ldots,y^{(3j+2)}$ with coefficients involving
derivatives of $p$ of order at most $3j-1$.  For $j\le m-1$, this order is
at most $3m-4$, which is available by hypothesis, and
$3j+2\le3m-1$.  Thus all components of $W(x_0)$ vanish.  Then uniqueness for
\eqref{eq:root-system} yields $W\equiv0$, and in particular $y\equiv0$, which is again
a contradiction.
\end{proof}

The following lemma connects  observations on sets with an accumulation point with observations on finite points. 

\begin{lemma}\label{lem:finite-sampling}
Let $Y$ be a finite-dimensional space of continuous functions on $\T$.  Let
$E\subset\T$.  If no non-zero element of $Y$ vanishes on all of $E$, then
there exists a finite subset $F\subset E$ such that the  map
\begin{align*}
  Y\ni y\longmapsto (y(a))_{a\in F}
\end{align*}
is injective.
\end{lemma}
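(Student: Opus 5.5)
The argument is a finite-dimensional induction on the kernel of the evaluation map. For a subset $F\subset E$, set
\begin{align*}
  K_F:=\{y\in Y:\ y(a)=0\ \forall a\in F\}.
\end{align*}
This is a linear subspace of $Y$, and $F\subset F'$ implies $K_{F'}\subset K_F$. The evaluation map $y\mapsto (y(a))_{a\in F}$ is injective exactly when $K_F=\{0\}$. The aim is therefore to build a finite $F\subset E$ with $K_F=\{0\}$, shrinking the kernel by at least one dimension at each step.

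Start with $F_0=\emptyset$, so that $K_{F_0}=Y$, of dimension $d=\dim Y<\infty$. Suppose $F_k\subset E$ has been constructed and $K_{F_k}\neq\{0\}$. Choose $0\neq y\in K_{F_k}$. By hypothesis $y$ does not vanish on all of $E$, so there is a point $a\in E$ with $y(a)\neq 0$. Put $F_{k+1}=F_k\cup\{a\}$. Then $K_{F_{k+1}}\subset K_{F_k}$, and the inclusion is strict because $y\in K_{F_k}\setminus K_{F_{k+1}}$. Hence
\begin{align*}
  \dim K_{F_{k+1}}\le \dim K_{F_k}-1.
\end{align*}
Since the dimension starts at $d$ and drops by at least one at every step, the process stops after at most $d$ steps. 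It stops only when $K_{F_k}=\{0\}$, and at that point $F=F_k$ is a subset of $E$ with at most $d$ points on which the evaluation map is injective.

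Continuity of the elements of $Y$ is used only so that point values $y(a)$ make sense. No step presents a real obstacle; the one thing to keep in view is that each new point must be taken from $E$, which is exactly what the hypothesis on $E$ provides. An equivalent compactness phrasing is that the decreasing family $\{K_F: F\subset E \text{ finite}\}$ of subspaces of a finite-dimensional space attains a minimal element. That minimal element must be $\{0\}$, since any nonzero $y$ in it would be nonzero at some $a\in E$, and adding $a$ would make the kernel strictly smaller.
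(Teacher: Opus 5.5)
Your proof is correct and is essentially the paper's argument. The paper picks a finite $F\subset E$ minimizing $\dim K_F$ and derives a contradiction from a nonzero $y\in K_F$, exactly as in your closing remark. Your iterative version makes this constructive and also gives the bound $|F|\le\dim Y$.
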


\begin{proof}
For finite $F\subset E$, set
\begin{align*}
  K_F=\{y\in Y:y(a)=0\hbox{ for every }a\in F\}.
\end{align*}
Choose $F$ so that $\dim K_F$ is minimal among all finite subsets of $E$.
If $K_F\ne\{0\}$, take $0\ne y\in K_F$.  By the hypothesis, there is
$a\in E$ with $y(a)\ne0$.  Then
$K_{F\cup\{a\}}$ is a proper subspace of $K_F$, contradicting the minimality
of $\dim K_F$.  Therefore $K_F=\{0\}$.
\end{proof}

We are ready to prove the time-trace observability for $U_{p}f$ on $\mathbb T$ from sets with an accumulation point. 
Let $\{\phi_n\}$ and $X_0$ be the ones defined in Proposition  \ref{prospec}. 
\begin{proposition}
 \label{usdorff}
Let $p\in L^{\infty}(\mathbb T; \mathbb R)$ and $E\subset\T$ have an accumulation point.   Let $N_0$ be the 
integer given by Proposition \ref{prospec} such that
\begin{align}\label{FrnnW}
  |\phi_n(a)|\ge\frac{1}{2}(2\pi)^{-\frac{1}{2}}
  \qquad \forall \mbox{ } |n|\ge N_{0}, \mbox{ }\forall a\in \mathbb T. 
\end{align}
 If $X_{0}\ne\{0\}$, let $m_{0}$ be the degree of the minimal polynomial of $\Lp|_{X_{0}}$ and
assume additionally
\begin{align*}
  p\in C^{3m_{0}-4}(\T;\R),
\end{align*}
with the convention $C^{-1}=L^\infty$ if $m_{0}=1$.  If
$X_{0}=\{0\}$, no additional regularity is required.  Then for every
$T>0$ there exists $C_{T,E,p}>0$ such that
\begin{equation}\label{eq:hausdorff-obs}
  \|v_0\|_{L^2(\T)}^2
  \le
  C_{T,E,p}\sup_{a\in E}\|\Tr_aU_p(\cdot)v_0\|_{L^2(0,T)}^2
\end{equation}
for all $v_0\in L^2(\T)$.
\end{proposition}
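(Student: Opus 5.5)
The plan is to reduce \eqref{eq:hausdorff-obs} to the finite point criterion of Proposition \ref{propfincri}. We find a finite set $F\subset E$ that is visible on $X_0$. Then the trivial bound
\begin{align*}
\int_0^T\sum_{a\in F}|\Tr_aU_p(t)v_0|^2\dd t\le |F|\sup_{a\in E}\|\Tr_aU_p(\cdot)v_0\|_{L^2(0,T)}^2
\end{align*}
turns the observability inequality \eqref{eq:finite-obs-def} for $F$ into \eqref{eq:hausdorff-obs}, with $C_{T,E,p}=|F|\,C_{T,F,p}$. If $X_0=\{0\}$, visibility is vacuous. In that case any single point $F=\{a\}\subset E$ works, since Proposition \ref{propfincri} (whose proof uses only \eqref{FrnnW} and the Ingham tail estimates) already gives observability.

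Assume now $X_0\neq\{0\}$. We apply Lemma \ref{lem:finite-sampling} not to $X_0$ itself but to the finite-dimensional space
\begin{align*}
Y:=\spn\{\Lp^q y:\ y\in X_0,\ 0\le q\le d-1\}=X_0 ,
\end{align*}
which equals $X_0$ by invariance. The point is the following. By Lemma \ref{lem:kalman}, visibility of $F$ on $X_0$ fails precisely when some $0\ne y\in X_0$ satisfies $C_F\mathcal{A}^qy=0$ for all $q$. In particular $C_Fy=0$, so $y$ vanishes on $F$. Hence it suffices that the evaluation map $y\mapsto(y(a))_{a\in F}$ is injective on $X_0$: then $C_Fy=0$ already forces $y=0$, and Kalman condition (ii) holds trivially.

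To get this injectivity we verify the hypothesis of Lemma \ref{lem:finite-sampling}, namely that no nonzero $y\in X_0$ vanishes on all of $E$. Since $E$ has an accumulation point $x_0$ and $y$ is continuous ($X_0\subset H^3(\T)\subset C^2(\T)$), such a $y$ would have a zero set with accumulation point $x_0$. Lemma \ref{lem:zeros}, applied with $Y=X_0$, minimal polynomial degree $m=m_0$, and the assumption $p\in C^{3m_0-4}$, then forces $y=0$. Lemma \ref{lem:finite-sampling} therefore produces a finite $F\subset E$ on which evaluation is injective on $X_0$. So $F$ is visible on $X_0$, and Proposition \ref{propfincri} applies for every $T>0$.

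The only delicate point is the interface with the paper's definitions. The $X_0$ in Proposition \ref{propfincri} must be the same space, with the same $N_0$ satisfying \eqref{FrnnW}, as the one whose minimal polynomial degree $m_0$ appears in the regularity hypothesis. The end of the proof of Proposition \ref{prospec} may enlarge $X_0$, so we fix $X_0$ after that modification, and $m_0$ refers to this final space.

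We also need the trace $\Tr_a U_p(\cdot)v_0$ on the right of \eqref{eq:hausdorff-obs} to be the one in Definition \ref{def:trace}. This holds by construction, and the supremum is finite by Lemma \ref{trace-tori}, which also gives the upper bound claimed in Corollary \ref{c2}.
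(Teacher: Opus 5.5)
Your proposal is correct and follows the paper's proof essentially step by step. Lemma \ref{lem:zeros} and Lemma \ref{lem:finite-sampling} give a finite $F\subset E$ on which evaluation is injective on $X_0$; this yields visibility (you use the $q=0$ Kalman condition, the paper lets $t\to0$), and Proposition \ref{propfincri} together with the bound by $|F|\sup_{a\in E}$ concludes. Your detour through $Y=\spn\{\Lp^q y\}$ is harmless but superfluous, since that space is just $X_0$.
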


\begin{proof}
If $X_{0}\ne\{0\}$, Lemma \ref{lem:zeros} applied to
$Y=X_{0}$ shows that no non-zero element of $X_{0}$ can vanish on all
of $E$, because $E$ has an accumulation point. Then by Lemma \ref{lem:finite-sampling}, there is a finite set
$F_0\subset E$ such that evaluation on $F_0$ is injective on
$X_{0}$. This injective property is obvious if $X_{0}=\{0\}$, and in this case we set $F_0=\emptyset$. 

Fix $a_0\in E$,  set 
\begin{align*}
  F=F_0\cup \{a_0\}. 
\end{align*}
 If
$C_F\ee^{-t\Lp|_{X_0}}y=0$ for $0<t<T$, then there holds 
$C_Fy=0$.  The injectivity of evaluation on $F$ thus gives $y=0$.  Hence
$F$ is visible on $X_0$.

The finite point observability criterion, Proposition \ref{propfincri},
therefore implies 
\begin{align*}
  \|v_0\|_2^2
  \le C_{T,F,p}\int_0^T\sum_{a\in F}|\Tr_aU_p(t)v_0|^2\dd t.
\end{align*}
Since $F\subset E$,
\begin{align*}
  \int_0^T\sum_{a\in F}|\Tr_aU_p(t)v_0|^2\dd t
  \le |F|\sup_{a\in E}\|\Tr_aU_p(\cdot)v_0\|_{L^2(0,T)}^2,
\end{align*}
which proves \eqref{eq:hausdorff-obs}.

\end{proof}

{\bf Proof of  Corollary \ref{c2}}

Note that the proof of Proposition \ref{prospec} indeed implies that $N_0$ has an upper bound depending only on $\|p\|_{L^{\infty}}$. Thus $3m_0-4\le 3n_0-4$ is also bounded by some constant depending only on $\|p\|_{L^{\infty}}$. 
Therefore,  Corollary \ref{c2} follows by Proposition \ref{usdorff} and (\ref{KDS}).

\section{Appendix A. Counterexamples}\label{example}

One cannot find  a finite set $E\subset \mathbb T$ such that the time-trace observability  on $E$ for $U_{p}(t)f$ with any $p\in L^{\infty}(\mathbb T; \mathbb R)$  holds simultaneously. Indeed, given a finite set $F\subset\T$, one can construct  a smooth potential $p$ and a stationary solution for $U_{p}(t)f$ which vanishes at all $x\in F$. 
\begin{lemma}
\label{thm:negative}
Let $F\subset\T$ be finite.  There exist $p\in C^\infty(\T;\R)$ and a nonzero function 
$\phi\in C^\infty(\T;\R)$ such that
\begin{align*}
  \phi'''+p(x)\phi=0,
  \qquad \phi|_F=0.
\end{align*}
Particularly, finite-point observability from $F$ fails for this potential in
every time $T>0$.
\end{lemma}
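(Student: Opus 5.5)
We construct $\phi$ first and then define $p$ by $p=-\phi'''/\phi$. This quotient only makes sense if $\phi$ vanishes nowhere, which is impossible because $\phi$ must vanish on $F$. So we must choose $\phi$ so that $\phi'''$ vanishes to at least the same order at each zero of $\phi$, and the quotient then extends smoothly.

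\textbf{Construction of $\phi$.} Write $F=\{a_1,\dots,a_k\}$; if $F$ is empty, take any admissible example. Let $\theta(x)=\prod_{i=1}^k \sin^2\!\bigl(\tfrac{x-a_i}{2}\bigr)$. This function is smooth and $2\pi$-periodic. It vanishes exactly on $F$, each zero having order two, and it is nonnegative. Set
\begin{align*}
  \phi=\ee^{-1/\theta}\quad\text{on }\T\setminus F,\qquad \phi=0\quad\text{on }F.
\end{align*}
Then $\phi\in C^\infty(\T;\R)$, since all derivatives vanish at each $a_i$ by the standard flatness of $\ee^{-1/s}$ as $s\to0^+$. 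Moreover $\phi\ne0$ and $\phi|_F=0$.

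\textbf{Definition of $p$ and the main step.} Define $p=-\phi'''/\phi$ on $\T\setminus F$. The chain rule gives $\phi'''=\ee^{-1/\theta}Q$, where $Q$ is a polynomial in $\theta^{-1}$ and $\theta',\theta'',\theta'''$. Hence
\begin{align*}
  p=-Q\bigl(\theta^{-1},\theta',\theta'',\theta'''\bigr),
\end{align*}
which blows up like $\theta^{-6}$ near $F$. So this choice fails. The key obstacle is that flat functions produce unbounded quotients, and the construction must be changed so that $p$ stays bounded and smooth.

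\textbf{Fix: polynomial zeros instead of flat zeros.} Take instead $\phi=\theta^{2}\,g$, so that near $a_i$ we have $\phi\approx c\,(x-a_i)^4$ times a positive function. Then $\phi'''$ vanishes only to first order at $a_i$, and $\phi'''/\phi\sim (x-a_i)^{-3}$, which is still singular. To remove the singularity we require that the Taylor coefficients of $\phi$ at each $a_i$ satisfy $\phi'''=-p\phi$ order by order. In a local coordinate $y=x-a_i$, the ODE $\phi'''+p\phi=0$ with $p$ smooth forces $\phi(0)=\phi'(0)=\phi''(0)=0$, and then $\phi\equiv0$ locally by uniqueness. So no smooth, or even bounded, $p$ admits a nonzero solution vanishing to order three. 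A nonzero $\phi$ with $\phi(a_i)=0$ must therefore have $(\phi'(a_i),\phi''(a_i))\ne(0,0)$, and then $p$ is a genuinely new coefficient rather than a quotient.

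\textbf{Plan via local solutions and gluing.} Since the quotient route fails, we solve with a prescribed $p$ and adjust it.

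1. Fix any smooth real $\phi_0$ that vanishes simply at each $a_i$ and nowhere else, and is nonzero elsewhere. An example is $\phi_0=\prod_i\sin\!\bigl(\tfrac{x-a_i}{2}\bigr)$ when $k$ is even; if $k$ is odd, add a dummy point so that the product is periodic.

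2. On $\T\setminus F$ set $p_0=-\phi_0'''/\phi_0$. Near $a_i$ this is singular like $-\phi_0'''(a_i)/\bigl(\phi_0'(a_i)(x-a_i)\bigr)$ unless $\phi_0'''(a_i)=0$.

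3. Modify $\phi=\phi_0+\psi$, with $\psi$ supported near the $a_i$, chosen so that $\phi(a_i)=0$, $\phi'(a_i)\ne0$, and $\phi'''(a_i)=0$. More generally we require $\phi'''=O(\phi)$ to all orders: writing $\phi=(x-a_i)h$ with $h(a_i)\ne0$, we need $\phi'''$ divisible by $(x-a_i)$ as a smooth function, which amounts to $\phi'''(a_i)=0$. Then, by the Hadamard lemma, $p=-\phi'''/\phi$ extends smoothly across $a_i$.

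This single condition, $\phi'''(a_i)=0$, is achievable by a local smooth correction. For instance, choose near $a_i$ the function $\phi(x)=\sin(x-a_i)\,\ee^{\eta(x)}$ with $\eta$ a smooth bump chosen to kill $\phi'''(a_i)$, then patch these pieces with a globally nonvanishing positive factor. The verification that such a correction exists while keeping the simple zeros at $F$ and no other zeros is the main step. The resulting $p=-\phi'''/\phi$ is smooth and real. Because $\phi$ is stationary ($L_p\phi=0$) and vanishes on $F$, the observation from $F$ is identically zero while $\phi\ne0$, so finite-point observability from $F$ fails for every $T>0$.
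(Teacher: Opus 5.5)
Your final route (simple zeros plus the jet condition $\phi'''(a)=0$, then Hadamard division) is viable and genuinely different from the paper's. As written, though, it stops at the step you call the main one. The gluing sentence (``patch these pieces with a globally nonvanishing positive factor'') is not a construction: the local functions $\sin(x-a_i)e^{\eta}$ do not glue to $\phi_0$ by multiplying by a positive factor.

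The step is routine once stated correctly. Let $Z$ be the zero set of $\phi_0$, that is, $F$ together with the dummy point $b$ when $|F|$ is odd. The point $b$ must get the same correction as the points of $F$, otherwise $p$ is singular there. Put $\phi=\phi_0e^{\eta}$, where $\eta\in C^\infty(\T;\R)$ is a finite sum of cutoff quadratics with $\eta'(a)=0$ and $\eta''(a)=-\phi_0'''(a)/(3\phi_0'(a))$ for $a\in Z$. Then $\phi$ has exactly the simple zeros $Z$, and
\[
\phi'''(a)=e^{\eta(a)}\bigl(\phi_0'''(a)+3\phi_0''(a)\eta'(a)+3\phi_0'(a)(\eta''(a)+\eta'(a)^2)\bigr)=0,\qquad a\in Z.
\]
Hence near each $a\in Z$ we have $\phi=(x-a)h$ and $\phi'''=(x-a)k$ with $h,k$ smooth and $h(a)\neq0$, so $p=-\phi'''/\phi=-k/h$ extends smoothly.

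An even simpler closure is to replace $\phi_0$ on a small arc around each $a\in Z$ by its linearization $\phi_0'(a)(x-a)$, using a cutoff convex combination. The two functions have the same sign off $a$, so no new zeros appear. Then $\phi'''\equiv0$, hence $p\equiv0$, near $a$.

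That last variant is exactly the paper's argument with $y$ in place of $y^2$. The paper starts from $g=\prod_{a\in F}(2-2\cos(x-a))\ge0$, which has double zeros exactly on $F$ and is $2\pi$-periodic for every $|F|$. It replaces $g$ by $y^2$ on a small arc $J_a$ around each $a$ via $\phi=\chi_a y^2+(1-\chi_a)g$, a convex combination of two functions positive on $I_a\setminus\{a\}$. It then sets $p=-\phi'''/\phi$ off $F$ and $p=0$ on $J_a$.

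Working with a nonnegative $\phi$ buys two things. It removes the parity obstruction that forces your dummy point, since a smooth function on $\T$ with only simple zeros has an even number of them. It also needs no Hadamard division, because $\phi'''$ vanishes on a whole neighborhood of each zero.

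Your version isolates the minimal requirement at a simple zero, namely the single jet condition $\phi'''(a)=0$. Your uniqueness remark, that zeros of order at least three are impossible for bounded $p$, shows the paper's double zeros are the largest order allowed. The observability conclusion is the same in both: $v(t,x)=\phi(x)$ is a nonzero smooth stationary solution whose point traces on $F$ vanish identically.
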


\begin{proof}
For each  $a\in F$, choose pairwise disjoint coordinate arcs $I_a$ around the points
$a$. Let
\begin{align*}
  g(x)=\prod_{a\in F}\bigl(2-2\cos(x-a)\bigr).
\end{align*}
Then $g\ge0$, the zero set of $g$ is exactly $F$, and every zero is of
order two.  For each $a\in F$, choose a smaller arc $J_a\Subset I_a$ and a
smooth cutoff function $\chi_a$ with $0\le\chi_a\le1$, which is supported in $I_a$ and equal to one on $J_a$.  In the
local coordinate $y=x-a$ on $I_a$, replace $g$ by $y^2$ on $J_a$ and
interpolate by
\begin{align*}
  \phi(x)=\chi_a(x)y^2+(1-\chi_a(x))g(x)
  \qquad (x\in I_a).
\end{align*}
Outside $\bigcup_a I_a$, set $\phi=g$.  The arcs are disjoint, so these
local definitions are compatible.  By choosing the arcs small, both $g$ and
$y^2$ are positive on $I_a\setminus\{a\}$. Hence $\phi>0$ on
$\T\setminus F$, while $\phi(a)=0$ for every $a\in F$.  Moreover,
$\phi(x)=y^2$ on $J_a$, so $\phi'''=0$ on $J_a$.

Define
\begin{align*}
  p(x)=-\frac{\phi'''(x)}{\phi(x)}
\end{align*}
where $\phi(x)\ne0$. In the neighborhoods $J_a$ of the zeros, define
$p(x)=0$.  These definitions agree on overlaps because $\phi'''=0$ in each
$J_a$.  Thus $p\in C^\infty(\T;\R)$, and by construction
$\phi'''+p\phi=0$.  The solution $v(t,x)=\phi(x)$ is stationary, non-zero and has zero observation at every point of $F$.
\end{proof}

\section{Appendix B. Gevrey cutoff function}

This Appendix proves the existence of compactly supported Gevrey cut-off functions. 

\begin{lemma}\label{lem:compact-gevrey-cutoff}
Let $1<\sigma<3$. Then there exists a nonnegative function
\begin{align*}
    \chi\in C_c^\infty(\mathbb R)
\end{align*}
such that
\begin{align}\label{G7hp}
    \chi(x)=1 \quad \text{for } |x|\le 1,
    \qquad
    \chi(x)=0 \quad \text{for } |x|\ge 2.
\end{align}
Moreover, $\chi$ is of Gevrey order $\sigma$ on $\mathbb R$, namely there exist
constants $C,A>0$ such that
\begin{align*}
    \|\chi^{(n)}\|_{L^\infty(\mathbb R)}
    \le C A^n (n!)^\sigma,
    \qquad n=0,1,2,\dots .
\end{align*}
\end{lemma}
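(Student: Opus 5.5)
The plan is the classical construction by gluing a Gevrey bump from the flat function $\psi_\beta(t)=e^{-t^{-\beta}}$ for $t>0$, $\psi_\beta(t)=0$ for $t\le 0$, with the exponent chosen as $\beta=1/(\sigma-1)>0$. First I verify that $\psi_\beta$ is Gevrey of order $1+1/\beta=\sigma$ on $\mathbb R$. For this I use Cauchy's estimates in the complex plane. For $t>0$, the function $z\mapsto e^{-z^{-\beta}}$ is holomorphic on the disc $|z-t|<t/2$. On that disc $\operatorname{Re}(z^{-\beta})\ge c_\beta t^{-\beta}$, since $|\arg z|\le \pi/6$ there; if $\beta\ge 3$ I shrink the disc to radius $\theta t$ with $\theta$ small so that $|\arg z|<\pi/(4\beta)$. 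Cauchy then gives
\begin{align*}
|\psi_\beta^{(n)}(t)|\le n!\,(\theta t)^{-n}\sup_{|z-t|=\theta t}e^{-\operatorname{Re} z^{-\beta}}\le n!\,\theta^{-n}t^{-n}e^{-c t^{-\beta}}.
\end{align*}
Maximizing $t^{-n}e^{-ct^{-\beta}}$ over $t>0$ gives $(n/(c\beta e))^{n/\beta}\le C^n (n!)^{1/\beta}$ by Stirling. This yields $\|\psi_\beta^{(n)}\|_\infty\le C A^n (n!)^{1+1/\beta}=CA^n(n!)^\sigma$. At $t\le 0$ all derivatives vanish, and continuity across $0$ follows from the same bound.

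Next I build the transition function $h(t)=\psi_\beta(t)/(\psi_\beta(t)+\psi_\beta(1-t))$, which equals $0$ for $t\le0$, equals $1$ for $t\ge1$, and satisfies $0\le h\le 1$. The denominator is bounded below by a positive constant on $[0,1]$, and it is Gevrey-$\sigma$ as a sum of such functions. I then need closure of the Gevrey class under products and under reciprocals of functions bounded away from zero. Products follow from Leibniz and $\binom nk (k!)^\sigma((n-k)!)^\sigma\le (n!)^\sigma$. For the reciprocal I use Faà di Bruno composed with $1/u$ on $u\ge c>0$, or more simply the same Cauchy-estimate argument: $\psi_\beta(z)+\psi_\beta(1-z)$ extends holomorphically to discs of radius $\theta\min(t,1-t)$, and its modulus there is bounded below for $t$ near the middle. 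Near the endpoints one of the two terms dominates, so the quotient is $1/(1+\psi_\beta(1-z)/\psi_\beta(z))$ and the same radius-dependent Cauchy bound applies. I expect this closure step, done carefully with uniform constants near $t=0$ and $t=1$, to be the main technical obstacle. The cleanest route is to invoke the standard algebra property of Gevrey classes (Rodino \cite{R1992}) rather than recompute it.

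Finally I set $\chi(x)=h(2-|x|)$. This equals $1$ for $|x|\le1$, equals $0$ for $|x|\ge2$, and is nonnegative. It is smooth because $h(2-|x|)$ is constant near $x=0$, so the nonsmoothness of $|x|$ is harmless. On each of $x>1/2$ and $x<-1/2$ it is an affine reparametrization of $h$, so its derivative bounds are inherited. Therefore $\|\chi^{(n)}\|_\infty\le CA^n(n!)^\sigma$. The condition $\sigma<3$ is not needed here; only $\sigma>1$ matters, since that makes $\beta$ finite and positive.
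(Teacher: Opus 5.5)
Your proposal is correct, but it takes a different route from the paper after the first step.

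The first step, the Gevrey-$\sigma$ bound for $\psi_\beta(t)=e^{-t^{-\beta}}$ with $\beta=1/(\sigma-1)$, matches Lemma \ref{onegevr}. Both use Cauchy estimates on discs of radius proportional to $t$, shrunk when $\beta$ is large so that $\beta|\arg z|<\pi/2$, followed by maximizing $t^{-n}e^{-ct^{-\beta}}$ and applying Stirling.

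The difference is in how the cutoff is assembled.
\begin{itemize}
\item You use the transition function $h=\psi_\beta(t)/(\psi_\beta(t)+\psi_\beta(1-t))$ and set $\chi(x)=h(2-|x|)$. This needs closure of $G^\sigma$ under reciprocals of functions bounded away from zero, via Fa\`a di Bruno or composition with the analytic map $u\mapsto 1/u$, or a careful Cauchy estimate for $h$ near $t=0$ and $t=1$. You correctly flag this as the technical step. It is standard, and citing Rodino for it is legitimate.
\item The paper avoids division entirely. It forms the bump $\varphi(x)=\theta(\tfrac12+x)\theta(\tfrac12-x)$, which needs only Leibniz/product closure. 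It normalizes $\varphi$ to $\eta$ with $\int\eta=1$ and takes $\chi=\mathbf 1_{[-3/2,3/2]}*\eta$. Then $\chi^{(n)}(x)=\int_{-3/2}^{3/2}\eta^{(n)}(x-y)\,dy$, so the Gevrey bound is immediate from $\|\chi^{(n)}\|_\infty\le 3\|\eta^{(n)}\|_\infty$. The facts $\chi=1$ on $|x|\le 1$ and $\chi=0$ on $|x|\ge 2$ follow directly from $\operatorname{supp}\eta\subset[-\tfrac12,\tfrac12]$.
\end{itemize}

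What each buys: your construction gives an explicit monotone profile, at the cost of the nontrivial quotient-closure property. The convolution construction is more elementary and fully self-contained.

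A minor point: the denominator $\psi_\beta(t)+\psi_\beta(1-t)$ is bounded below on all of $\mathbb R$, by $e^{-2^\beta}$, not only on $[0,1]$. That global bound is what a global reciprocal estimate needs, though since $h$ is constant off $[0,1]$ it does no harm. Your remark that only $\sigma>1$ is used, not $\sigma<3$, applies equally to the paper's proof.
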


\begin{proof}
Set
$
    \alpha=\frac{1}{\sigma-1}.
$
Then $\alpha>0$ and $
    1+\frac1\alpha=\sigma.
$ 

Define
\begin{align*}
    \theta(t)=
    \begin{cases}
        \exp(-t^{-\alpha}), & t>0,\\
        0, & t\le 0.
    \end{cases}
\end{align*}
It is a somewhat standard fact that $\theta\in G^\sigma(\mathbb R)$, we give a proof in Lemma \ref {onegevr} below for reader's convenience. Recall also that affine changes of variables and finite products
preserve the Gevrey order. 
Therefore, the function
\begin{align*}
\varphi(x)=\theta\!\left(\frac12+x\right)\theta\!\left(\frac12-x\right)
\end{align*}
belongs to $G^\sigma(\mathbb R)\cap C_c^\infty(\mathbb R)$. Moreover, one has 
\begin{align*}
    \varphi(x)=0 \quad \text{for } |x|\ge \frac12,
    \qquad
    \varphi(x)>0 \quad \text{for } |x|<\frac12.
\end{align*}
Hence
\begin{align*}
    c_0:=\int_{\mathbb R}\varphi(x)\,dx>0.
\end{align*}
Define
\begin{align*}
    \eta(x)=\frac{\varphi(x)}{c_0}.
\end{align*}
Then
\begin{align*}
    \eta\in G^\sigma(\mathbb R)\cap C_c^\infty(\mathbb R),
    \qquad
    \eta\ge0,
    \qquad
    \operatorname{supp}\eta\subset\left[-\frac12,\frac12\right],
    \qquad
    \int_{\mathbb R}\eta(x)\,dx=1.
\end{align*}
In particular, there exist constants $C_\eta,A_\eta>0$ such that
\begin{align*}
    \|\eta^{(n)}\|_{L^\infty(\mathbb R)}
    \le C_\eta A_\eta^n (n!)^\sigma,
    \qquad n=0,1,2,\dots .
\end{align*}
Now, set
$a=\frac32
$
and define
\begin{align*}
    \chi(x)
    =
    (\mathbf 1_{[-a,a]}*\eta)(x)
    =
    \int_{-a}^{a}\eta(x-y)\,dy .
\end{align*}
Since $\eta\in C_c^\infty(\mathbb R)$, differentiation under the integral sign gives
\begin{align*}
    \chi^{(n)}(x)
    =
    \int_{-a}^{a}\eta^{(n)}(x-y)\,dy,
    \qquad n=0,1,2,\dots .
\end{align*}
Hence there holds 
\begin{align*}
    |\chi^{(n)}(x)|
    \le
    2a\,\|\eta^{(n)}\|_{L^\infty(\mathbb R)}
    \le
    2a C_\eta A_\eta^n (n!)^\sigma .
\end{align*}
Thus $\chi\in G^\sigma(\mathbb R)$.  Moreover, since $\eta\in C_c^\infty(\mathbb R)$,
we have $\chi\in C^\infty(\mathbb R)$.

It remains to verify (\ref{G7hp}). By the change of variables
$z=x-y$,
\begin{align*}
    \chi(x)
    =
    \int_{x-a}^{x+a}\eta(z)\,dz.
\end{align*}
If $|x|\le1$, then
\begin{align*}
    \left[-\frac12,\frac12\right]\subset [x-a,x+a].
\end{align*}
Since $\operatorname{supp}\eta\subset[-1/2,1/2]$ and $\int_{\mathbb R}\eta=1$, it follows that
\begin{align*}
    \chi(x)=1,
    \qquad |x|\le1.
\end{align*}
If $|x|\ge2$, then the interval $[x-a,x+a]$ does not intersect
$(-1/2,1/2)$. Since $\eta(z)=0$ for $|z|\ge1/2$, we obtain
\begin{align*}
    \chi(x)=0,
    \qquad |x|\ge2.
\end{align*}
Therefore,
\begin{align*}
    \operatorname{supp}\chi\subset[-2,2].
\end{align*}
The proof is complete.
\end{proof}

\begin{lemma}
\label{onegevr}
Let $\alpha>0$, and define
\begin{align*}
\theta(t):=
\begin{cases}
\exp(-t^{-\alpha}),& t>0,\\
0,& t\leq0.
\end{cases}
\end{align*}
Then $\theta\in C^\infty(\mathbb R)$, all derivatives of $\theta$
vanish at the origin, and there exists a constant
$A_\alpha\geq1$ such that
\begin{equation}
\|\theta^{(n)}\|_{L^\infty(\mathbb R)}
\leq
A_\alpha^n (n!)^{1+1/\alpha},
\qquad n=0,1,2,\ldots.
\label{thetagev}
\end{equation}
Particularly, if
\begin{align*}
\alpha=\frac{1}{\sigma-1},
\qquad \sigma>1,
\end{align*}
then $\theta\in G^\sigma(\mathbb R)$.
\end{lemma}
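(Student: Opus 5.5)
The plan is to bound $\theta^{(n)}$ on $t>0$ with Cauchy's integral formula, using the holomorphic extension $\theta(z)=\exp(-z^{-\alpha})$ on the right half-plane (principal branch of $z^{-\alpha}$). Smoothness and vanishing of all derivatives at the origin then follow from the standard argument: each derivative on $t>0$ has the form $R_n(t^{-1})\exp(-t^{-\alpha})$ with $R_n$ a finite combination of powers $t^{-\beta}$, and such terms tend to $0$ as $t\downarrow0$. Alternatively, they follow directly from the bounds below, which force $\theta^{(n)}(t)\to0$. On $t\le 0$ everything is zero, so only $t>0$ needs attention.

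\textbf{Large $t$.} For $t\ge 1$ I use a disc $B(t,t/2)$. There $\operatorname{Re} z\ge t/2$, so $|z^{-\alpha}|$ is bounded and $|\exp(-z^{-\alpha})|\le e^{C}$. Cauchy's estimate gives $|\theta^{(n)}(t)|\le n!\,e^C(2/t)^n\le n!\,e^C2^n$. This is better than the required Gevrey bound.

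\textbf{Small $t$.} For $0<t<1$ I take the disc $B(t,r)$ with $r=\varepsilon t$ and $\varepsilon\le 1/2$ to be chosen. On this disc write $z=t(1+w)$ with $|w|\le\varepsilon$. Then $z^{-\alpha}=t^{-\alpha}(1+w)^{-\alpha}$, and for $\varepsilon$ small (depending only on $\alpha$) we have $\operatorname{Re}(1+w)^{-\alpha}\ge 1/2$. Hence $|\theta(z)|\le \exp(-\tfrac12 t^{-\alpha})$, and Cauchy's estimate yields
\begin{align*}
|\theta^{(n)}(t)|\le n!\,(\varepsilon t)^{-n}\exp\bigl(-\tfrac12 t^{-\alpha}\bigr).
\end{align*}

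\textbf{Optimization.} Set $s=t^{-1}$. Then $s^n e^{-s^\alpha/2}\le \sup_{s>0} s^n e^{-s^\alpha/2}$, and the supremum is attained at $s^\alpha=2n/\alpha$. Its value is $(2n/\alpha)^{n/\alpha}e^{-n/\alpha}\le C_\alpha^n n^{n/\alpha}\le C_\alpha'^n (n!)^{1/\alpha}$ by Stirling's formula. Combining this with the factor $n!\,\varepsilon^{-n}$ gives $|\theta^{(n)}(t)|\le A^n (n!)^{1+1/\alpha}$ for $0<t<1$. Together with the large-$t$ case (after enlarging $A\ge1$, and noting $n=0$ is trivial), this gives \eqref{thetagev}. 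When $\alpha=1/(\sigma-1)$ we have $1+1/\alpha=\sigma$, so $\theta\in G^\sigma$.

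\textbf{Main obstacle.} The step I expect to need the most care is the uniform lower bound $\operatorname{Re}(1+w)^{-\alpha}\ge 1/2$ for $|w|\le\varepsilon$, which must hold for all $\alpha>0$, including large $\alpha$. Choosing $\varepsilon$ so that $|\arg(1+w)|\le \pi/(4\alpha)$ and $|1+w|\in[1-\varepsilon,1+\varepsilon]$ handles it. Then $\operatorname{Re}(1+w)^{-\alpha}\ge (1+\varepsilon)^{-\alpha}\cos(\pi/4)$, and this is at least $1/2$ once $\varepsilon\le c/\alpha$. The constant $A_\alpha$ consequently grows like a power of $\alpha$, which is harmless since $\alpha$ is fixed.
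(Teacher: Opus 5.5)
Your proposal is correct and takes essentially the same route as the paper. Both use the holomorphic extension $\exp(-z^{-\alpha})$ on the right half-plane and Cauchy's estimate on a disc of radius proportional to $t$; the paper takes radius $\delta_\alpha t$ with $\delta_\alpha=\min\{1/4,\pi/(8\alpha)\}$ and gets $\operatorname{Re} z^{-\alpha}\ge c_\alpha t^{-\alpha}$. Both then maximize $t^{-n}e^{-ct^{-\alpha}}$ and apply $n!\ge (n/e)^n$. The only difference is cosmetic: your small-$t$ estimate never uses $t<1$, so it already covers all $t>0$ and the separate large-$t$ case is unnecessary.
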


\begin{proof}
Let
\begin{align*}
\mathbb H_+:=\{z\in\mathbb C:\operatorname{Re}z>0\}.
\end{align*}
On $\mathbb H_+$, choose the holomorphic branch of the logarithm
satisfying
\begin{align*}
-\frac{\pi}{2}<\arg z<\frac{\pi}{2},
\end{align*}
and define
\begin{align*}
\Theta(z):=\exp(-z^{-\alpha}),
\qquad
z^{-\alpha}:=\exp(-\alpha\log z).
\end{align*}
Thus $\Theta$ is holomorphic in $\mathbb H_+$, and
\begin{align*}
\Theta(t)=e^{-t^{-\alpha}}
\qquad (t>0).
\end{align*}

Set
\begin{align*}
\delta_\alpha
:=
\min\left\{
\frac14,\frac{\pi}{8\alpha}
\right\}.
\end{align*}
Fix $t>0$, and suppose that
\begin{align}\label{y7M}
|z-t|\leq\delta_\alpha t.
\end{align}
Since $\delta_\alpha\leq1/4$, we have
\begin{align*}
\operatorname{Re}z
\geq
(1-\delta_\alpha)t>0.
\end{align*}
Thus  the closed disk
$\overline{D(t,\delta_\alpha t)}$ is contained in
$\mathbb H_+$. Moreover, for $z$  satisfying (\ref{y7M}), there holds 
\begin{align}
|\operatorname{Im}z|
\leq\delta_\alpha t
\end{align}
and hence
\begin{align*}
|\arg z|
\leq
\arctan\frac{\delta_\alpha}{1-\delta_\alpha}
\leq
\frac{\delta_\alpha}{1-\delta_\alpha}
\leq
2\delta_\alpha.
\end{align*}
By the definition of $\delta_\alpha$,
\begin{align*}
|\alpha\arg z|
\leq
2\alpha\delta_\alpha
\leq
\frac{\pi}{4}.
\end{align*}
And for $z$ in  (\ref{y7M}) one has 
\begin{align*}
|z|
\leq
(1+\delta_\alpha)t
\leq
\frac54t.
\end{align*}
Writing $z=|z|e^{i\arg z}$, we therefore obtain
\begin{align*}
\operatorname{Re}(z^{-\alpha})
&=
|z|^{-\alpha}\cos(\alpha\arg z)\geq
\left(\frac54t\right)^{-\alpha}
\cos\frac{\pi}{4}\\
&=
c_\alpha t^{-\alpha},
\end{align*}
where
\begin{align*}
c_\alpha
:=
\frac{1}{\sqrt2}
\left(\frac54\right)^{-\alpha}>0.
\end{align*}
Thus one has 
\begin{equation}
|\Theta(z)|
=
\exp\bigl(-\operatorname{Re}(z^{-\alpha})\bigr)
\leq
\exp(-c_\alpha t^{-\alpha})
\label{thecomp}
\end{equation}
whenever $|z-t|\leq\delta_\alpha t$.

By Cauchy's derivative estimate  applied on the circle
$|z-t|=\delta_\alpha t$,  we have  for every $n\geq0$ that  
\begin{equation}
|\Theta^{(n)}(t)|
\leq
n!(\delta_\alpha t)^{-n}
\exp(-c_\alpha t^{-\alpha}).
\label{thetacau}
\end{equation}
For each fixed $n\geq0$, the right-hand side tends to zero as
$t\downarrow0$, because
\begin{align*}
\lim_{t\downarrow0}t^{-k}e^{-c_\alpha t^{-\alpha}}=0
\qquad
\text{for every }k\geq0.
\end{align*}
In particular,
\begin{align*}
\theta^{(n)}(t)=
\begin{cases}
\Theta^{(n)}(t),&t>0,\\
0,&t\leq0,
\end{cases}
\end{align*}
and $\theta\in C^{\infty}(\mathbb R)$. 

It remains to prove the uniform Gevrey estimate (\ref{thetagev}). The case $n=0$ of (\ref{thetagev}) is
immediate since 
$ 0\le \theta(t)\leq1$. 
Let $n\geq1$. From \eqref{thetacau}, one finds 
\begin{align*}
\|\theta^{(n)}\|_{L^\infty(\mathbb R)}
\leq
n!\delta_\alpha^{-n}
\sup_{t>0}
\left(
t^{-n}e^{-c_\alpha t^{-\alpha}}
\right).
\end{align*}
Make the change of variables
\begin{align*}
y=c_\alpha t^{-\alpha}.
\end{align*}
Then
\begin{align*}
\sup_{t>0}
t^{-n}e^{-c_\alpha t^{-\alpha}}
&=
c_\alpha^{-n/\alpha}
\sup_{y>0}y^{n/\alpha}e^{-y}\\
&=
c_\alpha^{-n/\alpha}
\left(\frac{n}{\alpha e}\right)^{n/\alpha}.
\end{align*}
Here we used the fact that the maximum of $y^p e^{-y}$ over
$y>0$ is attained at $y=p$.

The elementary Stirling bound
\begin{align*}
n!\geq\left(\frac ne\right)^n
\end{align*}
implies
\begin{align*}
\left(\frac{n}{\alpha e}\right)^{n/\alpha}
\leq
\alpha^{-n/\alpha}(n!)^{1/\alpha}.
\end{align*}
It follows that
\begin{align*}
\begin{aligned}
\|\theta^{(n)}\|_{L^\infty(\mathbb R)}
&\leq
\delta_\alpha^{-n}
c_\alpha^{-n/\alpha}
\alpha^{-n/\alpha}
(n!)^{1+1/\alpha}\\
&=
\left[
\delta_\alpha^{-1}
(\alpha c_\alpha)^{-1/\alpha}
\right]^n
(n!)^{1+1/\alpha}.
\end{aligned}
\end{align*}
Thus \eqref{thetagev} holds with
\begin{align*}
A_\alpha
:=
\max\left\{
1,\,
\delta_\alpha^{-1}
(\alpha c_\alpha)^{-1/\alpha}
\right\}.
\end{align*}

Finally, if $\alpha=(\sigma-1)^{-1}$, then
$ 
1+\frac1\alpha=\sigma$, 
and 
\begin{align*}
\|\theta^{(n)}\|_{L^\infty(\mathbb R)}
\leq
A_\alpha^n(n!)^\sigma,
\end{align*}
which proves that $\theta\in G^\sigma(\mathbb R)$.
\end{proof}

\end{document}